\documentclass[10pt]{amsart}

\usepackage[a4paper,margin=1in]{geometry}
\usepackage{amsmath,amssymb,mathtools,bm}
\usepackage{enumitem}
\usepackage{graphicx,booktabs}
\usepackage{mathrsfs}
\usepackage{float}
\usepackage{microtype}
\usepackage{aliascnt}
\usepackage{xcolor}
\usepackage[colorlinks=true,linkcolor=blue,citecolor=blue,urlcolor=blue]{hyperref}
\usepackage[nameinlink,capitalize,noabbrev]{cleveref}

\allowdisplaybreaks
\newtheorem{theorem}{Theorem}[section]
\newaliascnt{proposition}{theorem}
\newtheorem{proposition}[proposition]{Proposition}
\aliascntresetthe{proposition} \newaliascnt{lemma}{theorem}
\newtheorem{lemma}[lemma]{Lemma}
\aliascntresetthe{lemma} \newaliascnt{corollary}{theorem}
\newtheorem{corollary}[corollary]{Corollary}
\aliascntresetthe{corollary} \newaliascnt{assumption}{theorem}
\newtheorem{assumption}[assumption]{Assumption}
\aliascntresetthe{assumption}
\theoremstyle{definition}
\newaliascnt{definition}{theorem}

\aliascntresetthe{definition} \newaliascnt{alg}{theorem}
\newtheorem{alg}[alg]{Algorithm}
\aliascntresetthe{alg}
\theoremstyle{remark}
\newaliascnt{remark}{theorem}
\newtheorem{remark}[remark]{Remark}
\aliascntresetthe{remark}

\crefname{theorem}{Theorem}{Theorems} \crefname{proposition}{Proposition}{Propositions} \crefname{lemma}{Lemma}{Lemmas} \crefname{corollary}{Corollary}{Corollaries} \crefname{assumption}{Assumption}{Assumptions} \crefname{definition}{Definition}{Definitions} \crefname{alg}{Algorithm}{Algorithms} \crefname{remark}{Remark}{Remarks}

\DeclareMathOperator*{\argmin}{arg\,min}
\newcommand{\Lips}{\operatorname{Lip}}
\newcommand{\Div}{\operatorname{div}}
\newcommand{\KLdiv}{\operatorname{KL}}
\newcommand{\R}{\mathbb{R}}

\newcommand{\Pone}{\mathcal{P}_1}
\newcommand{\Hh}{\mathcal{H}}
\newcommand{\J}{\mathcal{J}}
\newcommand{\dd}{\,\mathrm{d}}
\newcommand{\norm}[1]{\left\lVert #1\right\rVert}
\newcommand{\abs}[1]{\left\lvert #1\right\rvert}
\newcommand{\ip}[2]{\left\langle #1,#2\right\rangle}
\newcommand{\eps}{\varepsilon}

\numberwithin{equation}{section}
\title[Alternating score--system iterations]{Convergence of Alternating Score--System Iterations for Regularized Landau Particle Dynamics}

\author{Yeoneung Kim}
\address{Yonsei University, Department of Industrial Engineering, Seoul,
	Republic of Korea}
\email{yeoneung@yonsei.ac.kr}

\author{Donghyun Lee}
\address{Department of Mathematical Sciences, Pohang University of Science and Technology, Pohang, Republic of Korea}
\email{donglee@postech.ac.kr}

\author{Sung-Jun Son}
\address{Center for Mathematical Machine Learning and its Applications (CM2LA), Department of Mathematics, Pohang University of Science and Technology, Pohang, Republic of Korea} 
\email{sungjun129@postech.ac.kr}

\date{\today}
\subjclass[2020]{35Q84, 46E22, 65M12, 65M75}
\keywords{Landau equation, score matching,  reproducing	kernel Hilbert space, particle method, mean-field limit, relative entropy}

\begin{document}
	
	\begin{abstract}
		We analyze an alternating iteration for score-based particle approximations of the homogeneous Landau equation. Each step computes a regularized score along the previous trajectory and recomputes the trajectory with that score frozen. First, at fixed positive regularization, we prove existence, uniqueness, and $W_1$-Lipschitz stability of a Tikhonov score minimizer in a vector-valued reproducing kernel Hilbert space embedded in $C_b^2$. For bounded Lipschitz collision matrices, this yields factorial convergence to the coupled regularized dynamics on every finite interval, with constants independent of particle number. Global objective gaps and system residuals control inexact solves, and structural kernel assumptions ensure momentum and energy conservation. The mean-field and alternating-iteration limits commute at fixed regularization in this regime. Second, score-difference cancellation extends the theory to far-field-truncated Landau kernels with $\gamma\ge-2$ and the uncut case $\gamma=-2$. For the uncut three-dimensional Coulomb kernel, a separate continuum construction gives local factorial convergence from bounded, compactly supported densities. Third, we characterize the obstruction to removing regularization at fixed particle number and establish a separate conditional density-level convergence criterion under uniform regularity and approximation assumptions, using relative entropy. A heat-equation counterexample shows that an unregularized density-level Picard iteration can diverge. Atomic Coulomb and simultaneous particle--zero-regularization limits remain open.
	\end{abstract}
	
	\maketitle
	
	\section{Introduction}
	\label{sec:introduction}
	
	\subsection{The Landau equation and regularized empirical scores}
	
	The spatially homogeneous Landau equation models grazing collisions in a plasma and admits deterministic particle formulations that preserve the pairwise structure of its collision operator \cite{CarrilloHuWangWu2020}. For a probability density $f_t$ on $\R^d$, $d\ge2$, it reads
	\begin{equation}\label{eq:intro-draft-landau-cauchy}
		\partial_t f_t(v)
		=\nabla_v\!\cdot\!\int_{\R^d}A_\gamma(v-w)
		\bigl(f_t(w)\nabla_v f_t(v)-f_t(v)\nabla_w f_t(w)\bigr)\dd w,
		\qquad f_{t=0}=f_0,
	\end{equation}
	with collision kernel
	\begin{equation}\label{eq:intro-draft-physical-kernel}
		A_\gamma(z)=C_\gamma\abs z^{\gamma}\bigl(\abs z^2I_d-z\otimes z\bigr)
		=C_\gamma\abs z^{\gamma+2}\Bigl(I_d-\frac{z\otimes z}{\abs z^2}\Bigr),
		\qquad z\ne0,
	\end{equation}
	where $C_\gamma>0$ and $-d\le\gamma\le1$; $\gamma=0$ gives Maxwell molecules, and $\gamma=-d$ the Coulomb interaction, physically most relevant for $d=3$ \cite{HuangWang2025}. For sufficiently regular solutions, the identities $A_\gamma(-z)=A_\gamma(z)=A_\gamma(z)^\top$, $A_\gamma(z)z=0$, and $A_\gamma\succeq0$ yield conservation of mass, momentum, and energy and the decay of the entropy $\int f_t\log f_t$. Since $f(w)\nabla f(v)-f(v)\nabla f(w)=f(v)f(w)(q_f(v)-q_f(w))$ with the \emph{score} $q_f:=\nabla\log f$, a smooth positive solution satisfies
	\begin{equation}\label{eq:intro-draft-landau-continuity}
		\partial_t f_t+\nabla_v\cdot\bigl(f_tU[f_t]\bigr)=0,
		\qquad
		U[f](v):=-\int_{\R^d}A_\gamma(v-w)\bigl(q_f(v)-q_f(w)\bigr)f(w)\dd w.
	\end{equation}
	Score-based particle methods \cite{HuangWang2025,IlinHuWang2025} estimate the score from a particle cloud and move the particles along $U$, avoiding the reconstruction and differentiation of the density.
	
	The score is estimated by implicit score matching. For a positive density $f$ with finite Fisher information, integration by parts turns the loss $\int\abs{s-q_f}^2f$ into $\int(\abs s^2+2\Div s)f$ up to an $s$-independent constant (the Hyv\"arinen identity \cite{Hyvarinen2005}; see \cref{prop:density-level-identity}), which involves no derivative of $f$ and can be approximated from samples. For a particle configuration $Z=(z_1,\ldots,z_N)$ with empirical measure $\mu_Z:=N^{-1}\sum_i\delta_{z_i}$, this leads to the empirical objective $\J_{\mu_Z}^0(s)=N^{-1}\sum_i(\abs{s(z_i)}^2+2\Div s(z_i))$. Its infimum over $C_c^\infty(\R^d;\R^d)$ is $-\infty$ (\cref{rem:empirical-minus-infinity}): a field can vanish at every sample point while its divergence there is arbitrarily negative. We therefore fix a separable Hilbert space $\Hh$ of vector fields continuously embedded in $C_b^2(\R^d;\R^d)$, so that $\Hh$ is a vector-valued reproducing kernel Hilbert space (RKHS), and use the Tikhonov-regularized functional
	\begin{equation}\label{eq:regularized-score-objective-intro}
		\J_\mu^\lambda(s)
		:=\int_{\R^d}\bigl(\abs s^2+2\Div s\bigr)\dd\mu+\lambda\norm{s}_{\Hh}^2,
		\qquad
		S_\lambda(\mu):=\argmin_{s\in\Hh}\J_\mu^\lambda(s),
		\qquad \lambda>0.
	\end{equation}
	The parameter $\lambda$ regularizes the score step; for fixed $\lambda$, $S_\lambda(f\dd v)$ need not coincide with $q_f$. A Gaussian RKHS with fixed bandwidth is one example. For empirical measures, $S_\lambda(\mu_Z)$ solves a finite symmetric positive-definite linear system \cite{SriperumbudurEtAl2017,ZhouShiZhu2020}; see \cref{sec:stable-score-learner}.
	
	\subsection{The alternating problem and main contributions}
	
	For a score field $s$ and a configuration $Z=(z_1,\ldots,z_N)$, consider the score-driven force
	\begin{equation}\label{eq:intro-force}
		F_i(Z;s):=-\frac1N\sum_{j=1}^NA(z_i-z_j)\bigl(s(z_i)-s(z_j)\bigr),
		\qquad i=1,\ldots,N.
	\end{equation}
	Starting from a common initial configuration $Z^0$ and the constant path $X_t^0\equiv Z^0$, the alternating iteration is
	\begin{equation}\label{eq:intro-iteration}
		\underbrace{s_t^k=S_\lambda(\mu_{X_t^k})}_{\text{score step}},\qquad
		\underbrace{\dot X_t^{k+1}=F(X_t^{k+1};s_t^k),\qquad X_0^{k+1}=Z^0}_{\text{system step}}.
	\end{equation}
	Each iteration updates a whole trajectory on $[0,T]$; it is not a time step. The score is estimated from the old path, while the frozen score and the collision kernel are evaluated along the new path.
	
	Existing analyses of score-based particle methods either take the learned score as given and bound the solution error by the score-matching or self-consistency loss \cite{HuangWang2025,IlinHuWang2025,Ilin2025,BoffiVandenEijnden2023,LuWuXiang2024,ShenEtAl2022}, or replace score matching by the mollified score of a kernel density estimate, for which stability in the input measure is obtained directly from the mollifier \cite{CarrilloDelgadinoWu2023}. To our knowledge, the present paper gives the first convergence theory in which the score is the actual minimizer of the empirical score-matching objective and no a priori accuracy of the learned score relative to the density score is assumed: the alternation \eqref{eq:intro-iteration} converges factorially on every finite interval, and the deviation of an inexact score from the regularized minimizer is controlled a posteriori by the exact objective gap. The analytic input that makes this possible is the $W_1$-stability of the Tikhonov-regularized score map in the input measure, with constants independent of the particle number (\cref{thm:main-score}); once it is available, classical Volterra estimates, Dobrushin's argument, and the cancellation mechanism of \cite{CarrilloDelgadinoWu2023} apply to the iteration. The convergence statements are qualitative in $\lambda$: the constants degenerate as $\lambda\downarrow0$ (\cref{rem:lambda-scaling}), and the fast convergence observed numerically (Appendix~\ref{sm:numerics}) is not explained by them.
	
	Once \cref{thm:main-score} is available, the coupled system $\dot X_t=F(X_t;S_\lambda(\mu_{X_t}))$ is a globally Lipschitz ODE, so its solution could in principle be computed by time marching with the exact RKHS score evaluated at every step. We analyze the alternation \eqref{eq:intro-iteration} instead because it is the structure of the schemes that separate score learning from the system solve: the score step is a separate optimization over a whole trajectory, as in global-in-time training \cite{KimEtAl2026}, and each system step can then be solved by any ODE solver. Schemes that retrain the score inside every time step, as in \cite{HuangWang2025}, are a different algorithm and are not analyzed here. \Cref{thm:inexact} is the bridge to such implementations: any learner whose excess objective is certified inherits the convergence bound.
	
	This formulation raises three logically distinct questions. The first is whether the empirical score problem has a unique minimizer that depends stably on the moving empirical measure; see \cref{thm:main-score}. The second is whether the alternating iteration converges to a solution of the coupled learned-score dynamics; see \cref{thm:main-outer}. The third is whether the resulting dynamics converges to the physical Landau equation as the particle number tends to infinity, the score regularization vanishes, and the collision regularization is removed; see \cref{thm:main-mean-field,cor:main-joint-limit} for the limit $N\to\infty$ at fixed regularization, and \cref{sec:stable-score-learner,sec:singular-effective-interaction,sec:landau-connection} for the partial treatment of the two regularization limits. Our main contributions are the following.
	\begin{enumerate}[label=(\roman*),leftmargin=2em]
		\item For every $\lambda>0$, $S_\lambda(\mu)$ exists, is unique, and
		satisfies $\norm{S_\lambda(\mu)-S_\lambda(\nu)}_{L^\infty}\le L_{S,\lambda}W_1(\mu,\nu)$ with $L_{S,\lambda}=O(\lambda^{-2})$ (Lax--Milgram and Kantorovich--Rubinstein duality). The exact identity
		\begin{equation}\label{eq:intro-gap}
			\J_\mu^\lambda(s)-\J_\mu^\lambda(S_\lambda(\mu))
			=\int\abs{s-S_\lambda(\mu)}^2\dd\mu
			+\lambda\norm{s-S_\lambda(\mu)}_{\Hh}^2
		\end{equation}
		converts a score-objective gap into a quantitative score error.
		\item For bounded, globally Lipschitz $A$ and every $\lambda>0$, the alternating
		iteration converges on every finite interval to the unique solution of $\dot X_t=F(X_t;S_\lambda(\mu_{X_t}))$, $X_0=Z^0$. The convergence is factorial in the iteration index, requires no smallness of $T$, and has constants independent of $N$; it rests on a causal Volterra recursion. Under the standard structural conditions on $A$, every exact system step conserves momentum and energy.
		\item Inexact score and system solves are controlled a posteriori through
		\eqref{eq:intro-gap} and the system residual, with a persistent-error floor (\cref{thm:inexact}).
		\item The $N$-independent estimates yield a mean-field limit at fixed
		$\lambda$ via a Dobrushin estimate, and the alternating-iteration and mean-field limits commute.
		\item At fixed $N$, the unregularized empirical problem has a minimizer only
		under an exact compatibility condition; otherwise a component invisible at the sample points grows like $\lambda^{-1}$ (\cref{thm:empirical-lambda-zero-dichotomy}). At the density level, for the uncut three-dimensional Coulomb kernel, a reverse-relative-entropy estimate with score forcing gives $\KLdiv(g_t^\lambda\Vert f_t)=O_T(\lambda)$ and $\norm{g_t^\lambda-f_t}_{L^1}=O_T(\sqrt\lambda)$ for the regularized learned-score density $g^\lambda$, under uniform regularity and RKHS source hypotheses and $\KLdiv(g_0^\lambda\Vert f_0)=O(\lambda)$ (\cref{cor:density-level-zero-lambda-KL}). These hypotheses include a uniform bound on $\norm{\nabla\log g_t^\lambda}_{\Hh}$, which excludes not only Maxwellian scores but every score space contained in $L^2(\R^3)$, in particular the Gaussian RKHS (\cref{rem:density-level-lambda-conditional}); the rate is therefore a conditional criterion rather than a regularization-removal theorem for physical data.
		\item The effective interaction $A(v-w)(s(v)-s(w))$ gains one power near the
		diagonal from the score difference. This mechanism, introduced in \cite{CarrilloDelgadinoWu2023} for a mollified score, transfers to the Tikhonov score and extends the fixed-regularization particle and mean-field analysis to far-field-truncated Landau kernels with $\gamma\ge-2$ and to the uncut case $\gamma=-2$ (\cref{thm:effective-interaction-extension,prop:singular-landau-range}); for $d=2$ the latter is the Coulomb case. For the three-dimensional Coulomb exponent $\gamma=-3$, a common-label argument gives a local continuum result for bounded, compactly supported densities (\cref{thm:continuum-Coulomb-waveform}), but not an atomic particle theorem.
	\end{enumerate}
	Thus the part of the third question settled without conditional hypotheses is the limit $N\to\infty$ with $\lambda>0$ and $A$ fixed.
	
	\subsection{Related literature}
	
	Score-based Landau particle methods and density-level relative-entropy estimates were developed in \cite{HuangWang2025,IlinHuWang2025}. A Coulomb relative-entropy stability estimate appears in \cite{Ilin2025}, while relative-entropy weak--strong stability between physical Landau solutions, including the Coulomb case, was established in \cite{TabaryWeakStrong2025}; in the Coulomb case, global existence of smooth physical solutions was recently established in \cite{GuillenSilvestre2025}. These estimates explain how a prescribed score error affects a density-level equation, but not convergence of the repeated empirical score minimization and trajectory-level system solves considered here. \Cref{thm:main-outer,thm:inexact} control a different quantity: the deviation of the computed score from the regularized empirical minimizer, through the objective gap \eqref{eq:intro-gap}, and the resulting trajectory error of the alternating scheme. They do not bound the error relative to the score of the physical density, so the two sets of results are complementary.
	
	The Landau equation also admits a gradient-flow formulation \cite{CarrilloDelgadinoDesvillettesWu2024}. The closest analytic precedent is \cite{CarrilloDelgadinoWu2023}, which studies the regularized Landau equation with the mollified score $\nabla G^\varepsilon*\log(f*G^\varepsilon)$: its Lemma~2.5 establishes the Lipschitz estimate for $A(v-w)(s(v)-s(w))$ with weight $\abs{v-w}^{\gamma+2}$ that underlies our \cref{sec:singular-effective-interaction}, and its Theorem~1.3 proves the mean-field limit of the associated particle system in $W_\infty$ for $\gamma\in(-3,0]$, up to the maximal existence time of the regularized continuum solution (global for $\gamma\ge-2$), with an initial minimal-separation condition imposed only for $\gamma\in(-3,-2)$. The present paper differs in the regularization and in the object analyzed: the score is the minimizer of a data-dependent empirical objective rather than a mollified log-density, so its stability in the input measure (\cref{thm:main-score}) must be proved before any of these estimates can be used, and what is analyzed is the alternating iteration rather than the coupled system. \Cref{sec:singular-effective-interaction} makes the transfer explicit and adds two statements without counterpart in \cite{CarrilloDelgadinoWu2023}: the RKHS obstruction below $\gamma=-2$ (\cref{prop:coulomb-effective-obstruction}) and the common-label Coulomb continuum alternating iteration (\cref{thm:continuum-Coulomb-waveform}). We do not obtain a Tikhonov analogue of the very-soft-potential particle theorem of \cite{CarrilloDelgadinoWu2023}; see \cref{rem:coulomb-next-step}. RKHS estimation of log-density gradients inside deterministic Fokker--Planck particle flows was proposed in \cite{MaoutsaReichOpper2020}, where convergence of the interacting empirical procedure was left open. For Stein variational gradient descent, a deterministic interacting particle method in which integration by parts against a kernel replaces the score of the empirical measure, the mean-field limit was established in \cite{LuLuNolen2019}. At the density level, the regularized Stein variational gradient flow of \cite{HeBalasubramanianSriperumbudurLu2025} uses a Tikhonov-type kernel regularization of the score as its velocity; that work analyzes the resulting flow and its time discretization, whereas here the score enters the Landau interaction, is computed from empirical measures, and the object analyzed is the alternating iteration. Related score-error and self-consistency analyses include \cite{BoffiVandenEijnden2023,LuWuXiang2024,ShenEtAl2022,LiHuraultSolomon2023,IlinSushkoHu2026}, and a global-in-time joint neural parameterization of the Landau score and particle flow is studied in \cite{KimEtAl2026}.
	
	Factorial Volterra decay for whole-interval waveform relaxation is classical \cite{MiekkalaNevanlinna1987}; in particular, \cite{Botchev2024} proves $\mathcal O((Ct)^k/k!)$ convergence for nonlinear ODE splittings, including residual and inexact-solve errors. In this terminology, \eqref{eq:intro-iteration} is a waveform relaxation in which only the score argument is lagged. That theory does not include a data-dependent empirical score minimization, RKHS regularization, particle-number-uniform force estimates, or a mean-field limit. Closer in structure, \cite{SzpruchTanTse2019} combines a Picard iteration in the law with particle approximation for McKean--Vlasov SDEs; there the dependence of the drift on the law is prescribed, whereas here it passes through an empirical score minimization. Our contribution is the stability of the empirical score map needed to close this Volterra mechanism for the Landau interaction.
	
	Vector-valued RKHS score matching has operator and representer formulations as well as static consistency theory \cite{ZhouShiZhu2020}, and Lipschitz stability of Tikhonov-regularized kernel risk minimizers with respect to the data distribution under the Fortet--Mourier metric has been studied in \cite{ZhangXuSun2024} for pointwise loss functions; the score-matching objective involves $\Div s$ and is not of that form. We use this regularization framework to derive deterministic $W_1$ stability of the score-matching minimizer, in $L^\infty$ and with explicit constants, for input measures with finite first moment, and to control its feedback into the evolving particle system. The fixed-$N$ analysis further makes explicit the compatibility condition for removing regularization and the distinction between convergence at the sample points and convergence of the full score in $\Hh$. Propagation of chaos for stochastic Landau particle systems is classical for Maxwell molecules, hard potentials, and moderately soft potentials \cite{FournierGuillin2017,FournierHauray2016} and has recently been extended to singular potentials \cite{TabaryChaos2025,FengWang2025}; these results do not apply directly to the deterministic learned-score flow studied here or remove its Tikhonov parameter. Recent asymptotic-preserving JKO work observes that a time-discrete fixed-point implementation of score matching can converge to an incorrect optimizer if the dependence of the future density on the score is omitted \cite{HuangWang2026AP}; that variational fixed point differs from our trajectory-level alternating iteration. An on-the-fly neural score method for the Vlasov--Maxwell--Landau system preserves the collision invariants and dissipates an estimated entropy \cite{IlinHu2026VML}, but its score--system iteration is not analyzed.
	
	The individual tools used here are classical: the Hyv\"arinen identity, Tikhonov regularization, Picard--Volterra iteration, and Dobrushin-type stability. The central analytical task is to control the feedback between score estimation and particle evolution. At fixed $\lambda>0$, the score-stability estimate closes the exact and inexact iteration bounds with constants independent of $N$ and yields a compatible mean-field limit under the stated kernel assumptions. For the three-dimensional Coulomb kernel, integrated estimates in common initial labels allow us to construct a common local class of bi-Lipschitz flows and prove convergence despite the discontinuity of the pair interaction. The empirical compatibility theorem and the separate conditional density-level limit identify the distinct requirements for removing score regularization.
	
	\subsection{Notation and organization}
	
	We write $\abs\cdot$ for the Euclidean norm and the induced matrix norm, $\norm{A}_{L^\infty}:=\sup_z\abs{A(z)}$, $I_d$ for the identity, $(z\otimes w)_{ij}:=z_iw_j$, and $A\succeq B$ if $A-B$ is positive semidefinite. For bounded $s$, $\norm{s}_{L^\infty}=\norm{s}_\infty :=\sup_x\abs{s(x)}$ and $\Lips(s):=\sup_{x\ne y}\abs{s(x)-s(y)}/\abs{x-y}$; $Ds$, $D^2s$, and $\Div s$ are the first and second derivatives and the divergence, and $C_b^k$ is the space of $C^k$ functions with bounded derivatives up to order $k$. For a kernel $K(x,z)$, $\partial_a^{(1)}K:=\partial_{x_a}K$ and $\partial_a^{(2)}K:=\partial_{z_a}K$, componentwise. For a bounded operator $T$ between Hilbert spaces, $T^*$, $T^\dagger$, $\operatorname{Ran}T$, and $\ker T$ are the adjoint, the Moore--Penrose pseudoinverse, the range, and the kernel. $\Pone(\R^d)$ is the set of probability measures with finite first moment, $\Gamma(\mu,\nu)$ the set of couplings, $W_1(\mu,\nu):=\inf_{\pi\in\Gamma(\mu,\nu)}\int\abs{v-w}\dd\pi$, and $\Phi_\#\mu:=\mu\circ\Phi^{-1}$. For configurations $Z=(z_i)_{i=1}^N$ and $Y=(y_i)_{i=1}^N$,
	\begin{equation}\label{eq:particle-norm}
		\abs{Z-Y}_N:=\frac1N\sum_{i=1}^N\abs{z_i-y_i}.
	\end{equation}
	We write $f\dd v$ for the measure with density $f$, $\norm{h}_{L^2(f)}^2:=\int\abs h^2f\dd v$, $\langle v\rangle:=(1+\abs v^2)^{1/2}$, and $\KLdiv(g\Vert f):=\int g\log(g/f)\dd v$, set to $+\infty$ unless $g\ll f$. Finally, $a_\lambda=O_T(b_\lambda)$ means $\abs{a_\lambda}\le C_T\abs{b_\lambda}$ for all small $\lambda>0$ with $C_T$ independent of $\lambda$; generic constants $C$ may change from line to line, and independence of $N$ or $\lambda$ is always stated explicitly.
	
	\Cref{sec:framework} states the assumptions and main theorems. \Cref{sec:score-identification,sec:stable-score-learner} treat score identification and the regularized score map, \cref{sec:system-step,sec:exact-outer} prove convergence of the alternating iteration, and \cref{sec:mean-field,sec:inexact,sec:singular-effective-interaction,sec:landau-connection} treat the mean-field limit, inexact solves, singular kernels, and the connection to the Landau equation. \Cref{sec:conclusion} summarizes the open problems. Numerical experiments are reported in Appendix~\ref{sm:numerics}.
	
	\section{Mathematical framework and main results}
	\label{sec:framework}
	
	\subsection{Score space and collision kernel}
	
	\begin{assumption}[Hypotheses on the score space]\label{ass:score-space}
		The space $\Hh$ is a separable Hilbert space of vector fields $s:\R^d\to\R^d$, continuously embedded in $C_b^2(\R^d;\R^d)$. There are finite constants $c_0,c_1,c_D,c_2,c_H$ such that, for every $s\in\Hh$,
		\begin{align}
			\norm{s}_{L^\infty}
			&\le c_0\norm{s}_{\Hh},
			&
			\Lips(s)
			&\le c_1\norm{s}_{\Hh},
			\label{eq:rkhs-bounds-1}
			\\
			\norm{\Div s}_{L^\infty}
			&\le c_D\norm{s}_{\Hh},
			&
			\Lips(\Div s)
			&\le c_2\norm{s}_{\Hh},
			\label{eq:rkhs-bounds-2}
			\\
			\norm{D^2s}_{L^\infty}
			&\le c_H\norm{s}_{\Hh}.
			\label{eq:rkhs-bounds-Hessian}
		\end{align}
	\end{assumption}
	
	The estimates for score stability and convergence of the exact iteration use $c_0,c_1,c_D,c_2$; the Hessian bound justifies differentiation of kernel sections in $\Hh$ (\cref{sec:stable-score-learner}). Since point evaluation is continuous on $\Hh$, the Riesz representation theorem gives a matrix-valued reproducing kernel $K:\R^d\times\R^d\to\R^{d\times d}$ with $K(\cdot,x)c\in\Hh$ and $\ip{K(\cdot,x)c}{s}_{\Hh}=c\cdot s(x)$ for $x,c\in\R^d$ and $s\in\Hh$. For instance, for each $\sigma>0$ the Gaussian RKHS with kernel $K_\sigma(x,y)=e^{-\abs{x-y}^2/(2\sigma^2)}I_d$ is separable and continuously embedded in $C_b^2$, so it satisfies \cref{ass:score-space}. The space $\Hh$ is fixed throughout; in particular it does not change with $N$ or during the alternating iteration.
	
	\begin{assumption}[Regularized collision kernel]\label{ass:kernel}
		The matrix $A:\R^d\to\R^{d\times d}$ is bounded and globally Lipschitz, and we write $a_0:=\norm{A}_{L^\infty}$ and $a_1:=\Lips(A)$. For the conservation statements only, we additionally assume
		\begin{equation}\label{eq:A-structure}
			A(-z)=A(z)=A(z)^\top,
			\qquad
			A(z)z=0,
			\qquad z\in\R^d.
		\end{equation}
	\end{assumption}
	
	These hypotheses describe a cutoff or mollified collision model, not the unregularized Coulomb matrix. For example, if $\chi_R\in C_c^\infty([0,\infty))$ satisfies $0\le\chi_R\le1$, $\chi_R=1$ on $[0,R]$, and $\chi_R=0$ on $[2R,\infty)$, then for fixed $(\gamma,\eta,R)$ with $\eta,R>0$ the matrix $C_\gamma\chi_R(\abs z)(\abs z^2+\eta^2)^{\gamma/2}(\abs z^2I_d-z\otimes z)$ satisfies \cref{ass:kernel}, including \eqref{eq:A-structure}; so does
	\begin{equation}\label{eq:numerical-kernel}
		A_{\mathrm{test}}(z)
		:=\frac{\abs z^2I_d-z\otimes z}{1+\abs z^2},
	\end{equation}
	which is used in the numerical experiments. The particle force is \eqref{eq:intro-force}.
	
	\subsection{The alternating score--system iteration}
	
	\begin{alg}[Alternating score--system iteration]
		\label{alg:alternating}
		Fix $Z^0\in(\R^d)^N$ and $T>0$, and set $X_t^0\equiv Z^0$ on $[0,T]$. Given the $k$th iterate $X^k\in C([0,T];(\R^d)^N)$:
		\begin{enumerate}[label=\arabic*.,leftmargin=2em]
			\item \emph{Score step.} For every $t\in[0,T]$, set
			\begin{equation}\label{eq:score-step}
				s_t^k:=S_\lambda(\mu_{X_t^k}).
			\end{equation}
			\item \emph{System step.} With the score path $s^k$ frozen, solve
			\begin{equation}\label{eq:system-step}
				\dot X_t^{k+1}=F(X_t^{k+1};s_t^k),
				\qquad
				X_0^{k+1}=Z^0.
			\end{equation}
		\end{enumerate}
	\end{alg}
	
	We call $X^k$ the \emph{$k$th iterate} and refer to \eqref{eq:score-step} and \eqref{eq:system-step} as the \emph{score step} and the \emph{system step}; the term \emph{time step} is reserved for time discretization. Each iteration consists of one score step followed by one system step, and $k$ counts the iterations. \Cref{alg:alternating} solves both subproblems exactly, while \cref{sec:inexact} allows both to be solved approximately.
	
	\subsection{Main results}
	
	For the statements below, set
	\[
	R_\lambda:=\frac{c_D}{\lambda}, \qquad M_\lambda:=c_0R_\lambda,\qquad
	L_\lambda:=c_1R_\lambda, \qquad L_{S,\lambda}:=\frac{c_0}{\lambda} \bigl(c_2+2c_0c_1R_\lambda\bigr),
	\]
	\[
	V_\lambda:=2a_0M_\lambda, \quad L_{F,\lambda}:=2(2a_1M_\lambda+a_0L_\lambda), \quad \beta_\lambda:=2a_0L_{S,\lambda},
	\quad C_{\mathrm{mf}}:=L_{F,\lambda}+\beta_\lambda.
	\]
	These constants depend only on $\Hh$, $A$, and $\lambda$, and not on $N$. We use the convention $\beta_\lambda^0=1$, including when $\beta_\lambda=0$.
	
	\begin{theorem}[Well-posedness and $W_1$-stability of the regularized score]
		\label{thm:main-score}
		Let \cref{ass:score-space} hold, and fix $\lambda>0$. Then, for every $\mu\in\Pone(\R^d)$, the functional $\J_\mu^\lambda$ has a unique minimizer $S_\lambda(\mu)$, which satisfies
		\begin{equation}\label{eq:main-score-bound}
			\norm{S_\lambda(\mu)}_{\Hh}\le R_\lambda,
			\qquad
			\norm{S_\lambda(\mu)}_{L^\infty}\le M_\lambda,
			\qquad
			\Lips(S_\lambda(\mu))\le L_\lambda.
		\end{equation}
		Moreover, for every $\mu,\nu\in\Pone(\R^d)$,
		\begin{equation}\label{eq:main-score-stability}
			\norm{S_\lambda(\mu)-S_\lambda(\nu)}_{L^\infty}
			\le L_{S,\lambda}W_1(\mu,\nu).
		\end{equation}
	\end{theorem}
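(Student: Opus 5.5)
The proof is a Lax--Milgram argument for a coercive quadratic functional on $\Hh$. For $\mu\in\Pone(\R^d)$, define on $\Hh$ the bilinear form
\[
a_\mu(s,u):=\int s\cdot u\dd\mu+\lambda\ip{s}{u}_{\Hh}
\]
and the linear functional $\ell_\mu(u):=-\int\Div u\dd\mu$, so that $\J_\mu^\lambda(s)=a_\mu(s,s)-2\ell_\mu(s)$.

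By \eqref{eq:rkhs-bounds-1}--\eqref{eq:rkhs-bounds-2}, $a_\mu$ is bounded, with $\abs{a_\mu(s,u)}\le(c_0^2+\lambda)\norm s_\Hh\norm u_\Hh$. It is also coercive, since $a_\mu(s,s)\ge\lambda\norm s_\Hh^2$. Likewise $\ell_\mu$ is bounded, since $\abs{\ell_\mu(u)}\le c_D\norm u_\Hh$; this uses only that $\mu$ is a probability measure and that $\Div u$ is bounded and continuous, hence integrable. Lax--Milgram (here simply Riesz, as $a_\mu$ is symmetric) then gives a unique $S=S_\lambda(\mu)$ with $a_\mu(S,u)=\ell_\mu(u)$ for all $u$.

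Expanding $\J_\mu^\lambda(S+h)=\J_\mu^\lambda(S)+a_\mu(h,h)$ shows that $S$ is the unique minimizer, and this same expansion is the gap identity \eqref{eq:intro-gap}. For the bounds, test the Euler--Lagrange equation with $u=S$:
\[
\lambda\norm S_\Hh^2\le a_\mu(S,S)=\ell_\mu(S)\le c_D\norm S_\Hh .
\]
Hence $\norm S_\Hh\le c_D/\lambda=R_\lambda$, and \eqref{eq:rkhs-bounds-1} gives $\norm S_{L^\infty}\le c_0R_\lambda=M_\lambda$ and $\Lips(S)\le c_1R_\lambda=L_\lambda$.

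For stability, let $S_\mu=S_\lambda(\mu)$, $S_\nu=S_\lambda(\nu)$ and $h=S_\mu-S_\nu$. Subtracting the two Euler--Lagrange equations and rearranging gives, for every $u\in\Hh$,
\[
a_\mu(h,u)=\ell_\mu(u)-\ell_\nu(u)-\int S_\nu\cdot u\dd(\mu-\nu).
\]
Take $u=h$ and discard the nonnegative term $\int\abs h^2\dd\mu$, so that $\lambda\norm h_\Hh^2$ is bounded by the right-hand side.

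The key step is to bound each term on the right by Kantorovich--Rubinstein duality, $\abs{\int\phi\dd(\mu-\nu)}\le\Lips(\phi)W_1(\mu,\nu)$, which is valid for Lipschitz $\phi$ and $\mu,\nu\in\Pone$.
\begin{itemize}
\item With $\phi=\Div h$: $\Lips(\Div h)\le c_2\norm h_\Hh$ by \eqref{eq:rkhs-bounds-2}.
\item With $\phi=S_\nu\cdot h$: the product rule gives $\Lips(S_\nu\cdot h)\le\norm{S_\nu}_\infty\Lips(h)+\Lips(S_\nu)\norm h_\infty\le M_\lambda c_1\norm h_\Hh+L_\lambda c_0\norm h_\Hh=2c_0c_1R_\lambda\norm h_\Hh$.
\end{itemize}
Combining, $\lambda\norm h_\Hh^2\le(c_2+2c_0c_1R_\lambda)W_1(\mu,\nu)\norm h_\Hh$. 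Dividing by $\lambda\norm h_\Hh$ and applying $\norm h_{L^\infty}\le c_0\norm h_\Hh$ gives exactly $L_{S,\lambda}=\frac{c_0}{\lambda}(c_2+2c_0c_1R_\lambda)$.

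The only delicate point is the duality step. We need that $\Div h$ and $S_\nu\cdot h$ are genuinely Lipschitz and integrable against measures in $\Pone$, which follows from the $C_b^2$ embedding. The factor $2$ comes from splitting the product-rule Lipschitz bound symmetrically, and it matches the stated constant.
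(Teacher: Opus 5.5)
Your proposal is correct and follows the paper's proof essentially step for step. It uses the same Lax--Milgram setup with $\lambda\ip{s}{h}_{\Hh}+\int s\cdot h\dd\mu$ and $-\int\Div h\dd\mu$, and the same test of the Euler equation with the minimizer itself to obtain $R_\lambda$. It then tests the subtracted Euler equation with $h=S_\mu-S_\nu$ and bounds it by Kantorovich--Rubinstein duality via $\Lips(S_\nu\cdot h+\Div h)\le(c_2+2c_0c_1R_\lambda)\norm{h}_{\Hh}$, exactly as the paper does; you should only note the trivial case $h=0$ before dividing by $\lambda\norm{h}_{\Hh}$.
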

	
	\begin{theorem}[Factorial convergence of the alternating iteration]
		\label{thm:main-outer}
		Let \cref{ass:score-space,ass:kernel} hold, and fix $\lambda>0$, $T>0$, $N\ge1$, and $Z^0\in(\R^d)^N$. Then every iterate of \cref{alg:alternating} is uniquely defined on $[0,T]$, and as $k\to\infty$ the iterates converge uniformly on $[0,T]$ to the unique path $X^{\lambda,N}\in C^1([0,T];(\R^d)^N)$ satisfying
		\begin{equation}\label{eq:main-coupled-particle}
			\dot X_t^{\lambda,N}
			=F\bigl(X_t^{\lambda,N};S_\lambda(\mu_{X_t^{\lambda,N}})\bigr),
			\qquad
			X_0^{\lambda,N}=Z^0.
		\end{equation}
		More precisely, with $D_k(t):=\abs{X_t^{k+1}-X_t^k}_N$, for every $k\ge0$ and $0\le t\le T$,
		\begin{equation}\label{eq:main-factorial-increment}
			\sup_{0\le r\le t}D_k(r)
			\le
			V_\lambda e^{L_{F,\lambda}t}
			\frac{\beta_\lambda^kt^{k+1}}{(k+1)!}.
		\end{equation}
		Moreover, comparison with the limiting path gives
		\begin{equation}\label{eq:main-factorial-tail}
			\sup_{0\le t\le T}
			\abs{X_t^k-X_t^{\lambda,N}}_N
			\le
			\mathcal E_{k,\lambda}(T)
			:=
			V_\lambda e^{L_{F,\lambda}T}
			\frac{\beta_\lambda^kT^{k+1}}{(k+1)!}.
		\end{equation}
		All constants are independent of $N$.	If, in addition, \eqref{eq:A-structure} holds,	every system step and the limiting path preserve the initial momentum and kinetic energy:
		\begin{equation}\label{eq:main-particle-invariants}
			\frac1N\sum_{i=1}^NX_{t,i}^{k+1}
			=\frac1N\sum_{i=1}^Nz_i^0,
			\qquad
			\frac1N\sum_{i=1}^N\abs{X_{t,i}^{k+1}}^2
			=\frac1N\sum_{i=1}^N\abs{z_i^0}^2,
		\end{equation}
		and likewise with $X^{k+1}$ replaced by $X^{\lambda,N}$.
	\end{theorem}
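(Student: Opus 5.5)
The plan is to treat \cref{thm:main-score} as the only input and run a Picard--Volterra argument in the particle norm $\abs{\cdot}_N$. First, the force bounds. For any $s\in\Hh$ with $\norm{s}_{\Hh}\le R_\lambda$, \cref{thm:main-score} and \cref{ass:score-space} give $\norm{s}_{L^\infty}\le M_\lambda$ and $\Lips(s)\le L_\lambda$, hence $\abs{F_i(Z;s)}\le 2a_0M_\lambda=V_\lambda$. For two configurations $Z,Y$ and the same $s$, I will split
\[
A(z_i-z_j)(s(z_i)-s(z_j))-A(y_i-y_j)(s(y_i)-s(y_j))
\]
into a term with the difference of $A$, bounded by $a_1(\abs{z_i-y_i}+\abs{z_j-y_j})\,2M_\lambda$, and a term with the difference of $s$, bounded by $a_0L_\lambda(\abs{z_i-y_i}+\abs{z_j-y_j})$. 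Averaging over $i,j$ then gives $\abs{F(Z;s)-F(Y;s)}_N\le L_{F,\lambda}\abs{Z-Y}_N$; the factor $2$ in $L_{F,\lambda}$ leaves room for the $i$ and $j$ contributions. For the same $Z$ and two scores, $\abs{F(Z;s)-F(Z;\tilde s)}_N\le 2a_0\norm{s-\tilde s}_{L^\infty}$. Using $W_1(\mu_X,\mu_Y)\le\abs{X-Y}_N$ (diagonal coupling) together with \eqref{eq:main-score-stability}, this becomes $\le\beta_\lambda\abs{X-Y}_N$ when $s=S_\lambda(\mu_X)$ and $\tilde s=S_\lambda(\mu_Y)$.

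Second, well-posedness of each system step. The map $t\mapsto s^k_t$ is continuous in $L^\infty$ because $X^k$ is continuous and the score map is $W_1$-Lipschitz. Hence $F(\cdot;s^k_t)$ is continuous in $t$, globally Lipschitz in $Z$ (in the $\abs\cdot_N$ norm, which is equivalent to the Euclidean norm at fixed $N$) and bounded. Picard--Lindel\"of therefore gives a unique global $C^1$ solution on $[0,T]$, and by induction every iterate is defined. The same argument shows that the coupled system \eqref{eq:main-coupled-particle} has a unique solution, since $Z\mapsto F(Z;S_\lambda(\mu_Z))$ is Lipschitz with constant $L_{F,\lambda}+\beta_\lambda$.

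Third, the Volterra recursion. Write $X^{k+1}-X^k$ as the integral of $F(X^{k+1}_r;s^k_r)-F(X^k_r;s^{k-1}_r)$ and insert $F(X^k_r;s^k_r)$. This yields
\[
D_k(t)\le\int_0^t\bigl(L_{F,\lambda}D_k(r)+\beta_\lambda D_{k-1}(r)\bigr)\dd r .
\]
Gronwall gives $D_k(t)\le\beta_\lambda\int_0^te^{L_{F,\lambda}(t-r)}D_{k-1}(r)\dd r$. For $k=0$, $D_0(t)\le V_\lambda t$ directly, since $X^0$ is constant. Induction with $\bar D_k(t):=\sup_{r\le t}D_k(r)$ and the bound $e^{L_{F,\lambda}(t-r)}\le e^{L_{F,\lambda}t}$ then yields \eqref{eq:main-factorial-increment}. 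I expect the main obstacle to be bookkeeping here, namely keeping the exponential factor at a single $e^{L_{F,\lambda}t}$ rather than accumulating powers. I would prove the sharper statement $D_k(t)\le V_\lambda e^{L_{F,\lambda}t}\beta_\lambda^kt^{k+1}/(k+1)!$ pointwise, using $e^{L(t-r)}e^{Lr}=e^{Lt}$, and the inductive step then closes cleanly.

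Fourth, the tail. The bounds are summable, so $X^k$ is Cauchy in $C([0,T])$ and converges uniformly to some $X$. Passing to the limit in the integral equation is justified by the Lipschitz bounds, so $X$ solves \eqref{eq:main-coupled-particle} and by uniqueness $X=X^{\lambda,N}$. For the stated tail bound, which carries a single factorial term rather than a series, I would compare directly with $E_k(t):=\abs{X^k_t-X^{\lambda,N}_t}_N$. The same splitting gives $E_{k+1}(t)\le\int_0^t(L_{F,\lambda}E_{k+1}+\beta_\lambda E_k)$ with $E_0(t)\le V_\lambda t$, and the identical induction yields \eqref{eq:main-factorial-tail}. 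All constants above come from \cref{thm:main-score} and \cref{ass:kernel} only, so they are independent of $N$.

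Finally, conservation. For a frozen score $s$, $\frac{d}{dt}\frac1N\sum_iX_i=-N^{-2}\sum_{i,j}A(x_i-x_j)(s(x_i)-s(x_j))=0$, because the summand is antisymmetric under $i\leftrightarrow j$ when $A$ is even. For the energy, $\frac{d}{dt}\frac1N\sum_i\abs{X_i}^2=-2N^{-2}\sum_{i,j}x_i\cdot A(x_i-x_j)(s(x_i)-s(x_j))$. Symmetrizing in $i,j$ turns $x_i$ into $\tfrac12(x_i-x_j)$, and $(x_i-x_j)^\top A(x_i-x_j)=0$ by symmetry of $A$ and $A(z)z=0$. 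This argument uses only the structure of $A$, so it applies to every system step and to $X^{\lambda,N}$, where the score $S_\lambda(\mu_{X_t})$ is treated as a given field at each time.
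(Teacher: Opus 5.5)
Your proposal is correct and follows the paper's proof essentially step by step: the same state/score splitting of the force difference, the Gronwall-reduced Volterra recursion $D_k(t)\le\beta_\lambda\int_0^te^{L_{F,\lambda}(t-r)}D_{k-1}(r)\dd r$ closed by induction with a single factor $e^{L_{F,\lambda}t}$, a separate direct comparison $E_{k+1}\le\int_0^t(L_{F,\lambda}E_{k+1}+\beta_\lambda E_k)$ for the tail, and the pairwise symmetrization for the invariants. The only difference is cosmetic: you obtain existence and uniqueness of the coupled path at once from Picard--Lindel\"of for the $(L_{F,\lambda}+\beta_\lambda)$-Lipschitz autonomous field, whereas the paper obtains existence as the uniform limit of the iterates and proves uniqueness separately by Gronwall.
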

	
	For $\mu\in\Pone(\R^d)$, define the regularized mean-field drift
	\begin{equation}\label{eq:main-mean-field-drift}
		b_\lambda[\mu](v)
		:=-\int_{\R^d}A(v-w)\bigl(S_\lambda(\mu)(v)-S_\lambda(\mu)(w)\bigr)\dd\mu(w).
	\end{equation}
	A curve $\mu\in C([0,T];\Pone(\R^d))$ is a \emph{characteristic solution} of \eqref{eq:main-mean-field} below if $\mu_t=(\Phi_t)_\#\mu_0$, where $\dot\Phi_t(v)=b_\lambda[\mu_t](\Phi_t(v))$ and $\Phi_0(v)=v$.  Since $b_\lambda[\mu_t]$ is bounded, Lipschitz in $v$ uniformly in $t$, and continuous in $t$ along $W_1$-continuous curves (\cref{lem:mean-field-drift}), every curve in $C([0,T];\Pone(\R^d))$ solving \eqref{eq:main-mean-field} in the sense of distributions is characteristic, by uniqueness for linear continuity equations with such fields \cite[section~8.1]{AmbrosioGigliSavare2008}.
	
	\begin{theorem}[Well-posedness of the regularized mean-field equation and mean-field limit]
		\label{thm:main-mean-field}
		Let \cref{ass:score-space,ass:kernel} hold, and fix $\lambda>0$ and $T>0$. For every $\mu_0\in\Pone(\R^d)$, the equation
		\begin{equation}\label{eq:main-mean-field}
			\partial_t\mu_t+\nabla\cdot\bigl(b_\lambda[\mu_t]\mu_t\bigr)=0,
			\qquad
			\mu_{t=0}=\mu_0,
		\end{equation}
		has a unique characteristic solution in $C([0,T];\Pone(\R^d))$, and the characteristic solutions $\mu,\nu$ with initial laws $\mu_0,\nu_0$ satisfy
		\begin{equation}\label{eq:main-Dobrushin}
			W_1(\mu_t,\nu_t)\le e^{C_{\mathrm{mf}}t}W_1(\mu_0,\nu_0),
			\qquad 0\le t\le T.
		\end{equation}
		If $\mu_0^N=N^{-1}\sum_i\delta_{z_i^0}$ and $X^{\lambda,N}$ solves \eqref{eq:main-coupled-particle} with $Z^0=(z_1^0,\ldots,z_N^0)$, then
		\begin{equation}\label{eq:main-particle-limit}
			\sup_{0\le t\le T}W_1(\mu_{X_t^{\lambda,N}},\mu_t)
			\le e^{C_{\mathrm{mf}}T}W_1(\mu_0^N,\mu_0).
		\end{equation}
		In particular, the left-hand side tends to zero as $N\to\infty$, with $\lambda$ and $A$ fixed, whenever $W_1(\mu_0^N,\mu_0)\to0$.
	\end{theorem}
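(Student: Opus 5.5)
We prove \cref{thm:main-mean-field} by a fixed-point argument on the characteristic map, using the $W_1$-stability of \cref{thm:main-score}. The main tool is a drift estimate, which we derive first. For $\mu,\nu\in\Pone(\R^d)$ write $s=S_\lambda(\mu)$ and $\tilde s=S_\lambda(\nu)$. By \eqref{eq:main-score-bound}, $\abs{b_\lambda[\mu](v)}\le 2a_0M_\lambda=V_\lambda$. In $v$, the integrand $A(v-w)(s(v)-s(w))$ is Lipschitz with constant $a_1\cdot 2M_\lambda+a_0L_\lambda$, and the same holds in $w$. Hence $b_\lambda[\mu]$ is Lipschitz with constant at most $L_{F,\lambda}/2$.

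We then compare two drifts at a common point:
\[
\abs{b_\lambda[\mu](v)-b_\lambda[\nu](v)}\le 2a_0\norm{s-\tilde s}_{L^\infty}+\Bigl|\int A(v-w)(s(v)-s(w))\dd(\mu-\nu)(w)\Bigr|.
\]
The first term is at most $\beta_\lambda W_1(\mu,\nu)$ by \eqref{eq:main-score-stability}. The second is at most $(L_{F,\lambda}/2)W_1(\mu,\nu)$ by Kantorovich--Rubinstein duality, since the integrand is Lipschitz in $w$. Together these give
\[
\norm{b_\lambda[\mu]-b_\lambda[\nu]}_{L^\infty}\le(\beta_\lambda+L_{F,\lambda}/2)W_1(\mu,\nu).
\]
Continuity in $t$ along $W_1$-continuous curves follows from this bound as well.

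\textbf{Existence and uniqueness.} On $C([0,T];\Pone)$ define $\Psi(\mu)_t=(\Phi^\mu_t)_\#\mu_0$, where $\Phi^\mu$ is the flow of $b_\lambda[\mu_t]$. This flow is well defined because the field is bounded, uniformly Lipschitz in $v$, and continuous in $t$. Since the drift is bounded, $\Psi(\mu)_t$ has finite first moment and is $W_1$-continuous in $t$. For two curves $\mu,\nu$, the same-label coupling gives
\[
\abs{\Phi^\mu_t(v)-\Phi^\nu_t(v)}\le\int_0^t\Bigl(\tfrac{L_{F,\lambda}}2\abs{\Phi^\mu_r-\Phi^\nu_r}+C\,W_1(\mu_r,\nu_r)\Bigr)\dd r.
\]
Gronwall's inequality then bounds $W_1(\Psi(\mu)_t,\Psi(\nu)_t)$ by a Volterra integral of $W_1(\mu_r,\nu_r)$. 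Either iterating, which gives factorial decay, or using a weighted sup norm $\sup_t e^{-\kappa t}W_1$ with $\kappa$ large, we see that $\Psi$ is a contraction on the complete metric space $C([0,T];\Pone)$. Its unique fixed point is the characteristic solution. Completeness of $(\Pone,W_1)$ and uniform closeness of the curves make this routine.

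\textbf{Dobrushin estimate.} This is the step that requires care with constants. Let $\mu,\nu$ be the solutions and $\Phi,\tilde\Phi$ their flows. Let $\pi_0$ be an optimal coupling of $(\mu_0,\nu_0)$, and set $\delta(t)=\int\abs{\Phi_t(v)-\tilde\Phi_t(w)}\dd\pi_0$, so that $W_1(\mu_t,\nu_t)\le\delta(t)$. Differentiating,
\[
\dot\delta\le\int\abs{b_\lambda[\mu_t](\Phi_t v)-b_\lambda[\nu_t](\tilde\Phi_t w)}\dd\pi_0 .
\]
To reach exactly $C_{\mathrm{mf}}=L_{F,\lambda}+\beta_\lambda$, we do not use the cruder split above. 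Instead we expand the double integral directly in the transported coupling $\pi_t=(\Phi_t,\tilde\Phi_t)_\#\pi_0$. Writing $b_\lambda[\mu](v)=-\iint$ over pairs drawn from $\pi_t\otimes\pi_t$ and comparing with the corresponding expression for $\nu$:
\begin{itemize}
\item the $v$-variation contributes at most $(L_{F,\lambda}/2)\delta$;
\item the $w$-variation contributes at most $(L_{F,\lambda}/2)\delta$;
\item the score difference contributes at most $2a_0\norm{s-\tilde s}_\infty\le\beta_\lambda W_1\le\beta_\lambda\delta$.
\end{itemize}
Hence $\dot\delta\le C_{\mathrm{mf}}\,\delta$, and Gronwall's inequality gives \eqref{eq:main-Dobrushin}.

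\textbf{Particle limit.} The empirical measures $\mu_{X_t^{\lambda,N}}$ form a characteristic solution with initial law $\mu_0^N$. Indeed, for an empirical measure, $b_\lambda[\mu_X](x_i)=F_i(X;S_\lambda(\mu_X))$, so \eqref{eq:main-coupled-particle} is precisely the characteristic system at the atoms. By uniqueness, \eqref{eq:main-Dobrushin} applied to $\mu_0^N$ and $\mu_0$ yields \eqref{eq:main-particle-limit}.
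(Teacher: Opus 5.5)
Your proposal is correct and follows the paper's proof essentially step for step. It uses the same drift bounds (your $L_{F,\lambda}/2$ and $\beta_\lambda+L_{F,\lambda}/2$ are the paper's $L_b$ and $K_b$), a contraction for $\Psi$ (a weighted sup norm where the paper uses short intervals and restarts), the labeled-coupling Gronwall estimate, and the identification of the particle curve as a characteristic solution. One small correction: the ``cruder split'' you avoid already gives exactly $C_{\mathrm{mf}}$, since $L_b\,\delta+K_bW_1(\mu_t,\nu_t)\le(L_b+K_b)\,\delta$ and $L_b+K_b=L_{F,\lambda}+\beta_\lambda$, which is how the paper concludes; your expansion in $\pi_t\otimes\pi_t$ is valid but not needed.
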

	
	\begin{corollary}[Commutation of the alternating-iteration and mean-field limits]
		\label{cor:main-joint-limit}
		Under the assumptions of \cref{thm:main-outer,thm:main-mean-field}, let $X^{k,N}$ denote the $k$th iterate with empirical initial law $\mu_0^N$. Then
		\begin{equation}\label{eq:main-joint-error}
			\sup_{0\le t\le T}
			W_1(\mu_{X_t^{k,N}},\mu_t)
			\le
			\mathcal E_{k,\lambda}(T)
			+
			e^{C_{\mathrm{mf}}T}
			W_1(\mu_0^N,\mu_0).
		\end{equation}
		In particular, $\sup_{t\le T}W_1(\mu_{X_t^{k_\ell,N_\ell}},\mu_t)\to0$ for every sequence with $k_\ell\to\infty$ and $W_1(\mu_0^{N_\ell},\mu_0)\to0$.  Moreover, by \cref{thm:continuum-outer-iteration}, both iterated limits exist and equal $\mu$, so the alternating-iteration and mean-field limits commute at fixed $\lambda$ and $A$.
	\end{corollary}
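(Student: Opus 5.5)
The estimate \eqref{eq:main-joint-error} will come from the triangle inequality for $W_1$, with the empirical measure of the coupled particle system $X^{\lambda,N}$ from \cref{thm:main-outer} as the intermediate point:
\[
W_1(\mu_{X_t^{k,N}},\mu_t)\le W_1(\mu_{X_t^{k,N}},\mu_{X_t^{\lambda,N}})+W_1(\mu_{X_t^{\lambda,N}},\mu_t).
\]
For the first term I use the identity coupling $N^{-1}\sum_i\delta_{(X^{k,N}_{t,i},X^{\lambda,N}_{t,i})}$ of the two empirical measures. It gives $W_1(\mu_{X_t^{k,N}},\mu_{X_t^{\lambda,N}})\le\abs{X_t^{k,N}-X_t^{\lambda,N}}_N$, and by \eqref{eq:main-factorial-tail} this is at most $\mathcal E_{k,\lambda}(T)$, uniformly in $t\le T$ and in $N$. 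The second term is bounded by \eqref{eq:main-particle-limit}, which gives $e^{C_{\mathrm{mf}}T}W_1(\mu_0^N,\mu_0)$. Taking the supremum over $t$ yields \eqref{eq:main-joint-error}.

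For the joint limit, note that $\mathcal E_{k,\lambda}(T)=V_\lambda e^{L_{F,\lambda}T}(\beta_\lambda T)^{k}T/(k+1)!\to0$ as $k\to\infty$. This holds because factorial decay beats geometric growth, and because $\mathcal E_{k,\lambda}(T)$ does not depend on $N$. Along any sequence with $k_\ell\to\infty$ and $W_1(\mu_0^{N_\ell},\mu_0)\to0$, both terms on the right of \eqref{eq:main-joint-error} therefore vanish.

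It remains to treat the iterated limits. First let $k\to\infty$ at fixed $N$, then $N\to\infty$. By \cref{thm:main-outer}, $\mu_{X^{k,N}_t}\to\mu_{X^{\lambda,N}_t}$ uniformly in $t$, and \cref{thm:main-mean-field} then sends this to $\mu_t$. In the other order, first let $N\to\infty$ at fixed $k$. This needs the continuum alternating iteration $\mu^k$: the score path is $S_\lambda(\mu^{k}_t)$, and $\mu^{k+1}$ is transported by the frozen linear drift. I would invoke \cref{thm:continuum-outer-iteration}, which I take to give two facts:
\begin{enumerate}[label=(\alph*),leftmargin=2em]
\item for each fixed $k$, $\sup_t W_1(\mu_{X^{k,N}_t},\mu^k_t)\to0$ as $N\to\infty$, via a Dobrushin estimate applied inductively in $k$ using \eqref{eq:main-score-stability};
\item $\mu^k\to\mu$ uniformly as $k\to\infty$, with the same factorial bound.
\end{enumerate}
Together these give the second iterated limit, also equal to $\mu$, which is the commutation claim.

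The only nonroutine point is that \cref{thm:continuum-outer-iteration} is stated later in the paper. If it is not available in the needed form, I would prove (a) directly by induction on $k$. The induction step is a coupling estimate between two linear flows whose drifts differ through $S_\lambda(\mu_{X^{k,N}_t})-S_\lambda(\mu^k_t)$; I would control this difference using the bounds $a_0$ and $a_1$ on $A$ together with $L_{S,\lambda}$, and close it with Grönwall. The rest of the argument is immediate.
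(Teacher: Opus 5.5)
Your proof is correct and follows the paper's argument. The estimate comes from the triangle inequality through $\mu_{X_t^{\lambda,N}}$ with the labeled coupling, \eqref{eq:main-factorial-tail}, and \eqref{eq:main-particle-limit}, and the iterated limits come from \cref{thm:continuum-outer-iteration}.

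That theorem already gives your fact (a) quantitatively as \eqref{eq:fixed-k-particle-waveform}, namely $\sup_t W_1(\rho^{k,N}_t,\rho^k_t)\le d_N e^{L_{F,\lambda}T}\sum_{j=0}^{k}(\beta_\lambda T)^j/j!$. It is proved by the same inductive Gronwall coupling you sketch as a fallback, with the initial law reset at each iteration, so no separate argument is needed.
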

	
	The limit equation \eqref{eq:main-mean-field} is a regularized learned-score equation, not the physical Landau equation; its connection to the latter is discussed in \cref{sec:landau-connection}.
	
	\section{Score identification and an empirical obstruction}
	\label{sec:score-identification}
	
	We restrict throughout to $-d\le\gamma\le1$, so that the Coulomb interaction $\gamma=-d$ is the most singular case considered (parts of the literature also treat $\gamma\ge-d-1$). By \eqref{eq:intro-draft-landau-continuity}, the Landau velocity field depends on $f_t$ only through $f_t$ and its score. The following standard identity, with its boundary assumption made explicit, underlies implicit score matching.
	
	\begin{proposition}\label{prop:density-level-identity}
		Let $f>0$ be a $C^1$ probability density with finite Fisher information,  $\int\abs{\nabla\log f}^2f\dd v<\infty$.  Let $s\in C^1(\R^d;\R^d)$ satisfy
		\begin{equation}\label{eq:density-level-identity-hypotheses}
			\begin{gathered}
				s\in L^2(f\dd v),\qquad
				(\Div s)f\in L^1(\R^d),\qquad
				s\cdot\nabla f\in L^1(\R^d),\\
				\int(\Div s)f\dd v=-\int s\cdot\nabla f\dd v .
			\end{gathered}
		\end{equation}
		Then
		\begin{equation}\label{eq:density-level-identity}
			\int\bigl(\abs{s}^2+2\Div s\bigr)f\dd v
			=\int\abs{s-\nabla\log f}^2f\dd v
			-\int\abs{\nabla\log f}^2f\dd v.
		\end{equation}
		Consequently, over a class of vector fields satisfying \eqref{eq:density-level-identity-hypotheses} and containing $\nabla\log f$ (which requires $f\in C^2$), the density-level minimizer is $\nabla\log f$, uniquely as an element of $L^2(f)$. The hypotheses \eqref{eq:density-level-identity-hypotheses} hold when $s$ is compactly supported and, by \cref{lem:cutoff-ibp} below, whenever $s$ and $\Div s$ are bounded; in particular they hold for every $s\in\Hh$ under \cref{ass:score-space}.
	\end{proposition}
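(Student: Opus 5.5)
The identity \eqref{eq:density-level-identity} follows from expanding the square and then using the integration-by-parts hypothesis in \eqref{eq:density-level-identity-hypotheses}. Write $q:=\nabla\log f=\nabla f/f$, which lies in $L^2(f)$ by the finite Fisher information assumption. Since $s\in L^2(f)$ as well, every term in
\[
\abs{s-q}^2f=\abs s^2f-2\,s\cdot q\,f+\abs q^2f
\]
is integrable; the cross term is integrable by Cauchy--Schwarz in $L^2(f)$. Moreover $s\cdot q\,f=s\cdot\nabla f$ pointwise because $f>0$. Integrating gives
\[
\int\abs{s-q}^2f=\int\abs s^2f-2\int s\cdot\nabla f+\int\abs q^2f.
\]
The last hypothesis in \eqref{eq:density-level-identity-hypotheses} lets us replace $-\int s\cdot\nabla f$ by $\int(\Div s)f$, which is finite. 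Rearranging yields \eqref{eq:density-level-identity}. No limiting argument is needed here, because the boundary behaviour is built into the hypotheses.

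For the minimizer statement, observe that the right-hand side of \eqref{eq:density-level-identity} equals $\norm{s-q}_{L^2(f)}^2$ minus a constant independent of $s$. Over any admissible class containing $q$, the objective is therefore minimized exactly when $\norm{s-q}_{L^2(f)}=0$, that is, when $s=q$ in $L^2(f)$. This gives uniqueness as an element of $L^2(f)$. The requirement $f\in C^2$ appears only so that $q\in C^1$ can belong to the class at all.

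It remains to verify the hypotheses in the two stated situations. If $s$ is $C^1$ with compact support, then $s$, $\Div s$ are bounded with compact support. Hence $s\in L^2(f)$ and $(\Div s)f\in L^1$. Also $s\cdot\nabla f=f\,s\cdot q$ is integrable on the support by Cauchy--Schwarz against the Fisher information. The identity $\int(\Div s)f=-\int s\cdot\nabla f$ is the divergence theorem for the compactly supported $C^1$ field $fs$.

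For bounded $s$ with bounded $\Div s$, I would invoke \cref{lem:cutoff-ibp}. The integrability conditions are immediate: $\int\abs s^2f\le\norm s_\infty^2$, $\int\abs{\Div s}f\le\norm{\Div s}_\infty$, and $\int\abs{s\cdot\nabla f}\le\norm s_\infty\bigl(\int\abs q^2f\bigr)^{1/2}$. If I had to prove the lemma myself, I would apply the compact case to $\chi(\cdot/R)s$ and let $R\to\infty$. The commutator term $R^{-1}\int\nabla\chi(\cdot/R)\cdot s\,f$ is bounded by $CR^{-1}\norm s_\infty\to0$. The remaining terms converge by dominated convergence, using the $L^1$ bounds above.

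Finally, \cref{ass:score-space} gives $\norm s_{L^\infty}\le c_0\norm s_\Hh$ and $\norm{\Div s}_{L^\infty}\le c_D\norm s_\Hh$, so every $s\in\Hh$ falls into the bounded case. The main point requiring care is the cutoff limit, and that step is delegated to \cref{lem:cutoff-ibp}.
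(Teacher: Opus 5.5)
Your proposal is correct and follows the paper's proof essentially step for step. You expand $\int\abs{s-\nabla\log f}^2f\dd v$, use $s\cdot\nabla\log f\,f=s\cdot\nabla f$ together with the integration-by-parts hypothesis, read off the minimizer from the $s$-independent constant, and handle the bounded case by the cutoff argument of \cref{lem:cutoff-ibp}; your direct divergence-theorem treatment of the compactly supported case is a harmless shortcut that the paper simply subsumes under that lemma.
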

	
	\begin{proof}
		All integrals in \eqref{eq:density-level-identity} are finite because $s,\nabla\log f\in L^2(f\dd v)$ and $(\Div s)f\in L^1$.  The last hypothesis in \eqref{eq:density-level-identity-hypotheses} gives 
		\[
		\int(\Div s)f\dd v=-\int s\cdot\nabla f\dd v =-\int s\cdot\nabla\log f\,f\dd v .
		\]
		Hence
		\begin{align*}
			\int\abs{s-\nabla\log f}^2f\dd v &=\int\abs s^2f\dd v-2\int s\cdot\nabla\log f\,f\dd v +\int\abs{\nabla\log f}^2f\dd v \\
			&=\int\bigl(\abs s^2+2\Div s\bigr)f\dd v +\int\abs{\nabla\log f}^2f\dd v,
		\end{align*}
		which is \eqref{eq:density-level-identity}.  Since the last term does not depend on $s$, the left-hand side of \eqref{eq:density-level-identity} is minimized over such a class exactly when $\norm{s-\nabla\log f}_{L^2(f)}=0$.
	\end{proof}
	
	\begin{lemma}\label{lem:cutoff-ibp}
		Let $f>0$ be a $C^1$ probability density with finite Fisher information, and let $s\in C^1(\R^d;\R^d)$ be such that $s$ and $\Div s$ are bounded.  Then $\nabla f\in L^1(\R^d)$ and the hypotheses \eqref{eq:density-level-identity-hypotheses} hold; in particular $\int(\Div s)f\dd v=-\int s\cdot\nabla f\dd v$.
	\end{lemma}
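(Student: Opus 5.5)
We establish the three integrability claims first and then prove the integration by parts with a cutoff. For $\nabla f\in L^1$, we write $\nabla f=(\nabla\log f)f$. By Cauchy--Schwarz,
\[
\int\abs{\nabla f}\dd v\le\Bigl(\int\abs{\nabla\log f}^2f\dd v\Bigr)^{1/2}\Bigl(\int f\dd v\Bigr)^{1/2}<\infty,
\]
using finite Fisher information and $\int f=1$. Boundedness of $s$ then gives $s\in L^2(f\dd v)$ and $s\cdot\nabla f\in L^1$, since $\abs{s\cdot\nabla f}\le\norm{s}_{L^\infty}\abs{\nabla f}$. Boundedness of $\Div s$ gives $(\Div s)f\in L^1$.

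For the identity, we fix $\chi\in C_c^\infty(\R^d)$ with $0\le\chi\le1$, $\chi=1$ on $B_1$ and $\chi=0$ outside $B_2$, and set $\chi_R(v):=\chi(v/R)$. The field $\chi_R s f$ is $C^1$ and compactly supported, so the divergence theorem gives
\[
\int\Div(\chi_R s f)\dd v=0 .
\]
Expanding the divergence yields
\[
\int\chi_R(\Div s)f\dd v+\int\chi_R\,s\cdot\nabla f\dd v=-\int f\,s\cdot\nabla\chi_R\dd v .
\]
We then let $R\to\infty$. The two integrals on the left converge to $\int(\Div s)f$ and $\int s\cdot\nabla f$ by dominated convergence, with dominating functions $\abs{\Div s}f$ and $\abs{s\cdot\nabla f}$, both in $L^1$ by the first step. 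The right-hand side is bounded by
\[
\norm{s}_{L^\infty}\frac{\norm{\nabla\chi}_{L^\infty}}{R}\int_{R\le\abs v\le2R}f\dd v\longrightarrow0 .
\]
In fact the factor $1/R$ alone suffices here, because $\int f=1$.

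The only point requiring care is that the product $sf$ need not vanish at infinity pointwise, so we cannot simply integrate by parts on all of $\R^d$. The cutoff together with the $L^1$ bounds avoids this, and no decay of $f$ beyond integrability is needed. The final statement, that the hypotheses hold for every $s\in\Hh$, follows because \cref{ass:score-space} makes $s$ and $\Div s$ bounded and $s$ of class $C^1$ (indeed $C^2_b$).
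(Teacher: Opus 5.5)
Your proposal is correct and follows the paper's proof essentially step by step. Both arguments use Cauchy--Schwarz to get $\nabla f\in L^1$ and boundedness of $s$ and its divergence for the remaining integrability. Both then apply the divergence theorem to the compactly supported field $\chi_R s f$, pass to the limit in the two main terms by dominated convergence, and bound the cutoff term by $O(1/R)$ using only $\int f=1$.
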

	\begin{proof}
		By the Cauchy--Schwarz inequality,
		\[
		\int\abs{\nabla f}\dd v	=\int\abs{\nabla\log f}\,f\dd v	\le\Bigl(\int\abs{\nabla\log f}^2f\dd v\Bigr)^{1/2}<\infty .
		\]
		Since $s$ and $\Div s$ are bounded and $\int f\dd v=1$, it follows that	$s\in L^2(f\dd v)$, $(\Div s)f\in L^1$, and $s\cdot\nabla f\in L^1$. For the integration by parts, let $\chi\in C_c^\infty(\R^d)$ satisfy $0\le\chi\le1$, $\chi=1$ on $B_1$, and $\operatorname{supp}\chi\subset B_2$, and put $\chi_R(v):=\chi(v/R)$. The divergence theorem applied to the compactly supported $C^1$ field $f\chi_Rs$ gives
		\[
		\int\chi_R(\Div s)f\dd v +\int f\,s\cdot\nabla\chi_R\dd v =-\int\chi_R\,s\cdot\nabla f\dd v .
		\]
		Since $\abs{\nabla\chi_R}\le C/R$, the middle term is bounded by $C\norm{s}_{L^\infty}/R\to0$. The remaining two terms converge by dominated	convergence, with integrable majorants $\norm{\Div s}_{L^\infty}f$ and $\norm{s}_{L^\infty}\abs{\nabla f}$. Letting $R\to\infty$ proves the claim.
	\end{proof}

	Density-level identification does not justify replacing $f\dd v$ by a finite empirical measure while retaining an unrestricted function class.
	\begin{remark}[The unregularized empirical objective is unbounded below]
		\label{rem:empirical-minus-infinity}
		Let $Z=(z_1,\ldots,z_N)$ have distinct locations $y_1,\ldots,y_m$, and let $\chi_\ell\in C_c^\infty(\R^d)$ equal one near $y_\ell$ and have disjoint  supports.  The fields	$s_M(x)=-\frac Md\sum_\ell\chi_\ell(x)(x-y_\ell)\in C_c^\infty(\R^d;\R^d)$	satisfy $s_M(y_\ell)=0$ and $\Div s_M(y_\ell)=-M$, so the unregularized	empirical objective $\J^0_{\mu_Z}(s_M)$ from the introduction equals $-2M\to-\infty$.  This is the function-space counterpart of the ill-posedness of the empirical score-matching system noted in \cite[Section~4]{SriperumbudurEtAl2017}.  By contrast, under \cref{ass:score-space} one has $\J_\mu^\lambda(s)\ge\lambda\norm s_{\Hh}^2-2c_D\norm s_{\Hh}\ge-c_D^2/\lambda$ for every $s\in\Hh$ and every probability measure $\mu$, so the regularized objective is bounded below. A fixed finite neural architecture may also prevent the construction above, but this alone yields neither a unique minimizer nor a data-Lipschitz minimizer map.
	\end{remark}
	
	\section{Well-posedness and stability of the regularized score map}\label{sec:stable-score-learner}
	
	Throughout this section, \cref{ass:score-space} is in force. By \eqref{eq:rkhs-bounds-1}--\eqref{eq:rkhs-bounds-2}, $\int\abs s^2\dd\mu\le c_0^2\norm s_{\Hh}^2$ and $\int\abs{\Div s}\dd\mu\le c_D\norm s_{\Hh}$ for every $s\in\Hh$ and every Borel probability measure $\mu$, so $\J_\mu^\lambda$ is finite on $\Hh$ without any moment assumption; the first moment is used only for stability in $W_1$.
	
	\begin{proposition}\label{prop:score-existence-gap}
		Under \cref{ass:score-space}, $\J_\mu^\lambda$ has a unique minimizer
		\[
		S_\lambda(\mu):=\argmin_{s\in\Hh}\J_\mu^\lambda(s).
		\]
		It is characterized by
		\begin{equation}\label{eq:euler-score}
			\lambda\ip{S_\lambda(\mu)}{h}_{\Hh}	+\int S_\lambda(\mu)\cdot h\dd\mu+\int\Div h\dd\mu=0, \qquad h\in\Hh.
		\end{equation}
		Moreover,
		\begin{equation}\label{eq:score-H-bound}
			\norm{S_\lambda(\mu)}_{\Hh}\le R_\lambda=\frac{c_D}{\lambda},
		\end{equation}
		and for every $s\in\Hh$,
		\begin{equation}\label{eq:exact-gap}
			\J_\mu^\lambda(s)-\J_\mu^\lambda(S_\lambda(\mu))=\int\abs{s-S_\lambda(\mu)}^2\dd\mu+\lambda\norm{s-S_\lambda(\mu)}_{\Hh}^2.
		\end{equation}
	\end{proposition}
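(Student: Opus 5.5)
The plan is to write $\J_\mu^\lambda$ as a strongly convex quadratic functional on $\Hh$ and apply Lax--Milgram (equivalently, the Riesz representation theorem). Define the symmetric bilinear form and linear functional
\[
a(s,h):=\lambda\ip{s}{h}_{\Hh}+\int s\cdot h\dd\mu,\qquad \ell(h):=-\int\Div h\dd\mu,
\]
so that $\J_\mu^\lambda(s)=a(s,s)-2\ell(s)$. By \eqref{eq:rkhs-bounds-1} we have $\abs{\int s\cdot h\dd\mu}\le c_0^2\norm s_{\Hh}\norm h_{\Hh}$, so $a$ is bounded. It is coercive with constant $\lambda$ because $\int\abs s^2\dd\mu\ge0$. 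By \eqref{eq:rkhs-bounds-2}, $\abs{\ell(h)}\le c_D\norm h_{\Hh}$, so $\ell$ is bounded. Measurability and integrability of the integrands follow from continuity and boundedness of $s$, $h$ and $\Div h$, so no moment assumption on $\mu$ is needed.

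Lax--Milgram then gives a unique $u\in\Hh$ with $a(u,h)=\ell(h)$ for all $h$; this is exactly \eqref{eq:euler-score}. For any $s\in\Hh$, write $s=u+e$ and expand the quadratic, using symmetry of $a$:
\[
\J_\mu^\lambda(u+e)=\J_\mu^\lambda(u)+2\bigl(a(u,e)-\ell(e)\bigr)+a(e,e)=\J_\mu^\lambda(u)+a(e,e).
\]
This is the gap identity \eqref{eq:exact-gap}, since $a(e,e)=\int\abs e^2\dd\mu+\lambda\norm e_{\Hh}^2$. Because $a(e,e)\ge\lambda\norm e_{\Hh}^2>0$ for $e\ne0$, $u$ is the unique minimizer. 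Conversely, any minimizer must satisfy the first-order condition: differentiate $\epsilon\mapsto\J_\mu^\lambda(u+\epsilon h)$ at $\epsilon=0$. Hence \eqref{eq:euler-score} characterizes $S_\lambda(\mu)=u$.

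For the bound \eqref{eq:score-H-bound}, test \eqref{eq:euler-score} with $h=S_\lambda(\mu)$:
\[
\lambda\norm{S_\lambda(\mu)}_{\Hh}^2\le\lambda\norm{S_\lambda(\mu)}_{\Hh}^2+\int\abs{S_\lambda(\mu)}^2\dd\mu=-\int\Div S_\lambda(\mu)\dd\mu\le c_D\norm{S_\lambda(\mu)}_{\Hh},
\]
and divide by $\norm{S_\lambda(\mu)}_{\Hh}$ (the case $S_\lambda(\mu)=0$ is trivial). No step is a real obstacle. The only point needing care is checking that both the $L^2(\mu)$ term and the divergence term are continuous on $\Hh$ for an arbitrary Borel probability measure, and the sup-norm embeddings of \cref{ass:score-space} handle exactly that.
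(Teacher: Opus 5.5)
Your proposal is correct and follows the paper's proof essentially step for step: the same bilinear form and linear functional, the Lax--Milgram existence step, the quadratic expansion giving the gap identity and hence uniqueness, and testing the Euler equation with $h=S_\lambda(\mu)$ to get the $\Hh$-bound. Your explicit remark that any minimizer satisfies the first-order condition is a small addition; the paper gets this implicitly from uniqueness of the minimizer.
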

	
	\begin{proof}
		For $s,h\in\Hh$, define
		\[
		B_\mu(s,h):=\lambda\ip{s}{h}_{\Hh}+\int s\cdot h\dd\mu,
		\qquad
		\ell_\mu(h):=-\int\Div h\dd\mu,
		\]
		so that $\J_\mu^\lambda(s)=B_\mu(s,s)-2\ell_\mu(s)$.  By \eqref{eq:rkhs-bounds-1}--\eqref{eq:rkhs-bounds-2}, $\abs{B_\mu(s,h)}\le(\lambda+c_0^2)\norm s_{\Hh}\norm h_{\Hh}$, $B_\mu(s,s)\ge\lambda\norm s_{\Hh}^2$, and $\abs{\ell_\mu(h)}\le c_D\norm h_{\Hh}$.  The Lax--Milgram theorem therefore gives a unique $S\in\Hh$ with $B_\mu(S,h)=\ell_\mu(h)$ for all $h\in\Hh$, which is \eqref{eq:euler-score}.  For $u\in\Hh$, expanding the quadratic functional and using this identity to cancel the cross terms gives
		\[
		\J_\mu^\lambda(S+u)-\J_\mu^\lambda(S)=B_\mu(u,u)
		=\int\abs u^2\dd\mu+\lambda\norm u_{\Hh}^2 .
		\]
		With $u=s-S$, this is \eqref{eq:exact-gap}; in particular $S$ is the unique minimizer, $S=S_\lambda(\mu)$.  Finally, taking $h=S_\lambda(\mu)$ in \eqref{eq:euler-score} gives
		\[
		\lambda\norm{S_\lambda(\mu)}_{\Hh}^2
		\le\lambda\norm{S_\lambda(\mu)}_{\Hh}^2+\int\abs{S_\lambda(\mu)}^2\dd\mu
		=-\int\Div S_\lambda(\mu)\dd\mu
		\le c_D\norm{S_\lambda(\mu)}_{\Hh},
		\]
		which proves \eqref{eq:score-H-bound}.
	\end{proof}
	
	\begin{proposition}\label{prop:score-W1}
		Under \cref{ass:score-space}, for all $\mu,\nu\in\Pone(\R^d)$,
		\begin{equation}\label{eq:score-W1}
			\norm{S_\lambda(\mu)-S_\lambda(\nu)}_{L^\infty}
			\le L_{S,\lambda}W_1(\mu,\nu),
		\end{equation}
		with $L_{S,\lambda}$ as in \cref{sec:framework}. The stronger Hilbert-space estimate is
		\begin{equation}\label{eq:score-H-W1}
			\norm{S_\lambda(\mu)-S_\lambda(\nu)}_{\Hh}
			\le\frac1\lambda
			\left(c_2+2c_0c_1\frac{c_D}{\lambda}\right)W_1(\mu,\nu).
		\end{equation}
	\end{proposition}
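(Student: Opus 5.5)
Write $S_\mu:=S_\lambda(\mu)$, $S_\nu:=S_\lambda(\nu)$ and $u:=S_\mu-S_\nu\in\Hh$. The plan is to subtract the two Euler--Lagrange identities \eqref{eq:euler-score} from \cref{prop:score-existence-gap}. This expresses $u$ as the response of the coercive form $B_\mu$ to a linear functional. That functional involves only integrals against $\mu-\nu$, so Kantorovich--Rubinstein duality bounds it by $W_1(\mu,\nu)$. The $L^\infty$ bound then follows from the $\Hh$ bound via $\norm{u}_{L^\infty}\le c_0\norm{u}_{\Hh}$.

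\emph{Step 1 (difference equation).} For every $h\in\Hh$, subtracting \eqref{eq:euler-score} for $\nu$ from the one for $\mu$ gives
\[
\lambda\ip{u}{h}_{\Hh}+\int u\cdot h\dd\mu
=-\int S_\nu\cdot h\dd(\mu-\nu)-\int\Div h\dd(\mu-\nu).
\]
This is obtained by writing $\int S_\mu\cdot h\dd\mu-\int S_\nu\cdot h\dd\nu=\int u\cdot h\dd\mu+\int S_\nu\cdot h\dd(\mu-\nu)$.

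\emph{Step 2 (test with $h=u$ and use duality).} Take $h=u$ and drop the nonnegative term $\int\abs u^2\dd\mu$. The left side is then at least $\lambda\norm u_{\Hh}^2$. On the right, both integrands are Lipschitz and are integrated against $\mu-\nu$ with $\mu,\nu\in\Pone$, so by Kantorovich--Rubinstein each integral is bounded by its Lipschitz constant times $W_1(\mu,\nu)$.
\begin{itemize}[leftmargin=2em]
\item[(a)] For the divergence term, \eqref{eq:rkhs-bounds-2} gives $\Lips(\Div u)\le c_2\norm u_{\Hh}$.
\item[(b)] For the score term, the product rule gives $\Lips(S_\nu\cdot u)\le\norm{S_\nu}_{L^\infty}\Lips(u)+\Lips(S_\nu)\norm{u}_{L^\infty}$. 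By \eqref{eq:rkhs-bounds-1} and \eqref{eq:score-H-bound}, this is at most $c_0R_\lambda c_1\norm u_{\Hh}+c_1R_\lambda c_0\norm u_{\Hh}=2c_0c_1R_\lambda\norm u_{\Hh}$.
\end{itemize}
Each integrand is bounded, so its integral against $\mu$ and against $\nu$ is finite and the duality applies.

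\emph{Step 3 (conclude).} Combining Steps 1 and 2 gives $\lambda\norm u_{\Hh}^2\le(c_2+2c_0c_1R_\lambda)\norm u_{\Hh}W_1(\mu,\nu)$. Dividing by $\norm u_{\Hh}$ (the case $u=0$ is trivial) yields \eqref{eq:score-H-W1}. Multiplying by $c_0$ gives \eqref{eq:score-W1} with $L_{S,\lambda}=\frac{c_0}{\lambda}(c_2+2c_0c_1R_\lambda)$, which is exactly the constant of \cref{sec:framework}.

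The only delicate point is Step 1. The cross term must be split so that the leftover coefficient field is the \emph{fixed} $S_\nu$, whose bounds are known, rather than $u$ integrated against the signed measure. Otherwise the estimate would not close linearly in $\norm u_{\Hh}$. I do not expect any real obstacle beyond this bookkeeping. Note that no positivity of $\int\abs u^2\dd\mu$ is needed beyond discarding it.
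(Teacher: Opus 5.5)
Your proposal is correct and follows the paper's proof essentially verbatim: you subtract the Euler equations, test with the difference, and split the cross term so that the fixed field $S_\nu$ multiplies the difference. You then discard $\int\abs u^2\dd\mu$ and apply Kantorovich--Rubinstein duality with the product-rule Lipschitz bound $(c_2+2c_0c_1R_\lambda)\norm u_{\Hh}$; the only cosmetic difference is that you bound the two integrals against $\mu-\nu$ separately, whereas the paper bounds the Lipschitz constant of $S_\nu\cdot u+\Div u$ at once, which gives the same constant.
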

	
	\begin{proof}
		Write $S_\mu:=S_\lambda(\mu)$, $S_\nu:=S_\lambda(\nu)$, and $q:=S_\mu-S_\nu$. Subtracting the Euler equations \eqref{eq:euler-score} for $\mu$ and $\nu$, taking $h=q$, and using $\int S_\mu\cdot q\dd\mu-\int S_\nu\cdot q\dd\nu =\int\abs q^2\dd\mu+\int S_\nu\cdot q\dd(\mu-\nu)$ gives
		\begin{equation}\label{eq:score-difference-proof}
			\lambda\norm q_{\Hh}^2
			+\int\abs q^2\dd\mu
			=-\int\bigl(S_\nu\cdot q+\Div q\bigr)\dd(\mu-\nu).
		\end{equation}
		By \cref{ass:score-space} and \eqref{eq:score-H-bound}, $\norm{S_\nu}_{L^\infty}\le c_0R_\lambda$, $\Lips(S_\nu)\le c_1R_\lambda$, $\norm q_{L^\infty}\le c_0\norm q_{\Hh}$, $\Lips(q)\le c_1\norm q_{\Hh}$, and $\Lips(\Div q)\le c_2\norm q_{\Hh}$, so the product rule for Lipschitz constants gives
		\[
		\Lips(S_\nu\cdot q+\Div q)\le\Bigl(c_2+\frac{2c_0c_1c_D}{\lambda}\Bigr)\norm q_{\Hh}.
		\]
		By Kantorovich--Rubinstein duality and $\int\abs q^2\dd\mu\ge0$, \eqref{eq:score-difference-proof} yields
		\[
		\lambda\norm q_{\Hh}^2\le(c_2+2c_0c_1c_D/\lambda)\norm q_{\Hh}W_1(\mu,\nu).
		\]
		Dividing by $\lambda\norm q_{\Hh}$ when $q\ne0$ proves \eqref{eq:score-H-W1}, and $\norm q_{L^\infty}\le c_0\norm q_{\Hh}$ gives \eqref{eq:score-W1}.
	\end{proof}
	
	\begin{proof}[Proof of \cref{thm:main-score}]
		Existence, uniqueness, and the $\Hh$-bound in \eqref{eq:main-score-bound} are contained in \cref{prop:score-existence-gap}; the other two bounds follow from \eqref{eq:rkhs-bounds-1}; and \eqref{eq:main-score-stability} is \eqref{eq:score-W1}.
	\end{proof}
	
	\subsection{Representer formula for empirical measures}
	
	The abstract score step has a finite representer formula.  Let $K:\R^d\times\R^d\to\R^{d\times d}$ be the operator-valued reproducing kernel of $\Hh$, with $K(x,y)=K(y,x)^\top$ and the convention $\ip{K(\cdot,x)c}{h}_{\Hh}=c\cdot h(x)$.  Let $d_x$ denote the Riesz representer of the continuous functional $h\mapsto\Div h(x)$.  The kernel sections are differentiable in the $\Hh$-norm with respect to their second argument: for $c\in\R^d$ and a coordinate direction $e_a$, Taylor's formula and \eqref{eq:rkhs-bounds-Hessian} give
	\[
	\abs{\frac{c\cdot h(x+te_a)-c\cdot h(x)}{t}-c\cdot\partial_ah(x)}
	\le\frac{c_H\abs c\,\abs t}{2}\norm h_{\Hh},
	\qquad h\in\Hh,
	\]
	so the difference quotients $t^{-1}\bigl(K(\cdot,x+te_a)-K(\cdot,x)\bigr)c$ converge in $\Hh$ as $t\to0$ to the representer of $h\mapsto c\cdot\partial_ah(x)$, and this limit is $\partial_a^{(2)}K(\cdot,x)c$ because convergence in $\Hh$ implies pointwise convergence.  Consequently,
	\begin{equation}\label{eq:divergence-representer}
		d_x=\sum_{a=1}^d\partial_a^{(2)}K(\cdot,x)e_a\in\Hh,\qquad \ip{d_x}{h}_{\Hh}=\Div h(x).
	\end{equation}
	For scalar kernels, derivative reproducing properties of this type are established in \cite{Zhou2008}. The following result is a reformulation of \cite[Theorem 3.1]{ZhouShiZhu2020} in terms of nodal values; we include a short proof in our notation.
	
	\begin{proposition}[Representer formula	{\cite[Theorem 3.1]{ZhouShiZhu2020}}]	\label{prop:rkhs-representer}
		Let $\mu_Z$ be the empirical measure $N^{-1}\sum_{i=1}^N\delta_{z_i}$, and put $u_i=S_\lambda(\mu_Z)(z_i)\in\R^d$.  Define the block matrix $G_Z\in\R^{dN\times dN}$ and vector $q_Z\in\R^{dN}$ by
		\begin{equation}\label{eq:block-Gram}
			(G_Z)_{ji}=K(z_j,z_i),	\qquad 	(q_Z)_j=\sum_{i=1}^Nd_{z_i}(z_j).
		\end{equation}
		Then
		\begin{equation}\label{eq:representer-linear-system}
			(G_Z+N\lambda I_{dN})u=-q_Z,
		\end{equation}
		and, for every $x\in\R^d$,
		\begin{equation}\label{eq:representer-score}
			S_\lambda(\mu_Z)(x)	=-\frac1{N\lambda}	\left(\sum_{i=1}^NK(x,z_i)u_i+\sum_{i=1}^Nd_{z_i}(x)\right).
		\end{equation}
		The matrix $G_Z+N\lambda I_{dN}$ is symmetric positive definite, so \eqref{eq:representer-linear-system} has a unique solution.
	\end{proposition}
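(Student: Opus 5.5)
The plan is to use the Euler equation \eqref{eq:euler-score} from \cref{prop:score-existence-gap} to show that $S:=S_\lambda(\mu_Z)$ lies in the finite-dimensional span of kernel sections, and then to read off the linear system for the nodal values. For the empirical measure, \eqref{eq:euler-score} becomes
\[
\lambda\ip{S}{h}_{\Hh}+\frac1N\sum_{i=1}^N\Bigl(S(z_i)\cdot h(z_i)+\Div h(z_i)\Bigr)=0,\qquad h\in\Hh.
\]
The reproducing property gives $S(z_i)\cdot h(z_i)=\ip{K(\cdot,z_i)u_i}{h}_{\Hh}$, where $u_i=S(z_i)$. By \eqref{eq:divergence-representer}, $\Div h(z_i)=\ip{d_{z_i}}{h}_{\Hh}$. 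Hence
\[
\ip{\lambda S+N^{-1}\sum_i\bigl(K(\cdot,z_i)u_i+d_{z_i}\bigr)}{h}_{\Hh}=0
\]
for every $h\in\Hh$. Since $h$ is arbitrary, the element in the first slot vanishes, and evaluating it at $x$ gives \eqref{eq:representer-score}.

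To get the linear system, evaluate \eqref{eq:representer-score} at $x=z_j$ and use $u_j=S(z_j)$:
\[
-N\lambda u_j=\sum_i K(z_j,z_i)u_i+\sum_i d_{z_i}(z_j).
\]
In block notation this reads $(G_Z+N\lambda I_{dN})u=-q_Z$, which is \eqref{eq:representer-linear-system}.

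For the matrix properties, symmetry follows from $K(x,y)=K(y,x)^\top$: the $(j,i)$ block is the transpose of the $(i,j)$ block. For semidefiniteness, take $c=(c_1,\dots,c_N)\in\R^{dN}$. Then
\[
c^\top G_Zc=\sum_{i,j}c_j\cdot K(z_j,z_i)c_i=\Bigl\|\sum_i K(\cdot,z_i)c_i\Bigr\|_{\Hh}^2\ge0,
\]
using $\ip{K(\cdot,z_i)c_i}{K(\cdot,z_j)c_j}_{\Hh}=c_j\cdot K(z_j,z_i)c_i$. Adding $N\lambda I_{dN}$ with $\lambda>0$ makes the matrix positive definite, so the system has a unique solution. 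That solution must be the nodal vector of $S$, which is consistent with the uniqueness of $S$.

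The step needing the most care is bookkeeping rather than analysis. One must check that the reproducing-kernel convention makes $(G_Z)_{ji}=K(z_j,z_i)$ act on $u_i$ in the stated orientation. One must also justify that $d_{z_i}$ is an element of $\Hh$, so that the first-slot element is in $\Hh$ and evaluating it pointwise is legitimate. That membership is supplied by \eqref{eq:divergence-representer}, which relies on the Hessian bound \eqref{eq:rkhs-bounds-Hessian}.
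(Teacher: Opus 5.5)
Your proposal is correct and follows essentially the same approach as the paper's proof. You rewrite the Euler equation via the reproducing identities so that the element $\lambda S+N^{-1}\sum_i\bigl(K(\cdot,z_i)u_i+d_{z_i}\bigr)$ vanishes in $\Hh$, evaluate it at $x$ and at $z_j$, and deduce that $G_Z+N\lambda I_{dN}$ is symmetric positive definite from $K(x,y)=K(y,x)^\top$ and $\xi^\top G_Z\xi=\bigl\|\sum_iK(\cdot,z_i)\xi_i\bigr\|_{\Hh}^2\ge0$.
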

	
	\begin{proof}
		Write $S=S_\lambda(\mu_Z)$.  By the reproducing identities $S(z_i)\cdot h(z_i)=\ip{K(\cdot,z_i)u_i}{h}_{\Hh}$ and $\Div h(z_i)=\ip{d_{z_i}}{h}_{\Hh}$, the Euler equation \eqref{eq:euler-score} for $\mu_Z$ reads
		\[
		\ip{
			\lambda S
			+\frac1N\sum_{i=1}^N K(\cdot,z_i)u_i
			+\frac1N\sum_{i=1}^N d_{z_i}
		}{h}_{\Hh}
		=0
		\qquad\text{for every }h\in\Hh,
		\]
		so the first argument vanishes in $\Hh$.  Evaluating it at $x$ gives \eqref{eq:representer-score}, and evaluating it at $x=z_j$ and stacking the $N$ vector equations gives \eqref{eq:representer-linear-system}.  The block Gram matrix is symmetric because $K(x,y)=K(y,x)^\top$, and $\xi^\top G_Z\xi=\norm{\sum_iK(\cdot,z_i)\xi_i}_{\Hh}^2\ge0$ for $\xi\in(\R^d)^N$.  Hence $G_Z+N\lambda I_{dN}$ is symmetric positive definite, and the linear system has a unique solution.
	\end{proof}
	
	\subsection{The sharp zero-regularization dichotomy at fixed particle number}
	
	The finite block system \eqref{eq:representer-linear-system} remains solvable as $\lambda\downarrow0$ at the sample values, but this can hide a divergent component of the score in the null space of point evaluation.  The following result identifies that component exactly.
	
	Equip $\mathcal Y_N:=(\R^d)^N$ with the inner product $\ip{u}{v}_N=N^{-1}\sum_i u_i\cdot v_i$ and norm $\norm{u}_N:=\ip{u}{u}_N^{1/2}$, and define
	\begin{equation}\label{eq:empirical-evaluation-operator}
		E_Z:\Hh\to\mathcal Y_N,
		\qquad (E_Zs)_i=s(z_i),
		\qquad
		C_Z:=E_Z^*E_Z,
		\qquad
		b_Z:=\frac1N\sum_{i=1}^Nd_{z_i}.
	\end{equation}
	Thus
	\begin{equation}\label{eq:empirical-operator-objective}
		\J_{\mu_Z}^\lambda(s)
		=\norm{E_Zs}_N^2+2\ip{b_Z}{s}_{\Hh}
		+\lambda\norm{s}_{\Hh}^2,
		\qquad
		S_\lambda(\mu_Z)=-(C_Z+\lambda I)^{-1}b_Z.
	\end{equation}
	
	\begin{theorem}[Fixed-$N$ zero-regularization dichotomy]
		\label{thm:empirical-lambda-zero-dichotomy}
		Let
		\[
		\mathcal R_Z:=\operatorname{Ran}E_Z^*=(\ker E_Z)^\perp
		\]
		and write $b_Z=b_R+b_\perp$ according to $\Hh=\mathcal R_Z\oplus\ker E_Z$.  Then
		\begin{equation}\label{eq:empirical-lambda-decomposition}
			S_\lambda(\mu_Z)
			=-\bigl(C_Z|_{\mathcal R_Z}+\lambda I\bigr)^{-1}b_R
			-\frac1\lambda b_\perp.
		\end{equation}
		Consequently:
		\begin{enumerate}[label=(\roman*)]
			\item The unregularized functional $\J_{\mu_Z}^0$ has a minimizer if and only if
			$b_\perp=0$, equivalently
			\begin{equation}\label{eq:empirical-lambda-compatibility}
				\sum_{i=1}^N\Div h(z_i)=0
				\quad\text{for every }h\in\Hh
				\text{ such that }h(z_i)=0\text{ for all }i.
			\end{equation}
			\item In the compatible case,
			\begin{equation}\label{eq:empirical-lambda-compatible-minimizers}
				\argmin_{s\in\Hh}\J_{\mu_Z}^0(s)
				=-C_Z^\dagger b_Z+\ker E_Z,
			\end{equation}
			and $S_\lambda(\mu_Z)\to-C_Z^\dagger b_Z$ in $\Hh$ as $\lambda\downarrow0$.  Thus Tikhonov regularization selects the minimum-norm minimizer, as in the classical linear theory \cite{EnglHankeNeubauer1996}.
			\item In the incompatible case, as $\lambda\downarrow0$,
			\begin{align}
				\inf_{s\in\Hh}\J_{\mu_Z}^0(s)&=-\infty,
				\qquad
				\lambda S_\lambda(\mu_Z)\longrightarrow-b_\perp,
				\label{eq:empirical-lambda-incompatible-limit}\\
				\min_{s\in\Hh}\J_{\mu_Z}^\lambda(s)
				&=-\frac{\norm{b_\perp}_{\Hh}^2}{\lambda}
				-\ip{b_R}{(C_Z|_{\mathcal R_Z}+\lambda I)^{-1}b_R}_{\Hh}.
				\label{eq:empirical-lambda-minimum}
			\end{align}
		\end{enumerate}
		If $\alpha_Z>0$ is the smallest positive eigenvalue of $C_Z$, then in the compatible case, with $s_Z^\dagger=-C_Z^\dagger b_Z$,
		\begin{equation}\label{eq:empirical-lambda-compatible-rate}
			\norm{S_\lambda(\mu_Z)-s_Z^\dagger}_{\Hh}
			\le\frac{\lambda}{\alpha_Z+\lambda}\norm{s_Z^\dagger}_{\Hh}.
		\end{equation}
	\end{theorem}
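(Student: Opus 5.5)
The plan is to reduce everything to the orthogonal decomposition $\Hh=\mathcal R_Z\oplus\ker E_Z$. This is a closed decomposition: $E_Z$ has finite rank, so $\operatorname{Ran}E_Z^*$ is finite dimensional, hence closed, and it equals $(\ker E_Z)^\perp$.

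\textbf{Step 1: block-diagonal form of $C_Z+\lambda I$.} $C_Z=E_Z^*E_Z$ vanishes on $\ker E_Z$ and maps $\Hh$ into $\mathcal R_Z$. Since $C_Z$ is self-adjoint, $\mathcal R_Z$ is invariant. Hence
\[
C_Z+\lambda I=\bigl(C_Z|_{\mathcal R_Z}+\lambda I\bigr)\oplus\lambda I_{\ker E_Z}.
\]
Applying the inverse to $-b_Z=-b_R-b_\perp$ together with \eqref{eq:empirical-operator-objective} gives \eqref{eq:empirical-lambda-decomposition}. I would first justify \eqref{eq:empirical-operator-objective} itself from \eqref{eq:euler-score}, using $\int s\cdot h\,d\mu_Z=\ip{E_Zs}{E_Zh}_N$ and $\int\Div h\,d\mu_Z=\ip{b_Z}{h}_{\Hh}$.

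\textbf{Step 2: spectral facts on $\mathcal R_Z$.} $C_Z|_{\mathcal R_Z}$ is injective, since $C_Zs=0$ implies $\norm{E_Zs}_N=0$, i.e.\ $s\in\ker E_Z$. On the finite-dimensional space $\mathcal R_Z$ it is therefore positive definite, with smallest eigenvalue $\alpha_Z>0$. This eigenvalue coincides with the smallest positive eigenvalue of $C_Z$.

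\textbf{Step 3: part (i).} Write $s=s_R+s_\perp$. Then
\[
\J^0_{\mu_Z}(s)=\norm{E_Zs_R}_N^2+2\ip{b_R}{s_R}_{\Hh}+2\ip{b_\perp}{s_\perp}_{\Hh}.
\]
If $b_\perp\neq0$, take $s=-tb_\perp$ with $t\to\infty$. This gives $\inf=-\infty$, so there is no minimizer; this settles the first claim of (iii) as well. If $b_\perp=0$, the functional depends only on $s_R$. It is then a strictly convex coercive quadratic on $\mathcal R_Z$, with unique minimizer $s_R=-(C_Z|_{\mathcal R_Z})^{-1}b_R=-C_Z^\dagger b_Z$, and the full argmin is this point plus $\ker E_Z$, which is \eqref{eq:empirical-lambda-compatible-minimizers}.

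The equivalence with \eqref{eq:empirical-lambda-compatibility} follows because $b_\perp=0$ iff $\ip{b_Z}{h}_{\Hh}=0$ for all $h\in\ker E_Z$. By \eqref{eq:divergence-representer}, $\ip{b_Z}{h}_{\Hh}=N^{-1}\sum_i\Div h(z_i)$.

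\textbf{Step 4: part (ii) and the rate.} In the compatible case, diagonalize $C_Z|_{\mathcal R_Z}$ with eigenvalues $\alpha\ge\alpha_Z$. Then
\[
S_\lambda-s_Z^\dagger=\bigl[(C+\lambda)^{-1}-C^{-1}\bigr](-b_R)=\lambda(C+\lambda)^{-1}s_Z^\dagger .
\]
Its norm is bounded by $\sup_\alpha\frac{\lambda}{\alpha+\lambda}\norm{s_Z^\dagger}_{\Hh}$, which gives \eqref{eq:empirical-lambda-compatible-rate} and hence convergence in $\Hh$.

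\textbf{Step 5: part (iii).} In the incompatible case, multiplying \eqref{eq:empirical-lambda-decomposition} by $\lambda$ gives
\[
\lambda S_\lambda=-\lambda(C+\lambda)^{-1}b_R-b_\perp .
\]
The first term has norm at most $\lambda\alpha_Z^{-1}\norm{b_R}_{\Hh}\to0$, so $\lambda S_\lambda(\mu_Z)\to-b_\perp$.

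For the minimum value, evaluate the quadratic at the minimizer: \eqref{eq:euler-score} with $h=S$ gives $\J(S)=\ip{b_Z}{S}_{\Hh}$. Substituting the decomposition and using orthogonality gives \eqref{eq:empirical-lambda-minimum}.

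\textbf{Main obstacle.} There is no deep difficulty; the main care needed is the decomposition bookkeeping: that $\mathcal R_Z$ is closed and reduces $C_Z$, and that $C_Z^\dagger b_Z=(C_Z|_{\mathcal R_Z})^{-1}b_R$ when $b_\perp=0$. Both follow from finite-dimensionality of $\operatorname{Ran}E_Z^*$.
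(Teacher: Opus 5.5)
Your proposal is correct and follows essentially the same route as the paper: the orthogonal splitting $\Hh=\mathcal R_Z\oplus\ker E_Z$, projecting the Euler equation to get \eqref{eq:empirical-lambda-decomposition}, the split form of $\J^0_{\mu_Z}$ with the ray $-tb_\perp$, and spectral bounds for $C_Z|_{\mathcal R_Z}$. There is one harmless sign slip in Step 4. In fact $\bigl[(C+\lambda)^{-1}-C^{-1}\bigr](-b_R)=-\lambda(C+\lambda)^{-1}s_Z^\dagger$, not $+\lambda(C+\lambda)^{-1}s_Z^\dagger$, which leaves the norm bound \eqref{eq:empirical-lambda-compatible-rate} unchanged.
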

	
	\begin{proof}
		Since $E_Z$ has finite rank, $\mathcal R_Z$ is closed and finite-dimensional, and $\Hh=\mathcal R_Z\oplus\ker E_Z$ orthogonally.  The operator $C_Z=E_Z^*E_Z$ is self-adjoint, vanishes on $\ker E_Z$, and maps $\mathcal R_Z$ into itself.  It is positive definite on $\mathcal R_Z$: if $r\in\mathcal R_Z$ and $\ip{C_Zr}{r}_{\Hh}=\norm{E_Zr}_N^2=0$, then $r\in\ker E_Z\cap\mathcal R_Z=\{0\}$.  Projecting the Euler equation $(C_Z+\lambda I)S_\lambda(\mu_Z)=-b_Z$ onto $\mathcal R_Z$ and $\ker E_Z$ gives $(C_Z|_{\mathcal R_Z}+\lambda I)S_R=-b_R$ and $\lambda S_\perp=-b_\perp$, which is \eqref{eq:empirical-lambda-decomposition}.
		
		For $s=r+h$ with $r\in\mathcal R_Z$ and $h\in\ker E_Z$, one has $E_Zh=0$, so
		\begin{equation}\label{eq:dichotomy-splitting}
			\J_{\mu_Z}^0(r+h)
			=\ip{C_Zr}{r}_{\Hh}+2\ip{b_R}{r}_{\Hh}+2\ip{b_\perp}{h}_{\Hh}.
		\end{equation}
		If $b_\perp\ne0$, then $\J_{\mu_Z}^0(-tb_\perp)=-2t\norm{b_\perp}_{\Hh}^2\to-\infty$ as $t\to\infty$, so $\J_{\mu_Z}^0$ has no minimizer; this is the first claim in (iii).  If $b_\perp=0$, then \eqref{eq:dichotomy-splitting} is independent of $h$ and strictly convex in $r\in\mathcal R_Z$; completing the square gives the unique minimizing component $r=-(C_Z|_{\mathcal R_Z})^{-1}b_R=-C_Z^\dagger b_Z$, which proves \eqref{eq:empirical-lambda-compatible-minimizers} and the equivalence in (i).  Moreover, for $h\in\ker E_Z$, $\ip{b_Z}{h}_{\Hh}=N^{-1}\sum_i\Div h(z_i)$; since $b_\perp$ is the orthogonal projection of $b_Z$ onto $\ker E_Z$, the condition $b_\perp=0$ is exactly \eqref{eq:empirical-lambda-compatibility}.
		
		The remaining statements follow from the spectral decomposition of the positive definite operator $C_Z|_{\mathcal R_Z}$, whose eigenvalues satisfy $\tau_j\ge\alpha_Z$.  First, $\lambda(C_Z|_{\mathcal R_Z}+\lambda I)^{-1}b_R\to0$, so \eqref{eq:empirical-lambda-decomposition} gives $\lambda S_\lambda(\mu_Z)\to-b_\perp$.  Next, the minimum of $\ip{(C_Z+\lambda I)s}{s}_{\Hh}+2\ip{b_Z}{s}_{\Hh}$ is $-\ip{b_Z}{(C_Z+\lambda I)^{-1}b_Z}_{\Hh}$, which splits as \eqref{eq:empirical-lambda-minimum} along $\Hh=\mathcal R_Z\oplus\ker E_Z$. Finally, if $b_\perp=0$, then $S_\lambda(\mu_Z)-s_Z^\dagger=-\lambda(C_Z|_{\mathcal R_Z}+\lambda I)^{-1}s_Z^\dagger$, so each spectral component is multiplied by $\lambda/(\tau_j+\lambda)\le\lambda/(\alpha_Z+\lambda)$.  This proves \eqref{eq:empirical-lambda-compatible-rate} and the convergence in (ii).
	\end{proof}
	
	The hidden component is computable from the kernel and its first and second derivatives at the particles. Set $D_Z:=Nb_Z=\sum_i d_{z_i}$.  Then $E_ZD_Z=q_Z$, $E_Z^*u=N^{-1}\sum_iK(\cdot,z_i)u_i$, and $E_ZE_Z^*=G_Z/N$, so the orthogonal projection onto $\mathcal R_Z$ is $P_Z=E_Z^*(E_ZE_Z^*)^\dagger E_Z$ and 
	\begin{equation}\label{eq:hidden-nullspace-residual}
		D_Z^\perp
		:=(I-P_Z)D_Z
		=D_Z-\sum_{i=1}^NK(\cdot,z_i)(G_Z^\dagger q_Z)_i
		=Nb_\perp .
	\end{equation}
	Since $q_Z=E_ZD_Z\in\operatorname{Ran}E_Z=\operatorname{Ran}(E_ZE_Z^*) =\operatorname{Ran}G_Z$, one has $G_ZG_Z^\dagger q_Z=q_Z$, hence $\norm{P_ZD_Z}_{\Hh}^2=q_Z^\top G_Z^\dagger q_Z$ and, by Pythagoras,
	\begin{equation}\label{eq:hidden-nullspace-residual-norm}
		\norm{D_Z^\perp}_{\Hh}^2
		=\norm{D_Z}_{\Hh}^2-q_Z^\top G_Z^\dagger q_Z.
	\end{equation}
	By \eqref{eq:representer-linear-system}, the nodal values $u_\lambda=-(G_Z+N\lambda I_{dN})^{-1}q_Z$ converge to $-G_Z^\dagger q_Z$ as $\lambda\downarrow0$, while by \eqref{eq:empirical-lambda-decomposition} the full score contains the component $-b_\perp/\lambda=-D_Z^\perp/(N\lambda)$. Hence convergence of the nodal linear system does not certify an unregularized minimizer in $\Hh$.
	
	The divergence of the full score must be distinguished from the behavior of the particle force. Since $b_\perp\in\ker E_Z$, one has $b_\perp(z_i)=0$ for every particle, and hence $F(Z;b_\perp)=0$. Under \cref{ass:kernel}, the decomposition \eqref{eq:empirical-lambda-decomposition} therefore gives, for each fixed configuration $Z$,
	\[
	F(Z;S_\lambda(\mu_Z))
	\longrightarrow F(Z;-C_Z^\dagger b_Z)
	\qquad\text{as }\lambda\downarrow0.
	\]
	Thus \cref{thm:empirical-lambda-zero-dichotomy} identifies an obstruction to convergence of the full score in $\Hh$; it does not by itself rule out a zero-regularization limit of the coupled particle dynamics. In the other direction, the pointwise force limit above does not establish convergence of trajectories, which requires additional control as the configuration varies.
	
	For the alternating iteration, the score learned at a configuration $Z$ is evaluated at a new configuration $Y$. The component $b_\perp$ need not vanish at the points of $Y$ and may therefore contribute to $F(Y;S_\lambda(\mu_Z))$. Convergence of the nodal values alone does not provide $\lambda$-uniform stability of this iteration.
	
	\begin{corollary}[Blow-up for a single atom in a Gaussian RKHS]	\label{cor:Gaussian-lambda-blowup}
		Let $N=1$ and consider
		\[
		K(x,y)=\exp\!\left(-\frac{\abs{x-y}^2}{2\sigma^2}\right)I_d.
		\]
		For the single atom $\delta_z$,
		\begin{equation}\label{eq:Gaussian-lambda-blowup}
			d_z(x)=\frac{x-z}{\sigma^2}e^{-\abs{x-z}^2/(2\sigma^2)},
			\qquad
			S_\lambda(\delta_z)=-\frac{d_z}{\lambda},
		\end{equation}
		and
		\begin{equation}\label{eq:Gaussian-lambda-blowup-norm}
			\norm{S_\lambda(\delta_z)}_{\Hh}
			=\frac{\sqrt d}{\sigma\lambda},
			\qquad
			\min\J_{\delta_z}^\lambda=-\frac{d}{\sigma^2\lambda}.
		\end{equation}
		Thus the $O(\lambda^{-1})$ bound in \eqref{eq:score-H-bound} is sharp: the optimal constant in \eqref{eq:rkhs-bounds-2} is $c_D=\sup_x\norm{d_x}_{\Hh}=\sqrt d/\sigma$, and with it $\norm{S_\lambda(\delta_z)}_{\Hh}=R_\lambda$.
	\end{corollary}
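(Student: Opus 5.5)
The plan is to apply \cref{thm:empirical-lambda-zero-dichotomy} with $N=1$ and compute everything explicitly from the Gaussian kernel.

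First, compute $d_z$ from \eqref{eq:divergence-representer}. With $K(x,y)=k(x,y)I_d$ and $k(x,y)=e^{-\abs{x-y}^2/(2\sigma^2)}$, we have $\partial^{(2)}_ak(x,z)=\frac{x_a-z_a}{\sigma^2}k(x,z)$. Summing $\partial_a^{(2)}K(\cdot,z)e_a$ over $a$ gives $d_z(x)=\frac{x-z}{\sigma^2}k(x,z)$, so $d_z(z)=0$.

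Second, identify the decomposition. We have $E_z s=s(z)$ and $\mathcal R_z=\{K(\cdot,z)c:c\in\R^d\}$. Since $\ip{K(\cdot,z)c}{d_z}_{\Hh}=c\cdot d_z(z)=0$, the vector $b_z=d_z$ lies in $\ker E_z$. Hence $b_R=0$ and $b_\perp=d_z$, and \eqref{eq:empirical-lambda-decomposition} gives $S_\lambda(\delta_z)=-d_z/\lambda$. One can check this directly against \eqref{eq:representer-linear-system}: there $q_z=d_z(z)=0$ forces $u=0$, and then \eqref{eq:representer-score} yields the same formula. The minimum value follows from \eqref{eq:empirical-lambda-minimum}: $\min\J^\lambda_{\delta_z}=-\norm{d_z}_{\Hh}^2/\lambda$.

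Third, compute the norm, which is the main step. By the reproducing property, $\norm{d_z}_{\Hh}^2=\ip{d_z}{d_z}_{\Hh}=\Div d_z(z)$. Differentiating, $\Div d_z(x)=\bigl(\frac{d}{\sigma^2}-\frac{\abs{x-z}^2}{\sigma^4}\bigr)k(x,z)$, which equals $d/\sigma^2$ at $x=z$. Therefore $\norm{S_\lambda(\delta_z)}_{\Hh}=\sqrt d/(\sigma\lambda)$ and $\min\J=-d/(\sigma^2\lambda)$. Alternatively, one can check this via the derivative-of-kernel identity $\ip{\partial_a^{(2)}K(\cdot,z)e_a}{\partial_b^{(2)}K(\cdot,z)e_b}=\partial^{(1)}_b\partial^{(2)}_a k\,\delta_{ab}$ evaluated on the diagonal, which equals $\delta_{ab}/\sigma^2$.

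Finally, address sharpness. The optimal $c_D$ in \eqref{eq:rkhs-bounds-2} is the norm of the functional $h\mapsto\Div h(x)$, maximized over $x$, namely $\sup_x\norm{d_x}_{\Hh}$. By translation invariance this equals $\sqrt d/\sigma$ for every $x$. Then $R_\lambda=c_D/\lambda=\norm{S_\lambda(\delta_z)}_{\Hh}$, so the bound \eqref{eq:score-H-bound} is attained. I expect no real obstacle beyond justifying $\ip{d_z}{d_z}=\Div d_z(z)$. That identity is immediate from \eqref{eq:divergence-representer} applied with $h=d_z\in\Hh$.
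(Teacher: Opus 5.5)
Your proposal is correct and follows the paper's proof: you get $b_R=0$ and $b_\perp=d_z$ from $d_z(z)=0$, then read off $S_\lambda(\delta_z)$ and the minimum value from \cref{thm:empirical-lambda-zero-dichotomy}. Your computation $\norm{d_z}_{\Hh}^2=\Div d_z(z)$ via \eqref{eq:divergence-representer} gives the same quantity as the paper's diagonal sum $\sum_a\partial_a^{(1)}\partial_a^{(2)}k$ without needing the orthogonality of distinct components, and your Riesz-norm argument for $c_D=\sup_x\norm{d_x}_{\Hh}$ proves a point the paper only asserts.
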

	
	\begin{proof}
		Write $K=kI_d$ with $k(x,y)=\exp(-\abs{x-y}^2/(2\sigma^2))$.  By \eqref{eq:divergence-representer},
		\[
		d_z(x)=\sum_a\partial_a^{(2)}k(x,z)e_a=\sigma^{-2}(x-z)k(x,z),
		\]
		so $d_z(z)=0$.  Since distinct components are orthogonal and
		\[
		\partial_a^{(1)}\partial_b^{(2)}k(x,z)
		=\bigl(\sigma^{-2}\delta_{ab}-\sigma^{-4}(x_a-z_a)(x_b-z_b)\bigr)k(x,z),
		\]
		derivative reproduction gives
		\[
		\norm{d_z}_{\Hh}^2
		=\sum_{a=1}^d\partial_a^{(1)}\partial_a^{(2)}k(x,z)\big|_{x=z}
		=\frac d{\sigma^2}.
		\]
		For $N=1$ one has $b_Z=d_z$ and $E_Zd_z=d_z(z)=0$, so $b_R=0$ and $b_\perp=d_z$.  \Cref{thm:empirical-lambda-zero-dichotomy} now gives $S_\lambda(\delta_z)=-d_z/\lambda$ by \eqref{eq:empirical-lambda-decomposition} and $\min\J_{\delta_z}^\lambda=-\norm{d_z}_{\Hh}^2/\lambda$ by \eqref{eq:empirical-lambda-minimum}, which proves \eqref{eq:Gaussian-lambda-blowup}--\eqref{eq:Gaussian-lambda-blowup-norm}.
	\end{proof}
	
	\begin{remark}\label{rem:LS-sharpness}
		The order $\lambda^{-2}$ of $L_{S,\lambda}$ in \eqref{eq:score-W1} comes from the a priori bound $\norm{S_\nu}_{\Hh}\le R_\lambda=c_D/\lambda$, which enters the product estimate for $\Lips(S_\nu\cdot q)$ through both $\norm{S_\nu}_{L^\infty}\le c_0R_\lambda$ and $\Lips(S_\nu)\le c_1R_\lambda$ and is then multiplied by the factor $\lambda^{-1}$ coming from the coercivity of the regularized objective. On single atoms in the Gaussian RKHS the dependence is only $\lambda^{-1}$: by \cref{cor:Gaussian-lambda-blowup}, $S_\lambda(\delta_z)=-d_z/\lambda$, and $\norm{d_z-d_y}_{\Hh}\le c_2\abs{z-y}$ by \cref{ass:score-space}, so $\norm{S_\lambda(\delta_z)-S_\lambda(\delta_y)}_{L^\infty} \le c_0c_2\,W_1(\delta_z,\delta_y)/\lambda$.
	\end{remark}
	
	A finite-dimensional feature-space version of the score step, with a closed form and an a posteriori certificate for approximate score solves, is given in Appendix~\ref{sm:features}.
	
	\section{Estimates for the system step and conservation laws}
	\label{sec:system-step}
	
	Throughout this section, \cref{ass:kernel} is in force, and we use the mean-particle distance \eqref{eq:particle-norm} and the force \eqref{eq:intro-force}.
	
	\begin{lemma}\label{lem:force-estimates}
		Suppose $\norm s_\infty\le M$ and $\Lips(s)\le L$.  Then
		\begin{align}
			\abs{F(Z;s)}_N&\le 2a_0M,\label{eq:force-bound}\\
			\abs{F(Z;s)-F(Y;s)}_N
			&\le L_F\abs{Z-Y}_N,
			&L_F&:=2(2a_1M+a_0L).\label{eq:force-state-lip}
		\end{align}
		For any two bounded scores $s,r$,
		\begin{equation}\label{eq:force-score-lip}
			\abs{F(Z;s)-F(Z;r)}_N
			\le2a_0\norm{s-r}_{L^\infty}.
		\end{equation}
		Also,
		\begin{equation}\label{eq:empirical-W1-coupling}
			W_1(\mu_Z,\mu_Y)\le\abs{Z-Y}_N.
		\end{equation}
	\end{lemma}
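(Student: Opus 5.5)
The plan is to prove each of the four estimates directly from the definition \eqref{eq:intro-force} of the force and the triangle inequality, averaging over particles with the normalization $\abs{\cdot}_N$ from \eqref{eq:particle-norm}. Throughout I use $\abs{A(z)}\le a_0$ and $\abs{A(z)-A(z')}\le a_1\abs{z-z'}$ from \cref{ass:kernel}.

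For \eqref{eq:force-bound}, each summand satisfies $\abs{A(z_i-z_j)(s(z_i)-s(z_j))}\le a_0\cdot 2M$. Hence $\abs{F_i(Z;s)}\le 2a_0M$ for every $i$, and averaging over $i$ gives the claim.

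Similarly, for \eqref{eq:force-score-lip} I write $s-r$ in place of $s$ and bound each summand by $a_0\cdot2\norm{s-r}_{L^\infty}$. The force is linear in the score, so this gives the estimate.

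For \eqref{eq:empirical-W1-coupling}, I use the coupling $N^{-1}\sum_i\delta_{(z_i,y_i)}\in\Gamma(\mu_Z,\mu_Y)$. Its transport cost is exactly $\abs{Z-Y}_N$.

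The main step is \eqref{eq:force-state-lip}. I split each summand difference as
\[
A(z_i-z_j)\bigl(s(z_i)-s(z_j)\bigr)-A(y_i-y_j)\bigl(s(y_i)-s(y_j)\bigr)
=\bigl[A(z_i-z_j)-A(y_i-y_j)\bigr]\bigl(s(z_i)-s(z_j)\bigr)
+A(y_i-y_j)\bigl[s(z_i)-s(y_i)-s(z_j)+s(y_j)\bigr].
\]
The first term is bounded by $a_1\bigl(\abs{z_i-y_i}+\abs{z_j-y_j}\bigr)\cdot 2M$. The second is bounded by $a_0L\bigl(\abs{z_i-y_i}+\abs{z_j-y_j}\bigr)$. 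Summing over $j$ with weight $N^{-1}$ and then averaging over $i$, each of $\abs{z_i-y_i}$ and $\abs{z_j-y_j}$ contributes $\abs{Z-Y}_N$. This yields the bound
\[
(2a_1M+a_0L)\cdot 2\abs{Z-Y}_N=L_F\abs{Z-Y}_N.
\]
The only care needed is this double-average bookkeeping: the $j$-terms after the $i$-average also give exactly $\abs{Z-Y}_N$. This accounts for the factor $2$ in $L_F$, and I expect no genuine obstacle beyond it.
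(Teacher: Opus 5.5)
Your proof is correct and matches the paper's argument essentially step for step. You insert the same intermediate term $A(y_i-y_j)\bigl(s(z_i)-s(z_j)\bigr)$, do the same double-average bookkeeping that produces the factor $2$ in $L_F$, use linearity in the score for \eqref{eq:force-score-lip}, and use the same labeled coupling $N^{-1}\sum_i\delta_{(z_i,y_i)}$ for \eqref{eq:empirical-W1-coupling}.
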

	
	\begin{proof}
		Write $Z=(z_1,\ldots,z_N)$ and $Y=(y_1,\ldots,y_N)$.  Since $\norm A_{L^\infty}=a_0$ and $\abs{s(z_i)-s(z_j)}\le2M$, each component satisfies $\abs{F_i(Z;s)}\le2a_0M$; averaging over $i$ gives \eqref{eq:force-bound}.  For \eqref{eq:force-state-lip}, fix a pair $(i,j)$ and insert $A(y_i-y_j)\bigl(s(z_i)-s(z_j)\bigr)$.  Using $\Lips(A)=a_1$, $\norm s_\infty\le M$, $\norm A_{L^\infty}=a_0$, and $\Lips(s)\le L$,
		\[
		\begin{aligned}
			&\abs{A(z_i-z_j)\bigl(s(z_i)-s(z_j)\bigr)
				-A(y_i-y_j)\bigl(s(y_i)-s(y_j)\bigr)}\\
			&\qquad\le a_1\abs{(z_i-z_j)-(y_i-y_j)}\,2M
			+a_0\bigl(\abs{s(z_i)-s(y_i)}+\abs{s(z_j)-s(y_j)}\bigr)\\
			&\qquad\le(2a_1M+a_0L)\bigl(\abs{z_i-y_i}+\abs{z_j-y_j}\bigr).
		\end{aligned}
		\]
		Averaging over $i$ and $j$ gives
		\[
		\abs{F(Z;s)-F(Y;s)}_N
		\le\frac{2a_1M+a_0L}{N^2}\sum_{i,j=1}^N
		\bigl(\abs{z_i-y_i}+\abs{z_j-y_j}\bigr)
		=L_F\abs{Z-Y}_N .
		\]
		For \eqref{eq:force-score-lip}, each $i$ satisfies $\abs{F_i(Z;s)-F_i(Z;r)} \le N^{-1}\sum_ja_0\abs{(s-r)(z_i)-(s-r)(z_j)}$, and the right-hand side is at most $2a_0\norm{s-r}_{L^\infty}$.  Finally, the coupling $\pi:=N^{-1}\sum_{i=1}^N\delta_{(z_i,y_i)}$ has marginals $\mu_Z$ and $\mu_Y$, so $W_1(\mu_Z,\mu_Y)\le\int\abs{x-y}\dd\pi(x,y)=\abs{Z-Y}_N$.
	\end{proof}
	
	Every iterate of \cref{alg:alternating} is well defined on $[0,T]$, by induction on $k$ starting from the constant path $X^0$.  If $X^k\in C([0,T];(\R^d)^N)$, then $t\mapsto\mu_{X_t^k}$ is $W_1$-continuous by \eqref{eq:empirical-W1-coupling}, so $t\mapsto s_t^k$ is continuous in $L^\infty$ by \eqref{eq:score-W1}, and $\norm{s_t^k}_\infty\le M_\lambda$, $\Lips(s_t^k)\le L_\lambda$ for all $t$ by \eqref{eq:main-score-bound}.  By \eqref{eq:force-state-lip} and \eqref{eq:force-score-lip}, the field $(t,Z)\mapsto F(Z;s_t^k)$ is then continuous and globally Lipschitz in $Z$ with the $t$-independent constant $L_{F,\lambda}$.  Since $\abs{\cdot}_N$ is equivalent to the Euclidean norm for fixed $N$, the Cauchy--Lipschitz theorem gives a unique solution $X^{k+1}\in C^1([0,T];(\R^d)^N)$ of the system step.
	
	The following pairwise calculation is standard for Landau-type particle systems \cite{CarrilloHuWangWu2020,HuangWang2025}; we record it because it holds even for a frozen off-iterate score path.
	
	\begin{proposition}\label{prop:conservation}
		Assume \eqref{eq:A-structure}.  Let $s_t$ be any continuous bounded score path, not necessarily the score of the current particles.  Every solution of $\dot Z_t=F(Z_t;s_t)$ satisfies
		\begin{equation}\label{eq:momentum-energy}
			\frac1N\sum_{i=1}^Nz_i(t)=\frac1N\sum_{i=1}^Nz_i(0),
			\qquad
			\frac1N\sum_{i=1}^N\abs{z_i(t)}^2
			=\frac1N\sum_{i=1}^N\abs{z_i(0)}^2.
		\end{equation}
		Consequently every output of \cref{alg:alternating} has the initial momentum and energy.  No entropy monotonicity is asserted for a frozen, off-iterate score.
	\end{proposition}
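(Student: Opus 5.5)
The plan is to differentiate the two averages along the flow and symmetrize the double sum using the structural identities \eqref{eq:A-structure}. Since $s_t$ is a fixed continuous bounded path, the solution $Z_t$ is $C^1$. We write $F_i=-N^{-1}\sum_j A(z_i-z_j)(s_t(z_i)-s_t(z_j))$, and no property of $s_t$ beyond boundedness (so that everything is finite) will be used. In particular, nothing links $s_t$ to the current particles, which is why a frozen off-iterate score is allowed.

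For the momentum, we compute $\frac{d}{dt}\frac1N\sum_i z_i=\frac1N\sum_i F_i=-N^{-2}\sum_{i,j}A(z_i-z_j)(s(z_i)-s(z_j))$. We then swap the indices $i\leftrightarrow j$. By $A(-z)=A(z)$ the matrix is unchanged, while the score difference changes sign, so the double sum equals its own negative and therefore vanishes.

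For the energy, we compute $\frac{d}{dt}\frac1N\sum_i\abs{z_i}^2=\frac2N\sum_i z_i\cdot F_i=-2N^{-2}\sum_{i,j}z_i\cdot A(z_i-z_j)(s(z_i)-s(z_j))$. Symmetrizing in $i,j$ as before replaces $z_i$ by $\tfrac12(z_i-z_j)$, giving $-N^{-2}\sum_{i,j}(z_i-z_j)\cdot A(z_i-z_j)(s(z_i)-s(z_j))$. Using $A=A^\top$, each term equals $\bigl(A(z_i-z_j)(z_i-z_j)\bigr)\cdot(s(z_i)-s(z_j))$, which is zero by $A(z)z=0$. Integrating both derivatives in time yields \eqref{eq:momentum-energy}.

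For the consequence, every output $X^{k+1}$ of \cref{alg:alternating} solves $\dot X=F(X;s^k_t)$ with $s^k_t$ continuous in $L^\infty$ and bounded by $M_\lambda$, as shown just before the proposition. Each output is started at $Z^0$, so it inherits the initial momentum and energy.

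The only delicate point is bookkeeping. The symmetrization must be applied to the whole summand, and the argument never uses a gradient structure of $s$. This is consistent with the remark that no entropy monotonicity is claimed, since that would require $s=\nabla\log$ of the current density.
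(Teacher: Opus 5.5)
Your proposal is correct and follows the paper's proof: evenness of $A$ cancels the paired $(i,j)$ and $(j,i)$ terms, so the momentum derivative vanishes, and symmetrizing the energy derivative gives $-N^{-2}\sum_{i,j}(z_i-z_j)^\top A(z_i-z_j)\bigl(s(z_i)-s(z_j)\bigr)$, which is zero by $A=A^\top$ and $A(z)z=0$. The consequence for the iterates of \cref{alg:alternating} follows as you say, because every system step restarts at $Z^0$ with a frozen score path.
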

	
	\begin{proof}
		Pairing the $(i,j)$ and $(j,i)$ terms and using $A(-z)=A(z)$ gives $\sum_iF_i(Z;s)=0$.  Similarly, symmetrizing in $(i,j)$,
		\[
		\frac{\dd}{\dd t}\frac1N\sum_i\abs{z_i}^2
		=-\frac1{N^2}\sum_{i,j}(z_i-z_j)^\top
		A(z_i-z_j)(s(z_i)-s(z_j))=0,
		\]
		where symmetry and $A(z)z=0$ were used in the last equality.
	\end{proof}
	
	\section{Convergence of the exact alternating iteration}
	\label{sec:exact-outer}
	
	Throughout this section, $N$, $\lambda>0$, and $A$ are fixed, and $V_\lambda$, $L_{F,\lambda}$, and $\beta_\lambda$ are the constants of \cref{sec:framework}. The map $X^k\mapsto X^{k+1}$ need not be a contraction on long time intervals, but its causal structure gives a Volterra recursion whose repeated integration yields factorial decay. The resulting estimates \eqref{eq:main-factorial-increment}--\eqref{eq:main-factorial-tail} have the form of the classical waveform-relaxation bound \cite{MiekkalaNevanlinna1987,Botchev2024}, with Lipschitz constant $L_{F,\lambda}$ for the current argument and $\beta_\lambda$ for the lagged one; what is specific to the present setting is that $\beta_\lambda=2a_0L_{S,\lambda}$ does not depend on $N$, by \cref{thm:main-score} and the labeled coupling \eqref{eq:empirical-W1-coupling}.
	
	\begin{proof}[Proof of \cref{thm:main-outer}]
		\emph{Increments.} By the discussion following \cref{lem:force-estimates}, every iterate is uniquely defined on $[0,T]$, and every exact score satisfies $\norm{s_t^k}_\infty\le M_\lambda$ and $\Lips(s_t^k)\le L_\lambda$ by \eqref{eq:main-score-bound}. Since $X^0\equiv Z^0$, \eqref{eq:force-bound} gives
		\begin{equation}\label{eq:D0}
			D_0(t)=\Bigl|\int_0^tF(X_r^1;s_r^0)\dd r\Bigr|_N\le2a_0M_\lambda t=V_\lambda t.
		\end{equation}
		For $k\ge1$, subtracting the integral forms of the $(k+1)$st and $k$th system steps and splitting the force difference into a state part and a score part gives
		\[
		D_k(t)\le\int_0^t\Bigl(\abs{F(X_r^{k+1};s_r^k)-F(X_r^k;s_r^k)}_N
		+\abs{F(X_r^k;s_r^k)-F(X_r^k;s_r^{k-1})}_N\Bigr)\dd r.
		\]
		By \eqref{eq:force-state-lip}, the first term is at most $L_{F,\lambda}D_k(r)$. By \eqref{eq:force-score-lip}, \eqref{eq:score-W1}, and \eqref{eq:empirical-W1-coupling}, the second is at most $2a_0\norm{s_r^k-s_r^{k-1}}_{L^\infty} \le2a_0L_{S,\lambda}W_1(\mu_{X_r^k},\mu_{X_r^{k-1}})\le\beta_\lambda D_{k-1}(r)$. Hence $D_k(t)\le L_{F,\lambda}\int_0^tD_k(r)\dd r+a_k(t)$ with $a_k(t):=\beta_\lambda\int_0^tD_{k-1}(r)\dd r$. Since $a_k$ is absolutely continuous with $a_k(0)=0$, the integral form of Gronwall's inequality followed by an integration by parts gives
		\begin{equation}\label{eq:volterra-recursion}
			D_k(t)
			\le a_k(t)+L_{F,\lambda}\int_0^te^{L_{F,\lambda}(t-r)}a_k(r)\dd r
			=\beta_\lambda\int_0^te^{L_{F,\lambda}(t-r)}D_{k-1}(r)\dd r.
		\end{equation}
		
		\emph{Factorial bound.} When \eqref{eq:volterra-recursion} is iterated, the exponential factors telescope to $e^{L_{F,\lambda}(t-r_1)}\le e^{L_{F,\lambda}t}$, and \eqref{eq:D0} gives, for $k\ge1$,
		\[
		D_k(t)\le V_\lambda e^{L_{F,\lambda}t}\beta_\lambda^k
		\int_{0\le r_1\le\cdots\le r_k\le t}r_1\dd r_1\cdots\dd r_k
		=V_\lambda e^{L_{F,\lambda}t}\frac{\beta_\lambda^kt^{k+1}}{(k+1)!};
		\]
		the case $k=0$ is \eqref{eq:D0}. Since the right-hand side is nondecreasing in $t$, taking the supremum over $r\in[0,t]$ gives \eqref{eq:main-factorial-increment}.
		
		\emph{Limit and error estimate.} Since the bounds \eqref{eq:main-factorial-increment} are summable, the iterates form a Cauchy sequence in $C([0,T];(\R^d)^N)$; let $X^{\lambda,N}$ be their uniform limit. In the integral form of the system step, the integrand converges uniformly, since by \eqref{eq:force-state-lip}, \eqref{eq:force-score-lip}, \eqref{eq:score-W1}, and \eqref{eq:empirical-W1-coupling},
		\begin{align*}
			&\bigl|F(X_r^{k+1};S_\lambda(\mu_{X_r^k}))
			-F(X_r^{\lambda,N};
			S_\lambda(\mu_{X_r^{\lambda,N}}))\bigr|_N\\
			&\qquad\le
			L_{F,\lambda}\abs{X_r^{k+1}-X_r^{\lambda,N}}_N
			+\beta_\lambda\abs{X_r^k-X_r^{\lambda,N}}_N
			\longrightarrow0.
		\end{align*}
		Passing to the limit in the integral equation shows that $X^{\lambda,N}$ solves \eqref{eq:main-coupled-particle}. Its right-hand side is continuous in $t$, so $X^{\lambda,N}\in C^1([0,T];(\R^d)^N)$.
		
		To estimate the error directly, set $e_k(t):=\abs{X_t^k-X_t^{\lambda,N}}_N$. Since $X^0\equiv Z^0$ and the limiting path has speed at most $V_\lambda$, we have $e_0(t)\le V_\lambda t$. Comparing the $(k+1)$st system step with \eqref{eq:main-coupled-particle} gives
		\[
		e_{k+1}(t)
		\le\int_0^t
		\bigl(L_{F,\lambda}e_{k+1}(r)
		+\beta_\lambda e_k(r)\bigr)\dd r.
		\]
		Gronwall's inequality therefore yields
		\[
		e_{k+1}(t)
		\le\beta_\lambda\int_0^t
		e^{L_{F,\lambda}(t-r)}e_k(r)\dd r.
		\]
		Starting from $e_0(t)\le V_\lambda t$, induction gives
		\[
		e_k(t)\le
		V_\lambda e^{L_{F,\lambda}t}
		\frac{\beta_\lambda^kt^{k+1}}{(k+1)!},
		\qquad k\ge0.
		\]
		Taking the supremum over $t\in[0,T]$ proves \eqref{eq:main-factorial-tail}.
		
		\emph{Uniqueness and invariants.} If $X$ and $Y$ solve \eqref{eq:main-coupled-particle} with the same initial condition, the same splitting gives $\abs{X_t-Y_t}_N\le(L_{F,\lambda}+\beta_\lambda)\int_0^t\abs{X_r-Y_r}_N\dd r$, and Gronwall's inequality gives $X=Y$. If \eqref{eq:A-structure} holds, \eqref{eq:main-particle-invariants} follows from \cref{prop:conservation}, applied to each system step with the frozen score path $s^k$ and to $X^{\lambda,N}$ with the score path $S_\lambda(\mu_{X_t^{\lambda,N}})$.
	\end{proof}
	
	The proof also gives a stopping criterion in terms of the last successive-iterate difference.
	
	\begin{corollary}[A posteriori error estimate for the alternating iteration]\label{cor:outer-stopping}
		Under the assumptions of \cref{thm:main-outer}, let $X^k$ be the exact iterates and put
		\[
		\Delta_k:=\sup_{0\le t\le T}\abs{X_t^{k+1}-X_t^k}_N.
		\]
		Then the remaining error of the exact iteration satisfies
		\begin{equation}\label{eq:outer-stopping}
			\abs{X_t^{k+1}-X_t^{\lambda,N}}_N
			\le\beta_\lambda\int_0^t
			e^{(L_{F,\lambda}+\beta_\lambda)(t-r)}D_k(r)\dd r,
			\qquad 0\le t\le T.
		\end{equation}
		Consequently,
		\begin{equation}\label{eq:outer-stopping-sup}
			\sup_{0\le t\le T}\abs{X_t^{k+1}-X_t^{\lambda,N}}_N
			\le\beta_\lambda\Delta_k
			\frac{e^{(L_{F,\lambda}+\beta_\lambda)T}-1}
			{L_{F,\lambda}+\beta_\lambda},
		\end{equation}
		with the quotient interpreted as $T$ if its denominator vanishes (the case $A\equiv0$, which \cref{ass:kernel} allows).  Thus a certified continuous-time bound on the successive-iterate difference, together with the analytic stability constant, is a rigorous stopping criterion; values at finitely many time nodes alone are not such a certificate.
	\end{corollary}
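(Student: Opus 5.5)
We compare the $(k+1)$st system step directly with the limiting equation \eqref{eq:main-coupled-particle}, as in the error part of the proof of \cref{thm:main-outer}. The difference from that proof is that we keep the lagged argument as $X^k$ and insert $X^{k+1}$ in the score comparison. This way the forcing is the computable successive difference $D_k$ rather than the unknown error $e_k$.

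Set $e(t):=\abs{X_t^{k+1}-X_t^{\lambda,N}}_N$. Subtracting the integral forms of the system step and of \eqref{eq:main-coupled-particle}, we split the force difference as follows:
\[
F(X_r^{k+1};s_r^k)-F(X_r^{\lambda,N};S_\lambda(\mu_{X_r^{\lambda,N}}))
=\bigl[F(X_r^{k+1};s_r^k)-F(X_r^{\lambda,N};s_r^k)\bigr]
+\bigl[F(X_r^{\lambda,N};s_r^k)-F(X_r^{\lambda,N};S_\lambda(\mu_{X_r^{\lambda,N}}))\bigr].
\]
The first bracket is bounded by $L_{F,\lambda}e(r)$, using \eqref{eq:force-state-lip} together with the bounds \eqref{eq:main-score-bound} on $s_r^k$. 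For the second bracket we apply \eqref{eq:force-score-lip}, \eqref{eq:score-W1} and \eqref{eq:empirical-W1-coupling}, which give the bound $\beta_\lambda\abs{X_r^k-X_r^{\lambda,N}}_N$. The triangle inequality through $X_r^{k+1}$ then bounds this by $\beta_\lambda(D_k(r)+e(r))$. Altogether,
\[
e(t)\le(L_{F,\lambda}+\beta_\lambda)\int_0^te(r)\dd r+\beta_\lambda\int_0^tD_k(r)\dd r .
\]

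Next we apply Gronwall's inequality in integral form to this estimate. The forcing term $a(t)=\beta_\lambda\int_0^tD_k$ is absolutely continuous with $a(0)=0$, so the same integration by parts as in \eqref{eq:volterra-recursion} gives
\[
e(t)\le\beta_\lambda\int_0^te^{(L_{F,\lambda}+\beta_\lambda)(t-r)}D_k(r)\dd r,
\]
which is \eqref{eq:outer-stopping}. To get \eqref{eq:outer-stopping-sup}, we bound $D_k(r)\le\Delta_k$ and compute $\int_0^te^{c(t-r)}\dd r=(e^{ct}-1)/c$ with $c=L_{F,\lambda}+\beta_\lambda$. This expression is nondecreasing in $t$, so we take $t=T$; when $c=0$ the integral equals $t\le T$.

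The only delicate point is the choice of splitting. Comparing with $X^k$ alone would reproduce the a priori bound of \cref{thm:main-outer} rather than an a posteriori one. Routing the lagged term through $X^{k+1}$ is what lets us absorb the resulting $\beta_\lambda e$ into the Gronwall exponent, at the cost of replacing $L_{F,\lambda}$ by $L_{F,\lambda}+\beta_\lambda$ there. The remaining steps are routine.
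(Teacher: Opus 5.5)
Your proposal is correct and follows the paper's proof essentially verbatim: the same state--score splitting against \eqref{eq:main-coupled-particle}, the triangle inequality $\abs{X_r^k-X_r^{\lambda,N}}_N\le D_k(r)+\abs{X_r^{k+1}-X_r^{\lambda,N}}_N$ to absorb the lagged error into the Gronwall exponent $L_{F,\lambda}+\beta_\lambda$, then Gronwall's inequality and $D_k\le\Delta_k$. Your explicit integration by parts in the Gronwall step and your treatment of the case $L_{F,\lambda}+\beta_\lambda=0$ only fill in details the paper leaves implicit.
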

	
	\begin{proof}
		Let $E(t):=\abs{X_t^{k+1}-X_t^{\lambda,N}}_N$. Comparing the $(k+1)$st system step with \eqref{eq:main-coupled-particle} by the same state--score splitting, and using $\abs{X_r^k-X_r^{\lambda,N}}_N\le D_k(r)+E(r)$, we obtain $E(t)\le\int_0^t[(L_{F,\lambda}+\beta_\lambda)E(r)+\beta_\lambda D_k(r)]\dd r$. Gronwall's inequality gives \eqref{eq:outer-stopping}, and $D_k\le\Delta_k$ gives \eqref{eq:outer-stopping-sup}.
	\end{proof}
	
	\begin{remark}\label{rem:no-small-time}
		The factorial denominator in \eqref{eq:main-factorial-increment} comes from the causal estimate \eqref{eq:volterra-recursion}, not from a contraction, so the alternating iteration converges for every finite $T$. This does not make a discrete optimizer or an explicit time integrator unconditionally stable; such errors are treated in \cref{sec:inexact}. The factorial mechanism is classical for waveform relaxation \cite{MiekkalaNevanlinna1987,Botchev2024}; the new input here is the $N$-uniform stability of the data-dependent score map and its connection to a computable score-objective gap.
	\end{remark}
	
	\begin{remark}\label{rem:lambda-scaling}
		The constants in \cref{thm:main-outer} degenerate as $\lambda\downarrow0$: $R_\lambda$, $M_\lambda$, $L_\lambda$, and $L_{F,\lambda}$ are $O(\lambda^{-1})$, while $L_{S,\lambda}$ and $\beta_\lambda$ are $O(\lambda^{-2})$. For $k\ge1$, the elementary bound $k!\ge(k/e)^k$ gives
		\[
		\frac{(\beta_\lambda T)^k}{(k+1)!}
		\le
		\left(\frac{e\beta_\lambda T}{k}\right)^k.
		\]
		Consequently, \eqref{eq:main-factorial-tail} implies
		\[
		\sup_{0\le t\le T}\abs{X_t^k-X_t^{\lambda,N}}_N
		\le V_\lambda T e^{L_{F,\lambda}T}\,2^{-k}
		\qquad\text{whenever }
		k\ge\max\{1,2e\beta_\lambda T\}.
		\]
		Thus the estimate certifies geometric decay beyond a sufficient threshold of order $1+\beta_\lambda T$. This is not a lower bound on the number of iterations actually needed. The prefactor $e^{L_{F,\lambda}T}$ is of size $e^{O(T/\lambda)}$. These are worst-case bounds, and for moderate $\lambda$ they are far from what is observed. For the feature space and kernel of the experiments in Appendix~\ref{sm:numerics} ($d=2$, $\lambda=0.08$, $T=1.5$) the constants of \cref{ass:score-space,ass:kernel} can be computed exactly (Table~\ref{tab:analytic-constants}): $c_0=a_0=1$, $c_1\approx0.905$, $c_D\approx1.07$, $c_2\approx1.61$, $a_1\approx0.650$, whence $\beta_\lambda\approx6.5\times10^2$ and a certified threshold $2e\beta_\lambda T\approx5.3\times10^3$; the bound \eqref{eq:main-factorial-increment} falls below $10^{-10}$ only for $k\ge2741$, whereas the experiments reach successive differences of $2.1\times10^{-10}$ at $k=15$. Thus the theorem guarantees convergence, the experiments exhibit its speed, and closing the gap between the two, that is, proving convergence with constants uniform or mildly dependent on $\lambda$, is an open problem. The estimates are therefore informative for fixed $\lambda$ and do not by themselves control the regime $\lambda\downarrow0$, which is discussed separately in \cref{sec:stable-score-learner} and \cref{sec:landau-connection}.
	\end{remark}
	
	\section{Mean-field limit at fixed regularization}
	\label{sec:mean-field}
	
	We keep $\Hh$, $A$, $\lambda>0$, and $T$ fixed and study the particle dynamics as $N\to\infty$. The equation \eqref{eq:main-mean-field} is meaningful for all measures in $\Pone$, including atomic ones, and continuity of measure-valued curves is understood in $W_1$. Set
	\begin{equation}\label{eq:mean-field-constants}
		L_b:=2a_1M_\lambda+a_0L_\lambda,
		\qquad\text{so that}\qquad C_{\mathrm{mf}}=2L_b+2a_0L_{S,\lambda}.
	\end{equation}
	
	\begin{lemma}\label{lem:mean-field-drift}
		For all $\mu,\nu\in\Pone(\R^d)$,
		\begin{align}
			\norm{b_\lambda[\mu]}_\infty&\le2a_0M_\lambda,
			&\Lips_x(b_\lambda[\mu])&\le L_b,
			\label{eq:drift-state-bounds}\\
			\norm{b_\lambda[\mu]-b_\lambda[\nu]}_\infty
			&\le K_b W_1(\mu,\nu),
			&K_b&:=L_b+2a_0L_{S,\lambda}.
			\label{eq:drift-measure-bounds}
		\end{align}
		Moreover, $(\mu,x)\mapsto b_\lambda[\mu](x)$ is jointly continuous on $\Pone(\R^d)\times\R^d$, with $\Pone$ carrying $W_1$.
	\end{lemma}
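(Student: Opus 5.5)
The bounds follow by mimicking \cref{lem:force-estimates} at the level of measures, using the a priori score bounds of \cref{thm:main-score}. Throughout, write $s_\mu:=S_\lambda(\mu)$. By \eqref{eq:main-score-bound}, $\norm{s_\mu}_\infty\le M_\lambda$ and $\Lips(s_\mu)\le L_\lambda$ for every $\mu$.

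\emph{Sup and spatial Lipschitz bounds.} Since $\abs{A}\le a_0$ and $\abs{s_\mu(v)-s_\mu(w)}\le 2M_\lambda$, integrating against the probability measure $\mu$ gives $\norm{b_\lambda[\mu]}_\infty\le 2a_0M_\lambda$. For the Lipschitz bound in $x$, I fix $w$ and split the integrand difference at $x,y$ as
\[
\bigl(A(x-w)-A(y-w)\bigr)\bigl(s_\mu(x)-s_\mu(w)\bigr)+A(y-w)\bigl(s_\mu(x)-s_\mu(y)\bigr).
\]
The first term is bounded by $a_1\abs{x-y}\,2M_\lambda$ and the second by $a_0L_\lambda\abs{x-y}$. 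Integrating in $w$ then gives $\Lips_x(b_\lambda[\mu])\le 2a_1M_\lambda+a_0L_\lambda=L_b$.

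\emph{Measure stability.} I split
\[
b_\lambda[\mu](x)-b_\lambda[\nu](x)
=-\int A(x-w)\bigl((s_\mu-s_\nu)(x)-(s_\mu-s_\nu)(w)\bigr)\dd\mu(w)
\;-\;\int g_x\,\dd(\mu-\nu),
\]
where $g_x(w):=A(x-w)\bigl(s_\nu(x)-s_\nu(w)\bigr)$. The first integral is bounded by $2a_0\norm{s_\mu-s_\nu}_\infty\le 2a_0L_{S,\lambda}W_1(\mu,\nu)$ by \eqref{eq:main-score-stability}. For the second, $g_x$ is vector-valued, with $\Lips_w(g_x)\le 2a_1M_\lambda+a_0L_\lambda=L_b$ by exactly the computation of the previous step with the roles of the variables exchanged.

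This vector-valued Kantorovich--Rubinstein step is the only delicate point. For each unit vector $e$, the scalar function $e\cdot g_x$ is $L_b$-Lipschitz, so duality gives $\abs{e\cdot\int g_x\dd(\mu-\nu)}\le L_bW_1(\mu,\nu)$. Taking the supremum over $e$ yields the Euclidean-norm bound with the same constant, and hence $K_b=L_b+2a_0L_{S,\lambda}$. Integrability is automatic since $g_x$ is bounded and $\mu,\nu\in\Pone$.

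\emph{Joint continuity.} For $(\mu_n,x_n)\to(\mu,x)$,
\[
\abs{b_\lambda[\mu_n](x_n)-b_\lambda[\mu](x)}\le K_bW_1(\mu_n,\mu)+L_b\abs{x_n-x}\to0,
\]
which follows from the two Lipschitz estimates already proved.
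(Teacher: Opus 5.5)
Your proof is correct and is essentially the paper's argument: the same insertion of $s_\nu$ in the $\mu$-integral, the same Lipschitz constant $L_b$ for the pair interaction in each variable, and the same triangle inequality for joint continuity. The only difference is in the term $\int g_x\dd(\mu-\nu)$: the paper writes it as $\int(g_x(y)-g_x(z))\dd\pi(y,z)$ for a coupling $\pi$ of $\mu$ and $\nu$, bounds the integrand pointwise by $L_b\abs{y-z}$, and takes the infimum over $\pi$, which avoids the vector-valued Kantorovich--Rubinstein step that you handle correctly by scalarizing with unit vectors.
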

	\begin{proof}
		For $\norm{s}_\infty\le M_\lambda$ and $\Lips(s)\le L_\lambda$, the pair interaction $\Gamma_s(x,y):=A(x-y)(s(x)-s(y))$ satisfies $\abs{\Gamma_s}\le2a_0M_\lambda$ and $\abs{\Gamma_s(x,y)-\Gamma_s(x',y')}\le L_b(\abs{x-x'}+\abs{y-y'})$; integrating with $s=S_\lambda(\mu)$ proves \eqref{eq:drift-state-bounds}. For $\pi\in\Gamma(\mu,\nu)$, $b_\lambda[\mu](x)-b_\lambda[\nu](x)=-\int(\Gamma_{S_\lambda(\mu)}(x,y) -\Gamma_{S_\lambda(\nu)}(x,z))\dd\pi(y,z)$; inserting $\Gamma_{S_\lambda(\nu)}(x,y)$ gives the bound $2a_0\norm{S_\lambda(\mu)-S_\lambda(\nu)}_\infty+L_b\int\abs{y-z}\dd\pi$, and taking the infimum over $\pi$ and using \eqref{eq:score-W1} proves \eqref{eq:drift-measure-bounds}. Joint continuity follows from $\abs{b_\lambda[\mu](x)-b_\lambda[\nu](x')}\le L_b\abs{x-x'}+K_bW_1(\mu,\nu)$.
	\end{proof}
	
	We prove \cref{thm:main-mean-field} by a deterministic Dobrushin-type argument \cite{Dobrushin1979}, included to fix the constants and the solution class.
	
	\begin{proof}[Proof of \cref{thm:main-mean-field}]
		Fix $\tau>0$ and equip $C([0,\tau];\Pone(\R^d))$ with the complete metric $\sup_{t\le\tau}W_1(m_t,n_t)$. For a curve $m$, let $\Phi^m$ be the flow of the globally Lipschitz field $b_\lambda[m_t]$ and set $\Psi(m)_t=(\Phi_t^m)_\#\mu_0$. Coupling two such flows by their common initial point and using \cref{lem:mean-field-drift} gives
		\begin{equation}\label{eq:mean-field-fixed-point-map}
			W_1(\Psi(m)_t,\Psi(n)_t)
			\le K_b\int_0^t e^{L_b(t-r)}W_1(m_r,n_r)\dd r,
		\end{equation}
		so $\Psi$ is a contraction when $K_b\tau e^{L_b\tau}<1$. It maps into $C([0,\tau];\Pone)$ because the first moment of $\Psi(m)_t$ grows by at most $2a_0M_\lambda t$ and $W_1(\Psi(m)_t,\Psi(m)_s)\le2a_0M_\lambda\abs{t-s}$. Since $\tau$ depends only on the fixed constants, Banach's fixed-point theorem, restarted at the endpoints, gives a characteristic solution on $[0,T]$. For stability, let $\Phi_t$ and $\widetilde\Phi_t$ be the characteristic flows of $\mu_t$ and $\nu_t$, choose an optimal coupling $\pi_0\in\Gamma(\mu_0,\nu_0)$, and set $E(t):=\int\abs{\Phi_t(x)-\widetilde\Phi_t(y)}\dd\pi_0(x,y)$, which is finite and satisfies $W_1(\mu_t,\nu_t)\le E(t)$ and $E(0)=W_1(\mu_0,\nu_0)$. By \cref{lem:mean-field-drift}, $E(t)\le E(0)+\int_0^t(L_bE+K_bW_1(\mu_r,\nu_r))\dd r \le E(0)+C_{\mathrm{mf}}\int_0^tE(r)\dd r$, and Gronwall's inequality proves \eqref{eq:main-Dobrushin} and uniqueness. For empirical data, $Z_t=X_t^{\lambda,N}$ and $m_t=\mu_{Z_t}$ satisfy $\dot z_i=F_i(Z_t;S_\lambda(m_t))=b_\lambda[m_t](z_i)$, so uniqueness for the flow of $b_\lambda[m_t]$ gives $m_t=(\Phi_t^m)_\#\mu_0^N$. Thus $m$ is a characteristic solution, and \eqref{eq:main-Dobrushin} proves the remaining claims.
	\end{proof}
	
	\subsection{Continuum alternating iteration and commuting limits}
	We now take $N\to\infty$ at fixed $k$ first and then $k\to\infty$; if $W_1(\mu_0^N,\mu_0)\to0$, both orders lead to the same solution of \eqref{eq:main-mean-field}. For a prescribed bounded Lipschitz score $s$, define
	\begin{equation}\label{eq:prescribed-score-drift}
		B[s,\rho](x):=-\int A(x-y)\bigl(s(x)-s(y)\bigr)\dd\rho(y).
	\end{equation}
	Starting from $\rho_t^0\equiv\mu_0$, define the continuum alternating iteration by
	\begin{equation}\label{eq:continuum-outer-iteration}
		s_t^k=S_\lambda(\rho_t^k),
		\qquad
		\partial_t\rho_t^{k+1}
		+\nabla\cdot\bigl(B[s_t^k,\rho_t^{k+1}]\rho_t^{k+1}\bigr)=0,
		\qquad \rho_0^{k+1}=\mu_0.
	\end{equation}
	Solutions in this subsection are again understood in the characteristic sense.
	
	\begin{theorem}
		\label{thm:continuum-outer-iteration}
		Under \cref{ass:score-space,ass:kernel}, and for $\mu_0\in\Pone(\R^d)$, every iterate of \eqref{eq:continuum-outer-iteration} is uniquely defined in $C([0,T];\Pone(\R^d))$.  If $\delta_k(t):=W_1(\rho_t^{k+1},\rho_t^k)$, then
		\begin{equation}\label{eq:continuum-outer-iteration-factorial}
			\sup_{0\le r\le t}\delta_k(r)
			\le V_\lambda e^{L_{F,\lambda}t}
			\frac{\beta_\lambda^kt^{k+1}}{(k+1)!}.
		\end{equation}
		Consequently $\rho^k$ converges uniformly in time in $W_1$ to the unique solution $\mu$ of \eqref{eq:main-mean-field}.
		
		Let $\rho_t^{k,N}:=\mu_{X_t^{k,N}}$ be the empirical measure of the $k$th iterate generated by \cref{alg:alternating} from an empirical initial law $\mu_0^N$, and set $d_N:=W_1(\mu_0^N,\mu_0)$.  For each $k\ge0$,
		\begin{equation}\label{eq:fixed-k-particle-waveform}
			\sup_{0\le t\le T}W_1(\rho_t^{k,N},\rho_t^k)
			\le d_Ne^{L_{F,\lambda}T}\sum_{j=0}^{k}\frac{(\beta_\lambda T)^j}{j!}.
		\end{equation}
		In particular, if $d_N\to0$, both iterated limits
		\begin{equation}\label{eq:commuting-limits}
			\lim_{N\to\infty}\lim_{k\to\infty}\rho^{k,N}
			=\mu
			=\lim_{k\to\infty}\lim_{N\to\infty}\rho^{k,N}
		\end{equation}
		hold in $C([0,T];(\Pone,W_1))$, at fixed $\lambda>0$ and fixed $A$.
	\end{theorem}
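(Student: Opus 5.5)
The plan is to transcribe the particle proof of \cref{thm:main-outer} to the measure level, with the labeled coupling replaced by coupling through the common initial law $\mu_0$.

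\emph{Well-posedness of the iterates.} Given $\rho^k\in C([0,T];\Pone)$, the score path $s^k_t=S_\lambda(\rho^k_t)$ is $L^\infty$-continuous in $t$ by \eqref{eq:score-W1}. It also satisfies $\norm{s^k_t}_\infty\le M_\lambda$ and $\Lips(s^k_t)\le L_\lambda$ by \eqref{eq:main-score-bound}. The system step is then a McKean--Vlasov equation with the frozen score $s^k$. The computation in \cref{lem:mean-field-drift}, applied to $B[s,\cdot]$ with $s$ fixed, gives
\[
\norm{B[s,\rho]}_\infty\le 2a_0M_\lambda,\qquad \Lips_x B[s,\rho]\le L_b,\qquad \norm{B[s,\rho]-B[s,\rho']}_\infty\le L_bW_1(\rho,\rho').
\]
A second bound, from \eqref{eq:force-score-lip}, is $\norm{B[s,\rho]-B[r,\rho]}_\infty\le 2a_0\norm{s-r}_\infty$. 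The Banach fixed-point argument in the proof of \cref{thm:main-mean-field}, restarted on short intervals, then produces a unique characteristic solution $\rho^{k+1}$.

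\emph{Factorial bound.} Let $\Phi^k$ be the characteristic flow of $\rho^k$, so $\rho^k_t=(\Phi^k_t)_\#\mu_0$ with $\Phi^0=\mathrm{id}$. Set $D_k(t):=\int\abs{\Phi^{k+1}_t-\Phi^k_t}\dd\mu_0\ge\delta_k(t)$. Comparing velocities $B[s^k_r,\rho^{k+1}_r](\Phi^{k+1}_r)$ and $B[s^{k-1}_r,\rho^k_r](\Phi^k_r)$, I would split the difference into three pieces:
\begin{enumerate}[label=(\alph*),leftmargin=2em]
\item the spatial Lipschitz part, bounded by $L_b\abs{\Phi^{k+1}-\Phi^k}$;
\item the measure part, bounded by $L_b\,D_k$, using the coupling $(\Phi^{k+1},\Phi^k)_\#\mu_0$;
\item the score part, bounded by $2a_0L_{S,\lambda}W_1(\rho^k,\rho^{k-1})\le\beta_\lambda D_{k-1}$.
\end{enumerate}
Integrating against $\mu_0$ gives $D_k(t)\le L_{F,\lambda}\int_0^tD_k+\beta_\lambda\int_0^tD_{k-1}$, since $2L_b=L_{F,\lambda}$. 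This is exactly the Volterra recursion \eqref{eq:volterra-recursion}. The base case is $D_0(t)\le V_\lambda t$, because speeds are bounded by $V_\lambda$. The iterated integration in the proof of \cref{thm:main-outer} then yields \eqref{eq:continuum-outer-iteration-factorial} for $D_k$, hence for $\delta_k$.

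\emph{Convergence of $\rho^k$.} Summability of these bounds makes $\rho^k$ Cauchy in $C([0,T];(\Pone,W_1))$. The flows $\Phi^k$ are Cauchy in $L^1(\mu_0)$ uniformly in $t$, so the limit is a pushforward of a limit flow $\Phi$. Passing to the limit in the integral equation, using the three bounds above, shows that $\Phi$ is the characteristic flow of $b_\lambda[\mu_t]$, since $B[S_\lambda(\mu),\mu]=b_\lambda[\mu]$. Uniqueness from \cref{thm:main-mean-field} identifies the limit as $\mu$.

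\emph{Fixed-$k$ particle estimate.} Take an optimal coupling $\pi_0$ of $\mu_0^N$ and $\mu_0$. Set $e_k(t):=\int\abs{\Phi^{k,N}_t(x)-\Phi^k_t(y)}\dd\pi_0$, where $\Phi^{k,N}$ is the characteristic flow of the particle iterate; this is legitimate since \cref{alg:alternating} is the same iteration started from $\mu_0^N$, as in the end of the proof of \cref{thm:main-mean-field}. Then $e_0=d_N$. The same three-way splitting gives
\[
e_k(t)\le d_N+L_{F,\lambda}\int_0^te_k+\beta_\lambda\int_0^te_{k-1},
\]
so by Gronwall
\[
e_k(t)\le e^{L_{F,\lambda}t}\Bigl(d_N+\beta_\lambda\int_0^te_{k-1}\Bigr).
\]
Induction with the telescoping exponentials gives
\[
e_k(t)\le d_Ne^{L_{F,\lambda}t}\sum_{j\le k}\frac{(\beta_\lambda t)^j}{j!},
\]
which is \eqref{eq:fixed-k-particle-waveform}.

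\emph{Commuting limits.} If $k\to\infty$ first, the inner limit is $\mu_{X^{\lambda,N}}$ by \cref{thm:main-outer}, and \eqref{eq:main-particle-limit} sends it to $\mu$ as $N\to\infty$. If $N\to\infty$ first, \eqref{eq:fixed-k-particle-waveform} gives $\rho^k$, and the first part gives $\mu$ as $k\to\infty$.

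The main obstacle I expect is the bookkeeping in the measure-level Gronwall step. There the new iterate's own measure enters the drift (self-interaction) in addition to the lagged score, so one must check that terms (a) and (b) together give precisely $L_{F,\lambda}$. Relatedly, I would verify that characteristic flows and the coupling by initial labels are rigorous for general $\mu_0\in\Pone$ (measurability, finiteness of $D_k$), which follows from the uniform speed bound $V_\lambda$.
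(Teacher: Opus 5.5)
Your overall architecture is correct and matches the paper's: characteristic flows coupled through initial labels, and a state/law/score splitting that produces $L_{F,\lambda}=2L_b$ and $\beta_\lambda=2a_0L_{S,\lambda}$. The argument for \eqref{eq:commuting-limits} is also the same. The differences in route are minor.
\begin{itemize}
\item You close the recursion on the coupled-flow quantities $D_k$ and $e_k$. The paper instead first proves the prescribed-score stability estimate \eqref{eq:prescribed-score-stability} and closes the recursion on $W_1$ itself.
\item You identify $\lim_k\rho^k$ by passing to the limit in the flow equation. The paper observes that $\mu$ is itself a prescribed-score solution with score path $S_\lambda(\mu_t)$, so $W_1(\rho_t^k,\mu_t)$ obeys the same recursion starting from $V_\lambda t$. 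That gives an explicit factorial rate. It also spares you from checking that the limit flow is the everywhere-defined flow of $b_\lambda[\rho^\infty_t]$, where $\rho^\infty$ is the $W_1$-limit.
\end{itemize}
Both routes work.

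The step that fails as written is the Gronwall step in the fixed-$k$ particle estimate. The bound you keep, $e_k(t)\le e^{L_{F,\lambda}t}\bigl(d_N+\beta_\lambda\int_0^te_{k-1}\bigr)$, is true but too weak. Inserting the induction hypothesis leaves an uncancelled factor $e^{L_{F,\lambda}r}$ inside $\int_0^t$, so the exponentials accumulate instead of telescoping. For $L_{F,\lambda}>0$ the induction step from $k=1$ to $k=2$ already fails. There you get $e^{L_{F,\lambda}t}\beta_\lambda d_N\int_0^te^{L_{F,\lambda}r}(1+\beta_\lambda r)\dd r$ in place of $e^{L_{F,\lambda}t}\beta_\lambda d_N\int_0^t(1+\beta_\lambda r)\dd r$. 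The weaker bound is still $O(d_N)$ at each fixed $k$, so it would give the commuting limits, but not \eqref{eq:fixed-k-particle-waveform}.

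Apply instead the integral form of Gronwall's inequality followed by an integration by parts, exactly as in \eqref{eq:volterra-recursion}. This keeps the kernel $e^{L_{F,\lambda}(t-r)}$ inside the integral:
\[
e_k(t)\le e^{L_{F,\lambda}t}d_N+\beta_\lambda\int_0^te^{L_{F,\lambda}(t-r)}e_{k-1}(r)\dd r .
\]
Then $f_k:=e^{-L_{F,\lambda}t}e_k$ satisfies $f_0\le d_N$ and $f_k(t)\le d_N+\beta_\lambda\int_0^tf_{k-1}(r)\dd r$. Induction gives $f_k(t)\le d_N\sum_{j=0}^k(\beta_\lambda t)^j/j!$, which is \eqref{eq:fixed-k-particle-waveform}. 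In your factorial step you avoided this problem by deferring to \eqref{eq:volterra-recursion}, which already has the correct form.
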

	
	\begin{proof}
		If $\norm s_\infty\le M_\lambda$ and $\Lips(s)\le L_\lambda$, then $B[s,\rho]$ is $L_b$-Lipschitz in space and
		\[
		\norm{B[s,\rho]-B[s,\sigma]}_\infty\le L_bW_1(\rho,\sigma),
		\]
		so the fixed-point argument of \cref{thm:main-mean-field} gives a unique prescribed-score solution. Let $\rho$ and $\sigma$ use score paths $s$ and $\bar s$, continuous in the supremum norm and with $\norm{s_t}_\infty,\norm{\bar s_t}_\infty\le M_\lambda$ and $\Lips(s_t),\Lips(\bar s_t)\le L_\lambda$. Let $\Phi_t$ and $\bar\Phi_t$ be their characteristic flows, choose an optimal coupling $\pi_0\in\Gamma(\rho_0,\sigma_0)$, and set
		\[
		E(t):=\int\abs{\Phi_t(x)-\bar\Phi_t(y)}\dd\pi_0(x,y)\ge W_1(\rho_t,\sigma_t).
		\]
		Since $\abs{B[s,\rho]-B[\bar s,\rho]}\le2a_0\norm{s-\bar s}_\infty$, the preceding bounds give
		\[
		E(t)\le W_1(\rho_0,\sigma_0)+\int_0^t\bigl(2L_bE(r)+2a_0\norm{s_r-\bar s_r}_\infty\bigr)\dd r,
		\]
		and Gronwall's inequality yields
		\begin{equation}\label{eq:prescribed-score-stability}
			W_1(\rho_t,\sigma_t)
			\le e^{L_{F,\lambda}t}W_1(\rho_0,\sigma_0)
			+2a_0\int_0^te^{L_{F,\lambda}(t-r)}
			\norm{s_r-\bar s_r}_\infty\dd r,
		\end{equation}
		where $L_{F,\lambda}=2L_b$: one copy of $L_b$ controls the spatial argument and one the evolving law. The score path of every iteration is continuous by \eqref{eq:score-H-W1}, so the construction applies.
		
		The first iterate moves at speed at most $V_\lambda$, so $\delta_0(t)\le V_\lambda t$. For $k\ge1$, \eqref{eq:prescribed-score-stability} with equal initial laws and \eqref{eq:score-W1} give $\delta_k(t)\le\beta_\lambda\int_0^te^{L_{F,\lambda}(t-r)}\delta_{k-1}(r)\dd r$, and the induction of \cref{thm:main-outer} proves \eqref{eq:continuum-outer-iteration-factorial}. The solution $\mu$ of \cref{thm:main-mean-field} is a prescribed-score solution with score path $S_\lambda(\mu_t)$, so $E_k(t):=W_1(\rho_t^k,\mu_t)$ satisfies $E_0(t)\le V_\lambda t$ and the same recursion; hence $E_k(t)\le V_\lambda e^{L_{F,\lambda}t}\beta_\lambda^kt^{k+1}/(k+1)!$ and $\rho^k\to\mu$. For the particle statement, note that $\dot z_i=F_i(X_t^{k+1,N};S_\lambda(\rho_t^{k,N})) =B[S_\lambda(\rho_t^{k,N}),\rho_t^{k+1,N}](z_i)$, so, as in the proof of \cref{thm:main-mean-field}, $\rho^{k+1,N}$ is the prescribed-score solution with score path $S_\lambda(\rho_t^{k,N})$ and initial law $\mu_0^N$.  Hence $e_k(t):=W_1(\rho_t^{k,N},\rho_t^k)$ satisfies $e_0=d_N$ and, since the initial law is reset at each iteration,
		\begin{equation}\label{eq:fixed-k-particle-recursion}
			e_{k+1}(t)\le e^{L_{F,\lambda}t}d_N+\beta_\lambda\int_0^te^{L_{F,\lambda}(t-r)}e_k(r)\dd r.
		\end{equation}
		Thus $f_k:=e^{-L_{F,\lambda}t}e_k$ satisfies $f_0\le d_N$ and $f_{k+1}(t)\le d_N+\beta_\lambda\int_0^tf_k$, and induction gives $f_k(t)\le d_N\sum_{j=0}^k(\beta_\lambda t)^j/j!$, which is \eqref{eq:fixed-k-particle-waveform}. Hence the $N\to\infty$ limit at fixed $k$ is $\rho^k$, whose $k\to\infty$ limit is $\mu$; the reverse order follows from \cref{thm:main-outer,thm:main-mean-field}.
	\end{proof}
	
	\begin{corollary}\label{cor:mean-field-conservation}
		Assume \eqref{eq:A-structure} and $\mu_0\in\mathcal P_2(\R^d)$.  The solution of \eqref{eq:main-mean-field} satisfies
		\begin{equation}\label{eq:mean-field-conservation}
			\int x\dd\mu_t(x)=\int x\dd\mu_0(x),
			\qquad
			\int\abs{x}^2\dd\mu_t(x)=\int\abs{x}^2\dd\mu_0(x).
		\end{equation}
	\end{corollary}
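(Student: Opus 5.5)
The plan is to exploit the characteristic representation $\mu_t=(\Phi_t)_\#\mu_0$ with $\dot\Phi_t(v)=b_\lambda[\mu_t](\Phi_t(v))$. We differentiate the moments along the flow and then use the structure \eqref{eq:A-structure} to symmetrize the resulting double integrals, exactly as in \cref{prop:conservation}.

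\emph{Finiteness of moments.} Since $\norm{b_\lambda[\mu_t]}_\infty\le 2a_0M_\lambda$ by \cref{lem:mean-field-drift}, we have $\abs{\Phi_t(v)-v}\le 2a_0M_\lambda t$. Hence $\abs{\Phi_t(v)}^2\le 2\abs v^2+8a_0^2M_\lambda^2t^2$, so $\mu_t\in\mathcal P_2$ with second moments bounded uniformly on $[0,T]$. The map $t\mapsto\Phi_t(v)$ is $C^1$, because $b_\lambda[\mu_t]$ is jointly continuous by \cref{lem:mean-field-drift} and $t\mapsto\mu_t$ is $W_1$-continuous. This gives
\[
\frac{\dd}{\dd t}\abs{\Phi_t(v)}^2=2\Phi_t(v)\cdot b_\lambda[\mu_t](\Phi_t(v)),
\]
and the right-hand side is dominated by $C(1+\abs v)$, which is $\mu_0$-integrable. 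Differentiation under the integral is therefore justified, and so is the interchange with the inner integral defining $b_\lambda$. The same domination handles the first moment.

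\emph{Momentum.} Write $s=S_\lambda(\mu_t)$. We get
\[
\frac{\dd}{\dd t}\int x\dd\mu_t=-\iint A(x-y)\bigl(s(x)-s(y)\bigr)\dd\mu_t(y)\dd\mu_t(x).
\]
The integrand is bounded, so Fubini applies. Exchanging $x\leftrightarrow y$ and using $A(-z)=A(z)$ turns the integrand into its negative, so the integral vanishes.

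\emph{Energy.} Similarly,
\[
\frac{\dd}{\dd t}\int\abs x^2\dd\mu_t=-2\iint x^\top A(x-y)\bigl(s(x)-s(y)\bigr)\dd\mu_t\dd\mu_t.
\]
The integrand is bounded by $2a_0M_\lambda\abs x$, which is integrable, so we may symmetrize. Averaging with the swapped version and using $A(-z)=A(z)$ gives
\[
-\iint (x-y)^\top A(x-y)\bigl(s(x)-s(y)\bigr)\dd\mu_t\dd\mu_t.
\]
Now $(x-y)^\top A(x-y)=\bigl(A(x-y)^\top(x-y)\bigr)^\top=\bigl(A(x-y)(x-y)\bigr)^\top=0$, by symmetry and $A(z)z=0$. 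Hence the derivative vanishes. Integrating in time yields \eqref{eq:mean-field-conservation}.

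The main obstacle is only the justification of differentiating the moments under the integral, which the domination above handles. An alternative route that avoids it is to approximate $\mu_0$ by empirical measures $\mu_0^N$ with $W_2(\mu_0^N,\mu_0)\to0$. One then invokes \cref{thm:main-outer} conservation for $X^{\lambda,N}$ and passes to the limit via \eqref{eq:main-particle-limit}. However, $W_1$ convergence alone does not transfer second moments, which would require uniform integrability. So the direct computation is preferable.
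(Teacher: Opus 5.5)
Your proposal is correct and follows essentially the same route as the paper's proof. Both write $\mu_t=(\Phi_t)_\#\mu_0$, use the speed bound $V_\lambda=2a_0M_\lambda$ to keep the second moment finite and to justify differentiating the moments under the $\mu_0$ integral, and then symmetrize the absolutely convergent double integrals, using $A(-z)=A(z)=A(z)^\top$ and $A(z)z=0$ to get zero for both moments.
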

	\begin{proof}
		Write $\mu_t=(\Phi_t)_\#\mu_0$. Since $\abs{\partial_t\Phi_t(x)}\le V_\lambda$ and $\abs{\Phi_t(x)}\le\abs x+V_\lambda T$, the second moment stays finite and the first and second moments may be differentiated under the $\mu_0$ integral. Symmetrizing the resulting double integrals, which converge absolutely because $A$ and the score are bounded and $\mu_t$ has a finite first moment, gives zero for the momentum because $A(-z)=A(z)$, and zero for the second moment, $-\iint(x-y)^\top A(x-y)(S_\lambda(\mu_t)(x)-S_\lambda(\mu_t)(y))\dd\mu_t\dd\mu_t=0$, by symmetry and $A(z)z=0$.
	\end{proof}
	
	\begin{proof}[Proof of \cref{cor:main-joint-limit}]
		The estimate \eqref{eq:main-joint-error} follows from the triangle inequality, 	the labeled coupling \eqref{eq:empirical-W1-coupling}, \eqref{eq:main-factorial-tail}, and \eqref{eq:main-particle-limit}. The	commutation of the two iterated limits is \eqref{eq:commuting-limits} in \cref{thm:continuum-outer-iteration}.
	\end{proof}
	
	\section{Inexact score and system solves}
	\label{sec:inexact}
	
	We call the inexact iterates \emph{admissible} if $\widetilde X^0\in C([0,T];(\R^d)^N)$ with $\widetilde X_0^0=Z^0$, if $\widetilde X^{k+1}\in AC([0,T];(\R^d)^N)$ with $\widetilde X_0^{k+1}=Z^0$, and if $t\mapsto\widetilde s_t^k$ is strongly measurable as an $\Hh$-valued map. At iteration $k$, suppose a measurable function $\eps_k:[0,T]\to[0,\infty)$ gives the \emph{global} excess-objective bound, for almost every $t$,
	\begin{equation}\label{eq:inexact-score-gap}
		0\le
		\J_{\mu_{\widetilde X_t^k}}^\lambda(\widetilde s_t^k)
		-\min_{s\in\Hh}\J_{\mu_{\widetilde X_t^k}}^\lambda(s)
		\le\eps_k(t).
	\end{equation}
	Let the system residual be
	\begin{equation}\label{eq:system-residual}
		\mathcal R_t^{k+1}
		:=\dot{\widetilde X}_t^{k+1}
		-F(\widetilde X_t^{k+1};\widetilde s_t^k),
		\qquad
		r_{k+1}(t):=\abs{\mathcal R_t^{k+1}}_N,
	\end{equation}
	and suppose $r_{k+1}\in L^1(0,T)$. By \eqref{eq:exact-gap}, the triangle inequality, and \eqref{eq:score-H-bound}, for almost every $t$,
	\begin{equation}\label{eq:score-gap-to-H}
		\norm{\widetilde s_t^k-S_\lambda(\mu_{\widetilde X_t^k})}_{\Hh}
		\le\sqrt{\frac{\eps_k(t)}{\lambda}},
		\qquad
		\norm{\widetilde s_t^k}_{\Hh}
		\le R_\lambda+\sqrt{\frac{\eps_k(t)}{\lambda}}.
	\end{equation}
	
	Define the forcing quantity
	\begin{equation}\label{eq:eta-definition}
		\eta_k(t)
		:=2a_0c_0\sqrt{\frac{\eps_k(t)}{\lambda}}+r_{k+1}(t),
	\end{equation}
	and assume $\eta_k\in L^1(0,T)$ for every $k\ge0$. Recall that $\beta_\lambda=2a_0L_{S,\lambda}$.
	
	\begin{theorem}[A posteriori error estimate for the inexact alternating iteration]
		\label{thm:inexact}
		Under \cref{ass:score-space,ass:kernel} and the preceding admissibility, score-gap, and residual assumptions, let $X^{\lambda,N}$ be the exact fixed point from \cref{thm:main-outer}. Put
		\[
		E_k(t):=\abs{\widetilde X_t^k-X_t^{\lambda,N}}_N,
		\qquad
		E_k^*:=\sup_{0\le t\le T}E_k(t).
		\]
		Then
		\begin{equation}\label{eq:inexact-one-step}
			E_{k+1}(t)
			\le
			\int_0^t e^{L_{F,\lambda}(t-r)}
			\left(\beta_\lambda E_k(r)+\eta_k(r)\right)\dd r.
		\end{equation}
		For every $m\ge1$,
		\begin{equation}\label{eq:inexact-unrolled}
			E_m^*
			\le e^{L_{F,\lambda}T}
			\left[
			E_0^*\frac{(\beta_\lambda T)^m}{m!}
			+
			\sum_{j=0}^{m-1}
			\frac{(\beta_\lambda T)^j}{j!}
			\norm{\eta_{m-1-j}}_{L^1(0,T)}
			\right].
		\end{equation}
		Consequently:
		\begin{enumerate}[label=(\roman*)]
			\item If $\norm{\eta_k}_{L^1(0,T)}\to0$, then
			$\widetilde X^k\to X^{\lambda,N}$ uniformly on $[0,T]$.
			\item If
			$\sup_k\norm{\eta_k}_{L^1(0,T)}\le\overline\eta$, then
			\begin{equation}\label{eq:inexact-floor}
				\limsup_{m\to\infty}E_m^*
				\le
				e^{(L_{F,\lambda}+\beta_\lambda)T}\overline\eta.
			\end{equation}
		\end{enumerate}
		No summability of the per-iteration errors is required in part~(i). We use the convention $0^0=1$ in the sums above.
	\end{theorem}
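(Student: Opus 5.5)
The plan is to compare the inexact system step with the exact fixed-point equation \eqref{eq:main-coupled-particle} in integral form, splitting the force discrepancy into a state part, a score-lag part, a score-inexactness part, and the residual. Since $\widetilde X^{k+1}$ is absolutely continuous with $\widetilde X_0^{k+1}=Z^0=X_0^{\lambda,N}$, we write
\[
\widetilde X_t^{k+1}-X_t^{\lambda,N}=\int_0^t\Bigl(\mathcal R_r^{k+1}+F(\widetilde X_r^{k+1};\widetilde s_r^k)-F(X_r^{\lambda,N};S_\lambda(\mu_{X_r^{\lambda,N}}))\Bigr)\dd r .
\]
We insert the intermediate terms $F(X_r^{\lambda,N};\widetilde s_r^k)$ and $F(X_r^{\lambda,N};S_\lambda(\mu_{\widetilde X_r^k}))$.

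The state difference $F(\widetilde X^{k+1}_r;\widetilde s^k_r)-F(X^{\lambda,N}_r;\widetilde s^k_r)$ is the delicate term. The reason is that \eqref{eq:force-state-lip} uses the bounds on $\widetilde s^k_r$, and by \eqref{eq:score-gap-to-H} the $\Hh$-norm of $\widetilde s_r^k$ is only $R_\lambda+\sqrt{\eps_k/\lambda}$, which would spoil the constant $L_{F,\lambda}$.

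\textbf{Main obstacle: the state term.} To keep exactly $L_{F,\lambda}$, I will instead insert the exact score at the opposite end. The state Lipschitz estimate is taken at $X^{\lambda,N}$ versus $\widetilde X^{k+1}$ with the score $S_\lambda(\mu_{\widetilde X_r^k})$, which obeys \eqref{eq:main-score-bound}. The score replacement $\widetilde s^k_r\to S_\lambda(\mu_{\widetilde X_r^k})$ is then made at the configuration $\widetilde X_r^{k+1}$. So the splitting is
\[
F(\widetilde X^{k+1};\widetilde s)-F(\widetilde X^{k+1};S_\lambda(\mu_{\widetilde X^k}))
+F(\widetilde X^{k+1};S_\lambda(\mu_{\widetilde X^k}))-F(X^{\lambda,N};S_\lambda(\mu_{\widetilde X^k}))
+F(X^{\lambda,N};S_\lambda(\mu_{\widetilde X^k}))-F(X^{\lambda,N};S_\lambda(\mu_{X^{\lambda,N}})).
\]
The three differences are bounded as follows:
\begin{itemize}
\item The first difference is at most $2a_0\norm{\widetilde s-S_\lambda(\mu_{\widetilde X^k})}_{L^\infty}\le 2a_0c_0\sqrt{\eps_k/\lambda}$, by \eqref{eq:force-score-lip}, \eqref{eq:rkhs-bounds-1} and \eqref{eq:score-gap-to-H}.
\item The second is at most $L_{F,\lambda}E_{k+1}(r)$ by \eqref{eq:force-state-lip}.
\item The third is at most $\beta_\lambda W_1(\mu_{\widetilde X^k_r},\mu_{X^{\lambda,N}_r})\le\beta_\lambda E_k(r)$, by \eqref{eq:score-W1} and \eqref{eq:empirical-W1-coupling}.
\end{itemize}
Measurability of all these integrands in $r$ follows from the strong measurability of $\widetilde s^k$ and the continuity of the paths. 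Together with $r_{k+1}$, this gives
\[
E_{k+1}(t)\le\int_0^t\bigl(L_{F,\lambda}E_{k+1}+\beta_\lambda E_k+\eta_k\bigr)\dd r .
\]
The integral form of Gronwall's inequality, applied as in \eqref{eq:volterra-recursion}, yields \eqref{eq:inexact-one-step}.

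\textbf{Unrolling.} To obtain \eqref{eq:inexact-unrolled}, I set $f_k(t):=e^{-L_{F,\lambda}t}E_k(t)$. Bounding $e^{L_{F,\lambda}(t-r)}e^{-L_{F,\lambda}t}\le 1$ gives
\[
f_{k+1}(t)\le\beta_\lambda\int_0^tf_k+\norm{\eta_k}_{L^1(0,T)} .
\]
Induction on $m$ then gives
\[
f_m(t)\le E_0^*\frac{(\beta_\lambda t)^m}{m!}+\sum_{j=0}^{m-1}\frac{(\beta_\lambda t)^j}{j!}\norm{\eta_{m-1-j}}_{L^1} .
\]
In the induction step, integrating the polynomial $t^j/j!$ against $\beta_\lambda$ shifts $j\to j+1$, and the new forcing term enters with $j=0$. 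Multiplying by $e^{L_{F,\lambda}T}$ and taking suprema gives \eqref{eq:inexact-unrolled}.

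\textbf{Consequences.} For (ii), bound each $\norm{\eta}$ by $\overline\eta$ and the sum by $e^{\beta_\lambda T}$; the $E_0^*$ term vanishes as $m\to\infty$. For (i), given $\varepsilon>0$, choose $K$ with $\norm{\eta_k}_{L^1}<\varepsilon$ for $k\ge K$, and split the sum into indices $m-1-j\ge K$ and $m-1-j<K$. The first group contributes at most $\varepsilon e^{\beta_\lambda T}$. The second has finitely many fixed $\eta$'s multiplied by $(\beta_\lambda T)^j/j!$ with $j\ge m-K\to\infty$, so it tends to zero. No summability of the $\norm{\eta_k}$ is needed for this argument.
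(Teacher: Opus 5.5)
Your proposal is correct and follows essentially the paper's route: an integrated state/score/residual splitting closed by Gronwall, then Volterra unrolling and a tail-splitting argument for (i). The differences are only bookkeeping. The paper inserts the single term $F(\widetilde X_t^{k+1};s_t^\star)$ with $s_t^\star=S_\lambda(\mu_{X_t^{\lambda,N}})$, uses $s_t^\star$ for the state Lipschitz bound, combines inexactness and lag as $\norm{\widetilde s_t^k-s_t^\star}_{L^\infty}\le c_0\sqrt{\eps_k(t)/\lambda}+L_{S,\lambda}E_k(t)$, and unrolls via the explicit kernel of the iterated operator $\mathcal K^p$ rather than your rescaling $e^{-L_{F,\lambda}t}E_k$; both yield identical constants.
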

	
	\begin{proof}
		Write $X_t:=X_t^{\lambda,N}$, $s_t^\star:=S_\lambda(\mu_{X_t})$, $L:=L_{F,\lambda}$, and $\beta:=\beta_\lambda$. By \cref{thm:main-outer} and \eqref{eq:system-residual},
		\[
		\dot X_t=F(X_t;s_t^\star),
		\qquad
		\dot{\widetilde X}_t^{k+1}
		=F(\widetilde X_t^{k+1};\widetilde s_t^k)
		+\mathcal R_t^{k+1}
		\]
		for almost every $t$, with the same initial configuration.
		
		First, insert the exact minimizer for the previous approximate trajectory:
		\[
		\widetilde s_t^k-s_t^\star
		=
		\left(\widetilde s_t^k
		-S_\lambda(\mu_{\widetilde X_t^k})\right)
		+
		\left(S_\lambda(\mu_{\widetilde X_t^k})
		-S_\lambda(\mu_{X_t})\right).
		\]
		By \eqref{eq:rkhs-bounds-1}, \eqref{eq:score-gap-to-H}, \eqref{eq:score-W1}, and \eqref{eq:empirical-W1-coupling},
		\[
		\norm{\widetilde s_t^k-s_t^\star}_{L^\infty}
		\le
		c_0\sqrt{\frac{\eps_k(t)}{\lambda}}
		+L_{S,\lambda}E_k(t).
		\]
		Moreover, $\norm{s_t^\star}_{\Hh}\le R_\lambda$, so \eqref{eq:force-state-lip} gives
		\[
		\abs{F(Z;s_t^\star)-F(Y;s_t^\star)}_N
		\le L\abs{Z-Y}_N.
		\]
		
		Set $\Delta_t^{k+1}:=\widetilde X_t^{k+1}-X_t$. Inserting $F(\widetilde X_t^{k+1};s_t^\star)$ gives
		\[
		\begin{aligned}
			\dot\Delta_t^{k+1}
			={}&
			F(\widetilde X_t^{k+1};\widetilde s_t^k)
			-F(\widetilde X_t^{k+1};s_t^\star)\\
			&+
			F(\widetilde X_t^{k+1};s_t^\star)
			-F(X_t;s_t^\star)
			+\mathcal R_t^{k+1}.
		\end{aligned}
		\]
		Using \eqref{eq:force-score-lip} and the preceding estimates,
		\[
		\abs{\dot\Delta_t^{k+1}}_N
		\le
		LE_{k+1}(t)+\beta E_k(t)+\eta_k(t).
		\]
		Since $\Delta^{k+1}$ is absolutely continuous and $\Delta_0^{k+1}=0$,
		\begin{equation}\label{eq:inexact-pre-gronwall}
			E_{k+1}(t)
			\le
			\int_0^t
			\left(
			LE_{k+1}(r)+\beta E_k(r)+\eta_k(r)
			\right)\dd r.
		\end{equation}
		The integral form of Gronwall's inequality proves \eqref{eq:inexact-one-step}. This argument does not require differentiating the norm at its zeros.
		
		For nonnegative $f$, define
		\[
		(\mathcal Kf)(t):=\int_0^t e^{L(t-r)}f(r)\dd r.
		\]
		An induction using Tonelli's theorem gives, for $p\ge1$,
		\[
		(\mathcal K^pf)(t)
		=
		\int_0^t e^{L(t-r)}
		\frac{(t-r)^{p-1}}{(p-1)!}f(r)\dd r.
		\]
		Repeated substitution in $E_{k+1}\le\beta\mathcal KE_k+\mathcal K\eta_k$ therefore yields
		\[
		E_m
		\le
		\beta^m\mathcal K^mE_0
		+
		\sum_{j=0}^{m-1}
		\beta^j\mathcal K^{j+1}\eta_{m-1-j}.
		\]
		Bounding $e^{L(t-r)}$ by $e^{LT}$ and $E_0(r)$ by $E_0^*$ gives \eqref{eq:inexact-unrolled}.
		
		For the asymptotic conclusions, put $a_k:=\norm{\eta_k}_{L^1(0,T)}$ and $w_j:=(\beta T)^j/j!$. Then $\sum_{j\ge0}w_j=e^{\beta T}$ and
		\[
		E_m^*\le e^{LT}
		\left(E_0^*w_m+\sum_{j=0}^{m-1}w_ja_{m-1-j}\right).
		\]
		If $a_k\to0$, then $M:=\sup_k a_k<\infty$. For $m\ge J$,
		\[
		\sum_{j=0}^{m-1}w_ja_{m-1-j}
		\le
		\sum_{j=0}^{J-1}w_ja_{m-1-j}
		+M\sum_{j\ge J}w_j.
		\]
		First let $m\to\infty$ and then $J\to\infty$. Together with $w_m\to0$, this proves part~(i). If $a_k\le\overline\eta$, the convolution sum is at most $e^{\beta T}\overline\eta$, which proves part~(ii).
	\end{proof}
	
	The quantity $E_0^*$ in \eqref{eq:inexact-unrolled} admits a computable upper bound. Indeed, $\abs{\dot X_t^{\lambda,N}}_N\le V_\lambda$ by \eqref{eq:force-bound} and \eqref{eq:main-score-bound}, so
	\begin{equation}\label{eq:initial-error-bound}
		E_0^*
		\le
		\sup_{0\le t\le T}\abs{\widetilde X_t^0-Z^0}_N
		+V_\lambda T.
	\end{equation}
	For the standard initialization $\widetilde X^0\equiv Z^0$, this reduces to $E_0^*\le V_\lambda T$. After substituting this upper bound, the estimate depends only on the prescribed initial guess, the available global score-gap and system-residual bounds, and the analytic constants. Integrated sufficient conditions and a conservative implicit-midpoint system solver with a computable defect are given in Appendix~\ref{sm:inexact}.

	\begin{remark}	\label{rem:restricted-class}
		Suppose the score step is solved only over a nonlinear class $\mathcal G\subset\Hh$. Then, with $\J:=\J^\lambda_{\mu_{\widetilde X_t^k}}$, the quantity required in \eqref{eq:inexact-score-gap} decomposes as
		\[
		\underbrace{\J(\widetilde s)-\inf_{s\in\mathcal G}\J(s)}_{\text{optimization}} +\underbrace{\inf_{s\in\mathcal G}\J(s)-\inf_{s\in\Hh}\J(s)}_{\text{approximation}}.
		\]
		An optimizer gap relative only to $\mathcal G$ does not control the second term. When density-level rather than same-sample empirical accuracy is desired, a separate generalization term is also necessary.
	\end{remark}
	
	\section{Singular collision kernels via score-difference cancellation}
	\label{sec:singular-effective-interaction}
	
	At fixed $\lambda>0$ and under \cref{ass:score-space}, we extend the theory using the cancellation in $A(x-y)(s(x)-s(y))$. This mechanism was introduced in \cite{CarrilloDelgadinoWu2023} for a mollified score, where Lemma~2.5 gives the Lipschitz estimate with weight $\abs{x-y}^{\gamma+2}$ and Theorem~1.3 the mean-field limit in $W_\infty$ for $\gamma\in(-3,0]$, with an initial minimal-separation condition for $\gamma\in(-3,-2)$. Here the score is the Tikhonov minimizer, so the estimates are formulated for $s\in\Hh$ with constants proportional to $\norm s_{\Hh}$, which is what the $\Hh$-norm stability \eqref{eq:score-H-W1} feeds into the Volterra recursion: an effective bounded-Lipschitz condition covers far-field-truncated Landau kernels with $\gamma\ge-2$ and the uncut case $\gamma=-2$ (the Coulomb case when $d=2$), and for the uncut three-dimensional Coulomb kernel, integrated estimates give local factorial convergence of the continuum iteration from bounded, compactly supported initial densities.
	
	For a matrix $A$ defined on $\R^d\setminus\{0\}$ and $s\in\Hh$, set
	\begin{equation}\label{eq:effective-pair-kernel}
		K_s(x,y):=A(x-y)(s(x)-s(y)),\qquad x\ne y,
	\end{equation}
	and set $K_s(x,x)=0$ whenever this gives a continuous extension. We impose regularity directly on this pair interaction, so that the score difference can compensate for the singularity of $A$ (cf.\ \cite[Lemma~2.5]{CarrilloDelgadinoWu2023}).
	\begin{assumption}[Effective interaction regularity]
		\label{ass:effective-interaction}
		There are constants $\kappa_0,\kappa_1<\infty$ such that, for every $s\in\Hh$,
		\begin{align}
			\norm{K_s}_{L^\infty(\R^d\times\R^d)}
			&\le\kappa_0\norm{s}_{\Hh},\label{eq:effective-bound}\\
			\abs{K_s(x,y)-K_s(x',y')}
			&\le\kappa_1\norm{s}_{\Hh}
			\bigl(\abs{x-x'}+\abs{y-y'}\bigr).
			\label{eq:effective-lipschitz}
		\end{align}
	\end{assumption}
	By linearity in $s$, the first bound also controls score perturbations in $\Hh$, so we use the Hilbert-space stability constant
	\begin{equation}\label{eq:Lambda-S-definition}
		\Lambda_{S,\lambda}
		:=\frac1\lambda\left(c_2+2c_0c_1\frac{c_D}{\lambda}\right),
	\end{equation}
	so that \eqref{eq:score-H-W1} reads $\norm{S_\lambda(\mu)-S_\lambda(\nu)}_{\Hh}\le\Lambda_{S,\lambda}W_1(\mu,\nu)$. The following theorem replaces \cref{ass:kernel} by \cref{ass:effective-interaction} and retains the remaining hypotheses of the preceding results.
	
	\begin{theorem}[Extension of the main results under effective-interaction regularity]
		\label{thm:effective-interaction-extension}
		Under \cref{ass:score-space,ass:effective-interaction}, define the particle force, including the zero diagonal convention, by
		\[
		F_i(Z;s):=-\frac1N\sum_{j=1}^N K_s(z_i,z_j),
		\]
		and set
		\begin{equation}\label{eq:effective-constants}
			V_\lambda^{\rm eff}:=\kappa_0R_\lambda,
			\qquad L_{F,\lambda}^{\rm eff}:=2\kappa_1R_\lambda,
			\qquad \beta_\lambda^{\rm eff}:=\kappa_0\Lambda_{S,\lambda}.
		\end{equation}
		Then the convergence theorem for the exact iteration, its factorial error bound and stopping certificate, the characteristic mean-field theorem, and the theorem on the continuum alternating iteration and commuting limits remain valid with $(V_\lambda,L_{F,\lambda},\beta_\lambda)$ replaced by the constants in \eqref{eq:effective-constants}.  The Dobrushin exponent is
		\begin{equation}\label{eq:effective-Dobrushin-constant}
			C_{\rm mf}^{\rm eff}
			:=2\kappa_1R_\lambda+\kappa_0\Lambda_{S,\lambda}.
		\end{equation}
		
		\Cref{thm:inexact} also remains valid with $L_{F,\lambda}$ and $\beta_\lambda$ replaced by $L_{F,\lambda}^{\rm eff}$ and $\beta_\lambda^{\rm eff}$, and with the forcing quantity replaced by
		\begin{equation}\label{eq:effective-inexact-forcing}
			\eta_k^{\rm eff}(t)
			:=
			\kappa_0\sqrt{\frac{\eps_k(t)}{\lambda}}
			+r_{k+1}(t).
		\end{equation}
		Here the same admissibility and global score-gap assumptions are retained, while the integrability condition is	$\eta_k^{\rm eff}\in L^1(0,T)$ for every $k$. If $A$ is even and symmetric off the diagonal and $A(z)z=0$ for $z\ne0$, the particle conservation statements and the conservation properties of the implicit midpoint scheme (Corollary~\ref{cor:implicit-midpoint}) also remain valid.
	\end{theorem}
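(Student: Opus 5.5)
The plan is to observe that every earlier proof used \cref{ass:kernel} only through three force estimates: the bound \eqref{eq:force-bound}, the state-Lipschitz bound \eqref{eq:force-state-lip}, and the score-Lipschitz bound \eqref{eq:force-score-lip}. Their mean-field counterparts are the bounds of \cref{lem:mean-field-drift} and the bound $\abs{B[s,\rho]-B[\bar s,\rho]}\le2a_0\norm{s-\bar s}_\infty$. I will re-derive these estimates from \cref{ass:effective-interaction}, then rerun each proof verbatim with the new constants.

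The first estimate comes from \eqref{eq:effective-bound} and $\norm{S_\lambda(\mu)}_{\Hh}\le R_\lambda$. Averaging over $j$ gives $\abs{F(Z;S_\lambda(\mu))}_N\le\kappa_0R_\lambda=V_\lambda^{\rm eff}$. For the state-Lipschitz estimate I use \eqref{eq:effective-lipschitz} pairwise, exactly as in the proof of \cref{lem:force-estimates}. This gives $\abs{K_s(z_i,z_j)-K_s(y_i,y_j)}\le\kappa_1\norm s_{\Hh}(\abs{z_i-y_i}+\abs{z_j-y_j})$, and averaging gives $\abs{F(Z;s)-F(Y;s)}_N\le2\kappa_1\norm s_{\Hh}\abs{Z-Y}_N$. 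The constant becomes $L_{F,\lambda}^{\rm eff}$ whenever $\norm s_{\Hh}\le R_\lambda$.

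The score-Lipschitz estimate is where the proof changes. The old argument passed through $L^\infty$ and used $\norm{s-r}_{L^\infty}$. Here I instead use linearity of $s\mapsto K_s$: since $K_s-K_r=K_{s-r}$, \eqref{eq:effective-bound} gives $\abs{F(Z;s)-F(Z;r)}_N\le\kappa_0\norm{s-r}_{\Hh}$. Combining this with the Hilbert-space stability \eqref{eq:score-H-W1} (constant $\Lambda_{S,\lambda}$) in place of \eqref{eq:score-W1} produces the lagged coefficient $\beta_\lambda^{\rm eff}=\kappa_0\Lambda_{S,\lambda}$. The mean-field drift is handled the same way. With $b_\lambda[\mu](x)=-\int K_{S_\lambda(\mu)}(x,y)\dd\mu(y)$, inserting $K_{S_\lambda(\nu)}(x,y)$ under a coupling gives $\norm{b_\lambda[\mu]-b_\lambda[\nu]}_\infty\le(\kappa_1R_\lambda+\kappa_0\Lambda_{S,\lambda})W_1(\mu,\nu)$. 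The spatial Lipschitz constant is $\kappa_1R_\lambda$. The Gronwall argument of \cref{thm:main-mean-field} then gives the Dobrushin exponent $2\kappa_1R_\lambda+\kappa_0\Lambda_{S,\lambda}=C_{\rm mf}^{\rm eff}$.

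Well-posedness of each system step needs the field $(t,Z)\mapsto F(Z;s_t^k)$ to be continuous in $t$. This follows from $\Hh$-continuity of $t\mapsto s_t^k$, which holds by \eqref{eq:score-H-W1} and \eqref{eq:empirical-W1-coupling}. With that in hand, the Volterra recursion, the factorial induction, the stopping certificate of \cref{cor:outer-stopping}, and the continuum iteration of \cref{thm:continuum-outer-iteration} go through unchanged. I use $L_{F,\lambda}^{\rm eff}=2\cdot\kappa_1R_\lambda$, one copy for the spatial argument and one for the law, as before.

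For \cref{thm:inexact}, the modification is in the score-error estimate. Instead of the $L^\infty$ bound, I estimate in $\Hh$:
\[
\norm{\widetilde s_t^k-s_t^\star}_{\Hh}\le\sqrt{\eps_k(t)/\lambda}+\Lambda_{S,\lambda}E_k(t),
\]
using \eqref{eq:score-gap-to-H}. Multiplying by $\kappa_0$ gives the forcing $\eta_k^{\rm eff}$. The state-Lipschitz constant is applied only with the exact score $s_t^\star$, whose $\Hh$-norm is at most $R_\lambda$, so the possibly larger norm of $\widetilde s_t^k$ never enters.

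Conservation is the point I expect to need the most care. The pairing arguments of \cref{prop:conservation} and \cref{cor:mean-field-conservation} involve only off-diagonal pairs $i\ne j$, where $A$ is defined, even, symmetric, and satisfies $A(z)z=0$. Diagonal terms vanish by the zero convention $K_s(x,x)=0$. For coincident particles $z_i=z_j$ with $i\ne j$, the continuous extension also gives $K_s=0$, and the contribution $(z_i-z_j)^\top K_s(z_i,z_j)$ is zero trivially. The implicit midpoint conservation is checked the same way, by the same pairwise identities at the midpoint configuration. The main obstacle is therefore only bookkeeping: checking that no step of the earlier proofs used boundedness or Lipschitz continuity of $A$ itself, rather than of the combination $K_s$. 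The one place it did was the $L^\infty$ route in \eqref{eq:force-score-lip}, which the $\Hh$ route above replaces.
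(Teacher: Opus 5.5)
Your proposal is correct and follows essentially the same route as the paper. You re-derive the three force bounds from linearity of $s\mapsto K_s$ and \cref{ass:effective-interaction}, route score perturbations through the $\Hh$-stability \eqref{eq:score-H-W1} instead of the $L^\infty$ bound, and rerun the Volterra, Dobrushin, inexact, and pairwise-conservation arguments with the new constants; the only cosmetic difference is that you bound the measure-dependence of the drift by a coupling, whereas the paper splits the difference and applies Kantorovich--Rubinstein duality, and the two are equivalent.
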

	
	\begin{proof}
		\emph{Step 1: Force estimates.}  Since $s\mapsto K_s$ is linear and the diagonal terms vanish, \eqref{eq:effective-bound}--\eqref{eq:effective-lipschitz} give, for $Z,Y\in(\R^d)^N$ and $s,r\in\Hh$,
		\begin{equation}\label{eq:effective-force-bounds}
			\begin{aligned}
				\abs{F(Z;s)}_N&\le\kappa_0\norm{s}_{\Hh},\\
				\abs{F(Z;s)-F(Y;s)}_N
				&\le\frac{\kappa_1\norm s_{\Hh}}{N^2}\sum_{i,j=1}^N
				\bigl(\abs{z_i-y_i}+\abs{z_j-y_j}\bigr)
				=2\kappa_1\norm{s}_{\Hh}\abs{Z-Y}_N,\\
				\abs{F(Z;s)-F(Z;r)}_N
				&=\abs{F(Z;s-r)}_N\le\kappa_0\norm{s-r}_{\Hh}.
			\end{aligned}
		\end{equation}
		In \crefrange{sec:stable-score-learner}{sec:inexact}, the collision matrix enters only through \eqref{eq:force-bound}, \eqref{eq:force-state-lip}, and \eqref{eq:force-score-lip}, applied to scores bounded by \eqref{eq:score-H-bound} whose differences are controlled by \eqref{eq:score-W1}.  By \eqref{eq:score-H-bound} and \eqref{eq:score-H-W1}, the three bounds in \eqref{eq:effective-force-bounds} replace these with speed $\kappa_0R_\lambda$, state-Lipschitz constant $2\kappa_1R_\lambda$, and score-change constant $\kappa_0\Lambda_{S,\lambda}$ per unit $W_1$-distance.
		
		\emph{Step 2: The exact alternating iteration.}  For a continuous score path with $\norm{s_t}_{\Hh}\le R_\lambda$, the second bound in \eqref{eq:effective-force-bounds} makes $F(\cdot\,;s_t)$ globally Lipschitz and the third makes it continuous in $t$, so every system step is uniquely and globally solvable.  In the proof of \cref{thm:main-outer}, the two force differences are now bounded by $2\kappa_1R_\lambda D_k(r)$ and $\kappa_0\norm{s_r^k-s_r^{k-1}}_{\Hh}\le\kappa_0\Lambda_{S,\lambda}D_{k-1}(r)$, and $D_0(t)\le\kappa_0R_\lambda t$.  The remainder of that proof and the proof of \cref{cor:outer-stopping} apply verbatim with the constants \eqref{eq:effective-constants}.
		
		\emph{Step 3: Mean-field statements.}  The drift is $b_\lambda[\mu](x)=-\int K_{S_\lambda(\mu)}(x,y)\dd\mu(y)$.  By \eqref{eq:effective-bound}--\eqref{eq:effective-lipschitz} and \eqref{eq:score-H-bound}, $\norm{b_\lambda[\mu]}_\infty\le\kappa_0R_\lambda$ and $\Lips_x(b_\lambda[\mu])\le\kappa_1R_\lambda$.  For $\mu,\nu\in\Pone(\R^d)$,
		\[
		b_\lambda[\mu](x)-b_\lambda[\nu](x)
		=-\int\bigl(K_{S_\lambda(\mu)}-K_{S_\lambda(\nu)}\bigr)(x,y)\dd\mu(y)
		-\int K_{S_\lambda(\nu)}(x,y)\dd(\mu-\nu)(y).
		\]
		The first term is at most $\kappa_0\Lambda_{S,\lambda}W_1(\mu,\nu)$ by linearity, \eqref{eq:effective-bound}, and \eqref{eq:score-H-W1}; the second is at most $\kappa_1R_\lambda W_1(\mu,\nu)$ by Kantorovich--Rubinstein duality, since $y\mapsto K_{S_\lambda(\nu)}(x,y)$ is $\kappa_1R_\lambda$-Lipschitz. Hence \cref{lem:mean-field-drift} holds with $2a_0M_\lambda$, $L_b$, and $K_b$ replaced by $\kappa_0R_\lambda$, $\kappa_1R_\lambda$, and $\kappa_1R_\lambda+\kappa_0\Lambda_{S,\lambda}$.  The proof of \cref{thm:main-mean-field} then gives the characteristic well-posedness and the Dobrushin estimate with exponent $\kappa_1R_\lambda+(\kappa_1R_\lambda+\kappa_0\Lambda_{S,\lambda}) =C_{\rm mf}^{\rm eff}$.  The identification of the particle system as a characteristic solution uses $F_i(Z;S_\lambda(\mu_Z))=b_\lambda[\mu_Z](z_i)$, which holds because the diagonal term vanishes on both sides.  For the continuum alternating iteration, $B[s,\rho](x):=-\int K_s(x,y)\dd\rho(y)$ satisfies the same spatial and measure bounds and $\norm{B[s,\rho]-B[\bar s,\rho]}_\infty\le\kappa_0\norm{s-\bar s}_{\Hh}$. Therefore \eqref{eq:prescribed-score-stability} holds with exponent $2\kappa_1R_\lambda=L_{F,\lambda}^{\rm eff}$ and with $2a_0\norm{s_r-\bar s_r}_\infty$ replaced by $\kappa_0\norm{s_r-\bar s_r}_{\Hh}$, and the proofs of \cref{thm:continuum-outer-iteration} and \cref{cor:main-joint-limit} apply with $\beta_\lambda^{\rm eff}=\kappa_0\Lambda_{S,\lambda}$.
		
		\emph{Step 4: Inexact solves.} Use the notation	$X_t:=X_t^{\lambda,N}$ and	$s_t^\star:=S_\lambda(\mu_{X_t})$ from the proof of	\cref{thm:inexact}. By \eqref{eq:exact-gap}, \eqref{eq:score-H-W1}, and	\eqref{eq:empirical-W1-coupling},
		\[
		\norm{\widetilde s_t^k-s_t^\star}_{\Hh}	\le	\sqrt{\frac{\eps_k(t)}{\lambda}}	+\Lambda_{S,\lambda}E_k(t),	\qquad	\norm{s_t^\star}_{\Hh}\le R_\lambda.
		\]
		Insert $F(\widetilde X_t^{k+1};s_t^\star)$ in the force 	difference, as in the proof of \cref{thm:inexact}.	The bounds \eqref{eq:effective-force-bounds} then give
		\[
		E_{k+1}(t)
		\le
		\int_0^t
		\left(
		L_{F,\lambda}^{\rm eff}E_{k+1}(r)
		+\beta_\lambda^{\rm eff}E_k(r)
		+\eta_k^{\rm eff}(r)
		\right)\dd r.
		\]
		Gronwall's inequality and the same iterated-convolution argument prove all conclusions of \cref{thm:inexact} with these replacements.
		
		\emph{Step 5: Conservation.}  If $A$ is even and symmetric off the diagonal and $A(z)z=0$ for $z\ne0$, then $K_s(y,x)=-K_s(x,y)$ and $(x-y)^\top K_s(x,y)=\bigl(A(x-y)(x-y)\bigr)^\top\bigl(s(x)-s(y)\bigr)=0$ for $x\ne y$, while the diagonal terms vanish.  Hence $\sum_iF_i(Z;s)=0$ and
		\[
		\sum_iz_i\cdot F_i(Z;s)
		=-\frac1{2N}\sum_{i,j}(z_i-z_j)^\top K_s(z_i,z_j)=0 .
		\]
		These are the only identities used in \cref{prop:conservation} and Corollary~\ref{cor:implicit-midpoint} (with $Z$ replaced by the midpoint $\overline Z$ in the latter).
	\end{proof}
	
	\begin{proposition}[Far-field-truncated Landau kernels with $\gamma\ge-2$]
		\label{prop:singular-landau-range}
		Let $d\ge2$, let $\chi_R\in C_c^\infty([0,\infty))$ equal one near zero, and, for $z\ne0$, define
		\begin{equation}\label{eq:origin-singular-landau-kernel}
			A_{\gamma,R}(z)
			:=C_\gamma\chi_R(\abs z)\abs z^{\gamma+2}
			\left(I-\frac{z\otimes z}{\abs z^2}\right).
		\end{equation}
		If $\gamma\ge-2$, then $K_s$ from \eqref{eq:effective-pair-kernel} extends by $K_s(x,x)=0$ and satisfies \cref{ass:effective-interaction}.  Consequently all conclusions of \cref{thm:effective-interaction-extension} hold.  This includes $-2\le\gamma<-1$, where $A_{\gamma,R}$ itself is not globally Lipschitz. For $\gamma=-2$, the conclusion also holds without a far-field cutoff, with $\chi_R\equiv1$; for $d=2$ this is the uncut two-dimensional Coulomb kernel.
	\end{proposition}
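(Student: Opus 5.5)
Since \cref{thm:effective-interaction-extension} already converts \cref{ass:effective-interaction} into all the stated conclusions, the task is to verify \eqref{eq:effective-bound}--\eqref{eq:effective-lipschitz} for $K_s(x,y)=A_{\gamma,R}(x-y)(s(x)-s(y))$ with constants proportional to $\norm s_{\Hh}$. Write $z=x-y$, $P(z)=I-z\otimes z/\abs z^2$, $\phi(r)=C_\gamma\chi_R(r)r^{\gamma+2}$, so $A_{\gamma,R}(z)=\phi(\abs z)P(z)$. The basic inputs from \cref{ass:score-space} are $\abs{s(x)-s(y)}\le c_1\norm s_{\Hh}\abs z$ and $\abs{s(x)-s(y)}\le 2c_0\norm s_{\Hh}$, together with the second-order Taylor expansion $s(x)-s(y)=Ds(y)z+O(c_H\norm s_{\Hh}\abs z^2)$.

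For the bound \eqref{eq:effective-bound}, I would split into $\abs z\le1$ and $\abs z>1$. Near the diagonal, $\abs{K_s}\le C\abs z^{\gamma+2}c_1\norm s_{\Hh}\abs z=C\abs z^{\gamma+3}\norm s_{\Hh}$, which is bounded and tends to zero as $\abs z\to0$ since $\gamma+3\ge1$; this also justifies the extension $K_s(x,x)=0$. Far away, the cutoff makes $\phi$ bounded, so $\abs{K_s}\le 2c_0\sup\abs\phi\norm s_{\Hh}$. For $\gamma=-2$ without cutoff, $\phi\equiv C_{-2}$ is constant, so boundedness is immediate everywhere.

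For the Lipschitz estimate \eqref{eq:effective-lipschitz}, it suffices to bound the gradient of $K_s$ in $(x,y)$ on $x\ne y$ and then pass across the diagonal by continuity, since segments crossing the null set $\{x=y\}$ can be perturbed. Differentiating gives two kinds of terms. The first is $A(z)$ times derivatives of $s$, of size $\abs{\phi(\abs z)}c_1\norm s_{\Hh}$. The second is $\nabla A(z)$ times $(s(x)-s(y))$, of size $\abs{\nabla A(z)}\min(c_1\abs z,2c_0)\norm s_{\Hh}$, and here $\abs{\nabla A(z)}\lesssim\abs z^{\gamma+1}$ for small $\abs z$. Near the diagonal the second term is therefore $\lesssim\abs z^{\gamma+2}\norm s_{\Hh}$, and the first is also $\lesssim\abs z^{\gamma+2}\norm s_{\Hh}$. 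Both are bounded exactly when $\gamma\ge-2$.

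The main obstacle is the far field in the uncut case $\gamma=-2$. There $\phi$ is constant, so the first-type term is bounded by $C c_1\norm s_{\Hh}$, but the second-type term involves $\nabla P(z)=O(\abs z^{-1})$ multiplied by $\abs{s(x)-s(y)}\le2c_0\norm s_{\Hh}$, which is bounded for $\abs z\ge1$. In the truncated case with $\gamma\ge-2$, $\phi$ and $\phi'$ are bounded on $[1,\infty)$ because $\chi_R$ has compact support, so the far field is harmless; this is the only place the cutoff is used. For $\gamma=-2$ near the origin, I would check that the $\abs z^{0}$ bound for $\nabla A(z)(s(x)-s(y))$ does not require Taylor cancellation, since $\abs{\nabla P}\abs z\le C$ suffices. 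Collecting the bounds yields $\kappa_0,\kappa_1$ depending only on $c_0,c_1,C_\gamma,\chi_R$, which proves the proposition via \cref{thm:effective-interaction-extension}.
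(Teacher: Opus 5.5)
Your proof is correct and follows essentially the paper's argument. The cancellation $\abs{s(x)-s(y)}\le c_1\norm s_{\Hh}\abs{x-y}$ gives the size bound and the diagonal extension. The off-diagonal derivative bound ($\abs{DA_{\gamma,R}(z)}\lesssim\abs z^{\gamma+1}+\abs z^{\gamma+2}$ on the support, and $\abs{DP(z)}\lesssim\abs z^{-1}$ in the uncut case $\gamma=-2$) makes $D_xK_s$ and $D_yK_s$ bounded, and continuity carries the Lipschitz bound across the diagonal. The differences are cosmetic: the paper crosses the diagonal by noting that a segment meets the subspace $\{x=y\}$ in at most one point unless it lies in it, instead of perturbing the segment. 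In the uncut case it also uses $\abs{s(x)-s(y)}\le c_1\norm s_{\Hh}\abs{x-y}$ globally, so the far field you flag as the main obstacle requires no separate treatment.
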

	
	\begin{proof}
		Put $p:=\gamma+2\ge0$, $P(z):=I-z\otimes z/\abs z^2$, and $M(z):=C_\gamma\chi_R(\abs z)\abs z^pP(z)$, so that $K_s(x,y)=M(x-y)\bigl(s(x)-s(y)\bigr)$ for $x\ne y$.  Let $\rho_R<\infty$ be such that $\chi_R(r)=0$ for $r\ge\rho_R$.  Throughout, $z=x-y$ and $L_s:=\norm{Ds}_\infty\le c_1\norm s_{\Hh}$.
		
		\emph{Step 1: Size and diagonal extension.}  Since $P(z)$ is an orthogonal projection and $\abs{s(x)-s(y)}\le L_s\abs z$,
		\begin{equation}\label{eq:singular-cancellation-size}
			\abs{K_s(x,y)}
			\le\abs{C_\gamma}\norm{\chi_R}_\infty L_s\abs z^{p+1}\mathbf 1_{\{\abs z<\rho_R\}}
			\le\abs{C_\gamma}\norm{\chi_R}_\infty\rho_R^{p+1}L_s .
		\end{equation}
		Since $p+1>0$, $K_s(x,y)\to0$ as $\abs{x-y}\to0$, so the extension $K_s(x,x)=0$ is continuous, and \eqref{eq:effective-bound} holds with $\kappa_0=\abs{C_\gamma}\norm{\chi_R}_\infty\rho_R^{p+1}c_1$.
		
		\emph{Step 2: Derivative bound off the diagonal.}  The map $M$ is smooth on $\R^d\setminus\{0\}$ and vanishes for $\abs z\ge\rho_R$.  Since $\abs{D(\abs z^p)}=p\abs z^{p-1}$, $\abs{DP(z)}\le C\abs z^{-1}$, and $\abs{D(\chi_R(\abs z))}\le\norm{\chi_R'}_\infty$, there is $C_M$, depending only on $(d,\gamma,C_\gamma,\chi_R)$, with
		\[
		\abs{DM(z)}\le C_M\bigl(\abs z^{p-1}+\abs z^p\bigr)
		\mathbf 1_{\{\abs z<\rho_R\}},\qquad z\ne0 .
		\]
		For $x\ne y$ and $h\in\R^d$,
		\begin{align*}
			D_xK_s(x,y)h&=\bigl(DM(z)h\bigr)\bigl(s(x)-s(y)\bigr)+M(z)Ds(x)h,\\
			D_yK_s(x,y)h&=-\bigl(DM(z)h\bigr)\bigl(s(x)-s(y)\bigr)-M(z)Ds(y)h .
		\end{align*}
		Using $\abs{s(x)-s(y)}\le L_s\abs z$ and $\abs z<\rho_R$ on the support,
		\begin{equation}\label{eq:singular-cancellation-gradient}
			\abs{D_xK_s(x,y)}+\abs{D_yK_s(x,y)}
			\le C_1L_s,
			\qquad
			C_1:=2C_M\bigl(\rho_R^p+\rho_R^{p+1}\bigr)
			+2\abs{C_\gamma}\norm{\chi_R}_\infty\rho_R^p .
		\end{equation}
		
		\emph{Step 3: Global Lipschitz bound.}  Let $w=(x,y)$ and $w'=(x',y')$, and put $w(\tau):=(1-\tau)w+\tau w'$.  The diagonal $\Delta:=\{(u,u):u\in\R^d\}$ is a linear subspace.  If the segment meets $\Delta$ in two points, it lies in $\Delta$ and $K_s$ vanishes along it. Otherwise it meets $\Delta$ in at most one parameter $\tau_*$.  On each subinterval not containing $\tau_*$, the function $\tau\mapsto K_s(w(\tau))$ is $C^1$, and by \eqref{eq:singular-cancellation-gradient} its derivative is bounded by $C_1L_s(\abs{x-x'}+\abs{y-y'})$.  Since $K_s$ is continuous at the diagonal by Step~1, integrating over both subintervals gives
		\[
		\abs{K_s(x,y)-K_s(x',y')}\le C_1L_s\bigl(\abs{x-x'}+\abs{y-y'}\bigr).
		\]
		This is \eqref{eq:effective-lipschitz} with $\kappa_1=C_1c_1$.  The constants depend only on $(d,\gamma,R,C_\gamma,\chi_R,c_1)$.
		
		\emph{Step 4: The uncut case $\gamma=-2$.}  Let $\chi_R\equiv1$, write $e=z/\abs z$ and $P_e=I-e\otimes e$, so that $A_{-2,R}(z)=C_{-2}P_e$ and $K_s(x,y)=C_{-2}P_e\bigl(s(x)-s(y)\bigr)$.  Then $\abs{K_s(x,y)}\le\abs{C_{-2}}\min\{L_s\abs z,2\norm s_\infty\}$, which gives the continuous diagonal extension and $\kappa_0=2\abs{C_{-2}}c_0$.  For $x\ne y$ and $h\in\R^d$, one has $D_xe[h]=P_eh/\abs z$, which is orthogonal to $e$.  With $v=s(x)-s(y)$,
		\[
		(D_xP_e[h])v=-(D_xe[h])(e\cdot v)-e\,(D_xe[h]\cdot v)
		\]
		is a sum of two orthogonal vectors.  Since $D_xe[h]\cdot v=D_xe[h]\cdot P_ev$, its norm is at most $\abs{D_xe[h]}\bigl((e\cdot v)^2+\abs{P_ev}^2\bigr)^{1/2} =\abs{D_xe[h]}\,\abs v\le\abs h\,\abs v/\abs z\le L_s\abs h$.  Adding $\abs{P_eDs(x)h}\le L_s\abs h$ gives $\abs{D_xK_s}\le2\abs{C_{-2}}L_s$, and the same computation with $D_ye[h]=-P_eh/\abs z$ gives $\abs{D_yK_s}\le2\abs{C_{-2}}L_s$.  The segment argument of Step~3 then gives $\kappa_1=2\abs{C_{-2}}c_1$.
	\end{proof}
	
	\begin{proposition}[Sharpness of $\gamma=-2$: very soft and Coulomb potentials]
		\label{prop:coulomb-effective-obstruction}
		Let $d=3$, let $A=A_{\gamma,R}$ be given by \eqref{eq:origin-singular-landau-kernel}, and write $P_e:=I-e\otimes e$. Suppose $s\in C_b^2(\R^3;\R^3)$ satisfies $P_eDs(x_0)e\ne0$ for some $x_0\in\R^3$ and some unit vector $e$.  If $-3<\gamma<-2$, the diagonal modulus of $K_s$ at $x_0$ is exactly of order $\abs{x-y}^{\gamma+3}$, with $\gamma+3\in(0,1)$, so $K_s$ is not Lipschitz.  If $\gamma=-3$ (the three-dimensional Coulomb exponent), $K_s$ has no continuous extension to the diagonal.  Such scores exist in the Gaussian RKHS of \cref{sec:framework}, so for that space neither $A_{\gamma,R}$ with $-3<\gamma<-2$ nor the three-dimensional Coulomb matrix satisfies \cref{ass:effective-interaction}, and the present empirical-measure Picard/Dobrushin proof does not extend to the three-dimensional Coulomb kernel merely by exploiting $A(z)z=0$.
	\end{proposition}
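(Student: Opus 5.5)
The plan is to test $K_s$ along the ray through $x_0$ in direction $e$, i.e.\ at the pairs $(x,y)=(x_0+re,x_0)$ with $r\downarrow0$. Since $\chi_R$ equals one near zero, for small $r$ we have $A_{\gamma,R}(re)=C_\gamma r^{\gamma+2}P_e$. A first-order Taylor expansion with the $C_b^2$ remainder gives
\[
s(x_0+re)-s(x_0)=r\,Ds(x_0)e+O(r^2),
\]
with the $O(r^2)$ term bounded by $\tfrac12\norm{D^2s}_\infty r^2$. Hence
\[
K_s(x_0+re,x_0)=C_\gamma r^{\gamma+3}P_eDs(x_0)e+O(r^{\gamma+4}).
\]
Put $v_0:=P_eDs(x_0)e\ne0$ and $p:=\gamma+3$.

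For $-3<\gamma<-2$ we have $p\in(0,1)$, so $\abs{K_s(x_0+re,x_0)}=\abs{C_\gamma}\abs{v_0}r^{p}(1+o(1))$. This gives a lower bound of exact order $\abs{x-y}^{p}$. Step~1 of the proof of \cref{prop:singular-landau-range}, which uses only the Lipschitz bound on $s$ and $\abs{P}\le1$, gives the matching upper bound $\abs{K_s}\le C\abs{x-y}^{p}$. So the diagonal modulus at $x_0$ is exactly of order $\abs{x-y}^{\gamma+3}$. The continuous extension is $K_s(x_0,x_0)=0$, and $K_s(x_0+re,x_0)/r\sim r^{p-1}\to\infty$, so $K_s$ is not Lipschitz.

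For $\gamma=-3$ we have $p=0$, and $K_s(x_0+re,x_0)\to C_{-3}v_0\ne0$. Replacing $e$ by $-e$ leaves $P_e$ unchanged and changes $v_0$ to $P_eDs(x_0)(-e)=-v_0$, so along the opposite ray the limit is $-C_{-3}v_0$. Two different limits at $(x_0,x_0)$ rule out any continuous extension. This argument does not even need the value zero on the diagonal.

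For existence in the Gaussian RKHS with $d=3$, take $s(x)=c\,k(x,x_1)$ for some $x_1$, i.e.\ $s=K(\cdot,x_1)c\in\Hh$; this is in $C_b^2$. Its derivative is $Ds(x)=-\sigma^{-2}k(x,x_1)\,c\otimes(x-x_1)$, so $Ds(x_0)e=-\sigma^{-2}k(x_0,x_1)\bigl((x_0-x_1)\cdot e\bigr)c$. Choose $x_0-x_1=e$, with $c\perp e$ and $c\ne0$ (possible since $d=3\ge2$). Then $P_eDs(x_0)e=-\sigma^{-2}e^{-1/(2\sigma^2)}c\ne0$.

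Given such an $s$, \eqref{eq:effective-lipschitz} fails for $-3<\gamma<-2$, and for $\gamma=-3$ even \eqref{eq:effective-bound} with a continuous zero-diagonal convention is incompatible. So \cref{ass:effective-interaction} fails. The closing sentence of the proposition is then interpretive: the proof of \cref{thm:effective-interaction-extension} uses exactly that assumption, and the cancellation $A(z)z=0$ kills only the component of $s(x)-s(y)$ parallel to $z$, leaving the transverse part $P_eDs\,e$.

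The main point needing care is the sign/direction argument in the Coulomb case, namely that $P_e$ is even in $e$ while the difference quotient is odd. Beyond that, the argument only requires uniform remainder control, which follows from the $C_b^2$ hypothesis.
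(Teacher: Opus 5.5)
Your proposal is correct and follows the paper's proof essentially step for step: the same radial test along $x_0+re$ with remainder bound $\tfrac12\norm{D^2s}_\infty r^2$, the general upper bound $C\norm{Ds}_\infty\abs{x-y}^{\gamma+3}$, the $e\mapsto-e$ sign flip for $\gamma=-3$, and a Gaussian kernel section $K_\sigma(\cdot,x_1)c$ with $c$ not parallel to $e$. One small correction: for $\gamma=-3$ it is \eqref{eq:effective-lipschitz}, not \eqref{eq:effective-bound}, that fails, because $\abs{K_s}\le\abs{C_{-3}}\norm{\chi_R}_\infty\norm{Ds}_\infty$ is bounded there; the Lipschitz bound applied to the pairs $(x_0\pm re,x_0)$ is exactly what your sign-flip argument contradicts.
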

	
	\begin{proof}
		For $r>0$ so small that the cutoff equals one, Taylor's formula gives
		\begin{equation}\label{eq:coulomb-directional-limit}
			K_s(x_0+re,x_0)
			=C_\gamma r^{\gamma+2}P_e\bigl(s(x_0+re)-s(x_0)\bigr)
			=C_\gamma r^{\gamma+3}\bigl(P_eDs(x_0)e+\omega(r)\bigr),
		\end{equation}
		with $\abs{\omega(r)}\le\frac r2\norm{D^2s}_\infty$.  In general, $\abs{K_s(x,y)}$ is at most $C\norm{Ds}_\infty\abs{x-y}^{\gamma+3}$, so this radial order is sharp.  For $-3<\gamma<-2$, $K_s$ extends continuously by zero to the diagonal, but its difference quotient along $x_0+re$ is of order $r^{\gamma+2}\to\infty$, so no Lipschitz extension exists.  For $\gamma=-3$, the right-hand side tends to $C_{-3}P_eDs(x_0)e\ne0$; replacing $e$ by $-e$ leaves $P_e$ unchanged and changes the sign of the limit, so no continuous diagonal extension exists.  Finally, for the Gaussian kernel $K_\sigma$ and $s=K_\sigma(\cdot,y)c$,
		\[
		P_eDs(x_0)e=-\sigma^{-2}e^{-\abs{x_0-y}^2/(2\sigma^2)}\bigl((x_0-y)\cdot e\bigr)P_ec,
		\]
		which is nonzero whenever $(x_0-y)\cdot e\ne0$ and $c$ is not parallel to $e$.
	\end{proof}
	
	The preceding obstruction is pointwise.  For an absolutely continuous law in three dimensions, the residual angular singularity is integrable after characteristics are coupled by their initial labels.  This gives the following continuum result for the uncut three-dimensional Coulomb matrix. The two-point estimate \eqref{eq:Coulomb-pair-difference} below is the Coulomb case of \cite[Lemma~2.5]{CarrilloDelgadinoWu2023}; the integrated bounds \eqref{eq:Coulomb-flow-stability}--\eqref{eq:Coulomb-spatial-velocity} along labeled flows are what the continuum alternating iteration requires.
	
	\begin{lemma}	\label{lem:integrated-coulomb-stability}
		In dimension three, let
		\begin{equation}\label{eq:Coulomb-matrix}
			A_C(z):=\frac{c_C}{\abs z}
			\left(I-\widehat z\otimes\widehat z\right),
			\qquad \widehat z:=\frac z{\abs z},\quad z\ne0.
		\end{equation}
		For $s\in C_b^2(\R^3;\R^3)$, put $L_s:=\norm{Ds}_\infty$ and $H_s:=\norm{D^2s}_\infty$.  If $r=\abs{x-y}>0$, $r'=\abs{x'-y'}>0$, and $\delta=\abs{x-x'}+\abs{y-y'}$, then
		\begin{equation}\label{eq:Coulomb-pair-difference}
			\abs{K_s(x,y)-K_s(x',y')}
			\le \abs{c_C}\,\delta
			\left[H_s+L_s\left(\frac1r+\frac1{r'}\right)\right].
		\end{equation}
		
		Let $\rho_0(\dd a)=f_0(a)\dd a$, where $f_0\in L^\infty(\R^3)$ is a probability density and $D:=\operatorname{supp}\rho_0$ is compact.  All label maps below are continuous on $D$, and we write
		\[
		\norm{X}_{0,D}:=\sup_{a\in D}\abs{X(a)}.
		\]
		Define
		\begin{equation}\label{eq:initial-Riesz-constant}
			J_0:=\sup_{a\in D}\int_D\frac{f_0(b)}{\abs{a-b}}\dd b<\infty.
		\end{equation}
		For a continuous injective map $X:D\to\R^3$, set
		\begin{equation}\label{eq:Coulomb-label-velocity}
			\mathcal V_s[X](a)
			:=-\int_D A_C(X(a)-X(b))
			\bigl(s(X(a))-s(X(b))\bigr)f_0(b)\dd b,
		\end{equation}
		where the integrand at $b=a$ is set to zero.  If $X,Y\in C(D;\R^3)$ are both $m$-co-Lipschitz, that is, $\abs{X(a)-X(b)}\ge m\abs{a-b}$ and $\abs{Y(a)-Y(b)}\ge m\abs{a-b}$ for all $a,b\in D$, then
		\begin{align}
			\norm{\mathcal V_s[X]-\mathcal V_s[Y]}_{0,D}
			&\le2\abs{c_C}\left(H_s+\frac{2L_sJ_0}{m}\right)
			\norm{X-Y}_{0,D},
			\label{eq:Coulomb-flow-stability}\\
			\norm{\mathcal V_s[X]}_{0,D}
			&\le\abs{c_C}L_s,
			\label{eq:Coulomb-velocity-bound}\\
			\norm{\mathcal V_s[X]-\mathcal V_r[X]}_{0,D}
			&\le\abs{c_C}\Lips(s-r).
			\label{eq:Coulomb-score-stability}
		\end{align}
		Moreover,
		\begin{equation}\label{eq:Coulomb-spatial-velocity}
			\abs{\mathcal V_s[X](a)-\mathcal V_s[X](a')}
			\le\abs{c_C}\left(H_s+\frac{2L_sJ_0}{m}\right)
			\abs{X(a)-X(a')}.
		\end{equation}
		If $d_0=\operatorname{diam}D$, then $J_0\le2\pi\norm{f_0}_\infty d_0^2$.
	\end{lemma}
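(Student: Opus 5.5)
I would first prove the two-point estimate \eqref{eq:Coulomb-pair-difference}, since every integrated bound follows from it. Write $K_s(x,y)=A_C(z)\,v$ with $z=x-y$ and $v=s(x)-s(y)$, and set $z'=x'-y'$, $v'=s(x')-s(y')$. I would split
\[
K_s(x,y)-K_s(x',y')=A_C(z)(v-v')+\bigl(A_C(z)-A_C(z')\bigr)v' .
\]
For the first term, write $v-v'=\int_0^1 Ds(\cdot)\,\mathrm d\tau$ along the straight path. This gives
\[
v-v'=Ds(\xi)(z-z')+O(H_s\delta\,r)
\]
for a suitable base point $\xi$. Then $\abs{A_C(z)}\le\abs{c_C}/r$ bounds the Hessian part by $\abs{c_C}H_s\delta$ and the $Ds$ part by $\abs{c_C}L_s\delta/r$.

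For the second term I use $\abs{v'}\le L_s r'$. I also need $\abs{A_C(z)-A_C(z')}\lesssim\abs{z-z'}/(r\,r')$, which follows from $\bigl|\abs z^{-1}-\abs{z'}^{-1}\bigr|\le\abs{z-z'}/(rr')$ and $\abs{\widehat z-\widehat{z'}}\le 2\abs{z-z'}/\max(r,r')$. Together these give a contribution $\lesssim L_s\delta/r$. Symmetrizing the roles of $(x,y)$ and $(x',y')$ produces the form $1/r+1/r'$. Getting exactly the constant $\abs{c_C}$ rather than a multiple will take care; the cleanest route is to exploit the projection structure as in Step~4 of \cref{prop:singular-landau-range}, estimating the derivative of $K_s$ along the segment joining the two points. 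If the sharp constant resists, I would settle for a constant $C\abs{c_C}$.

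Next come the single-map bounds. The bound \eqref{eq:Coulomb-velocity-bound} is immediate: $\abs{A_C(z)v}\le\abs{c_C}L_s$, because $\abs v\le L_s\abs z$ and $f_0$ has unit mass. The bound \eqref{eq:Coulomb-score-stability} follows the same way by linearity in $s$, with $L_s$ replaced by $\Lips(s-r)$.

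For \eqref{eq:Coulomb-flow-stability}, I apply \eqref{eq:Coulomb-pair-difference} pointwise in $b$ to the points $(X(a),X(b))$ and $(Y(a),Y(b))$. Here $\delta\le 2\norm{X-Y}_{0,D}$, and the co-Lipschitz property gives $r,r'\ge m\abs{a-b}$. Integrating against $f_0(b)\,\mathrm db$ and using \eqref{eq:initial-Riesz-constant} gives
\[
\abs{c_C}\,2\norm{X-Y}_{0,D}\bigl(H_s+2L_sJ_0/m\bigr).
\]
The diagonal $b=a$ has measure zero, so the convention there is harmless.

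The spatial estimate \eqref{eq:Coulomb-spatial-velocity} uses the same argument with the points $(X(a),X(b))$ and $(X(a'),X(b))$, so that $\delta=\abs{X(a)-X(a')}$. The singular weights $1/\abs{a-b}$ and $1/\abs{a'-b}$ are both controlled by $J_0$, since $a,a'\in D$.

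Finally, for the bound on $J_0$, I note that $D\subset B(a,d_0)$ for every $a\in D$. Then
\[
\int_D\frac{f_0(b)}{\abs{a-b}}\,\mathrm db\le\norm{f_0}_\infty\int_{B(0,d_0)}\abs{w}^{-1}\,\mathrm dw=2\pi\norm{f_0}_\infty d_0^2 .
\]

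The main obstacle is the sharp two-point estimate near the diagonal. There the angular factor $\widehat z$ varies at rate $1/r$, and this must be paired with the factor $r'$ coming from $v'$ without losing extra factors.
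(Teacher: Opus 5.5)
Your derivations of \eqref{eq:Coulomb-velocity-bound}, \eqref{eq:Coulomb-score-stability}, \eqref{eq:Coulomb-flow-stability}, \eqref{eq:Coulomb-spatial-velocity} and of the bound on $J_0$ from the two-point estimate are correct and coincide with the paper's. The gap is the two-point estimate \eqref{eq:Coulomb-pair-difference} itself, which is the substance of the lemma.

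\textbf{The constant matters.} It produces the explicit factors in \eqref{eq:Coulomb-flow-stability} and \eqref{eq:Coulomb-spatial-velocity}, which later define $Q_\lambda$ and $T_0$ in \cref{thm:continuum-Coulomb-waveform}. Settling for $C\abs{c_C}$ therefore does not prove the statement.

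\textbf{Your split cannot reach the sharp constant.} Bounding the two pieces of $A_C(z)(v-v')+(A_C(z)-A_C(z'))v'$ separately by norms separates $v/r$ from $v'/r'$ and so ignores their cancellation. For $e=e'$ the true difference is $c_CP_e(v/r-v'/r')=O(H_s\delta)$, while each piece is of size $L_s\delta/r$. Concretely, take $y=y'$, $r'=(1+\eps)r$ and angle $\eps$ between $z$ and $z'$. Then $\abs{z-z'}\approx\sqrt2\,\eps r$ and the exact operator norm is $\norm{P_e/r-P_{e'}/r'}\approx\frac{1+\sqrt5}{2}\eps/r$. Either ordering of your split therefore yields an $L_s$-coefficient of about $(\sqrt2+\frac{1+\sqrt5}{2})\eps\approx3.03\,\eps$. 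The right-hand side of \eqref{eq:Coulomb-pair-difference} has only $\delta(1/r+1/r')\approx2\sqrt2\,\eps\approx2.83\,\eps$, and for small $r$ the $H_s$-terms are negligible.

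\textbf{The segment fallback fails for Coulomb.} In Step~4 of \cref{prop:singular-landau-range}, integrating the derivative along the segment works because the derivative is uniformly bounded. For $A_C$ it is of size $H_s+L_s/\abs{z(\tau)}$ with $z(\tau)=(1-\tau)z+\tau z'$. The segment may cross the diagonal (e.g.\ $x'=y$, $y'=x$), where $K_s$ has no continuous extension by \cref{prop:coulomb-effective-obstruction}. If $z(\tau)$ comes within distance $\eta$ of the origin, then $\int_0^1\abs{z(\tau)}^{-1}\dd\tau\sim\log(1/\eta)$, although $r$, $r'$ and $\delta$ stay of order one.

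\textbf{The missing idea} is to absorb the singular factor into a difference quotient \emph{before} splitting. Write $K_s(x,y)=c_CP_e\,g(x,y)$ with $g(x,y)=\int_0^1Ds((1-\theta)y+\theta x)e\dd\theta$, so that $\abs g\le L_s$. Then
\[
K_s(x,y)-K_s(x',y')=c_C(P_e-P_{e'})g(x,y)+c_CP_{e'}\bigl(g(x,y)-g(x',y')\bigr)
\]
sees only the change of direction. One has $\norm{P_e-P_{e'}}=\sin\theta\le\abs{e-e'}$ and $\abs{g(x,y)-g(x',y')}\le H_s\delta+L_s\abs{e-e'}$. Hence the difference is at most $\abs{c_C}(H_s\delta+2L_s\abs{e-e'})$. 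The exact identity $\abs{e-e'}^2=(\abs{z-z'}^2-(r-r')^2)/(rr')\le\delta^2/(rr')$, together with $1/\sqrt{rr'}\le\frac12(1/r+1/r')$, then gives \eqref{eq:Coulomb-pair-difference} with constant exactly $\abs{c_C}$. This argument uses only the two endpoint configurations and never crosses the diagonal.
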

	
	\begin{proof}
		\emph{Step 1: The pair estimate.}  For $x\ne y$ write $z=x-y$, $r=\abs z$, $e=z/r$, and $P_e=I-e\otimes e$.  Since $s(x)-s(y)=\int_0^1Ds\bigl(y+\theta z\bigr)z\dd\theta$, the singular factor is absorbed into a difference quotient:
		\begin{equation}\label{eq:Coulomb-difference-quotient}
			K_s(x,y)=c_CP_e\,g(x,y),
			\qquad
			g(x,y):=\int_0^1Ds\bigl((1-\theta)y+\theta x\bigr)e\dd\theta,
			\qquad \abs{g(x,y)}\le L_s .
		\end{equation}
		Let $z'=x'-y'$, $r'=\abs{z'}$, and $e'=z'/r'$.  Then
		\[
		K_s(x,y)-K_s(x',y')
		=c_C(P_e-P_{e'})g(x,y)+c_CP_{e'}\bigl(g(x,y)-g(x',y')\bigr).
		\]
		The matrix $P_e-P_{e'}=e'\otimes e'-e\otimes e$ vanishes on $\{e,e'\}^\perp$ and has eigenvalues $\pm\sin\theta$ on $\operatorname{span}\{e,e'\}$, where $\theta$ is the angle between $e$ and $e'$; hence $\norm{P_e-P_{e'}}_{\rm op}=\sin\theta\le2\sin(\theta/2)=\abs{e-e'}$. Next, the points $p_\theta=(1-\theta)y+\theta x$ and $p'_\theta=(1-\theta)y'+\theta x'$ satisfy $\abs{p_\theta-p'_\theta}\le\delta$, so
		\[
		\begin{aligned}
			\abs{g(x,y)-g(x',y')}
			&\le\int_0^1\Bigl(\abs{Ds(p_\theta)-Ds(p'_\theta)}+\abs{Ds(p'_\theta)}\,\abs{e-e'}\Bigr)\dd\theta\\
			&\le H_s\delta+L_s\abs{e-e'} .
		\end{aligned}
		\]
		Consequently $\abs{K_s(x,y)-K_s(x',y')}\le\abs{c_C}\bigl(H_s\delta+2L_s\abs{e-e'}\bigr)$. Finally, expanding $\abs{z-z'}^2=r^2+r'^2-2z\cdot z'$ gives the identity
		\[
		\abs{e-e'}^2=2-\frac{2z\cdot z'}{rr'}
		=\frac{\abs{z-z'}^2-(r-r')^2}{rr'}\le\frac{\abs{z-z'}^2}{rr'},
		\]
		and $\abs{z-z'}\le\delta$.  By the arithmetic--geometric mean inequality, $\abs{e-e'}\le\delta/\sqrt{rr'}\le\frac\delta2(r^{-1}+r'^{-1})$, which proves \eqref{eq:Coulomb-pair-difference}.
		
		\emph{Step 2: Flow stability.}  Fix $a\in D$ and $b\in D\setminus\{a\}$, and apply \eqref{eq:Coulomb-pair-difference} with $(x,y)=(X(a),X(b))$ and $(x',y')=(Y(a),Y(b))$.  Then $\delta\le\abs{X(a)-Y(a)}+\abs{X(b)-Y(b)}\le2\norm{X-Y}_{0,D}$, and the co-Lipschitz bounds give $r,r'\ge m\abs{a-b}$.  Hence
		\[
		\abs{K_s(X(a),X(b))-K_s(Y(a),Y(b))}
		\le2\abs{c_C}\norm{X-Y}_{0,D}
		\left(H_s+\frac{2L_s}{m\abs{a-b}}\right).
		\]
		Integrating against $f_0(b)\dd b$ over $D$, and using $\int f_0=1$ and \eqref{eq:initial-Riesz-constant}, proves \eqref{eq:Coulomb-flow-stability}; the single label $b=a$ is a $\rho_0$-null set.
		
		\emph{Step 3: Velocity and score bounds.}  By \eqref{eq:Coulomb-difference-quotient}, $\abs{K_s}\le\abs{c_C}L_s$, so the integral \eqref{eq:Coulomb-label-velocity} converges absolutely and $\abs{\mathcal V_s[X](a)}\le\abs{c_C}L_s\int f_0=\abs{c_C}L_s$, which is \eqref{eq:Coulomb-velocity-bound}.  Since $K_s-K_r=K_{s-r}$, the same bound applied to $s-r$ gives \eqref{eq:Coulomb-score-stability}, because $\norm{D(s-r)}_\infty=\Lips(s-r)$.
		
		\emph{Step 4: Spatial regularity.}  Fix $a,a'\in D$ and $b\in D\setminus\{a,a'\}$, and apply \eqref{eq:Coulomb-pair-difference} with $(x,y)=(X(a),X(b))$ and $(x',y')=(X(a'),X(b))$.  Then $\delta=\abs{X(a)-X(a')}$, $r\ge m\abs{a-b}$, and $r'\ge m\abs{a'-b}$, so
		\begin{align*}
			&\abs{K_s(X(a),X(b))-K_s(X(a'),X(b))}\\
			&\qquad\le\abs{c_C}\abs{X(a)-X(a')}
			\left(H_s+\frac{L_s}m\Bigl(\frac1{\abs{a-b}}+\frac1{\abs{a'-b}}\Bigr)\right).
		\end{align*}
		Integrating against $f_0(b)\dd b$ and using \eqref{eq:initial-Riesz-constant} at $a$ and at $a'$ proves \eqref{eq:Coulomb-spatial-velocity}; the labels $b\in\{a,a'\}$ form a $\rho_0$-null set.  In particular $\mathcal V_s[X]$ is continuous on $D$.
		
		\emph{Step 5: The Riesz constant.}  For $a\in D$ one has $D-a\subset \overline{B_{d_0}(0)}$, so
		\[
		J_0\le\norm{f_0}_\infty
		\int_{B_{d_0}(0)}\frac{\dd z}{\abs z}
		=4\pi\norm{f_0}_\infty\int_0^{d_0}\rho\dd\rho
		=2\pi\norm{f_0}_\infty d_0^2.
		\]
	\end{proof}
	
	\begin{theorem}[Convergence of the continuum Coulomb alternating iteration]
		\label{thm:continuum-Coulomb-waveform}
		Let $d=3$, adopt \cref{ass:score-space} and the setting of \cref{lem:integrated-coulomb-stability}, fix $\lambda>0$, and put
		\begin{equation}\label{eq:Coulomb-waveform-constants}
			H_\lambda:=c_HR_\lambda,
			\quad
			Q_\lambda:=\abs{c_C}\bigl(H_\lambda+4L_\lambda J_0\bigr),
			\quad
			V_C:=\abs{c_C}L_\lambda,
			\quad
			\beta_C:=\abs{c_C}c_1\Lambda_{S,\lambda}.
		\end{equation}
		For $4Q_\lambda T_0<1$, that is, for $T_0<\lambda/\bigl(4\abs{c_C}(c_H+4c_1J_0)c_D\bigr)$, so that $T_0=O(\lambda)$, define the continuum alternating iteration by
		\begin{align}
			X_t^0(a)&=a,
			&s_t^k&=S_\lambda\bigl((X_t^k)_\#(f_0\dd a)\bigr),
			\label{eq:Coulomb-waveform-score}\\
			\dot X_t^{k+1}(a)&=\mathcal V_{s_t^k}[X_t^{k+1}](a),
			&X_0^{k+1}(a)&=a.
			\label{eq:Coulomb-waveform-system}
		\end{align}
		Let
		\begin{multline}\label{eq:Coulomb-local-flow-class}
			\mathcal X_{T_0}:=\bigl\{X\in C([0,T_0];C(D;\R^3)):\
			X_0=\operatorname{id}_D,\\
			\tfrac12\abs{a-b}\le\abs{X_t(a)-X_t(b)}
			\le2\abs{a-b}\quad
			(a,b\in D,\ 0\le t\le T_0)\bigr\}.
		\end{multline}
		Every prescribed-score system step has a unique solution in $\mathcal X_{T_0}$, and therefore all its flow maps satisfy
		\begin{equation}\label{eq:Coulomb-local-bilipschitz}
			\frac12\abs{a-b}\le\abs{X_t^k(a)-X_t^k(b)}
			\le2\abs{a-b}.
		\end{equation}
		With $L_{F,C}:=2Q_\lambda$, for $k\ge0$,
		\begin{equation}\label{eq:Coulomb-factorial}
			\norm{X_t^{k+1}-X_t^k}_{0,D}
			\le V_Ce^{L_{F,C}t}
			\frac{\beta_C^kt^{k+1}}{(k+1)!}.
		\end{equation}
		Consequently $X^k$ converges in $C([0,T_0];C(D;\R^3))$ to the unique solution of
		\begin{equation}\label{eq:Coulomb-fixed-flow}
			\dot X_t=\mathcal V_{S_\lambda((X_t)_\#(f_0\dd a))}[X_t],
			\qquad X_0(a)=a,
		\end{equation}
		within the class $\mathcal X_{T_0}$.
		
		More generally, let $T<\infty$.  If every prescribed-score system step in \eqref{eq:Coulomb-waveform-system} has a solution $X^{k+1}\in C([0,T];C(D;\R^3))$ and, uniformly in $k,t$,
		\begin{equation}\label{eq:Coulomb-global-colipschitz}
			\abs{X_t^k(a)-X_t^k(b)}\ge m_T\abs{a-b}
			\quad\text{for some }m_T>0,
		\end{equation}
		then the factorial estimate and the convergence conclusions hold on $[0,T]$, and the limiting equation is unique among continuous flows satisfying the same $m_T$-co-Lipschitz bound, with
		\begin{equation}\label{eq:Coulomb-conditional-LF}
			L_{F,C}:=2\abs{c_C}
			\left(H_\lambda+\frac{2L_\lambda J_0}{m_T}\right).
		\end{equation}
		Thus the uncut Coulomb continuum iteration converges on an explicit common local interval without an a priori separation assumption, and conditionally on arbitrary finite intervals.
	\end{theorem}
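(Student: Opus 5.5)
The plan has two parts: first solve each prescribed-score system step inside the class $\mathcal X_{T_0}$, then run the Volterra induction of \cref{thm:main-outer} in the label-sup norm $\norm{\cdot}_{0,D}$, using \cref{lem:integrated-coulomb-stability} in place of \cref{lem:force-estimates}.

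\emph{Solving a system step in $\mathcal X_{T_0}$.} Fix a score path $s_t=S_\lambda(\rho_t)$ that is continuous in $\Hh$; continuity follows from \eqref{eq:score-H-W1}. By \eqref{eq:main-score-bound} and \eqref{eq:rkhs-bounds-Hessian}, $L_{s_t}\le L_\lambda$ and $H_{s_t}\le H_\lambda$. Define the Picard map $\Psi(X)_t(a)=a+\int_0^t\mathcal V_{s_r}[X_r](a)\dd r$ on $\mathcal X_{T_0}$. Every $X\in\mathcal X_{T_0}$ is $\tfrac12$-co-Lipschitz, so \eqref{eq:Coulomb-spatial-velocity} with $m=1/2$ gives
\[
\abs{\mathcal V_{s_r}[X_r](a)-\mathcal V_{s_r}[X_r](a')}\le Q_\lambda\abs{X_r(a)-X_r(a')}\le 2Q_\lambda\abs{a-a'}.
\]
Integrating in time yields
\[
\abs{a-a'}(1-2Q_\lambda t)\le\abs{\Psi(X)_t(a)-\Psi(X)_t(a')}\le\abs{a-a'}(1+2Q_\lambda t).
\]
Since $4Q_\lambda T_0<1$, we have $2Q_\lambda t\le\tfrac12$ on $[0,T_0]$, so $\Psi$ maps $\mathcal X_{T_0}$ into itself. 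Continuity in $t$ into $C(D)$ follows from \eqref{eq:Coulomb-velocity-bound}. Continuity of $\mathcal V_{s_r}[X_r]$ in $r$ follows from \eqref{eq:Coulomb-flow-stability} together with \eqref{eq:Coulomb-score-stability} and $\Lips(s-r)\le c_1\norm{s-r}_{\Hh}$.

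For contraction, \eqref{eq:Coulomb-flow-stability} with $m=1/2$ gives
\[
\norm{\Psi(X)_t-\Psi(Y)_t}_{0,D}\le 2Q_\lambda\int_0^t\norm{X_r-Y_r}_{0,D}\dd r.
\]
So $\Psi$ is a contraction in the weighted norm $\sup_t e^{-\theta t}\norm{\cdot}_{0,D}$ for large $\theta$, or directly since $2Q_\lambda T_0<1/2$. The class $\mathcal X_{T_0}$ is closed under uniform convergence, because the bi-Lipschitz inequalities pass to pointwise limits. This gives existence and uniqueness in $\mathcal X_{T_0}$, and hence \eqref{eq:Coulomb-local-bilipschitz} for every iterate. $X^0=\mathrm{id}$ trivially belongs to the class.

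\emph{Volterra recursion.} Set $D_k(t)=\norm{X^{k+1}_t-X^k_t}_{0,D}$. For $k=0$, \eqref{eq:Coulomb-velocity-bound} gives $D_0\le V_Ct$. For $k\ge1$, split
\[
\mathcal V_{s^k}[X^{k+1}]-\mathcal V_{s^{k-1}}[X^k]
=\bigl(\mathcal V_{s^k}[X^{k+1}]-\mathcal V_{s^k}[X^k]\bigr)
+\bigl(\mathcal V_{s^k}[X^k]-\mathcal V_{s^{k-1}}[X^k]\bigr).
\]
By \eqref{eq:Coulomb-flow-stability} with $m=1/2$, the first term is at most $L_{F,C}D_k$, since $2\abs{c_C}(H_\lambda+4L_\lambda J_0)=2Q_\lambda$. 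By \eqref{eq:Coulomb-score-stability} and then \eqref{eq:score-H-W1}, the second term is at most
\[
\abs{c_C}c_1\norm{s^k-s^{k-1}}_{\Hh}\le\abs{c_C}c_1\Lambda_{S,\lambda}W_1\bigl((X^k_t)_\#\rho_0,(X^{k-1}_t)_\#\rho_0\bigr)\le\beta_CD_{k-1},
\]
where the last step uses the label coupling, as in \eqref{eq:empirical-W1-coupling}. Gronwall's inequality and the iteration argument of \cref{thm:main-outer} then give \eqref{eq:Coulomb-factorial}. Summability makes $X^k$ Cauchy, and the limit lies in the closed class $\mathcal X_{T_0}$. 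Passing to the limit in the integral equation uses the same two estimates, and shows the limit solves \eqref{eq:Coulomb-fixed-flow}.

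For uniqueness in $\mathcal X_{T_0}$, take two solutions $X,Y$ of \eqref{eq:Coulomb-fixed-flow} in the class. The same splitting gives
\[
\norm{X_t-Y_t}\le(L_{F,C}+\beta_C)\int_0^t\norm{X_r-Y_r}\dd r,
\]
so $X=Y$. The conditional statement on $[0,T]$ is identical, with $m=1/2$ replaced by $m_T$ throughout, which produces \eqref{eq:Coulomb-conditional-LF}.

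\emph{Main obstacle.} The main difficulty is the self-consistency of the co-Lipschitz class. The Coulomb Lipschitz constant $Q_\lambda$ is finite only because the flow is co-Lipschitz, and the co-Lipschitz bound is itself propagated only through $Q_\lambda$. The closed invariant class $\mathcal X_{T_0}$, together with the smallness $4Q_\lambda T_0<1$, breaks this circularity. Some care is also needed that \eqref{eq:Coulomb-flow-stability} applies with the \emph{same} $m$ to both arguments. Every iterate and the limit lie in $\mathcal X_{T_0}$, so this holds.
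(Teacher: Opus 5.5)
Your proposal is correct and follows the paper's proof essentially step for step. You use the Picard map on the closed invariant class $\mathcal X_{T_0}$, with invariance from \eqref{eq:Coulomb-spatial-velocity} (constant $Q_\lambda$) and contraction from \eqref{eq:Coulomb-flow-stability} (constant $2Q_\lambda$), both closed by $2Q_\lambda T_0<\tfrac12$; then the same state/score splitting into the Volterra recursion with $L_{F,C}=2Q_\lambda$ and $\beta_C$, via label coupling and \eqref{eq:score-H-W1}; then the limit, uniqueness, and the $m_T$ conditional version; the only cosmetic differences are that the paper states the system step for an arbitrary $\Hh$-continuous score path bounded by $R_\lambda$ and adds a Gronwall upper Lipschitz bound for the limit flow in the conditional case.
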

	
	\begin{proof}
		For $s\in\Hh$ with $\norm s_{\Hh}\le R_\lambda$, \cref{ass:score-space} gives $L_s\le L_\lambda$ and $H_s\le H_\lambda$.  Hence, for ${\tfrac12}$-co-Lipschitz maps, \eqref{eq:Coulomb-flow-stability} and \eqref{eq:Coulomb-spatial-velocity} hold with constants $2Q_\lambda$ and $Q_\lambda$, respectively, and \eqref{eq:Coulomb-velocity-bound} gives the speed bound $V_C$.  Moreover, for $s,r\in\Hh$, \eqref{eq:Coulomb-score-stability} and \eqref{eq:rkhs-bounds-1} give
		\begin{equation}\label{eq:Coulomb-score-H}
			\norm{\mathcal V_s[X]-\mathcal V_r[X]}_{0,D}\le\abs{c_C}c_1\norm{s-r}_{\Hh}.
		\end{equation}
		
		\emph{Step 1: One system step.}  Let $t\mapsto s_t$ be continuous into $\Hh$ with $\norm{s_t}_{\Hh}\le R_\lambda$.  The set $\mathcal X_{T_0}$ is a nonempty closed subset of the Banach space $C([0,T_0];C(D;\R^3))$, since the constant path $\operatorname{id}_D$ belongs to it and the defining inequalities are preserved under uniform limits.  For $X\in\mathcal X_{T_0}$ define
		\[
		(\Phi X)_t(a):=a+\int_0^t\mathcal V_{s_r}[X_r](a)\dd r.
		\]
		The integrand is continuous in $r$ with values in $C(D;\R^3)$: by \eqref{eq:Coulomb-flow-stability} and \eqref{eq:Coulomb-score-H},
		\[
		\norm{\mathcal V_{s_r}[X_r]-\mathcal V_{s_{r'}}[X_{r'}]}_{0,D}
		\le2Q_\lambda\norm{X_r-X_{r'}}_{0,D}+\abs{c_C}c_1\norm{s_r-s_{r'}}_{\Hh},
		\]
		and each $\mathcal V_{s_r}[X_r]$ is continuous on $D$ by \eqref{eq:Coulomb-spatial-velocity}.  Hence $\Phi X\in C([0,T_0];C(D;\R^3))$ and $(\Phi X)_0=\operatorname{id}_D$.  For $X,Y\in\mathcal X_{T_0}$, \eqref{eq:Coulomb-flow-stability} gives
		\[
		\sup_{0\le t\le T_0}\norm{(\Phi X)_t-(\Phi Y)_t}_{0,D}
		\le2Q_\lambda T_0\sup_{0\le t\le T_0}\norm{X_t-Y_t}_{0,D},
		\]
		and \eqref{eq:Coulomb-spatial-velocity} together with the upper bound $\abs{X_r(a)-X_r(b)}\le2\abs{a-b}$ gives
		\[
		\abs{(\Phi X)_t(a)-(\Phi X)_t(b)-(a-b)}
		\le\int_0^tQ_\lambda\abs{X_r(a)-X_r(b)}\dd r
		\le2Q_\lambda T_0\abs{a-b}.
		\]
		Since $2Q_\lambda T_0<\tfrac12$, the second estimate yields $\tfrac12\abs{a-b}\le\abs{(\Phi X)_t(a)-(\Phi X)_t(b)}\le\tfrac32\abs{a-b}$, so $\Phi$ maps $\mathcal X_{T_0}$ into itself, and the first estimate makes it a contraction.  Banach's fixed-point theorem gives a unique solution of the prescribed-score system step in $\mathcal X_{T_0}$.
		
		\emph{Step 2: Admissible score paths.}  For $X\in C([0,T_0];C(D;\R^3))$, the label coupling gives 
		\[
			W_1\bigl((X_t)_\#\rho_0,(X_{t'})_\#\rho_0\bigr)\le\norm{X_t-X_{t'}}_{0,D}.
		\]
		 Hence, by \eqref{eq:score-H-bound} and \eqref{eq:score-H-W1}, the score path $s_t=S_\lambda((X_t)_\#\rho_0)$ is continuous into $\Hh$ and bounded by $R_\lambda$.  Starting from $X^0=\operatorname{id}_D$ and applying Step~1 inductively, every iterate $X^k$ is well defined in $\mathcal X_{T_0}$, with the same $T_0$; this proves \eqref{eq:Coulomb-local-bilipschitz}.
		
		\emph{Step 3: Factorial recursion.}  Put $D_k(t):=\norm{X_t^{k+1}-X_t^k}_{0,D}$.  Since $X_t^1(a)-a=\int_0^t \mathcal V_{s_r^0}[X_r^1](a)\dd r$, the speed bound gives $D_0(t)\le V_Ct$.  For $k\ge1$, split
		\[
		\mathcal V_{s_r^k}[X_r^{k+1}]-\mathcal V_{s_r^{k-1}}[X_r^k]
		=\bigl(\mathcal V_{s_r^k}[X_r^{k+1}]-\mathcal V_{s_r^k}[X_r^k]\bigr)
		+\bigl(\mathcal V_{s_r^k}[X_r^k]-\mathcal V_{s_r^{k-1}}[X_r^k]\bigr).
		\]
		Both $X_r^{k+1}$ and $X_r^k$ are $\tfrac12$-co-Lipschitz, so the first term is at most $2Q_\lambda D_k(r)$.  By \eqref{eq:Coulomb-score-H}, \eqref{eq:score-H-W1}, and the label coupling, the second term is at most $\abs{c_C}c_1\Lambda_{S,\lambda}D_{k-1}(r)=\beta_CD_{k-1}(r)$.  Hence
		\[
		D_k(t)\le L_{F,C}\int_0^tD_k(r)\dd r
		+\beta_C\int_0^tD_{k-1}(r)\dd r .
		\]
		This is the recursion in the proof of \cref{thm:main-outer} with $(V_\lambda,L_{F,\lambda},\beta_\lambda)$ replaced by $(V_C,L_{F,C},\beta_C)$; the same Gronwall and repeated-integration argument proves \eqref{eq:Coulomb-factorial}.
		
		\emph{Step 4: The limit.}  By \eqref{eq:Coulomb-factorial}, $(X^k)$ is Cauchy in $C([0,T_0];C(D;\R^3))$; its limit $X$ lies in the closed set $\mathcal X_{T_0}$.  Put $s_t:=S_\lambda((X_t)_\#\rho_0)$.  By \eqref{eq:score-H-W1} and the label coupling, $\norm{s_t^k-s_t}_{\Hh}\le\Lambda_{S,\lambda}\norm{X_t^k-X_t}_{0,D}$, so \eqref{eq:Coulomb-flow-stability} and \eqref{eq:Coulomb-score-H} give $\mathcal V_{s_t^k}[X_t^{k+1}]\to\mathcal V_{s_t}[X_t]$ uniformly in $(t,a)$. Passing to the limit in the integral form of \eqref{eq:Coulomb-waveform-system} shows that $X$ solves \eqref{eq:Coulomb-fixed-flow}.  If $X,Y\in\mathcal X_{T_0}$ both solve \eqref{eq:Coulomb-fixed-flow}, the same two estimates give
		\[
		\norm{X_t-Y_t}_{0,D}\le(L_{F,C}+\beta_C)\int_0^t\norm{X_r-Y_r}_{0,D}\dd r,
		\]
		and Gronwall's inequality gives $X=Y$.
		
		\emph{Step 5: Arbitrary finite intervals.}  Assume that every system step exists on $[0,T]$ and that \eqref{eq:Coulomb-global-colipschitz} holds.  Then \cref{lem:integrated-coulomb-stability} applies with $m=m_T$, so Steps~2--4 hold on $[0,T]$ with $L_{F,C}$ given by \eqref{eq:Coulomb-conditional-LF}; no small-time restriction is needed for the Volterra argument itself.  The lower bound \eqref{eq:Coulomb-global-colipschitz} is preserved under uniform limits, so the limit satisfies it and the uniqueness argument of Step~4 applies among continuous flows with the same lower bound.  Moreover, \eqref{eq:Coulomb-spatial-velocity} and Gronwall's inequality give the common upper bound
		\[
		\abs{X_t^k(a)-X_t^k(b)}
		\le \exp\!\left(\abs{c_C}
		\left(H_\lambda+\frac{2L_\lambda J_0}{m_T}\right)t\right)\abs{a-b},
		\]
		so the limit flow is bi-Lipschitz.
	\end{proof}
	
	\begin{remark}[Scope of the Coulomb result]
		\label{rem:coulomb-next-step}
		\Cref{thm:continuum-Coulomb-waveform} uses the averaging of an absolutely continuous density and does not extend to empirical measures: for atoms, the direction-dependent diagonal limit in \eqref{eq:coulomb-directional-limit} remains.  It therefore proves neither a three-dimensional Coulomb particle theorem nor an $N\to\infty$ limit.  By contrast, the very-soft-potential case $\gamma\in(-3,-2)$ of \cite[Theorem~1.3]{CarrilloDelgadinoWu2023} is a particle theorem for the mollified score, proved by a $W_\infty$ argument under an initial minimal-separation condition, up to the maximal existence time of the regularized continuum solution; we do not obtain a Tikhonov analogue.  Extending the local interval to arbitrary $T$ also requires a uniform lower bi-Lipschitz bound, or an equivalent Riesz-potential estimate, that is not supplied by $f_0\in L_c^\infty$ alone.  Finally, all constants retain their poor dependence on $\lambda$, so this characteristic theorem cannot be passed to $\lambda\downarrow0$.  The conditional reverse-entropy result in \cref{cor:density-level-zero-lambda-KL} gives a different density-level route, but requires uniform full-family estimates not established here.
	\end{remark}
	
	\section{Connection to the Landau equation}
	\label{sec:landau-connection}
	
	\subsection{The regularized limit equation}
	
	The limit of the exact iteration solves \eqref{eq:main-coupled-particle}, and passing to the limit as the number of particles tends to infinity yields  \eqref{eq:main-mean-field}. Since this dynamics retains the RKHS score regularization at fixed $\lambda>0$, understanding its relation to the Landau equation \eqref{eq:intro-draft-landau-cauchy} requires examining what happens as the regularization is removed. As a first step, we show below that, under suitable assumptions and for a fixed density, the regularized score approaches the true score as $\lambda\downarrow0$; this provides a starting point for connecting the two dynamics, but does not by itself establish convergence of the time-dependent solutions.
	
	\begin{proposition}\label{prop:static-lambda-bias}
		Let $f>0$ be a $C^1$ probability density with finite Fisher information, $f\dd v\in\Pone(\R^d)$, and suppose $\nabla\log f\in\Hh$.  Then $f\in W^{1,1}(\R^d)$, integration by parts is valid for every $s\in\Hh$, and
		\begin{equation}\label{eq:static-lambda-bias}
			\int\abs{S_\lambda(f\dd v)-\nabla\log f}^2f\dd v
			+\lambda\norm{S_\lambda(f\dd v)}_{\Hh}^2
			\le\lambda\norm{\nabla\log f}_{\Hh}^2.
		\end{equation}
	\end{proposition}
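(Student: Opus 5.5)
The plan is to combine the density-level Hyv\"arinen identity (\cref{prop:density-level-identity}) with the variational characterization of $S_\lambda(f\dd v)$ from \cref{prop:score-existence-gap}. First, \cref{lem:cutoff-ibp} gives $\nabla f\in L^1$. Since $f$ is integrable, this yields $f\in W^{1,1}(\R^d)$. The same lemma, applied with the bounds $\norm{s}_{L^\infty}\le c_0\norm s_{\Hh}$ and $\norm{\Div s}_{L^\infty}\le c_D\norm s_{\Hh}$ of \cref{ass:score-space}, shows that the hypotheses \eqref{eq:density-level-identity-hypotheses} hold for every $s\in\Hh$. Hence integration by parts is valid on $\Hh$.

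Write $q:=\nabla\log f\in\Hh$ and $\mu:=f\dd v$. By \eqref{eq:density-level-identity}, every $s\in\Hh$ satisfies
\[
\J_\mu^\lambda(s)=\int\abs{s-q}^2f\dd v-\int\abs q^2f\dd v+\lambda\norm s_{\Hh}^2 .
\]
So, up to an $s$-independent constant, minimizing $\J_\mu^\lambda$ over $\Hh$ is the same as minimizing $\Phi(s):=\norm{s-q}_{L^2(f)}^2+\lambda\norm s_{\Hh}^2$, and $S:=S_\lambda(\mu)$ minimizes $\Phi$. The Fisher information is finite, so the constant is finite and this reformulation is legitimate.

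The crude estimate follows by comparing with the competitor $s=q\in\Hh$. This gives $\Phi(S)\le\Phi(q)=\lambda\norm q_{\Hh}^2$, which is exactly \eqref{eq:static-lambda-bias}.

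A sharper statement is available from the exact gap identity \eqref{eq:exact-gap} with $s=q$, which also equals $\Phi(q)-\Phi(S)$:
\[
\lambda\norm q_{\Hh}^2-\Phi(S)=\norm{q-S}_{L^2(f)}^2+\lambda\norm{q-S}_{\Hh}^2\ge0.
\]
The inequality \eqref{eq:static-lambda-bias} is the weaker consequence of this, and it is all that is needed.

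The only delicate point is justifying the identity \eqref{eq:density-level-identity} uniformly over $\Hh$, which includes the finiteness of $\int\abs q^2f$. This is handled by \cref{lem:cutoff-ibp} together with the finite Fisher information. The hypothesis $f\dd v\in\Pone$ is not used in this argument, only in the stability theory.
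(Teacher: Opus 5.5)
Your proposal is correct and follows essentially the paper's proof. You use \cref{lem:cutoff-ibp} to get $\nabla f\in L^1$ and to validate the Hyv\"arinen identity on all of $\Hh$, rewrite $\J^\lambda_{f\dd v}(s)$ as $\norm{s-\nabla\log f}_{L^2(f)}^2+\lambda\norm{s}_{\Hh}^2$ minus a finite constant, and compare with the competitor $s=\nabla\log f$ to obtain \eqref{eq:static-lambda-bias}; your extra use of \eqref{eq:exact-gap} is a valid sharpening to an equality with the additional nonnegative term $\norm{\nabla\log f-S_\lambda(f\dd v)}_{L^2(f)}^2+\lambda\norm{\nabla\log f-S_\lambda(f\dd v)}_{\Hh}^2$ on the left, though it is not needed.
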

	
	\begin{proof}
		By \cref{ass:score-space}, every $s\in\Hh$ is bounded with bounded divergence.  Hence \cref{lem:cutoff-ibp} gives $\nabla f\in L^1$, so $f\in W^{1,1}(\R^d)$, and the hypotheses \eqref{eq:density-level-identity-hypotheses} of \cref{prop:density-level-identity} hold for every $s\in\Hh$.  Applying \eqref{eq:density-level-identity} to the definition of $\J_{f\dd v}^\lambda$ gives, for every $s\in\Hh$,
		\begin{equation}\label{eq:density-level-objective}
			\J_{f\dd v}^\lambda(s)
			=\norm{s-\nabla\log f}_{L^2(f)}^2-\norm{\nabla\log f}_{L^2(f)}^2
			+\lambda\norm s_{\Hh}^2 .
		\end{equation}
		Since $\nabla\log f\in\Hh$ and $S_\lambda(f\dd v)$ minimizes $\J_{f\dd v}^\lambda$ over $\Hh$,
		\[
		\J_{f\dd v}^\lambda\bigl(S_\lambda(f\dd v)\bigr)
		\le\J_{f\dd v}^\lambda(\nabla\log f)
		=-\norm{\nabla\log f}_{L^2(f)}^2+\lambda\norm{\nabla\log f}_{\Hh}^2 .
		\]
		Inserting \eqref{eq:density-level-objective} with $s=S_\lambda(f\dd v)$ on the left-hand side and cancelling the common term $-\norm{\nabla\log f}_{L^2(f)}^2$ proves \eqref{eq:static-lambda-bias}.
	\end{proof}
	
	\begin{remark}[Exclusion of Maxwellian scores]\label{rem:maxwellian-exclusion}
		The assumption $\Hh\hookrightarrow C_b^2$ excludes an unbounded affine score such as the exact score of a Maxwellian on $\R^d$, so \cref{prop:static-lambda-bias} does not apply to a Gaussian or BKW benchmark. More is true for score spaces contained in $L^2(\R^d;\R^d)$ with $d\ge3$, such as the Gaussian RKHS: by the argument of \cref{rem:density-level-lambda-conditional}, no strictly positive $C^1$ probability density has its score in such a space, so \cref{prop:static-lambda-bias} applies there to no density at all. Spaces satisfying \cref{ass:score-space} to which it does apply exist, for instance the sum of the Gaussian RKHS and the span of a fixed field $\varphi\in C_b^2(\R^d;\R^d)$, with kernel $K_\sigma+\varphi\otimes\varphi$; for $\varphi(v)=v/\langle v\rangle$ it contains the score of $g\propto e^{-\langle v\rangle}$. For the following approximation statement, assume that $f>0$ is a $C^1$ probability density with $f\dd v\in\Pone(\R^d)$ and finite Fisher information. If $\nabla\log f\notin\Hh$ and $\Hh$ is dense in $L^2(f\dd v;\R^d)$, as is the case for the Gaussian RKHS and every strictly positive $C^1$ density, then \eqref{eq:density-level-objective} still gives $\norm{S_\lambda(f\dd v)-\nabla\log f}_{L^2(f)}\to0$ by density, but without a rate and with $\norm{S_\lambda(f\dd v)}_{\Hh}\to\infty$ (a bounded subsequence would converge weakly in $\Hh$, hence pointwise, to $\nabla\log f$, forcing $\nabla\log f\in\Hh$); if instead $\nabla\log f\in\Hh$, \cref{prop:static-lambda-bias} gives $\norm{S_\lambda(f\dd v)}_{\Hh}\le\norm{\nabla\log f}_{\Hh}$.  Density alone provides no quantitative convergence rate. Such rates may nevertheless be obtained within the same RKHS from approximation estimates that balance the $L^2(f)$ error against the RKHS norm; enlarging the score space is therefore not necessary in general.
	\end{remark}
	
	\subsection{A conditional zero-regularization limit via relative entropy}
	
	By \cref{thm:empirical-lambda-zero-dichotomy}, an RKHS limit as $\lambda\downarrow0$ cannot be expected for empirical measures in general, whereas for densities the integration-by-parts identity gives a route to vanishing score errors. We therefore treat the time evolution of densities separately from the fixed-$N$ theory, using relative entropy with the learned density in the first argument.  This orientation would tolerate a learned flow with smaller support, but the argument below assumes that both densities are strictly positive, together with finite Fisher information and the needed approximation properties.
	
	For this subsection let $d=3$, $c_C>0$, and let $A_C$ be given by \eqref{eq:Coulomb-matrix}.  For a nonnegative matrix $M$, write $\abs{z}_M^2=z^\top Mz$.
	
	\begin{assumption}[Hypotheses for the relative-entropy estimate]
		\label{ass:reverse-entropy-admissibility}
		Fix $T<\infty$.  Let $f$ be a strictly positive classical probability solution of the Coulomb Landau equation on $[0,T]$, with finite energy and entropy.  For some $\kappa,\nu\ge1$ and $\zeta\ge0$, assume
		\begin{align}
			K_1(t)&:=\norm{\langle v\rangle^{-\kappa}\nabla\log f_t}_{L^\infty},
			&K_1&\in L^2(0,T),\label{eq:reverse-K1}\\
			K_2(t)&:=\norm{\langle v\rangle^{-\zeta}D^2\log f_t}_{L^\infty},
			&K_2&\in L^2(0,T),\label{eq:reverse-K2}\\
			K_0(t)&:=\norm{\langle v\rangle^{-\nu}\partial_t\log f_t}_{L^\infty},
			&K_0&\in L^1(0,T).
			\label{eq:reverse-K0}
		\end{align}
		Here $\langle v\rangle=(1+\abs v^2)^{1/2}$.  Put
		\begin{equation}\label{eq:reverse-moment-order}
			m_*:=3+\max\{2\kappa,2\zeta,\nu\}.
		\end{equation}
		Let $g$ solve the score-driven equation
		\begin{equation}\label{eq:forced-score-flow}
			\partial_tg+\nabla\cdot(gU[g,s])=0,
			\qquad
			U[g,s](v):=-\int_{\R^3}A_C(v-w)(s(v)-s(w))g(w)\dd w.
		\end{equation}
		Assume that $g$ is a probability density with weak score $q_g:=\nabla g/g$ on $\{g>0\}$ and that $\KLdiv(g_0\Vert f_0)<\infty$.  Assume also that
		\begin{align}
			\sup_{0\le t\le T}\int_{\R^3}\langle v\rangle^{m_*}(f_t+g_t)\dd v
			&<\infty,\label{eq:reverse-moment-bound}\\
			\sup_{0\le t\le T}\left[
			\int\abs v^2g_t(v)\dd v+
			\abs{\int g_t\log g_t\dd v}\right]&<\infty.
			\label{eq:reverse-entropy-bound}
		\end{align}
		Finally, assume that $g$ is strictly positive, that $f$ and $g$ are $C^1$ in $t$ and $C^2$ in $v$, that $s$ is $C^1$ in $v$ and measurable in $t$, with $s$ and $Ds$ bounded on $[0,T]\times\R^3$, and that
		\begin{equation}\label{eq:reverse-Linfty-Fisher}
			\sup_{0\le t\le T}\bigl(\norm{f_t}_{L^\infty}+\norm{g_t}_{L^\infty}\bigr)<\infty,
			\qquad
			\int_0^T\!\!\int_{\R^3}g_t\abs{\nabla\log g_t}^2\dd v\dd t<\infty .
		\end{equation}
	\end{assumption}
	
	\begin{remark}\label{rem:integrability-justification}
		Put $Q_\infty:=\sup_{0\le t\le T}(\norm{f_t}_{L^\infty}+\norm{g_t}_{L^\infty})$, finite by \eqref{eq:reverse-Linfty-Fisher}, and write $\norm{Ds_t}_\infty$ for the supremum of the operator norm of $Ds_t$. Under \cref{ass:reverse-entropy-admissibility} the following bounds hold; they are all that is used to justify the formal computations in \cref{lem:reverse-entropy-slack,thm:forced-reverse-KL} and in Appendix~\ref{sm:tabary-proofs}.
		\begin{enumerate}[label=(\alph*)]
			\item (\emph{Riesz potentials and velocity fields}) By
			\cref{lem:Coulomb-weighted-loss}, the Riesz potentials of $f_t$ and $g_t$, that is, $\int\abs{v-w}^{-1}f_t(w)\dd w$ and $\int\abs{v-w}^{-1}g_t(w)\dd w$, are bounded by $C_RQ_\infty^{1/3}$ uniformly in $v$ and $t$.  Since $\norm{A_C(z)}_{\rm op}=c_C/\abs z$, $\abs{\nabla\cdot A_C(z)}=2c_C/\abs z^2$, and $\abs{s_t(v)-s_t(w)}\le\norm{Ds_t}_\infty\abs{v-w}$, one has
			\[
			\abs{U[g_t,s_t]}\le c_C\norm{Ds_t}_\infty,
			\qquad
			\abs{\nabla\cdot U[g_t,s_t]}\le4c_C\norm{Ds_t}_\infty C_RQ_\infty^{1/3}.
			\]
			By the mean value theorem, $\abs{U[f_t,\nabla\log f_t](v)}\le c_CK_2(t)(\langle v\rangle^\zeta+M_*)$, and $\abs{\log f_t(v)}\le\abs{\log f_t(0)}+K_1(t)\langle v\rangle^{\kappa+1}$.
			\item (\emph{the good term}) With $r_t=\log(g_t/f_t)$ and
			$q_{g_t}=\nabla\log g_t$, since $\abs{\nabla r_t}^2\le2\abs{q_{g_t}}^2+2K_1(t)^2\langle v\rangle^{2\kappa}$,
			\begin{equation}\label{eq:good-term-L1}
				\mathcal G_t\le2c_CC_RQ_\infty^{1/3}\int_{\R^3}g_t\abs{\nabla r_t}^2\dd v
				\le4c_CC_RQ_\infty^{1/3}\Bigl(\int_{\R^3}g_t\abs{q_{g_t}}^2\dd v
				+K_1(t)^2M_*\Bigr),
			\end{equation}
			so $\mathcal G\in L^1(0,T)$ by \eqref{eq:reverse-Linfty-Fisher} and $K_1\in L^2(0,T)$; likewise
			\[
			\mathcal L_A(s_t,g_t)\le c_CC_RQ_\infty^{1/3}\norm{s_t-q_{g_t}}_{L^2(g_t)}^2
			\]
			by \eqref{eq:Coulomb-weighted-L2}, so $\mathcal L_A(s_\cdot,g_\cdot)\in L^1(0,T)$ as well.  The Fisher-information condition in \eqref{eq:reverse-Linfty-Fisher} is used only here and in (d) below; the same bound justifies, by Fubini's theorem, the symmetrizations in $(v,w)$ in \eqref{eq:forced-extra-term} and in Step~1 of the proof of \cref{lem:reverse-entropy-slack}.
			\item (\emph{square integrability of $r_t$}) Since $x\mapsto x(\log x)^2$
			is increasing on $(0,e^{-2})$, splitting according to whether $g_t\ge e^{-\abs v^2}$ gives $\int g_t(\log^-g_t)^2\dd v\le\int\abs v^4g_t\dd v+C_0$ with an absolute constant $C_0$, while $\log^+g_t\le\log^+Q_\infty$ and $\int g_t(\log f_t)^2\dd v\le2\abs{\log f_t(0)}^2+2K_1(t)^2\int g_t\langle v\rangle^{2\kappa+2}\dd v$. Hence
			\begin{equation}\label{eq:r-square-integrable}
				\int_{\R^3}g_tr_t^2\dd v\le C\bigl(1+K_1(t)^2\bigr),
			\end{equation}
			with $C$ depending only on $Q_\infty$, $M_*$, and $\sup_{0\le t\le T}\abs{\log f_t(0)}$, which is finite because $f$ is continuous in $t$ and strictly positive.
			\item (\emph{absolute continuity of the relative entropy}) Writing
			$\partial_tg_t=-g_t\bigl(q_{g_t}\cdot U[g_t,s_t]+\nabla\cdot U[g_t,s_t]\bigr)$ and using $\abs{\partial_t\log f_t}\le K_0(t)\langle v\rangle^\nu$, (a)--(c) give
			\begin{align}
				&\int_0^T\!\!\int_{\R^3}\abs{\partial_t\bigl(g_t\log(g_t/f_t)\bigr)}\dd v\dd t
				\notag\\
				&\quad\le\int_0^T\Bigl[c_C\norm{Ds_t}_\infty
				\Bigl(\int g_t\abs{q_{g_t}}^2\dd v\Bigr)^{1/2}
				\Bigl(\int g_t(r_t+1)^2\dd v\Bigr)^{1/2}
				\notag\\
				&\qquad\qquad
				+4c_C\norm{Ds_t}_\infty C_RQ_\infty^{1/3}\int g_t\abs{r_t+1}\dd v
				+K_0(t)M_*\Bigr]\dd t<\infty,
				\label{eq:KL-abs-cont}
			\end{align}
			so $t\mapsto\KLdiv(g_t\Vert f_t)$ is absolutely continuous on $[0,T]$, with the derivative computed in \cref{thm:forced-reverse-KL} for almost every $t$.
		\end{enumerate}
		Consequently every integrand in the proofs of \cref{lem:reverse-entropy-slack,thm:forced-reverse-KL} is dominated by a function integrable in $(v,w,t)$, and, in each integration by parts, the relevant vector field and its distributional divergence with respect to the variables being differentiated are integrable. The cutoff argument of \cref{lem:cutoff-ibp} therefore shows that the integral of this divergence vanishes for almost every time: the fields on $\R^6$ in Step~1 of the proof of \cref{lem:reverse-entropy-slack} are not $C^1$ across the diagonal $v=w$, and their divergences contain the factor $\abs{g_t-f_t}(v)\abs{g_t-f_t}(w)/\abs{v-w}$, which is why the Riesz potentials of both $f_t$ and $g_t$ are needed. The moment order $m_*$ in \eqref{eq:reverse-moment-order} suffices, since the exponents that occur are at most $2\kappa+2$, $3+2\zeta$, $\kappa+\zeta$, $\nu$, and $4$.
	\end{remark}
	
	\begin{remark}\label{rem:weaker-classes}
		The estimates below extend to weaker solution classes whenever a strictly positive smooth approximation preserves the relative-entropy identity, the moment, energy, and entropy bounds, the coercive estimate, and the forcing term, and passes to the limit with lower semicontinuity of the relative entropy. We do not pursue this here; in particular, it is not established for the compact-support flow of \cref{thm:continuum-Coulomb-waveform}, whose density is not strictly positive.
	\end{remark}
	
	After relabeling the weak density of \cite{TabaryWeakStrong2025} as $g$ and its strong density as $f$, the unforced good--bad decomposition is its Lemma~2.4, the coercivity of the mollified kernel is its Lemma~2.8, and the good- and bad-term estimates below are the free-Young-parameter versions of its Lemmas~2.9 and~2.11. We do not claim those estimates as new; complete proofs of \cref{lem:weighted-pinsker,lem:cutoff-coercivity,lem:reverse-entropy-slack}, which fix every constant in the present notation, are given in Appendix~\ref{sm:tabary-proofs}. Our additional step is to insert the forcing $s-\nabla\log g$, control it by the weighted score loss, and deduce the conditional density-level consequence.
	
	Throughout, we write
	\begin{equation}\label{eq:reverse-bound-constants}
		\begin{gathered}
			M_*:=\sup_{0\le t\le T}\int_{\R^3}\langle v\rangle^{m_*}(f_t+g_t)\dd v,\\
			E:=\sup_{0\le t\le T}\int_{\R^3}\abs v^2g_t\dd v,
			\qquad
			\mathsf H:=\sup_{0\le t\le T}\abs{\int_{\R^3}g_t\log g_t\dd v},
		\end{gathered}
	\end{equation}
	all finite by \eqref{eq:reverse-moment-bound}--\eqref{eq:reverse-entropy-bound}; note $M_*\ge2$.
	
	\begin{lemma}\label{lem:weighted-pinsker}
		Let $f>0$ and $g$ be probability densities on $\R^d$ and let $\varphi\ge0$ be measurable.  Then
		\begin{equation}\label{eq:weighted-pinsker}
			\int_{\R^d}\abs{g-f}\varphi\dd v
			\le\sqrt{2\KLdiv(g\Vert f)}
			\left(\int_{\R^d}(g+f)\varphi^2\dd v\right)^{1/2}.
		\end{equation}
	\end{lemma}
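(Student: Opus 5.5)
The plan is to write $\abs{g-f}\varphi=\frac{\abs{g-f}}{\sqrt{g+f}}\,\sqrt{g+f}\,\varphi$ on the set $\{g+f>0\}$, which is all of $\R^d$ since $f>0$. The Cauchy--Schwarz inequality then gives
\[
\int\abs{g-f}\varphi\dd v\le\Bigl(\int\frac{(g-f)^2}{g+f}\dd v\Bigr)^{1/2}\Bigl(\int(g+f)\varphi^2\dd v\Bigr)^{1/2}.
\]
This reduces the claim to the unweighted inequality
\[
\int\frac{(g-f)^2}{g+f}\dd v\le2\KLdiv(g\Vert f),
\]
a bound of the triangular-discrimination (Vincze--Le Cam) divergence by the Kullback--Leibler divergence. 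If $\KLdiv(g\Vert f)=+\infty$ there is nothing to prove, so we may assume it is finite.

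For the reduced inequality we prove the pointwise estimate
\[
\frac{(x-1)^2}{x+1}\le2\bigl(x\log x-x+1\bigr),\qquad x\ge0.
\]
Setting $x=g/f$, multiplying by $f$ and integrating then gives
\[
\int\frac{(g-f)^2}{g+f}\dd v\le2\int\bigl(g\log(g/f)-g+f\bigr)\dd v=2\KLdiv(g\Vert f),
\]
since $\int g=\int f=1$. The integrand on the right is nonnegative, which justifies splitting the integral. At $x=0$ the pointwise estimate reads $1\le2$, so the points where $g=0$ are covered.

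To prove the pointwise estimate, set $h(x)=2(x\log x-x+1)-\frac{(x-1)^2}{x+1}$. Then $h(1)=0$ and $h'(x)=2\log x-\frac{(x-1)(x+3)}{(x+1)^2}$, so $h'(1)=0$. A direct computation gives
\[
h''(x)=\frac2x-\frac{8}{(x+1)^3}=\frac{2\bigl((x+1)^3-4x\bigr)}{x(x+1)^3}.
\]
The step that needs checking is that $(x+1)^3-4x>0$ for $x>0$. The AM--GM inequality gives $(x+1)^3\ge(2\sqrt x)^3=8x^{3/2}$. This alone does not settle the matter, so we check directly: $\psi(x)=(x+1)^3-4x$ satisfies $\psi'(x)=3(x+1)^2-4=0$ at $x=2/\sqrt3-1$, where $\psi\approx1.54-0.62>0$, and $\psi(0)=1>0$. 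Hence $h$ is convex on $(0,\infty)$ with a critical point at $x=1$, so $h\ge h(1)=0$ there. By continuity $h(0)\ge0$ as well.

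The only real obstacle is this elementary pointwise inequality, and it is settled by the convexity computation above. Everything else is a single application of Cauchy--Schwarz, and no integrability assumption on $\varphi$ is needed because all integrands are nonnegative.
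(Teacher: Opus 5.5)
Your proof is correct. Its outer structure, Cauchy--Schwarz reducing everything to $\int (g-f)^2/(g+f)\dd v\le 2\KLdiv(g\Vert f)$, is the same as the paper's; the difference is in how you prove that reduced inequality.

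You prove the single pointwise bound $\frac{(x-1)^2}{x+1}\le 2(x\log x-x+1)$ by a second-derivative convexity argument. The paper instead passes through the Hellinger distance:
\begin{itemize}
\item applying $\log y\ge 1-1/y$ with $y=(g/f)^{1/2}$ gives $g\log(g/f)\ge 2g-2\sqrt{fg}$, hence $\KLdiv(g\Vert f)\ge\int(\sqrt g-\sqrt f)^2\dd v$;
\item then $(\sqrt g+\sqrt f)^2\le 2(g+f)$ turns this Hellinger integral into the triangular discrimination, with the factor $2$.
\end{itemize}
In the variable $x=g/f$ the paper's chain reads $x\log x-x+1\ge(\sqrt x-1)^2\ge\frac{(x-1)^2}{2(x+1)}$. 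So it proves exactly your pointwise inequality, but without calculus and with the Hellinger bound as a by-product. It also sidesteps the splitting question, since it integrates a lower bound $2g-2\sqrt{fg}$ that is visibly integrable.

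Your route is a single comparison and makes explicit why the integral of $g\log(g/f)-g+f$ equals $\KLdiv(g\Vert f)$. Its cost is the positivity check on $(x+1)^3-4x$. You could shorten that check by writing $(x+1)^3-4x=x^3+(3x^2-x+1)$, where the quadratic has negative discriminant; this avoids the AM--GM detour and the numerical evaluation at the critical point.
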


	\begin{lemma}\label{lem:cutoff-coercivity}
		Let
		\begin{equation}\label{eq:cutoff-coulomb-kernel}
			\widetilde A_C(z):=c_C\langle z\rangle^{-3}\bigl(\abs z^2I-z\otimes z\bigr),
			\qquad z\in\R^3,
		\end{equation}
		so that $0\le\widetilde A_C(z)\le A_C(z)$ for $z\ne0$, $\norm{\widetilde A_C(z)}_{\rm op}\le c_C$, and $\nabla\cdot\widetilde A_C(z)=-2c_C\langle z\rangle^{-3}z$ (columnwise divergence), hence $\abs{\nabla\cdot\widetilde A_C}\le2c_C$.  Let $g$ be a probability density on $\R^3$ with $\int\abs w^2g\dd w\le E$ and $\abs{\int g\log g\dd w}\le\mathsf H$. Then there is $c_*=c_*(c_C,E,\mathsf H)>0$ such that
		\begin{equation}\label{eq:cutoff-coercivity}
			(\widetilde A_C*g)(v)\succeq c_*\langle v\rangle^{-3}I
			\qquad\text{for all }v\in\R^3 .
		\end{equation}
	\end{lemma}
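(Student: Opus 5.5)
The proof splits into the elementary properties of $\widetilde A_C$ and the coercivity \eqref{eq:cutoff-coercivity}.

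\emph{Elementary properties.} For $z\ne0$ write $\abs z^2I-z\otimes z=\abs z^2P_{\widehat z}$. Then
\[
\widetilde A_C(z)=c_C\abs z^2\langle z\rangle^{-3}P_{\widehat z},
\qquad
A_C(z)=c_C\abs z^{-1}P_{\widehat z},
\]
so the comparison $0\preceq\widetilde A_C\preceq A_C$ reduces to $\abs z^3\le\langle z\rangle^3$. The operator-norm bound follows from $\abs z^2\langle z\rangle^{-3}\le1$. For the divergence, compute columnwise:
\[
\partial_i\bigl(\abs z^2\delta_{ij}-z_iz_j\bigr)=2z_j-(d+1)z_j=-2z_j \quad\text{in }d=3 .
\]
The derivative of $\langle z\rangle^{-3}$ produces a multiple of $z_i(\abs z^2\delta_{ij}-z_iz_j)$, which vanishes because $z$ lies in the kernel. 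Hence $\nabla\cdot\widetilde A_C(z)=-2c_C\langle z\rangle^{-3}z$, and $\abs z\langle z\rangle^{-3}\le1$ gives the stated bound.

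\emph{Coercivity.} Fix $v$ and a unit vector $\xi$. Then
\[
\xi^\top(\widetilde A_C*g)(v)\xi=c_C\int\langle v-w\rangle^{-3}\abs{v-w}^2\sin^2\theta\,g(w)\dd w,
\]
where $\theta$ is the angle between $v-w$ and $\xi$. I would lower bound this integral by restricting $w$ to a good set $G$ of positive $g$-mass. This set must satisfy four conditions: $\abs w\le R$, $\abs{v-w}\ge r$, and the angle condition $\abs{\sin\theta}\ge\delta$. The constants $R,r,\delta$ depend only on $E$ and $\mathsf H$.

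\emph{Controlling the bad sets.}
\begin{itemize}
\item Markov's inequality gives $g(\{\abs w>R\})\le E/R^2$.
\item The bad set, consisting of points within $r$ of $v$ or within the double cone (slab) $\abs{\sin\theta}<\delta$ intersected with $B_R$, has Lebesgue measure at most $C(r^3+\delta R^3)$.
\item The entropy bound gives \emph{uniform non-concentration}: for any set $B$ with $\abs B\le\epsilon$, $g(B)\le\omega(\epsilon)$, where $\omega(\epsilon)\to0$ depends only on $E,\mathsf H$. To prove this, split at the level $g>M$: $\int_{g>M}g\le\int g\log^+ g/\log M$. The quantity $\int g\log^+g$ is controlled by $\mathsf H$ plus $\int g\log^-g$, and the latter is bounded via $E$ by the standard Gaussian comparison. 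Thus $g(B)\le M\epsilon+C/\log M$.
\end{itemize}
Choosing $R$ large and then $r,\delta$ small gives $g(G)\ge1/2$.

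\emph{The $\langle v\rangle^{-3}$ weight.} On $G$ the integrand is $\ge c\,\delta^2r^2\langle v-w\rangle^{-3}$. Since $\abs w\le R$, $\langle v-w\rangle\le C_R\langle v\rangle$, which yields the bound $c_*\langle v\rangle^{-3}$. This works for bounded $v$. For large $\abs v$ the bound is still fine: $\abs{v-w}\ge\abs v/2$ makes the factor $\abs{v-w}^2\langle v-w\rangle^{-3}\gtrsim\langle v\rangle^{-1}$, which is better than needed, except in the direction $\xi\parallel v$.

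\emph{Main obstacle: the direction $\xi\parallel v$ for large $\abs v$.} Here the angle between $v-w$ and $v$ is small when $\abs w\le R$, of order $\abs{w_\perp}/\abs v$, so $\sin^2\theta\approx\abs{w_\perp}^2/\abs v^2$. The factor $\abs{v-w}^2\langle v-w\rangle^{-3}\sim\abs v^{-1}$ then gives $\abs v^{-3}\int\abs{w_\perp}^2g$. It remains to bound $\int\abs{w_\perp}^2g$ from below, uniformly in direction. This follows from non-concentration again: $g$ cannot place mass $\ge1/2$ in a slab $\{\abs{w_\perp}<\delta\}\cap B_R$, which is a tube of small volume. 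Combining the cases gives \eqref{eq:cutoff-coercivity}.
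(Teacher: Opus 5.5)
You verify the elementary properties of $\widetilde A_C$ correctly. The tools you assemble for coercivity are also the right ones: Markov for the tail, the entropy/Gaussian-comparison bound $g(B)\le M\abs B+C/\log M$, and $\langle v-w\rangle\le C_R\langle v\rangle$ on $B_R$. The gap is that your case analysis does not yield a constant uniform in $(v,\xi)$.

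\textbf{Where the argument breaks.} Your volume bound $C(r^3+\delta R^3)$ for the bad cone $\{\abs{\sin\theta}<\delta\}\cap B_R$ fails for large $\abs v$. If $\abs v\gg R/\delta$ and $\xi$ is close to $\widehat v$, every $w\in B_R$ lies in the cone. You notice this and treat large $\abs v$ separately, but your two sub-cases leave a hole.
\begin{itemize}
\item In the sub-case ``$\xi$ not parallel to $v$'' you drop the factor $\sin^2\theta$. Keeping it, $\abs{v-w}\abs{\sin\theta}=\abs{P_\xi^\perp(v-w)}\ge\abs{P_\xi^\perp v}-R$. This sub-case therefore closes only while the line $v+\R\xi$ stays at distance $\abs{P_\xi^\perp v}\gtrsim R$ from the origin. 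The claimed $\langle v\rangle^{-1}$ rate holds only for angles bounded away from zero.
\item The sub-case ``$\xi\parallel v$'' uses $\abs{v-w}^2\sin^2\theta=\abs{w_\perp}^2$. That identity is valid only under exact alignment, when the line passes through the origin.
\end{itemize}
For $0<\abs{P_\xi^\perp v}\lesssim R$, the line $v+\R\xi$ passes within $O(R)$ of the origin but not through it. The relevant quantity is then the distance from $w$ to this shifted line, not $\abs{w_\perp}$, and neither sub-case applies. So ``combining the cases'' is not justified as written.

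\textbf{The repair (the paper's proof).} Never separate $\abs{v-w}$ from $\sin\theta$. Write $\xi^\top\widetilde A_C(v-w)\xi=c_C\langle v-w\rangle^{-3}\abs{P_\xi^\perp(v-w)}^2$, where $\abs{P_\xi^\perp(v-w)}$ is the distance from $w$ to the line $v+\R\xi$.
\begin{itemize}
\item The exceptional set $S=\{\abs w\le R,\ \abs{P_\xi^\perp(v-w)}\le\rho\}$ is a piece of a cylinder of radius $\rho$ and length at most $2R$. Hence $\abs S\le2\pi R\rho^2$ for every $v$ and every unit $\xi$.
\item Take $R=\max\{1,2\sqrt E\}$ and $\rho^2=1/(8\pi RM)$. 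Then $\{\abs w\le R\}\setminus S$ carries $g$-mass at least $\frac14$.
\item On that set the integrand is at least $c_C\rho^2\langle v-w\rangle^{-3}\ge c_C\rho^2\,2^{-3/2}(1+R)^{-3}\langle v\rangle^{-3}$.
\end{itemize}
This single estimate covers bounded and large $v$, and aligned and non-aligned $\xi$, at once. Your exact-alignment tube argument is its special case of a line through the origin. It was the split into a radius condition and an angle condition that created the degenerate regime.
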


	The lemma is uniform over families with common bounds $E$ and $\mathsf H$; this is used in \cref{cor:density-level-zero-lambda-KL}.
	
	\begin{lemma}
		\label{lem:reverse-entropy-slack}
		Under \cref{ass:reverse-entropy-admissibility}, put $r_t=\log(g_t/f_t)$, $h_t=g_t/f_t-1$,
		\begin{equation}\label{eq:relative-good-term}
			\begin{gathered}
				\mathcal G_t:=\frac12\iint_{\R^6}
				\abs{\nabla r_t(v)-\nabla r_t(w)}_{A_C(v-w)}^2
				g_t(v)g_t(w)\dd v\dd w,\\
				\mathcal I_t:=\int_{\R^3}\langle v\rangle^{-3}\abs{\nabla r_t(v)}^2g_t(v)\dd v,
			\end{gathered}
		\end{equation}
		so that $\mathcal G\in L^1(0,T)$ by \cref{rem:integrability-justification}(b). Let $\mathcal B_t$ be the tensorization-defect term in the Landau weak--strong identity,
		\begin{multline}\label{eq:relative-bad-term}
			\mathcal B_t:=\frac12\iint_{\R^6}
			\frac{(g_t(v)-f_t(v))(g_t(w)-f_t(w))}{f_t(v)f_t(w)}\\
			{}\times(\nabla_v-\nabla_w)\cdot
			\left[A_C(v-w)(\nabla_v-\nabla_w)(f_t(v)f_t(w))\right]
			\dd v\dd w,
		\end{multline}
		and let $c_*=c_*(c_C,E,\mathsf H)$ be the constant of \cref{lem:cutoff-coercivity} with $E,\mathsf H$ from \eqref{eq:reverse-bound-constants}.  Then, for almost every $t\in[0,T]$:
		\begin{enumerate}[label=(\roman*)]
			\item (\emph{decomposition})
			\begin{equation}\label{eq:landau-part-decomposition}
				\int_{\R^3}g_t\bigl(U[g_t,\nabla\log g_t]-U[f_t,\nabla\log f_t]\bigr)
				\cdot\nabla r_t\dd v=-\mathcal G_t+\mathcal B_t,
			\end{equation}
			where $\mathcal B_t$ also has the once-integrated form
			\begin{multline}\label{eq:relative-bad-term-once}
				\mathcal B_t=-\iint_{\R^6}f_t(v)f_t(w)\,h_t(w)\,
				\nabla h_t(v)^\top A_C(v-w)\\
				\times\bigl(\nabla\log f_t(v)-\nabla\log f_t(w)\bigr)\dd v\dd w ;
			\end{multline}
			\item (\emph{good term})
			\begin{equation}\label{eq:relative-good-lower}
				\mathcal G_t\ge\frac{c_*}2\mathcal I_t-C_G(t)\KLdiv(g_t\Vert f_t),
				\qquad
				C_G(t):=\frac{c_C^2\bigl(2+K_1(t)\bigr)^2M_*^2}{c_*};
			\end{equation}
			\item (\emph{bad term}) for every $\delta>0$,
			\begin{equation}\label{eq:relative-bad-young}
				\abs{\mathcal B_t}\le\delta\,\mathcal I_t
				+\frac{C_B(t)}{\delta}\KLdiv(g_t\Vert f_t),
				\qquad
				C_B(t):=2c_C^2K_2(t)^2M_*^2 ;
			\end{equation}
			\item (\emph{coercive slack}) for every $\theta\in(0,1)$,
			\begin{equation}\label{eq:relative-bad-slack}
				\mathcal B_t\le\theta\,\mathcal G_t+C_\theta(t)\KLdiv(g_t\Vert f_t),
				\qquad
				C_\theta(t):=\theta C_G(t)+\frac{2C_B(t)}{\theta c_*}.
			\end{equation}
		\end{enumerate}
		Since $K_1,K_2\in L^2(0,T)$, the functions $C_G$, $C_B$, and $C_\theta$ belong to $L^1(0,T)$; they are uniform over families for which the bounds in \cref{ass:reverse-entropy-admissibility} are uniform.  The only moments used in the proof are those of orders $2\kappa$, $3$, and $3+2\zeta$; together with those listed in \cref{rem:integrability-justification}, all are at most $m_*$, which is why \eqref{eq:reverse-moment-order} suffices.
	\end{lemma}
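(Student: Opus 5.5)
We prove the four items in order. Throughout, we use $r=\log(g/f)$ and $h=g/f-1$, and we write $u:=U[g,\nabla\log g]-U[f,\nabla\log f]$. All integrations by parts are justified by \cref{rem:integrability-justification}.

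\emph{Item (i).} Expand
\[
g(v)g(w)\bigl(q_g(v)-q_g(w)\bigr)=g(v)g(w)\bigl(\nabla r(v)-\nabla r(w)\bigr)+g(v)g(w)\bigl(\nabla\log f(v)-\nabla\log f(w)\bigr).
\]
Pair the first piece with $\nabla r(v)$ and symmetrize in $(v,w)$, using that $A_C$ is even and symmetric. This produces $-\mathcal G_t$. The remainder is
\[
-\iint A_C(v-w)\bigl(\nabla\log f(v)-\nabla\log f(w)\bigr)\bigl(g(w)g(v)-f(w)g(v)\bigr)\cdot\nabla r(v)\dd v\dd w.
\]
Write $g=f(1+h)$ and $g\nabla r=f\nabla h$. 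The terms linear in $h$ cancel after a symmetrization: they combine into the $f$-driven Landau flux, which is exactly what the strong solution satisfies. What remains is the quadratic term \eqref{eq:relative-bad-term-once}. One further integration by parts in $v$ and $w$ converts it into the doubly integrated form \eqref{eq:relative-bad-term}. This bookkeeping follows the unforced identity of \cite[Lemma~2.4]{TabaryWeakStrong2025}.

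\emph{Item (ii).} Replace $A_C$ by the smaller kernel $\widetilde A_C\preceq A_C$ from \cref{lem:cutoff-coercivity}. Expanding the square gives
\[
\mathcal G\ge\int g\,\nabla r^\top(\widetilde A_C*g)\nabla r\dd v-\Bigl|\int\!\!\int g(v)g(w)\,\nabla r(v)^\top\widetilde A_C(v-w)\nabla r(w)\dd v\dd w\Bigr|.
\]
The first term is at least $c_*\mathcal I_t$ by \cref{lem:cutoff-coercivity}. For the cross term, integrate by parts in $w$, writing $g\nabla r=\nabla g-g\nabla\log f$ so that the derivative falls on $\widetilde A_C$ and on $\log f$. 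Then subtract the corresponding identity with $g$ replaced by $f$; that identity vanishes because $\nabla(f/f)=0$. The cross term therefore becomes an integral against $g-f$, with integrand bounded by $c_C(2+K_1)\langle w\rangle^{\kappa}$, using $\abs{\nabla\cdot\widetilde A_C}\le2c_C$ and $\norm{\widetilde A_C}_{\rm op}\le c_C$.

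Apply \cref{lem:weighted-pinsker} to this integral, then Cauchy--Schwarz against $\mathcal I_t^{1/2}$ with weight $\langle v\rangle^{3}$, using the moments $M_*$. This yields a bound $c_C(2+K_1)M_*\sqrt{2\KLdiv}\,\mathcal I^{1/2}$. Young's inequality with parameter $c_*/2$ then gives \eqref{eq:relative-good-lower}.

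\emph{Item (iii).} Start from \eqref{eq:relative-bad-term-once}. The mean value theorem gives
\[
\abs{A_C(v-w)(\nabla\log f(v)-\nabla\log f(w))}\le c_CK_2\bigl(\langle v\rangle^\zeta+\langle w\rangle^\zeta\bigr),
\]
because the singular factor $\abs{v-w}^{-1}$ is cancelled. The $w$-integral $\int f h(w)(\cdots)\dd w=\int(g-f)(w)(\cdots)\dd w$ is controlled by \cref{lem:weighted-pinsker} as $\sqrt{2\KLdiv}$ times a moment. The $v$-integral uses $f\nabla h=g\nabla r$, and Cauchy--Schwarz with weight $\langle v\rangle^{\pm3}$ bounds it by $\mathcal I^{1/2}$ times a moment of order $3+2\zeta$. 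Applying Young's inequality with parameter $\delta$ gives $C_B=2c_C^2K_2^2M_*^2$.

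\emph{Item (iv).} Combine (ii) and (iii) with $\delta=\theta c_*/2$. By (ii), $\mathcal I\le(2/c_*)(\mathcal G+C_G\KLdiv)$. Substituting, $\delta\mathcal I\le\theta\mathcal G+\theta C_G\KLdiv$, which yields $C_\theta$ exactly as stated. The $L^1$ claims follow from $K_1,K_2\in L^2$.

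The main obstacle is the cross term in item (ii). The integration by parts across the singular diagonal must be justified, and the structure must be arranged so that only $g-f$ appears. This is what allows Pinsker's inequality, rather than the Fisher information, to control it.
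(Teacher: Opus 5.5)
Your proposal is correct and follows the paper's proof essentially step for step: the same splitting $q_g=q_f+\nabla r$ and symmetrization in (i); in (ii) the same comparison $\widetilde A_C\preceq A_C$, coercivity, one integration by parts, weighted Pinsker and Young (subtracting the $f$-identity is the paper's integration by parts of $f\nabla h$ written differently); the same mean-value cancellation in (iii); and the same choice $\delta=\theta c_*/2$ in (iv). Two small corrections: in (i) no cancellation via the equation for $f$ is involved, since $g(v)\nabla r(v)\,(g-f)(w)=f(v)f(w)\nabla h(v)h(w)$ gives \eqref{eq:relative-bad-term-once} directly; and the diagonal issue you flag does not arise in (ii), where $\widetilde A_C$ is smooth, but in the passage from \eqref{eq:relative-bad-term-once} to \eqref{eq:relative-bad-term} with the singular $A_C$.
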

	The proofs of \cref{lem:weighted-pinsker,lem:cutoff-coercivity,lem:reverse-entropy-slack} are given in Appendix~\ref{sm:tabary-proofs}.
	
	The following theorem is the weak--strong estimate of \cite{TabaryWeakStrong2025} with the score-forcing term of \cite[Theorem~4.2]{Ilin2025} inserted in the reverse orientation, that is, with the learned density in the first argument. The new element is the reduction of the forcing to the weighted score loss $\mathcal L_A$ without a bound on $f/g$.

	\begin{theorem}[Relative-entropy stability with score forcing]
		\label{thm:forced-reverse-KL}
		Adopt \cref{ass:reverse-entropy-admissibility}.  Let $e_t=s_t-q_{g_t}$ and define
		\begin{align}
			\mathcal E_t&:=\frac12\iint_{\R^6}
			\abs{e_t(v)-e_t(w)}_{A_C(v-w)}^2g_t(v)g_t(w)\dd v\dd w,
			\label{eq:forced-pair-error}\\
			\mathcal L_A(s_t,g_t)&:=\int_{\R^3}
			e_t(v)^\top(A_C*g_t)(v)e_t(v)g_t(v)\dd v.
			\label{eq:weighted-score-loss}
		\end{align}
		Then
		\begin{equation}\label{eq:pair-to-weighted-loss}
			\mathcal E_t\le2\mathcal L_A(s_t,g_t).
		\end{equation}
		Both $\mathcal L_A(s_t,g_t)$ and $\mathcal G_t$ belong to $L^1(0,T)$ by \cref{rem:integrability-justification}(b). Moreover, for a nonnegative $C\in L^1(0,T)$ depending only on the admissibility bounds,
		\begin{equation}\label{eq:forced-reverse-KL-differential}
			\frac{\dd}{\dd t}\KLdiv(g_t\Vert f_t)
			\le C(t)\KLdiv(g_t\Vert f_t)+\mathcal L_A(s_t,g_t)
		\end{equation}
		for almost every $t\in(0,T)$.  Consequently, for every $t\in[0,T]$,
		\begin{align}
			\KLdiv(g_t\Vert f_t)
			&\le e^{\int_0^tC(r)\dd r}\KLdiv(g_0\Vert f_0)\nonumber\\
			&\quad+\int_0^t e^{\int_s^tC(r)\dd r}
			\mathcal L_A(s_s,g_s)\dd s.
			\label{eq:forced-reverse-KL-gronwall}
		\end{align}
	\end{theorem}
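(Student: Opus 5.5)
We first prove \eqref{eq:pair-to-weighted-loss}. Since $A_C(v-w)\succeq0$, the inequality $\abs{a-b}_M^2\le2\abs a_M^2+2\abs b_M^2$ applies pointwise with $M=A_C(v-w)$, $a=e_t(v)$, $b=e_t(w)$. Integrating against $g_t(v)g_t(w)$ and exchanging $v\leftrightarrow w$ in the second term (using $A_C(-z)=A_C(z)$ and Fubini, justified by \cref{rem:integrability-justification}(b)), we get
\[
\mathcal E_t\le\iint\bigl(\abs{e_t(v)}_{A_C(v-w)}^2+\abs{e_t(w)}_{A_C(v-w)}^2\bigr)g_t(v)g_t(w)\dd v\dd w=2\int e_t^\top(A_C*g_t)e_t\,g_t\dd v .
\]
This is $2\mathcal L_A$.

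For the differential inequality, we use the absolute continuity of $t\mapsto\KLdiv(g_t\Vert f_t)$ from \cref{rem:integrability-justification}(d). Differentiating and integrating by parts (mass conservation kills the $\partial_t g$ term multiplied by $1$) gives, for a.e.\ $t$,
\[
\frac{\dd}{\dd t}\KLdiv(g_t\Vert f_t)=\int g_t\bigl(U[g_t,s_t]-U[f_t,\nabla\log f_t]\bigr)\cdot\nabla r_t\dd v .
\]
Here we use that $f$ solves Landau in the form \eqref{eq:intro-draft-landau-continuity}, so $\partial_t\log f=-U[f,\nabla\log f]\cdot\nabla\log f-\nabla\cdot U[f,\nabla\log f]$. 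We then split $U[g,s]=U[g,q_g]+U[g,e]$. The first part is handled by \cref{lem:reverse-entropy-slack}(i), which gives $-\mathcal G_t+\mathcal B_t$. The forcing part is
\[
\int g_tU[g_t,e_t]\cdot\nabla r_t\dd v .
\]
Symmetrizing in $(v,w)$, it equals
\[
-\tfrac12\iint(\nabla r_t(v)-\nabla r_t(w))^\top A_C(v-w)(e_t(v)-e_t(w))g_t(v)g_t(w) .
\]
Cauchy--Schwarz in the $A_C$ seminorm bounds it by $\sqrt{\mathcal G_t\,\mathcal E_t}\le\tfrac14\mathcal G_t+\mathcal E_t$, and hence by $\tfrac14\mathcal G_t+2\mathcal L_A$. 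The constant in front of $\mathcal L_A$ must be $1$, so the Young split has to be tuned. We use $\sqrt{\mathcal G\mathcal E}\le\tfrac12\mathcal G+\tfrac12\mathcal E\le\tfrac12\mathcal G+\mathcal L_A$.

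Collecting terms,
\[
\frac{\dd}{\dd t}\KLdiv\le-\tfrac12\mathcal G_t+\mathcal B_t+\mathcal L_A .
\]
Applying \cref{lem:reverse-entropy-slack}(iv) with $\theta=\tfrac12$ gives $\mathcal B_t\le\tfrac12\mathcal G_t+C_{1/2}(t)\KLdiv$, so the $\mathcal G$ terms cancel. This yields \eqref{eq:forced-reverse-KL-differential} with $C=C_{1/2}\in L^1(0,T)$, which depends only on the admissibility bounds.

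Finally, \eqref{eq:forced-reverse-KL-gronwall} follows from the differential inequality by Gronwall's lemma for absolutely continuous functions with $L^1$ coefficients, applied to $e^{-\int_0^tC}\KLdiv(g_t\Vert f_t)$.

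The main obstacle is justifying the entropy derivative identity and the symmetrization near the diagonal. The integrand contains the singular factor $\abs{v-w}^{-1}$ and the unbounded quantities $\nabla r_t$. This is exactly where the integrability bounds (a)--(d) of \cref{rem:integrability-justification} and the cutoff argument of \cref{lem:cutoff-ibp} are invoked. One must check that each boundary term vanishes for a.e.\ $t$, using $g\abs{q_g}^2$ integrability and the Riesz potential bound.
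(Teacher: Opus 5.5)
Your proposal is correct and follows the paper's proof essentially step for step. The shared steps are the entropy identity \eqref{eq:relative-entropy-continuity}, the split $U[g,s]=U[g,q_g]+U[g,e]$ with \cref{lem:reverse-entropy-slack}(i) for the Landau part, the symmetrized forcing bounded by $\sqrt{\mathcal G_t\mathcal E_t}\le\tfrac12\mathcal G_t+\tfrac12\mathcal E_t$, the pointwise inequality $\abs{a-b}_M^2\le2\abs{a}_M^2+2\abs{b}_M^2$ giving $\mathcal E_t\le2\mathcal L_A$, and absorption through \cref{lem:reverse-entropy-slack}(iv) with $\theta=\tfrac12$, so that $C=C_{1/2}$ (your abandoned first Young split is harmless, since you replace it with the correct one).
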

	
	\begin{proof}
		\emph{Step 1: Entropy identity.}  By \cref{ass:reverse-entropy-admissibility} and \cref{rem:integrability-justification}, mass conservation and the two continuity equations give, with $r_t=\log(g_t/f_t)$,
		\begin{align*}
			\frac{\dd}{\dd t}\KLdiv(g_t\Vert f_t)
			&=\int\partial_tg_t\,(r_t+1)\dd v-\int g_t\,\partial_t\log f_t\dd v\\
			&=\int g_tU[g_t,s_t]\cdot\nabla r_t\dd v
			-\int g_tU[f_t,\nabla\log f_t]\cdot\nabla r_t\dd v ,
		\end{align*}
		where the last term uses $\int g\,\partial_tf/f=-\int(g/f)\nabla\cdot(fU_f)=\int fU_f\cdot\nabla(g/f) =\int gU_f\cdot\nabla r$; the term $\int g_t\partial_t\log f_t$ is finite because $\abs{\partial_t\log f_t}\le K_0(t)\langle v\rangle^\nu$ and $\nu\le m_*$. This is
		\begin{equation}\label{eq:relative-entropy-continuity}
			\frac{\dd}{\dd t}\KLdiv(g_t\Vert f_t)
			=\int g_t\bigl(U[g_t,s_t]-U[f_t,\nabla\log f_t]\bigr)
			\cdot\nabla\log\frac{g_t}{f_t}\dd v.
		\end{equation}
		
		\emph{Step 2: Decomposition into Landau part and forcing.}  Since $s=q_g+e$ and $U$ is linear in the score, $U[g,s]=U[g,q_g]+U[g,e]$.  By \cref{lem:reverse-entropy-slack}(i) the exact Landau part equals $-\mathcal G_t+\mathcal B_t$, while the score forcing contributes, after symmetrization in $(v,w)$,
		\begin{multline}\label{eq:forced-extra-term}
			\mathcal R_t:=\int g_tU[g_t,e_t]\cdot\nabla r_t\dd v
			=-\frac12\iint_{\R^6}
			(e_t(v)-e_t(w))^\top A_C(v-w)\\
			{}\times(\nabla r_t(v)-\nabla r_t(w))g_t(v)g_t(w)\dd v\dd w.
		\end{multline}
		Hence
		\begin{equation}\label{eq:forced-entropy-decomposition}
			\frac{\dd}{\dd t}\KLdiv(g_t\Vert f_t)=-\mathcal G_t+\mathcal B_t+\mathcal R_t .
		\end{equation}
		
		\emph{Step 3: The forcing term.}  Cauchy--Schwarz in the semidefinite $A_C$ metric, first pointwise and then in $L^2(g_t\otimes g_t)$, gives $\abs{\mathcal R_t}\le\sqrt{\mathcal E_t\mathcal G_t} \le\frac12\mathcal E_t+\frac12\mathcal G_t$.  Moreover $\abs{e(v)-e(w)}^2_{A_C(v-w)}\le2\abs{e(v)}^2_{A_C(v-w)}+2\abs{e(w)}^2_{A_C(v-w)}$; integrating against $g(v)g(w)$ and using the symmetry of the two resulting terms gives \eqref{eq:pair-to-weighted-loss}.
		
		\emph{Step 4: Absorption.}  Apply \cref{lem:reverse-entropy-slack}(iv) with $\theta=\frac12$ in \eqref{eq:forced-entropy-decomposition}:
		\[
		\begin{aligned}
			\frac{\dd}{\dd t}\KLdiv(g_t\Vert f_t)
			&\le-\mathcal G_t+\Bigl(\frac12\mathcal G_t+C_{1/2}(t)\KLdiv(g_t\Vert f_t)\Bigr)
			+\frac12\mathcal E_t+\frac12\mathcal G_t\\
			&=C_{1/2}(t)\KLdiv(g_t\Vert f_t)+\frac12\mathcal E_t ,
		\end{aligned}
		\]
		and \eqref{eq:pair-to-weighted-loss} turns $\frac12\mathcal E_t$ into $\mathcal L_A(s_t,g_t)$.  This is \eqref{eq:forced-reverse-KL-differential} with the explicit integrable coefficient $C(t):=C_{1/2}(t)=\frac12C_G(t)+4C_B(t)/c_*$.  Gronwall's inequality proves \eqref{eq:forced-reverse-KL-gronwall}.  The imported unforced ingredients are Steps~1--2 together with \cref{lem:reverse-entropy-slack}; the forcing estimate of Step~3, which replaces the factor $\norm{f/g}_\infty$ of \cite[Theorem~4.2]{Ilin2025} by absorption into $\mathcal G_t$, and the reduction to $\mathcal L_A$ are the only new steps.
	\end{proof}
	
	\begin{lemma}\label{lem:Coulomb-weighted-loss}
		Let $g$ be a probability density in $L^\infty(\R^3)$ and put $Q=\norm g_\infty$.  Then
		\begin{equation}\label{eq:Coulomb-Riesz-Linfty}
			\sup_{v\in\R^3}\int_{\R^3}\frac{g(w)}{\abs{v-w}}\dd w
			\le C_RQ^{1/3},
			\qquad C_R:=\frac32(4\pi)^{1/3}.
		\end{equation}
		If, in addition, $g$ admits a weak score $q_g$ and $s-q_g\in L^2(g\dd v;\R^3)$, then
		\begin{equation}\label{eq:Coulomb-weighted-L2}
			\mathcal L_A(s,g)
			\le c_CC_RQ^{1/3}\norm{s-q_g}_{L^2(g)}^2.
		\end{equation}
	\end{lemma}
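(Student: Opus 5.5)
For the Riesz bound \eqref{eq:Coulomb-Riesz-Linfty} I will use the standard bathtub (rearrangement) argument. Fix $v$ and a radius $\rho>0$. Split the integral into the ball $B_\rho(v)$ and its complement. On the ball, use $g\le Q$:
\[
\int_{B_\rho(v)}\frac{Q}{\abs{v-w}}\dd w=4\pi Q\int_0^\rho r\dd r=2\pi Q\rho^2 .
\]
Off the ball, use $\abs{v-w}^{-1}\le\rho^{-1}$ and $\int g=1$, which gives at most $\rho^{-1}$. The total is $2\pi Q\rho^2+\rho^{-1}$. Minimizing in $\rho$ gives $4\pi Q\rho^3=1$, so $\rho=(4\pi Q)^{-1/3}$. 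Substituting back,
\[
2\pi Q\rho^2+\rho^{-1}=\tfrac12(4\pi Q)^{1/3}+(4\pi Q)^{1/3}=\tfrac32(4\pi)^{1/3}Q^{1/3},
\]
which is exactly $C_RQ^{1/3}$. The bound is uniform in $v$, so the supremum is controlled. If $Q=0$ there is nothing to prove, but a probability density has $Q>0$ anyway.

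For the weighted loss \eqref{eq:Coulomb-weighted-L2}, I bound the matrix $(A_C*g)(v)$ in operator norm. From \eqref{eq:Coulomb-matrix}, $A_C(z)$ is $c_C/\abs z$ times an orthogonal projection. So $0\preceq A_C(z)\preceq (c_C/\abs z)I$ for $z\ne0$; the diagonal is a null set for the measure $g\dd w$. Integrating against $g(w)\dd w$ and applying the first part gives
\[
0\preceq (A_C*g)(v)\preceq c_C\Bigl(\int\frac{g(w)}{\abs{v-w}}\dd w\Bigr)I\preceq c_CC_RQ^{1/3}I .
\]
Therefore $e^\top(A_C*g)(v)e\le c_CC_RQ^{1/3}\abs e^2$ pointwise, with $e=s-q_g$. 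Integrating against $g\dd v$ gives \eqref{eq:Coulomb-weighted-L2}. The hypothesis $s-q_g\in L^2(g)$ ensures the right-hand side is finite, so the integral defining $\mathcal L_A$ is absolutely convergent.

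There is no real obstacle here. The only points needing care are that the convolution of a nonnegative matrix field with $g\ge0$ preserves the semidefinite ordering, which follows by testing against a fixed vector, and the arithmetic of the optimal radius that produces the constant $\tfrac32(4\pi)^{1/3}$.
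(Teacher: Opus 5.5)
Your proposal is correct and follows the paper's proof essentially verbatim: split the Riesz integral at radius $(4\pi Q)^{-1/3}$ to get $\tfrac32(4\pi Q)^{1/3}$, then use that $A_C(z)$ is $c_C/\abs z$ times an orthogonal projection to bound $(A_C*g)(v)\preceq c_CC_RQ^{1/3}I$. (What you call a bathtub argument is just this direct ball/complement split, which is exactly what the paper does.)
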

	
	\begin{proof}
		For every $R>0$, splitting the integral at $\abs{v-w}=R$ and using $\int g=1$,
		\[
		\int\frac{g(w)}{\abs{v-w}}\dd w
		\le Q\int_{\abs z\le R}\frac{\dd z}{\abs z}+\frac1R
		=2\pi QR^2+\frac1R.
		\]
		The choice $R=(4\pi Q)^{-1/3}$ gives $2\pi QR^2+R^{-1}=\bigl(\tfrac12+1\bigr)(4\pi Q)^{1/3}=C_RQ^{1/3}$, which proves \eqref{eq:Coulomb-Riesz-Linfty}.  Since the projection in $A_C$ has operator norm one, $(A_C*g)(v)\preceq c_C\bigl(\int g(w)\abs{v-w}^{-1}\dd w\bigr)I \preceq c_CC_RQ^{1/3}I$, which proves \eqref{eq:Coulomb-weighted-L2}.
	\end{proof}
	
	For a density with a weak score $q_g\in L^2(g)$ and valid integration by parts, define the density-level approximation modulus
	\begin{equation}\label{eq:density-level-approximation-modulus}
		\mathfrak b_\lambda(g):=\inf_{h\in\Hh}
		\left\{\norm{h-q_g}_{L^2(g)}^2+\lambda\norm h_{\Hh}^2\right\}.
	\end{equation}
	By \eqref{eq:density-level-objective} with $f$ replaced by $g$, $\J_{g\dd v}^\lambda(h)$ and the bracket in \eqref{eq:density-level-approximation-modulus} differ by the constant $\norm{q_g}_{L^2(g)}^2$, so they have the same minimizer $S_\lambda(g\dd v)$ over $\Hh$.  Hence
	\begin{equation}\label{eq:density-level-modulus-score-error}
		\norm{S_\lambda(g\dd v)-q_g}_{L^2(g)}^2
		+\lambda\norm{S_\lambda(g\dd v)}_{\Hh}^2
		=\mathfrak b_\lambda(g).
	\end{equation}
	
	\begin{corollary}[Conditional zero-regularization limit for densities]
		\label{cor:density-level-zero-lambda-KL}
		For every $\lambda\in(0,1]$, let $g^\lambda$ solve
		\begin{equation}\label{eq:density-level-regularized-Coulomb}
			\partial_tg^\lambda+\nabla\cdot
			\left(g^\lambda U[g^\lambda,S_\lambda(g^\lambda\dd v)]\right)=0
		\end{equation}
		on $[0,T]$.  Assume that the pairs $(g^\lambda,f)$ satisfy \cref{ass:reverse-entropy-admissibility} with bounds uniform in $\lambda$, and write $Q_*$ for a uniform bound on $\norm{g_t^\lambda}_\infty$ in \eqref{eq:reverse-Linfty-Fisher}. Put $\mathfrak B_T(\lambda):=\int_0^T\mathfrak b_\lambda(g_t^\lambda)\dd t$. Then, for $C_T<\infty$ independent of $\lambda$,
		\begin{equation}\label{eq:density-level-modulus-KL}
			\sup_{0\le t\le T}\KLdiv(g_t^\lambda\Vert f_t)
			\le C_T\left[
			\KLdiv(g_0^\lambda\Vert f_0)
			+c_CC_RQ_*^{1/3}\mathfrak B_T(\lambda)\right].
		\end{equation}
		If $\KLdiv(g_0^\lambda\Vert f_0)\to0$ and $\mathfrak B_T(\lambda)\to0$, then
		\begin{equation}\label{eq:density-level-modulus-convergence}
			\sup_{0\le t\le T}\KLdiv(g_t^\lambda\Vert f_t)\longrightarrow0,
			\qquad
			\sup_{0\le t\le T}\norm{g_t^\lambda-f_t}_{L^1}\longrightarrow0.
		\end{equation}
		If, more strongly, $\KLdiv(g_0^\lambda\Vert f_0)=O(\lambda)$ and
		\begin{equation}\label{eq:uniform-density-level-source}
			\nabla\log g_t^\lambda\in\Hh,
			\qquad
			\sup_{\lambda,t}\norm{\nabla\log g_t^\lambda}_{\Hh}\le M_{\Hh},
		\end{equation}
		then
		\begin{equation}\label{eq:density-level-zero-lambda-rate}
			\sup_{0\le t\le T}\KLdiv(g_t^\lambda\Vert f_t)=O_T(\lambda),
			\qquad
			\sup_{0\le t\le T}\norm{g_t^\lambda-f_t}_{L^1}=O_T(\sqrt\lambda).
		\end{equation}
	\end{corollary}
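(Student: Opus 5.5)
The plan is to apply \cref{thm:forced-reverse-KL} to each pair $(g^\lambda,f)$ with the forcing score $s_t=S_\lambda(g_t^\lambda\dd v)$. Then we bound the weighted loss $\mathcal L_A$ through \cref{lem:Coulomb-weighted-loss} and the identity \eqref{eq:density-level-modulus-score-error}.

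First, the score. For each $\lambda$ and $t$, the field $s_t=S_\lambda(g^\lambda_t\dd v)$ lies in $\Hh$ with $\norm{s_t}_{\Hh}\le R_\lambda$. It is therefore $C^1$ with $s$ and $Ds$ bounded, and it is measurable in $t$, because $t\mapsto g_t^\lambda\dd v$ is $W_1$-continuous under the moment bounds and \eqref{eq:score-H-W1} applies. This covers the score requirements of \cref{ass:reverse-entropy-admissibility}; the remaining requirements are assumed uniformly in $\lambda$.

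Second, the constant. \Cref{thm:forced-reverse-KL} gives \eqref{eq:forced-reverse-KL-gronwall} with coefficient $C(t)=\tfrac12C_G(t)+4C_B(t)/c_*$. By \cref{lem:reverse-entropy-slack} and \cref{lem:cutoff-coercivity}, this coefficient depends only on $c_C$, $K_1$, $K_2$, $M_*$, $E$ and $\mathsf H$. Since these bounds are uniform in $\lambda$, we get $\int_0^TC\le\Gamma_T$ with $\Gamma_T$ independent of $\lambda$, and we set $C_T:=e^{\Gamma_T}$.

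Third, the forcing. Since $g_t^\lambda$ is a density with valid integration by parts (\cref{lem:cutoff-ibp} together with the Fisher bound), \eqref{eq:density-level-modulus-score-error} applies. Combined with \eqref{eq:Coulomb-weighted-L2}, it gives
\[
\mathcal L_A(s_t,g_t^\lambda)\le c_CC_RQ_*^{1/3}\norm{S_\lambda(g_t^\lambda\dd v)-q_{g_t^\lambda}}_{L^2(g_t^\lambda)}^2\le c_CC_RQ_*^{1/3}\mathfrak b_\lambda(g_t^\lambda).
\]
A subtle point is that \eqref{eq:density-level-modulus-score-error} needs $q_{g}\in L^2(g)$ at a.e.\ $t$; this follows from \eqref{eq:reverse-Linfty-Fisher}. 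Integrating in time and taking the supremum over $t$ yields \eqref{eq:density-level-modulus-KL}.

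Fourth, the consequences. The convergence of the relative entropy in \eqref{eq:density-level-modulus-convergence} is immediate from \eqref{eq:density-level-modulus-KL}. The $L^1$ statement follows from Pinsker, i.e.\ \cref{lem:weighted-pinsker} with $\varphi\equiv1$, which gives $\norm{g-f}_{L^1}\le 2\sqrt{\KLdiv(g\Vert f)}$.

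Fifth, the rate. Under the source condition \eqref{eq:uniform-density-level-source}, we take $h=\nabla\log g_t^\lambda$ in \eqref{eq:density-level-approximation-modulus}. This gives $\mathfrak b_\lambda(g_t^\lambda)\le\lambda M_{\Hh}^2$, hence $\mathfrak B_T(\lambda)\le T\lambda M_{\Hh}^2$. Combined with $\KLdiv(g^\lambda_0\Vert f_0)=O(\lambda)$, this gives $\KLdiv=O_T(\lambda)$, and Pinsker gives the $O_T(\sqrt\lambda)$ bound in $L^1$.

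The main obstacle is the second step: checking that the Gronwall coefficient $C(t)$ and the coercivity constant $c_*$ are genuinely $\lambda$-independent. The plan is to rely entirely on the explicit formulas for $C_G$, $C_B$ and $c_*(c_C,E,\mathsf H)$, and on the uniformity remark following \cref{lem:cutoff-coercivity}. The only risk is a hidden $\lambda$-dependence through $\norm{Ds_t}_\infty\le c_1R_\lambda$ in \cref{rem:integrability-justification}. That quantity enters only the justification of absolute continuity, not the final constant, so I will state this separation explicitly.
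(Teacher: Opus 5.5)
Your proposal is correct and follows the paper's proof essentially step by step. You obtain a $\lambda$-uniform Gronwall coefficient $C=\tfrac12C_G+4C_B/c_*$ from the uniform admissibility bounds, bound the forcing by $\mathcal L_A\le c_CC_RQ_*^{1/3}\mathfrak b_\lambda(g_t^\lambda)$ via \cref{lem:Coulomb-weighted-loss} and \eqref{eq:density-level-modulus-score-error}, use Pinsker for the $L^1$ statements, and take the competitor $h=\nabla\log g_t^\lambda$ for the rate, exactly as the paper does. The differences are cosmetic: you use the weighted Pinsker lemma, which gives $2\sqrt{\KLdiv(g\Vert f)}$ instead of $\sqrt{2\KLdiv(g\Vert f)}$. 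You also make explicit several points the paper leaves implicit: the measurability of $t\mapsto s_t$, the a.e.-$t$ validity of the score identity, and the fact that $\norm{Ds_t}_\infty\le c_1R_\lambda$ enters only the qualitative justification and not the constant.
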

	
	\begin{proof}
		\emph{Step 1: A $\lambda$-uniform Gronwall coefficient.}  Since the bounds in \cref{ass:reverse-entropy-admissibility} hold uniformly in $\lambda$, the constants $M_*$, $E$, and $\mathsf H$ in \eqref{eq:reverse-bound-constants}, and hence $c_*$ from \cref{lem:cutoff-coercivity}, can be chosen independently of $\lambda$.  The coefficient $C=\frac12C_G+4C_B/c_*$ of \cref{thm:forced-reverse-KL} is then bounded by a fixed function in $L^1(0,T)$, so $\overline C:=\sup_\lambda\int_0^TC(r)\dd r<\infty$.
		
		\emph{Step 2: The forcing.}  Apply \cref{thm:forced-reverse-KL} with $g=g^\lambda$ and $s_t=S_\lambda(g_t^\lambda\dd v)$; this score is $C^1$ with $s$ and $Ds$ bounded by \cref{ass:score-space}.  By \cref{lem:Coulomb-weighted-loss} with $Q\le Q_*$, and by \eqref{eq:density-level-modulus-score-error},
		\[
		\mathcal L_A(s_t,g_t^\lambda)
		\le c_CC_RQ_*^{1/3}\norm{S_\lambda(g_t^\lambda\dd v)-q_{g_t^\lambda}}_{L^2(g_t^\lambda)}^2
		\le c_CC_RQ_*^{1/3}\,\mathfrak b_\lambda(g_t^\lambda).
		\]
		Inserting this into \eqref{eq:forced-reverse-KL-gronwall} and using $e^{\int_s^tC}\le e^{\overline C}$ gives \eqref{eq:density-level-modulus-KL} with $C_T:=e^{\overline C}$.
		
		\emph{Step 3: Convergence.}  If $\KLdiv(g_0^\lambda\Vert f_0)\to0$ and $\mathfrak B_T(\lambda)\to0$, the first statement in \eqref{eq:density-level-modulus-convergence} follows from \eqref{eq:density-level-modulus-KL}, and the second from Pinsker's inequality $\norm{g-f}_{L^1}\le\sqrt{2\KLdiv(g\Vert f)}$.
		
		\emph{Step 4: The rate.}  Under \eqref{eq:uniform-density-level-source}, the competitor $h=\nabla\log g_t^\lambda=q_{g_t^\lambda}$ in \eqref{eq:density-level-approximation-modulus} gives $\mathfrak b_\lambda(g_t^\lambda)\le\lambda M_{\Hh}^2$, hence $\mathfrak B_T(\lambda)\le\lambda TM_{\Hh}^2$.  Together with $\KLdiv(g_0^\lambda\Vert f_0)=O(\lambda)$, \eqref{eq:density-level-modulus-KL} gives the first estimate in \eqref{eq:density-level-zero-lambda-rate}, and Pinsker's inequality gives the second.
	\end{proof}
	
	\begin{remark}[Conditional hypotheses]
		\label{rem:density-level-lambda-conditional}
		The source condition \eqref{eq:uniform-density-level-source} is satisfied, for a fixed density, by the augmented space of \cref{rem:maxwellian-exclusion}; whether an evolving family $g^\lambda_t$ solving \eqref{eq:density-level-regularized-Coulomb} satisfies it uniformly in $\lambda$ and $t$ is not known to us, so the rate \eqref{eq:density-level-zero-lambda-rate} is at present a criterion without a verified instance. The assumptions $g\in L^\infty$ and compact support alone imply neither finite Fisher information nor a uniform rate for $\mathfrak b_\lambda(g)$.  Moreover,
		\begin{equation}\label{eq:modulus-H-bound}
			\norm{S_\lambda(g\dd v)}_{\Hh}
			\le\sqrt{\mathfrak b_\lambda(g)/\lambda}.
		\end{equation}
		Thus $\mathfrak b_\lambda(g)\to0$ alone does not provide a $\lambda$-uniform bound on the learned score in $\Hh$. A uniform estimate $\mathfrak b_\lambda(g_t^\lambda)\le C\lambda$ does give such a bound along the assumed solution family. In particular, \eqref{eq:uniform-density-level-source} yields $\norm{S_\lambda(g_t^\lambda\dd v)}_{\Hh}\le M_{\Hh}$ through \eqref{eq:static-lambda-bias}. This controls the scores along that family, but does not by itself control every iteration or establish a common existence interval for the Coulomb alternating iteration.
		
		The source condition also restricts the choice of score space. For the Gaussian RKHS on $\R^3$, it cannot hold for any strictly positive $C^1$ probability density: the Gaussian RKHS is contained in unweighted $L^2(\R^3;\R^3)$, whereas the score of such a density cannot belong to this space. Indeed, if $\nabla\log g\in L^2(\R^3)$, Cauchy--Schwarz along almost every ray implies that $\log g(r\theta)$ converges to a finite limit as $r\to\infty$, contradicting the integrability of $g$. The source-condition rate therefore does not apply to the Gaussian RKHS. The approximation-modulus criterion does not require exact score membership and remains available, provided its hypotheses are verified for the evolving density family.
		
		Finally, the source condition is incompatible with a nonzero compactly supported density, since a globally bounded score prevents the log-density from tending to $-\infty$ at a finite support boundary. Thus the $O(\lambda)$ source-condition branch is a separate full-support density-level criterion, not a regularization-removal theorem for the compact-support characteristic flow of \cref{thm:continuum-Coulomb-waveform}.
	\end{remark}
	
	Static RKHS score-estimation rates \cite{ZhouShiZhu2020} and the density-level KL estimates of \cite{HuangWang2025,IlinHuWang2025,Ilin2025} do not control the dependent particle clouds generated here or compare an empirical measure with a smooth density, so the result above is a separate continuum statement that cannot yet be composed with the particle theorem. The orientation of the relative entropy matters in the compact-support Coulomb setting: under the hypotheses of \cite{HendersonSnelsonTarfulea2019}, the physical solution instantaneously spreads mass throughout velocity space, whereas the learned bi-Lipschitz characteristic flow from compactly supported data keeps compact support. Hence the condition $\norm{f_t/g_t}_\infty<\infty$ in the Coulomb a posteriori estimate of \cite{Ilin2025}, with $f$ physical and $g$ learned, fails and $\KLdiv(f_t\Vert g_t)=+\infty$. The orientation in \cref{thm:forced-reverse-KL} is not affected by this mismatch as such, but the present proof assumes strict positivity of $g$ (\cref{ass:reverse-entropy-admissibility,rem:weaker-classes}) and the uniform requirements in \cref{rem:density-level-lambda-conditional}, so it does not apply to the compact-support flow either.
	
	\subsection{Failure of the unregularized density-level Picard iteration}
	
	It is tempting to set $\lambda=0$ in the continuum alternating iteration \eqref{eq:continuum-outer-iteration}, identify the exact density-level minimizer with $\nabla\log f^k$, and claim convergence of the resulting unregularized density-level alternating iteration. The score map contains a spatial derivative, so this is an iteration for an unbounded generator.  Ordinary Banach-space Picard reasoning is invalid. The following elementary example shows actual divergence.
	
	\begin{proposition}[Divergence of the frozen heat-flow Picard iteration]
		\label{prop:heat-counterexample}
		On the one-dimensional torus, consider
		\begin{equation}\label{eq:heat-picard}
			u^{k+1}(t)=u_0+\int_0^t\Delta u^k(r)\dd r,
			\qquad u^0(t)=u_0.
		\end{equation}
		There exists real-analytic $u_0$ such that, for every fixed $t>0$, $\norm{u^k(t)}_{L^2}\to\infty$ along the iteration.
	\end{proposition}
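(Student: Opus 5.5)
The plan is to compute the iterates explicitly in Fourier modes. On the torus $\mathbb T=\R/2\pi\mathbb Z$ write $u_0(x)=\sum_{n\in\mathbb Z}\hat u_0(n)e^{inx}$. Since $\Delta e^{inx}=-n^2e^{inx}$ and the iteration \eqref{eq:heat-picard} is linear and diagonal in Fourier space, induction on $k$ gives
\[
\widehat{u^k(t)}(n)=\hat u_0(n)\sum_{j=0}^{k}\frac{(-n^2t)^j}{j!},
\]
the $k$th Taylor partial sum of $e^{-n^2t}$. For each fixed $n$ the partial sums converge, but not uniformly in $n$. The mechanism is that for $k$ comparable to $n^2t$ the partial sum is of size roughly the largest term $(n^2t)^k/k!$, which is exponentially large in $n^2t$.

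Next I choose real-analytic data with slowly decaying coefficients, for instance $\hat u_0(n)=\hat u_0(-n)=e^{-\abs n}$, so that $u_0$ is a Poisson-kernel-type real-analytic function. By Parseval,
\[
\norm{u^k(t)}_{L^2}^2=2\pi\sum_n\abs{\hat u_0(n)}^2\,\abs{P_k(n^2t)}^2,
\qquad P_k(y):=\sum_{j=0}^k\frac{(-y)^j}{j!}.
\]
Since all terms are nonnegative, it suffices to bound a single well-chosen mode from below. For fixed $t>0$ and each large $k$, I pick $n=n(k)$ with $n^2t\approx k/2$, or more generally $y=n^2t\le k/2$ with $y\asymp k$.

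The main step is a lower bound on $\abs{P_k(y)}$ for $y\le k/2$. When $y\le k/2$ the terms $y^j/j!$ are decreasing once $j\ge y$, with ratio at most $1/2$ for $j\ge k/2$. Hence $P_k(y)=e^{-y}-\sum_{j>k}(-y)^j/j!$, and the tail has absolute value between $\tfrac23$ and $2$ times $y^{k+1}/(k+1)!$. The sign of the tail alternates with $k$, so $\abs{P_k(y)}\ge \tfrac23\,y^{k+1}/(k+1)!-e^{-y}$.

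Taking $y=n^2t\approx k/2$ and using Stirling, $y^{k+1}/(k+1)!\approx (e/2)^{k}$ up to polynomial factors. Meanwhile $\abs{\hat u_0(n)}=e^{-n}=e^{-\sqrt{k/(2t)}}$. The product $e^{-2\sqrt{k/(2t)}}(e/2)^{2k}$ therefore tends to infinity as $k\to\infty$. Because $n$ must be an integer, I will take $n=\lfloor\sqrt{k/(2t)}\rfloor$; this gives $y\in[k/2-O(\sqrt k),k/2]$, which only perturbs the bound by polynomial and $e^{O(\sqrt k)}$ factors. This shows $\norm{u^k(t)}_{L^2}\to\infty$ for every fixed $t>0$.

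I expect the main obstacle to be a clean, uniform lower bound on the truncated exponential tail together with the integer rounding of $n$. The essential point is that $e^{-\sqrt k}$ decay of analytic coefficients is beaten by exponential growth in $k$. Any coefficient decay slower than Gaussian, for instance Gevrey-type decay, would also work.
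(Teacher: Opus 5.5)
Your proof is correct. It follows a genuinely different route from the paper in the one step that matters, the lower bound on the truncated exponential $P_k$. The Fourier diagonalization and the data (coefficients $e^{-\abs n}$) are essentially the same as in the paper.

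\textbf{The paper's route.} It uses the mode $m=k$, so $y=tk^2\ge 3k$ once $k\ge 3/t$. In that regime the partial sum is dominated by its \emph{last} term, and a single downward ratio test gives $\abs{P_k}\ge (tk^2)^k/(2\,k!)$. The coefficient $e^{-k}$ is then beaten by the superexponential factor $(tk)^k$, and no integer rounding is needed.

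\textbf{Your route.} You take $n=\lfloor\sqrt{k/(2t)}\rfloor$, so $y\le k/2$ lies past the peak of the Taylor series of $e^{-y}$. You then bound $P_k$ from below by the Taylor remainder, which is about $y^{k+1}/(k+1)!\approx(e/2)^k$.
\begin{itemize}
\item The cost: this is only exponential growth, and it requires the rounding and $e^{O(\sqrt k)}$ bookkeeping.
\item The gain: the coefficient penalty is only $e^{-\sqrt{k/(2t)}}$, so your argument extends verbatim to any sub-Gaussian coefficient decay.
\item By contrast, the paper's choice $m=k$ is tailored to analytic decay. For Gevrey decay $e^{-m^\sigma}$ with $1<\sigma<2$, one would also have to move the mode to $m\sim\sqrt{k}$ there.
\end{itemize}

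\textbf{Two harmless slips.}
\begin{itemize}
\item The ratio bound $y/(j+1)\le\frac12$ is guaranteed only for $j\ge k-1$, not for $j\ge k/2$. That range still covers the whole tail $j\ge k+1$, so nothing breaks.
\item With ratios at most $\frac12$, the alternating-series argument (first term minus second) rigorously gives $\abs{\sum_{j>k}(-y)^j/j!}\ge\frac12\,y^{k+1}/(k+1)!$. The constant $\frac23$ would need the ratios to be nearly constant, which is true here but unnecessary, since any fixed constant suffices.
\end{itemize}
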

	
	\begin{proof}
		Take
		\[
		u_0(x)=\sum_{m=1}^\infty e^{-m}\cos(mx),
		\]
		which is real-analytic because its Fourier coefficients decay exponentially. Since $\Delta\cos(mx)=-m^2\cos(mx)$, induction on \eqref{eq:heat-picard} shows that each $u^k$ is smooth and
		\begin{equation}\label{eq:heat-picard-series}
			u^k(t)=\sum_{j=0}^k\frac{t^j\Delta^ju_0}{j!}
			=\sum_{m=1}^\infty e^{-m}P_k(-tm^2)\cos(mx),
			\qquad
			P_k(-x):=\sum_{j=0}^k\frac{(-x)^j}{j!}.
		\end{equation}
		Fix $t>0$.  At the Fourier mode $m=k$, put $x=tk^2$.  For $k\ge3/t$ one has $x\ge3k$, so consecutive terms of $\sum_{j<k}x^j/j!$ have ratio $j/x\le1/3$ when read downward from $j=k$, so $\sum_{j<k}x^j/j!\le\frac{x^k}{k!}\sum_{i\ge1}3^{-i}=\frac12\frac{x^k}{k!}$ and hence $\abs{P_k(-tk^2)}\ge x^k/k!-\sum_{j<k}x^j/j!\ge(tk^2)^k/(2k!)$. By Stirling's bound $k!\le e\sqrt k\,(k/e)^k$, the $m=k$ Fourier coefficient of $u^k(t)$ satisfies
		\[
		e^{-k}\abs{P_k(-tk^2)}
		\ge\frac{e^{-k}(tk^2)^k}{2k!}
		\ge\frac{(tk)^k}{2e\sqrt k},
		\]
		which diverges as $k\to\infty$.  Since $\norm{u^k(t)}_{L^2}\ge\sqrt\pi\,e^{-k}\abs{P_k(-tk^2)}$, the $L^2$ norm diverges.
	\end{proof}
	
	The heat equation itself has the score form $\partial_tf=\Div(f\nabla\log f)$.  The linearization about $f\equiv1$ of the semi-implicit alternating transport step $\partial_tf^{k+1}=\Div(f^{k+1}\nabla\log f^k)$, which is the unregularized density-level analogue of the alternating iteration \eqref{eq:intro-iteration} for the heat equation, is exactly \eqref{eq:heat-picard}.  Thus a same-Sobolev contraction theorem for the unregularized density-level score iteration cannot hold in this generality. Possible remedies include analytic-vector assumptions with Gaussian Fourier decay, a spectral or RKHS cutoff, or a genuinely parabolic implicit step.

\section{Concluding remarks and open problems}
\label{sec:conclusion}

The viewpoint of this paper is that the learned score does not have to approximate the true score for the alternating scheme to converge to the regularized coupled dynamics; it has to be stable. Once the regularized score map is Lipschitz in $W_1$, the feedback between score fitting and particle evolution can be followed step by step: at fixed $\lambda>0$, exact solves converge factorially and inexact ones obey a posteriori error bounds. For bounded Lipschitz effective interactions these estimates do not depend on the particle number, so, together with mean-field stability, the alternating-iteration and particle limits can be taken in either order. Because we work with the interaction after score cancellation, the same theory covers far-field-truncated Landau kernels with $\gamma\ge-2$ and the uncut two-dimensional Coulomb kernel.

The three-dimensional Coulomb kernel is where the particle theory stops (\cref{prop:coulomb-effective-obstruction}). For bounded, compactly supported densities, averaging over common initial labels still gives factorial convergence of the continuum iteration at fixed $\lambda$, on a guaranteed local interval whose length can be chosen proportional to $\lambda$ (\cref{thm:continuum-Coulomb-waveform}). Removing the regularization is a different matter. At fixed $N$, the unregularized empirical score problem has a minimizer exactly when a compatibility condition holds; otherwise a component of the regularized score grows like $\lambda^{-1}$ (\cref{thm:empirical-lambda-zero-dichotomy}). For strictly positive densities, a relative-entropy argument shows that the regularized density-level flow converges to the Coulomb Landau equation as $\lambda\downarrow0$, but only under uniform regularity and approximation assumptions that are not proved here (\cref{cor:density-level-zero-lambda-KL}). A heat-equation counterexample also shows why an unregularized Picard argument requires additional assumptions (\cref{prop:heat-counterexample}).

We see two natural next steps. The first is a joint particle and regularization limit. One possible route would be to develop a common solution framework connecting the compactly supported flow construction with the strictly positive density criterion, establish uniform flow and score-approximation estimates on a common time interval, and control Coulomb collisions at the particle level, perhaps through a cutoff that is removed afterwards. The second concerns the constants. In the time-discretized experiments of \cref{sm:numerics}, successive iterates differ by $2.1\times10^{-10}$ at $k=15$, whereas the factorial bound for the exact iteration certifies increments below $10^{-10}$ only for $k\ge2741$, and the constants deteriorate further as $\lambda\downarrow0$ (\cref{rem:lambda-scaling}). Understanding why the iteration converges so much faster than our estimates predict, and obtaining constants that depend only mildly on $\lambda$, would make the theory a genuine guide for choosing iteration counts and solver tolerances.

	\appendix
	
	\section{Finite-dimensional feature spaces}
	\label{sm:features}
	
	For implementation one may specialize Assumption~\ref{ass:score-space} to a finite-dimensional feature space, replacing the $dN$ block system by an $m\times m$ system.  In the rest of this section, set the score space itself to $\Hh:=\Hh_m$; accordingly, $S_\lambda$ denotes minimization over $\Hh_m$. If $\Hh_m$ is instead treated only as a restricted class inside a larger score space, the formulas below certify the restricted optimum and a separate approximation gap remains, as explained in Remark~\ref{rem:restricted-class}.
	
	\begin{proposition}
		\label{prop:finite-feature-score}
		Let $\phi:\R^d\to\R^m$ be $C_b^2$, and consider the vector-valued feature space
		\begin{equation}\label{eq:feature-space}
			\Hh_m:=\{s_W:s_W(x)=W\phi(x),\;W\in\R^{d\times m}\},
			\qquad \norm{s_W}_{\Hh_m}:=\norm W_F.
		\end{equation}
		Assume that this coefficient representation is injective.  For $\mu\in\Pone(\R^d)$ set
		\begin{equation}\label{eq:feature-moments}
			G_\mu:=\int\phi(x)\phi(x)^\top\dd\mu(x),
			\qquad
			g_{\mu,a}:=\int\partial_a\phi(x)\dd\mu(x),
			\quad a=1,\ldots,d.
		\end{equation}
		Then the $a$th row $w_{\mu,a}^{\lambda\,\top}$ of the coefficient matrix of $S_\lambda(\mu)$ is
		\begin{equation}\label{eq:feature-closed-form}
			w_{\mu,a}^{\lambda}
			=-(G_\mu+\lambda I_m)^{-1}g_{\mu,a}.
		\end{equation}
		For $W^\lambda=(w_{\mu,1}^\lambda,\ldots,w_{\mu,d}^\lambda)^\top$ and every $W\in\R^{d\times m}$, the exact objective gap is
		\begin{equation}\label{eq:feature-gap}
			\J_\mu^\lambda(s_W)-\J_\mu^\lambda(s_{W^\lambda})
			=\sum_{a=1}^d
			(w_a-w_{\mu,a}^\lambda)^\top
			(G_\mu+\lambda I_m)(w_a-w_{\mu,a}^\lambda).
		\end{equation}
	\end{proposition}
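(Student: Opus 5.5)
The plan is to rewrite $\J_\mu^\lambda$ as a quadratic form in the coefficient matrix $W$ and minimize it row by row. For $s_W=W\phi$ one has $\abs{s_W(x)}^2=\sum_a(w_a\cdot\phi(x))^2$, where $w_a^\top$ is the $a$th row of $W$. Also $\Div s_W(x)=\sum_a w_a\cdot\partial_a\phi(x)$, and $\norm{s_W}_{\Hh_m}^2=\norm W_F^2=\sum_a\abs{w_a}^2$. Integrating against $\mu$ then gives
\[
\J_\mu^\lambda(s_W)=\sum_{a=1}^d\Bigl(w_a^\top(G_\mu+\lambda I_m)w_a+2\,w_a\cdot g_{\mu,a}\Bigr).
\]
This objective decouples across rows. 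The integrals $G_\mu$ and $g_{\mu,a}$ are finite for every probability measure, since $\phi\in C_b^2$ makes $\phi$ and $\partial_a\phi$ bounded.

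The next step is to minimize each row separately. The matrix $G_\mu$ is symmetric positive semidefinite, so $G_\mu+\lambda I_m$ is positive definite. Each summand is therefore a strictly convex quadratic in $w_a$, and its unique minimizer is $w^\lambda_{\mu,a}=-(G_\mu+\lambda I_m)^{-1}g_{\mu,a}$. Because the coefficient representation is injective, $W\mapsto s_W$ is a bijection onto $\Hh_m$ (and an isometry, since the norm is defined through $\norm W_F$). Hence the minimizer over $W$ is exactly $S_\lambda(\mu)$, which gives \eqref{eq:feature-closed-form}.

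For \eqref{eq:feature-gap}, complete the square in each row:
\[
w^\top Bw+2w\cdot g=(w-w^\ast)^\top B(w-w^\ast)-w^{\ast\top}Bw^\ast,\qquad w^\ast=-B^{-1}g .
\]
Subtracting the value at $W^\lambda$ and summing over $a$ yields the identity. As a consistency check, this agrees with the abstract identity \eqref{eq:exact-gap} applied with $\Hh=\Hh_m$.

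No step is a real obstacle. The only care needed is in verifying that $\Hh_m$ with this norm fits the abstract framework, and only the direct computation actually relies on that. The main bookkeeping point is the divergence term, where the $a$th component of $s_W$ is differentiated in $x_a$ and so pairs row $a$ with $\partial_a\phi$.
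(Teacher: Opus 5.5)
Your proposal is correct and follows essentially the same route as the paper: rewrite $\J_\mu^\lambda(s_W)$ as $\sum_a\bigl[w_a^\top(G_\mu+\lambda I_m)w_a+2w_a^\top g_{\mu,a}\bigr]$, solve the $d$ decoupled strictly convex row problems through their normal equations, and complete the square in each row to obtain the exact gap. Your additional remarks on the finiteness of $G_\mu$ and $g_{\mu,a}$ and on the isometric bijection $W\mapsto s_W$ are harmless extras that the paper leaves implicit.
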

	
	\begin{proof}
		Let $w_a^\top$ denote the rows of $W$.  Since $(s_W)_a=w_a^\top\phi$ and $\Div s_W=\sum_{a=1}^dw_a^\top\partial_a\phi$, the definitions \eqref{eq:feature-moments} and $\norm W_F^2=\sum_a\abs{w_a}^2$ give
		\[
		\J_\mu^\lambda(s_W)
		=\sum_{a=1}^d
		\left[w_a^\top H_\mu w_a+2w_a^\top g_{\mu,a}\right],
		\qquad H_\mu:=G_\mu+\lambda I_m .
		\]
		Since $G_\mu\succeq0$ and $\lambda>0$, $H_\mu$ is positive definite, so the $d$ row problems are independent strictly convex quadratic problems.  Their normal equations $H_\mu w_{\mu,a}^\lambda+g_{\mu,a}=0$ give \eqref{eq:feature-closed-form}.  Using $H_\mu w_{\mu,a}^\lambda=-g_{\mu,a}$, completing the square in each row gives
		\[
		w_a^\top H_\mu w_a+2w_a^\top g_{\mu,a}
		-\bigl[(w_{\mu,a}^\lambda)^\top H_\mu w_{\mu,a}^\lambda
		+2(w_{\mu,a}^\lambda)^\top g_{\mu,a}\bigr]
		=(w_a-w_{\mu,a}^\lambda)^\top H_\mu(w_a-w_{\mu,a}^\lambda),
		\]
		and summing over $a$ proves \eqref{eq:feature-gap}.
	\end{proof}
	
	\begin{corollary}
		\label{cor:linear-residual-certificate}
		In the setting of \cref{prop:finite-feature-score}, let $H_\mu:=G_\mu+\lambda I_m$ and, for an approximate coefficient matrix $\widetilde W$, define the row residuals
		\[
		\rho_a:=H_\mu\widetilde w_a+g_{\mu,a},
		\qquad a=1,\ldots,d.
		\]
		Then the global excess objective relative to the selected score space $\Hh=\Hh_m$, as required in Theorem~\ref{thm:inexact}, is exactly
		\begin{equation}\label{eq:linear-residual-gap}
			\J_\mu^\lambda(s_{\widetilde W})
			-\J_\mu^\lambda(S_\lambda(\mu))
			=\sum_{a=1}^d\rho_a^\top H_\mu^{-1}\rho_a
			\le\frac1\lambda\sum_{a=1}^d\abs{\rho_a}^2.
		\end{equation}
		Thus a conjugate-gradient or direct-solve residual certifies the \emph{global} score gap without knowing the exact minimizer.
	\end{corollary}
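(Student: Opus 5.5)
The identity comes from the row-wise quadratic structure already established in the proof of \cref{prop:finite-feature-score}, together with the closed form \eqref{eq:feature-closed-form}. Since the selected score space is $\Hh=\Hh_m$, the minimizer is $S_\lambda(\mu)=s_{W^\lambda}$. The exact objective gap \eqref{eq:feature-gap} then gives
\[
\J_\mu^\lambda(s_{\widetilde W})-\J_\mu^\lambda(S_\lambda(\mu))
=\sum_{a=1}^d(\widetilde w_a-w_{\mu,a}^\lambda)^\top H_\mu(\widetilde w_a-w_{\mu,a}^\lambda).
\]
It therefore suffices to express the coefficient error in each row through the computable residual $\rho_a$.

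By the normal equations $H_\mu w_{\mu,a}^\lambda=-g_{\mu,a}$, we have $\rho_a=H_\mu\widetilde w_a+g_{\mu,a}=H_\mu(\widetilde w_a-w_{\mu,a}^\lambda)$. The matrix $H_\mu$ is positive definite, hence invertible, so $\widetilde w_a-w_{\mu,a}^\lambda=H_\mu^{-1}\rho_a$. Substituting this into the gap formula and using the symmetry of $H_\mu$ gives
\[
(H_\mu^{-1}\rho_a)^\top H_\mu(H_\mu^{-1}\rho_a)=\rho_a^\top H_\mu^{-1}\rho_a,
\]
which is the equality in \eqref{eq:linear-residual-gap}.

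For the inequality, note that $G_\mu\succeq0$, so $H_\mu\succeq\lambda I_m$. Hence $H_\mu^{-1}\preceq\lambda^{-1}I_m$ by operator monotonicity of inversion; equivalently, every eigenvalue of $H_\mu^{-1}$ is at most $1/\lambda$ in the spectral decomposition. This gives $\rho_a^\top H_\mu^{-1}\rho_a\le\abs{\rho_a}^2/\lambda$, and summing over $a$ finishes the proof.

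No step is a genuine obstacle. The one point to state carefully is that the gap in \eqref{eq:linear-residual-gap} is measured against the minimizer over $\Hh_m$. This is exactly the global excess objective $\eps_k$ required in \eqref{eq:inexact-score-gap} once $\Hh:=\Hh_m$, so it can be inserted directly into \cref{thm:inexact}. The remark about conjugate gradients then needs no further argument, because $\rho_a$ is computable from $\widetilde W$, $G_\mu$, and $g_{\mu,a}$ alone, without knowing $W^\lambda$.
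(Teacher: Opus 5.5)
Your proposal is correct and follows the paper's proof: you write $\widetilde w_a-w_{\mu,a}^\lambda=H_\mu^{-1}\rho_a$ via the normal equations, substitute into \eqref{eq:feature-gap}, and bound the result using $H_\mu\succeq\lambda I_m$. Your added remarks on the symmetry of $H_\mu$ and on measuring the gap against the minimizer over $\Hh=\Hh_m$ only make explicit what the paper's two-line proof leaves implicit.
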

	
	\begin{proof}
		Since $\widetilde w_a-w_{\mu,a}^\lambda=H_\mu^{-1}\rho_a$, substitute into \eqref{eq:feature-gap}; the inequality follows from $H_\mu\succeq\lambda I_m$.
	\end{proof}
	
	\begin{remark}\label{rem:rff}
		For fixed frequencies $\omega_\ell$ and phases $b_\ell$, the features
		\[
		\phi_\ell(x)=\sqrt{\frac2m}\cos(\omega_\ell\cdot x+b_\ell)
		\]
		are bounded with bounded derivatives of every order.  Hence the resulting finite-dimensional RKHS satisfies Assumption~\ref{ass:score-space}; all constants are finite for each fixed feature draw, after linearly dependent features have been removed so that the coefficient representation is injective.  Formula \eqref{eq:feature-closed-form} requires one $m\times m$ positive-definite solve per time sample, and does not invoke a nonconvex optimizer.  The $N$-independent constants are uniform only for a fixed feature space; no uniformity is claimed if the feature count or frequencies vary with $N$; see \cite{RahimiRecht2007} for the random-feature construction.
	\end{remark}
	
	\begin{remark}
		\label{rem:finite-feature-lambda-zero}
		For the feature model \eqref{eq:feature-space}, the unregularized coefficient problem is well posed and has a unique minimizer when $G_{\mu_Z}\succeq\kappa I_m$ for some $\kappa>0$; necessarily $m\le N$. A dynamic zero-regularization argument based on \eqref{eq:feature-closed-form} with $\lambda=0$ would need this lower bound uniformly along all particle paths and iterations.  Full rank at one configuration does not prevent the inverse and its data-stability constant from diverging near a later rank loss.  Even under a uniform lower bound, a fixed feature space retains a separate approximation bias relative to the physical score.
	\end{remark}
	
	\section{Complements to the inexact theorem}
	\label{sm:inexact}
	
	\begin{remark}\label{rem:integrated-losses}
		In the setting of Theorem~\ref{thm:inexact}, Cauchy--Schwarz in $t$, together with Jensen's inequality in the particle average, gives
		\begin{align}
			\int_0^T\sqrt{\eps_k(t)}\dd t
			&\le
			\sqrt{T\int_0^T\eps_k(t)\dd t},
			\label{eq:integrated-score-loss}\\
			\int_0^Tr_{k+1}(t)\dd t
			&\le
			\sqrt{
				T\int_0^T\frac1N\sum_{i=1}^N
				\abs{\mathcal R_{i,t}^{k+1}}^2\dd t
			}.
			\label{eq:integrated-system-loss}
		\end{align}
		Consequently,
		\[
		\norm{\eta_k}_{L^1(0,T)}
		\le
		2a_0c_0
		\sqrt{\frac{T}{\lambda}\int_0^T\eps_k(t)\dd t}
		+
		\sqrt{
			T\int_0^T\frac1N\sum_{i=1}^N
			\abs{\mathcal R_{i,t}^{k+1}}^2\dd t
		}.
		\]
		Thus finite time-integrated global score-gap bounds and finite time-integrated mean-square system residuals ensure $\eta_k\in L^1(0,T)$ at each iteration. If both integrals on the right-hand side tend to zero as $k\to\infty$, then $\norm{\eta_k}_{L^1(0,T)}\to0$, and Theorem~\ref{thm:inexact} yields uniform convergence of the iteration. The finite-iteration estimate requires only finiteness of these quantities at the iterations used; no separate uniform-in-$k$ integrated gap budget or time-uniform score-gap bound is required.
	\end{remark}
	
	\begin{corollary}
		\label{cor:implicit-midpoint}
		Fix a prescribed score value $s$ with $\norm s_\infty\le M$ and $\Lips(s)\le L$, and let $L_F=2(2a_1M+a_0L)$.  The implicit midpoint step
		\begin{equation}\label{eq:implicit-midpoint}
			Z^{n+1}=Z^n+hF\left(\frac{Z^n+Z^{n+1}}2;s\right)
		\end{equation}
		has a unique solution for every step size $h>0$ with $hL_F/2<1$.  If $A$ is even, the step preserves the empirical momentum exactly.  If in addition $A=A^\top$ and $A(z)z=0$, it also preserves empirical kinetic energy exactly.  Finally, for any absolutely continuous dense-output reconstruction $\widehat X$ of the numerical system values, the contribution of the system solve to the hypotheses of Theorem~\ref{thm:inexact} is measured by the computable defect
		\begin{equation}\label{eq:dense-output-defect}
			r(t)=\abs{\dot{\widehat X}_t-F(\widehat X_t;s_t)}_N,
		\end{equation}
		where $s_t$ is the prescribed score path whose value on the current step plays the role of $s$ in \eqref{eq:implicit-midpoint}. For admissible iterates, this defect and a global score-gap certificate $\eps_k$ holding almost everywhere yield the explicit error bound of Theorem~\ref{thm:inexact}, provided that $\eta_k\in L^1(0,T)$ at each iteration. The integrated conditions in \cref{rem:integrated-losses} are sufficient for this integrability. A system-defect certificate alone does not verify the score hypothesis.
	\end{corollary}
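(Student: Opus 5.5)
Three claims need proof: unique solvability of the implicit midpoint step, exact conservation of momentum and energy, and the role of the dense-output defect in \cref{thm:inexact}. I take them in that order.

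\emph{Unique solvability.} Fix $Z^n$ and write the step as a fixed-point problem $Z=\Psi(Z):=Z^n+hF\bigl(\tfrac{Z^n+Z}{2};s\bigr)$ on $(\R^d)^N$ with the norm $\abs{\cdot}_N$. This norm is equivalent to the Euclidean one for fixed $N$, so the space is complete. By \eqref{eq:force-state-lip},
\[
\abs{\Psi(Z)-\Psi(Y)}_N\le hL_F\,\abs{\tfrac{Z-Y}{2}}_N=\tfrac{hL_F}{2}\abs{Z-Y}_N .
\]
Hence $\Psi$ is a contraction when $hL_F/2<1$, and Banach's fixed-point theorem gives a unique $Z^{n+1}$. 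This step is routine.

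\emph{Conservation.} Both laws follow from pairwise identities evaluated at the midpoint $\overline Z:=(Z^n+Z^{n+1})/2$.

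For momentum, evenness of $A$ gives $\sum_iF_i(\overline Z;s)=0$, exactly as in \cref{prop:conservation}. Therefore $\sum_iz_i^{n+1}=\sum_iz_i^n$.

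For energy, expand
\[
\abs{z_i^{n+1}}^2-\abs{z_i^n}^2=(z_i^{n+1}-z_i^n)\cdot(z_i^{n+1}+z_i^n)=2h\,F_i(\overline Z;s)\cdot\overline z_i .
\]
Summing over $i$ and symmetrizing in $(i,j)$ yields
\[
\sum_i\overline z_i\cdot F_i(\overline Z;s)=-\frac1{2N}\sum_{i,j}(\overline z_i-\overline z_j)^\top A(\overline z_i-\overline z_j)\bigl(s(\overline z_i)-s(\overline z_j)\bigr).
\]
This vanishes because $A=A^\top$ and $A(z)z=0$. The mechanism is that the midpoint rule reproduces the continuous-time identity $\tfrac{\dd}{\dd t}\abs z^2=2z\cdot\dot z$ with $z$ replaced by the midpoint.

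\emph{Defect certificate.} Let $\widehat X$ be an absolutely continuous reconstruction with $\widehat X_0=Z^0$, for instance piecewise linear through the nodes. Then the system residual \eqref{eq:system-residual} for $\widetilde X^{k+1}:=\widehat X$ and $\widetilde s^k:=s$ is exactly $\mathcal R_t=\dot{\widehat X}_t-F(\widehat X_t;s_t)$, so $r_{k+1}=r$ from \eqref{eq:dense-output-defect}. Along the dense output, $F(\widehat X_t;s_t)$ is a computable function of $t$ and $\dot{\widehat X}_t$ is explicit, so the defect is computable. Combining it with the score certificate $\eps_k$ gives $\eta_k$ via \eqref{eq:eta-definition}. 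Provided $\eta_k\in L^1(0,T)$, which \cref{rem:integrated-losses} guarantees under the integrated conditions, \cref{thm:inexact} applies verbatim. The defect involves only the system equation and not the score error, so a system-defect certificate alone does not verify the score hypothesis.

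\emph{Main obstacle.} I expect the only real subtlety to be admissibility. The score path $t\mapsto s_t$ taken piecewise constant per step must be strongly measurable as an $\Hh$-valued map, which holds for finitely many pieces. The piecewise-linear $\widehat X$ must be absolutely continuous and start at $Z^0$, which holds by construction.
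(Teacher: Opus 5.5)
Your proposal is correct and follows the paper's proof. It uses the same four steps: the contraction constant $hL_F/2$ from \eqref{eq:force-state-lip}; summation over particles for momentum; the midpoint identity $\abs{z_i^{n+1}}^2-\abs{z_i^n}^2=2h\,\overline z_i\cdot F_i(\overline Z;s)$ with the pairwise calculation of \cref{prop:conservation} for energy; and the identification $r_{k+1}=r$ by taking $\widetilde X^{k+1}=\widehat X$ in \eqref{eq:system-residual}, with $\eps_k$ supplied separately. Your extra admissibility remarks are harmless but not needed, since the corollary assumes admissible iterates.
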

	
	\begin{proof}
		The fixed-point map $Y\mapsto Z^n+hF((Z^n+Y)/2;s)$ has Lipschitz constant $hL_F/2$, proving the first claim.  Summing \eqref{eq:implicit-midpoint} over particles proves momentum conservation.  With $\overline Z=(Z^n+Z^{n+1})/2$,
		\[
		\frac1N\sum_i\bigl(\abs{z_i^{n+1}}^2-\abs{z_i^n}^2\bigr)
		=\frac{2h}{N}\sum_i\overline z_i\cdot F_i(\overline Z;s)=0
		\]
		by the same pairwise calculation as Proposition~\ref{prop:conservation}.  The last claim is the definition of the system residual in \eqref{eq:system-residual}: with $\widetilde X^{k+1}=\widehat X$ one has $r_{k+1}=r$, while the score-gap term $\eps_k$ must be supplied separately.
	\end{proof}
	
	\section{Numerical experiments}
	\label{sm:numerics}
	
	The proof is analytic; the following deterministic experiments check four observable mechanisms: factorial decay of the iteration, stability as $N$ varies, persistent-bias scaling consistent with the error-floor bound, and the two branches of the fixed-$N$ zero-regularization dichotomy.  All source code, raw trajectories, CSV data, and plotting commands are stored separately in \texttt{experiments/}; no numerical output is used in a proof.
	
	\subsection{Setup}
	
	We take $d=2$ and the bounded Landau-structured matrix $A_{\rm test}$ from \eqref{eq:numerical-kernel}.  The score class consists of $40$ scalar random Fourier features---$20$ fixed sine/cosine pairs---with frequency scale $0.85$, and every score step uses the exact SPD solve \eqref{eq:feature-closed-form} with $\lambda=0.08$.  The initial cloud is a fixed two-component Gaussian mixture (seed $20260815$).  Unless stated otherwise, $N=48$, $T=1.5$, and the time mesh has $150$ subintervals.  Each frozen-score system is integrated by classical RK4 using a linearly interpolated coefficient path.  This is therefore a time-semidiscrete version of Algorithm~\ref{alg:alternating}: the score minimizer is exact at every mesh node in the selected feature space, while temporal score interpolation and RK4 both create discretization defects.  The refinement study measures their combined effect.

	For this feature space the constants of Assumption~\ref{ass:score-space} can be computed exactly.  Writing $\phi(x)=q^{-1/2}\bigl(\cos(\omega_\ell\cdot x),\sin(\omega_\ell\cdot x)\bigr)_{\ell=1}^{q}$ with $q=20$ frequencies $\omega_\ell\in\R^2$, one has $\abs{\phi(x)}\equiv1$ and $D\phi(x)^\top D\phi(x)=\Sigma:=q^{-1}\sum_{\ell}\omega_\ell\omega_\ell^\top$ for every $x$, so that $c_0=1$, $c_1=\norm{\Sigma}_{\rm op}^{1/2}$, $c_D=(\operatorname{tr}\Sigma)^{1/2}$, and $c_2=\bigl\|q^{-1}\sum_\ell\abs{\omega_\ell}^2\omega_\ell\omega_\ell^\top\bigr\|_{\rm op}^{1/2}$; for $A_{\rm test}$ one has $a_0=1$ and $a_1=9/(8\sqrt3)$ (the maximal radial derivative, attained at $\abs z=1/\sqrt3$).  Table~\ref{tab:analytic-constants} lists the values for the frequency draw of the experiments together with the constants of Theorem~\ref{thm:main-outer} at $\lambda=0.08$, $T=1.5$.  The certified threshold $2e\beta_\lambda T$ of Remark~\ref{rem:lambda-scaling} is about $5.3\times10^3$, and the bound \eqref{eq:main-factorial-increment} falls below $10^{-10}$ only for $k\ge2741$, whereas the observed increments reach $2.1\times10^{-10}$ at $k=15$.
	\begin{table}[H]
		\centering
		\caption{Constants of Assumptions~\ref{ass:score-space} and~\ref{ass:kernel}
			for the feature space and kernel of the experiments, and the resulting
			constants of Theorem~\ref{thm:main-outer} ($\lambda=0.08$, $T=1.5$).}
		\label{tab:analytic-constants}
		\begin{tabular}{@{}llll@{}}
			\toprule
			$c_0$ & $1$ & $R_\lambda=M_\lambda$ & $13.4$\\
			$c_1$ & $0.905$ & $L_\lambda$ & $12.1$\\
			$c_D$ & $1.07$ & $L_{S,\lambda}$ & $3.24\times10^{2}$\\
			$c_2$ & $1.61$ & $\beta_\lambda=2a_0L_{S,\lambda}$ & $6.48\times10^{2}$\\
			$a_0$ & $1$ & $L_{F,\lambda}$, $V_\lambda$ & $59.2$, $26.8$\\
			$a_1$ & $0.650$ & $2e\beta_\lambda T$ & $5.28\times10^{3}$\\
			\bottomrule
		\end{tabular}
	\end{table}
	
	\subsection{Convergence of the alternating iteration and dependence on \texorpdfstring{$N$}{N}}
	
	The initial successive-iterate increment is $6.4628\times10^{-1}$; by iteration $15$ it is $2.1456\times10^{-10}$.  Applying the map once more gives the fixed-point defect $2.3865\times10^{-11}$.  The left panel of \cref{fig:outer-convergence} plots the increments and the tail to the last iterate.  The right panel plots the heuristic ratio $(k+1)D_k^*/D_{k-1}^*$, where $D_k^*:=\sup_tD_k(t)$.  Its bounded behavior and the fitted envelope are consistent with factorial decay, but do not follow as a converse to the theorem's one-sided bound.  The fitted diagnostic has constant $1.2023$; it is an empirical descriptor, not a measurement of the generally conservative analytic quantities $\beta_\lambda$ and $e^{L_{F,\lambda}T}$.
	
	\begin{figure}[H]
		\centering
		\includegraphics[width=\textwidth]{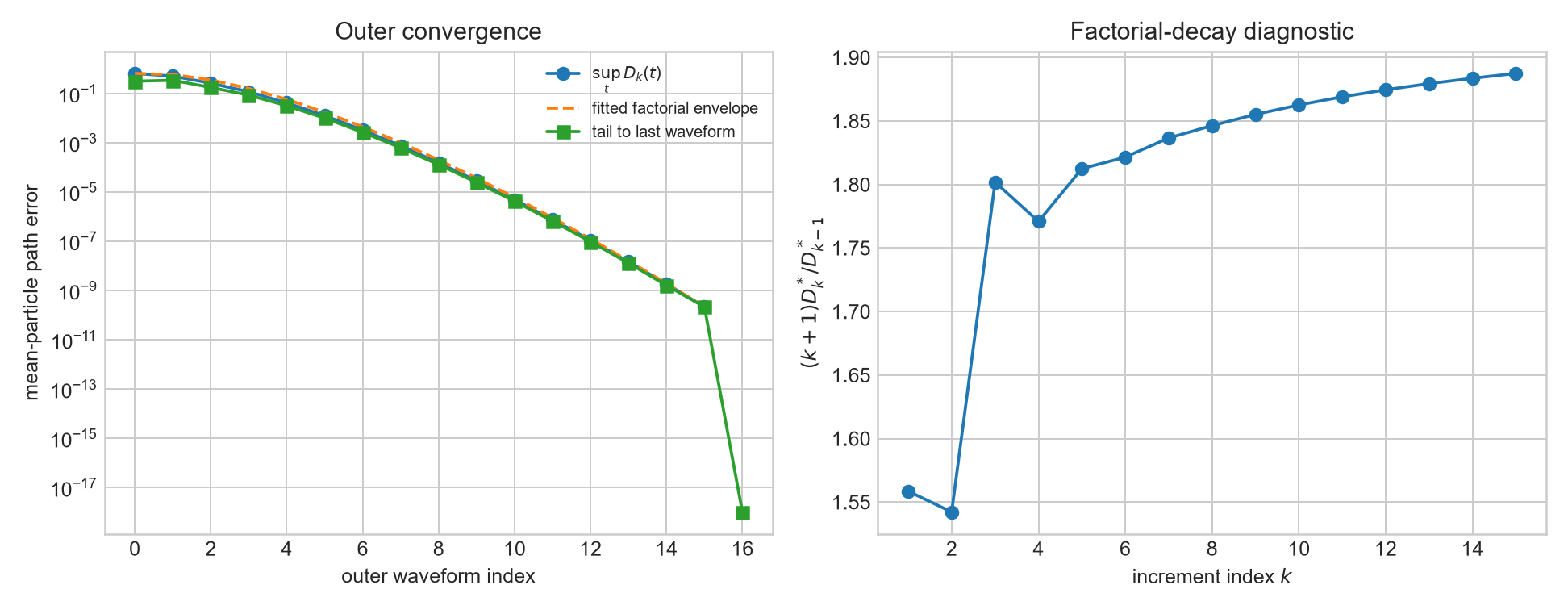}
		\caption{Successive-iterate decay, computed tail, and factorial-decay
			diagnostic for the baseline run.}
		\label{fig:outer-convergence}
	\end{figure}
	
	Using the same feature draw and nested initial clouds, the fitted constants for $N=24,48,96$ are respectively $1.2425$, $1.2023$, and $1.2133$, while the final one-more-map defects are $3.90$, $2.39$, and $2.68$ times $10^{-11}$.  The nearly overlapping curves in \cref{fig:N-sweep} are consistent with the $N$-independent deterministic constants.  They are not a statistical proof of a sampling rate, which Theorem~\ref{thm:main-mean-field} deliberately does not assert.
	
	\begin{figure}[H]
		\centering
		\includegraphics[width=\textwidth]
		{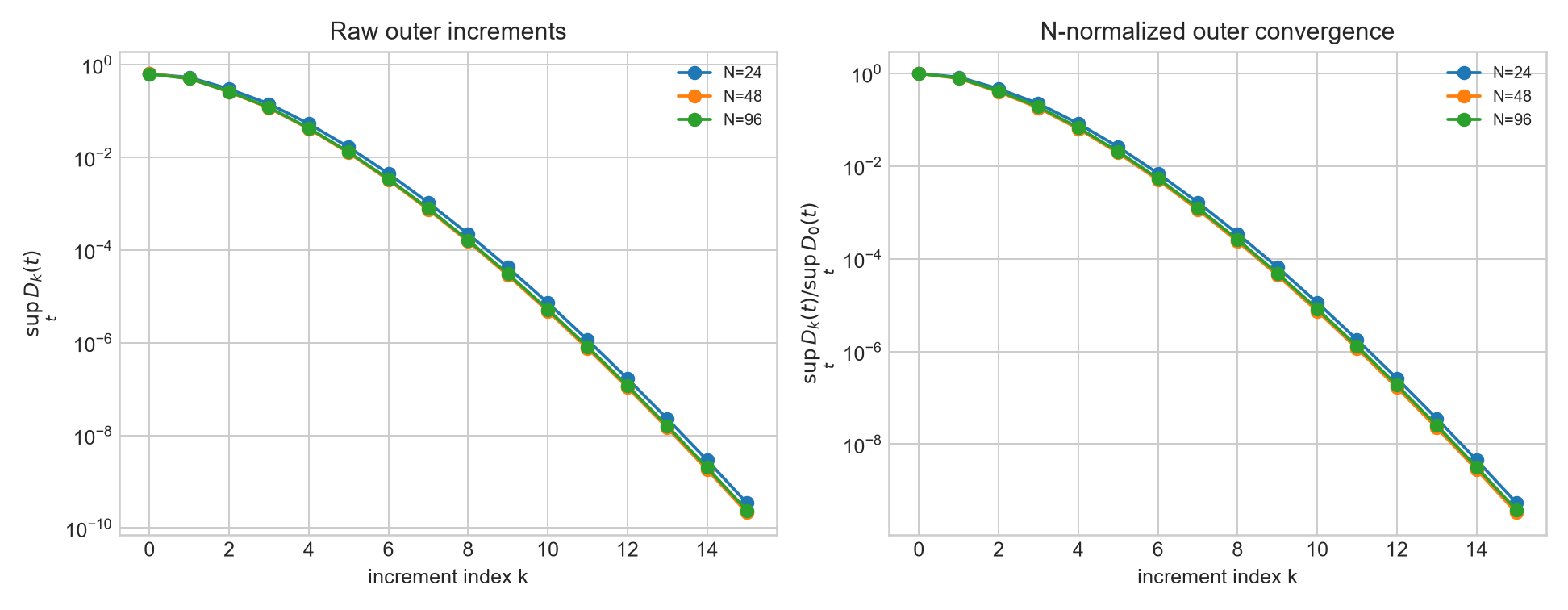}
		\caption{Raw and initial-increment-normalized convergence of the iteration for three
			particle counts with a fixed feature space.}
		\label{fig:N-sweep}
	\end{figure}
	
	As an independent reference, we also integrate the coupled regularized ODE \eqref{eq:main-coupled-particle} directly, recomputing the exact finite-feature score normal equations at all four RK4 stages.  At $\Delta t=0.01$, the converged iterate differs from this monolithic path by $1.4903\times10^{-6}$ in the supremum mean-particle distance (\cref{fig:monolithic-reference}).  The monolithic solver's two-level effective order relative to the $300$-step reference is $4.085$, and its maximum score normal-equation relative residual is $3.08\times10^{-16}$, where the relative residual is the Frobenius norm of the residual of the normal equations \eqref{eq:feature-closed-form} divided by the larger of one and the Frobenius norm of their right-hand side.  This comparison separates fixed-point convergence of the iteration from the temporal interpolation/discretization errors left by the semi-discrete implementation of the alternating iteration.
	
	\begin{figure}[H]
		\centering
		\includegraphics[width=0.9\textwidth]
		{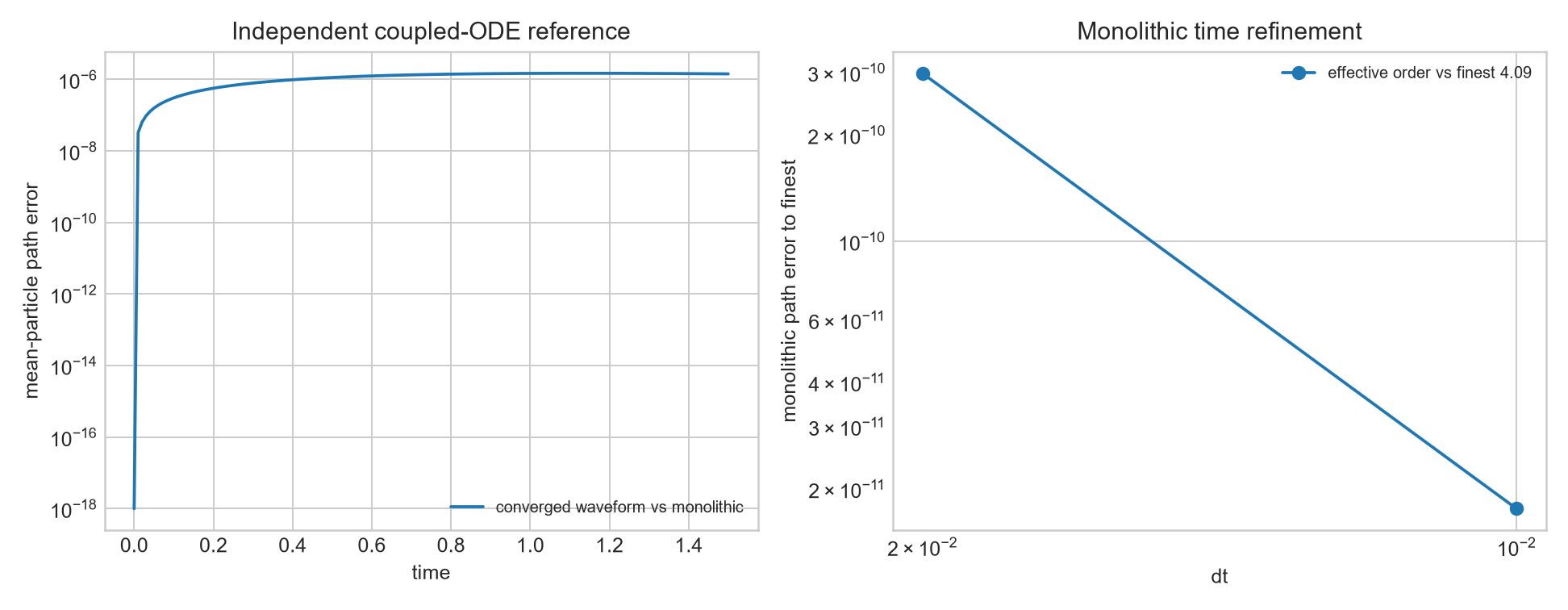}
		\caption{Converged iterate versus a direct current-state score--system solve,
			and independent refinement of the monolithic RK4 reference.}
		\label{fig:monolithic-reference}
	\end{figure}
	
	\subsection{Inexact score steps and conservation}
	
	To isolate the score term in Theorem~\ref{thm:inexact}, we add the same deterministic coefficient perturbation at every iteration, with amplitudes $0.01,0.02,0.04,0.08,0.16$.  At every time node of the final (converged) biased iterate the code computes the exact quadratic gap \eqref{eq:feature-gap} of the perturbed coefficients, rather than a surrogate training loss, and uses trapezoidal integration between nodes.  This is a nodal diagnostic, not an a.e.-in-time certificate for Theorem~\ref{thm:inexact}.  The largest discrepancy between the direct objective difference and the quadratic identity is $1.80\times10^{-15}$.  The converged biased trajectory error is linear in $\int_0^T\sqrt{\eps(t)}\dd t$ (\cref{fig:inexact-floor}): the log--log slope is $0.99962$, the through-origin fit has $R^2=0.9999998$, and the measured ratio is approximately $0.0368$.  This is consistent with the square-root scaling in \eqref{eq:eta-definition}; it neither proves that scaling nor claims the upper-bound constant is sharp.
	
	\begin{figure}[H]
		\centering
		\includegraphics[width=\textwidth]{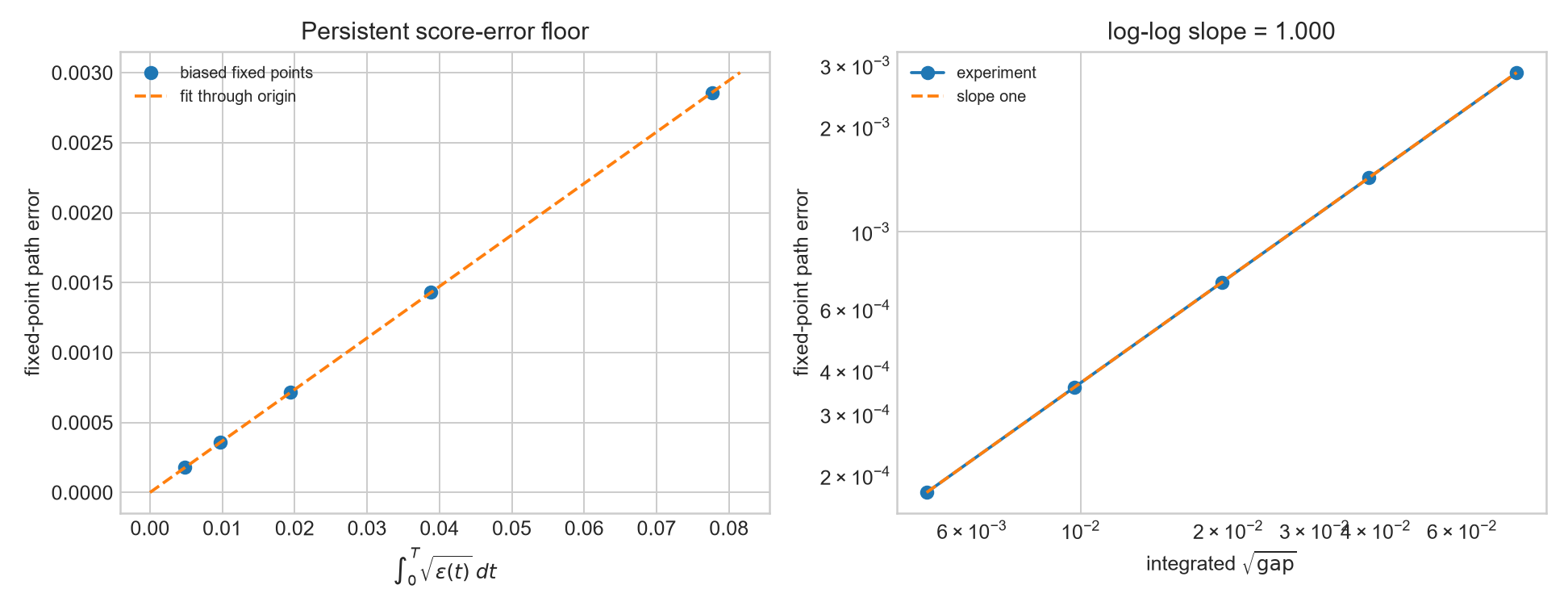}
		\caption{Persistent trajectory error versus the trapezoidal integral of the
			square root of the exactly evaluated nodal score-objective gap.}
		\label{fig:inexact-floor}
	\end{figure}
	
	At the time nodes, pairwise cancellation keeps the maximum momentum drift below $4.54\times10^{-16}$ (\cref{fig:invariants}).  The maximum nodal kinetic-energy drift over all baseline iterates is $2.59\times10^{-11}$; for the final iterate it decreases from $8.67\times10^{-11}$ to $3.43\times10^{-13}$ as $\Delta t$ is refined from $0.02$ to $0.005$.  The midpoint chord-residual diagnostic decreases from $1.11\times10^{-5}$ to $7.02\times10^{-7}$ over the same sweep. Selected diagnostics are collected in \cref{tab:numerical-audit}; complete time-resolved values are in the accompanying CSV files.
	
	\begin{figure}[H]
		\centering
		\includegraphics[width=0.9\textwidth]{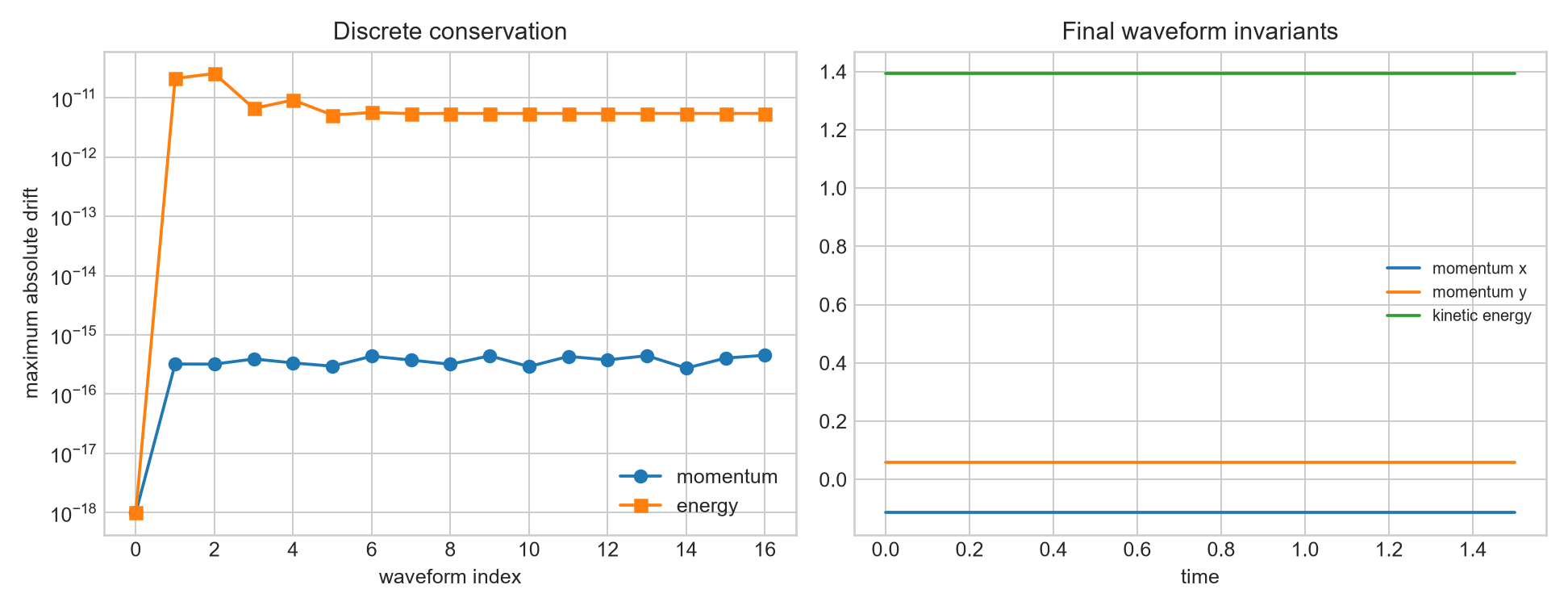}
		\caption{Momentum and kinetic-energy diagnostics evaluated at the time nodes
			for all iterates, including the final one. The nonzero nodal
			energy drift is due to RK4 time discretization.}
		\label{fig:invariants}
	\end{figure}
	
	\begin{table}[H]
		\centering
		\caption{Summary of numerical diagnostics.}
		\label{tab:numerical-audit}
		\begin{tabular}{@{}ll@{}}
			\toprule
			Diagnostic & Value\\
			\midrule
			Baseline $\sup_tD_0(t)$, $\sup_tD_{15}(t)$ & $6.4628\times10^{-1}$, $2.1456\times10^{-10}$\\
			One-more-map defect & $2.3865\times10^{-11}$\\
			Max.\ score normal-eq.\ relative residual (baseline) & $3.22\times10^{-16}$\\
			Iterate--monolithic path difference & $1.4903\times10^{-6}$\\
			Monolithic effective order vs. 300-step reference & $4.085$\\
			Maximum nodal momentum, energy drift & $4.54\times10^{-16}$, $2.59\times10^{-11}$\\
			$N=24,48,96$ fitted constants & $1.2425$, $1.2023$, $1.2133$\\
			Inexact-floor log--log slope; $R^2$ & $0.99962$; $0.9999998$\\
			Maximum exact-gap identity error & $1.80\times10^{-15}$\\
			\bottomrule
		\end{tabular}
	\end{table}
	
	\subsection{The zero-regularization dichotomy at fixed \texorpdfstring{$N$}{N}}
	
	We isolate Theorem~\ref{thm:empirical-lambda-zero-dichotomy} from all dynamical effects by sweeping $15$ logarithmically spaced values $10^{-8}\le\lambda\le10^{-1}$.  In a scalar four-feature problem with $N=24$, the empirical Gram matrix is uniformly coercive: $\lambda_{\min}(G)=0.764515$ and $\operatorname{cond}(G)=1.80993$. The regularized coefficient error relative to the unregularized minimizer has log--log slope $0.999999$ over the seven smallest values of $\lambda$, in agreement with \eqref{eq:empirical-lambda-compatible-rate}.
	
	The second calculation uses the exact one-dimensional Gaussian RKHS, not a feature truncation, with $N=5$ and $\sigma=0.55$.  Kernel and derivative-section inner products give
	\[
	\norm{b_\perp}_{\Hh}=0.253219,
	\qquad
	\frac{\norm{b_\perp}_{\Hh}^2}{\norm{b}_{\Hh}^2}=0.300914,
	\]
	while the maximum evaluation of $b_\perp$ at a training point is $6.08\times10^{-17}$.  Thus the unstable component is numerically invisible on the training set.  As $\lambda\downarrow0$, the fitted values at those points approach their finite limit with slope $0.999983$, but $\norm{S_\lambda}_{\Hh}$, a fixed off-grid value, and $-\min\J_{\mu_Z}^\lambda$ have slopes $-1.000000$, $-0.999996$, and $-0.999975$, respectively.  At $\lambda=10^{-8}$ the training-point RMS error is only $1.49\times10^{-7}$, whereas $\norm{S_\lambda}_{\Hh}=2.53\times10^7$.  This directly illustrates why a converged nodal solve does not certify an unregularized RKHS score; see \cref{fig:lambda-dichotomy}.
	
	\begin{figure}[H]
		\centering
		\includegraphics[width=\textwidth]
		{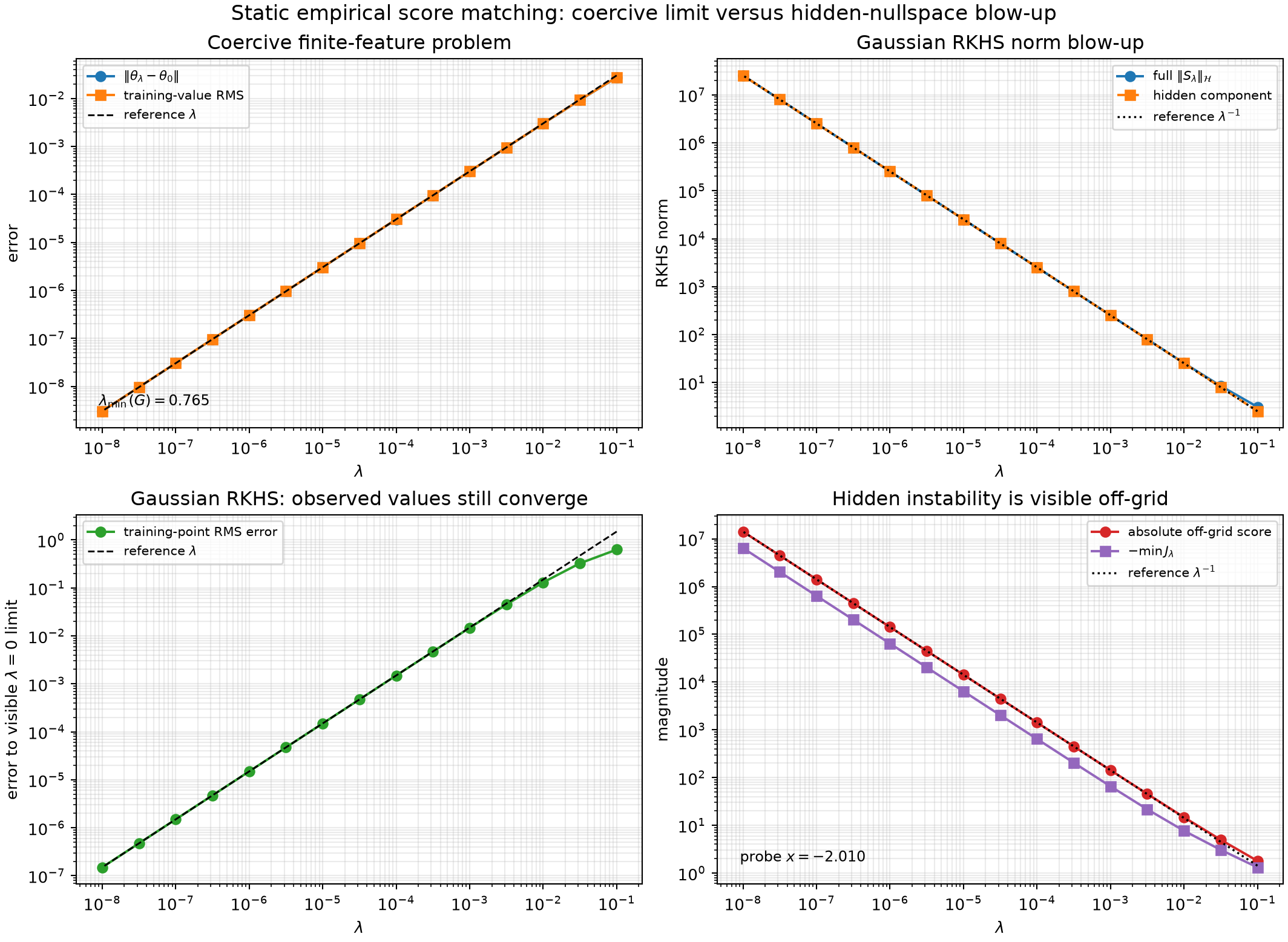}
		\caption{Zero-regularization dichotomy at fixed $N$.  The coercive finite-feature problem
			converges at first order (top left).  In the exact Gaussian RKHS, the hidden
			null-space component makes the full norm diverge like $\lambda^{-1}$ (top
			right), even while training values converge (bottom left); an off-grid value
			and the negative optimum reveal the same instability (bottom right).  Dashed
			lines show reference powers.  This finite-sample static test makes no claim
			about a dynamic Landau, Coulomb, mean-field, or simultaneous $(N,\lambda)$
			limit.}
		\label{fig:lambda-dichotomy}
	\end{figure}
	
	The executable validator checks finiteness, a six-order increment reduction, the fixed-point defect, conservation tolerances, the normal-equation residual, the monolithic reference and its refinement order, the nodal exact-gap identity, refinement monotonicity, and all asymptotic slopes in the static $\lambda$ sweep. The tests of the alternating iteration support the fixed-$(\lambda,A)$ theorem only, and the zero-regularization test supports only the algebraic dichotomy.  They do not test a dynamic singular-Coulomb or simultaneous particle--zero-regularization limit, nor a universal empirical-measure rate.
	
	\section{Proofs of the relative-entropy lemmas}
	\label{sm:tabary-proofs}
	
	We prove Lemmas~\ref{lem:weighted-pinsker}, \ref{lem:cutoff-coercivity}, and~\ref{lem:reverse-entropy-slack}. The first is elementary. As explained in section~\ref{sec:landau-connection}, Lemma~\ref{lem:cutoff-coercivity} corresponds to Lemma~2.8 of \cite{TabaryWeakStrong2025}, and Lemma~\ref{lem:reverse-entropy-slack} combines the good--bad decomposition of its Lemma~2.4 with free-Young-parameter versions of its Lemmas~2.9 and~2.11. The notation and standing assumptions are those of section~\ref{sec:landau-connection}.
	
	\begin{proof}[Proof of Lemma~\ref{lem:weighted-pinsker}]
		For $y>0$, $\log y\ge1-1/y$; on the set $\{g>0\}$, applying this with $y=(g/f)^{1/2}$ gives $g\log(g/f)\ge2g-2\sqrt{fg}$, and the inequality extends to $\{g=0\}$ with the convention $0\log0=0$ (both sides vanish). Integrating, $\KLdiv(g\Vert f)\ge2-2\int\sqrt{fg}=\int(\sqrt g-\sqrt f)^2$.  Since $(\sqrt g+\sqrt f)^2\le2(g+f)$,
		\[
		\int\frac{(g-f)^2}{g+f}\dd v
		=\int(\sqrt g-\sqrt f)^2\frac{(\sqrt g+\sqrt f)^2}{g+f}\dd v
		\le2\int(\sqrt g-\sqrt f)^2\dd v\le2\KLdiv(g\Vert f).
		\]
		Finally, by Cauchy--Schwarz, $\int\abs{g-f}\varphi =\int\frac{\abs{g-f}}{(g+f)^{1/2}}(g+f)^{1/2}\varphi \le(\int\frac{(g-f)^2}{g+f})^{1/2}(\int(g+f)\varphi^2)^{1/2}$.
	\end{proof}
	
	\begin{proof}[Proof of Lemma~\ref{lem:cutoff-coercivity}]
		The stated properties of $\widetilde A_C$ are direct: $\abs z^2\le\langle z\rangle^3$ gives the operator bound; the derivative of $\langle z\rangle^{-3}$ does not contribute to the divergence because $(\abs z^2\delta_{ij}-z_iz_j)z_i=0$, and $\partial_i(\abs z^2\delta_{ij}-z_iz_j)=2z_j-4z_j=-2z_j$ in three dimensions.
		
		For a unit vector $\xi$, $\xi^\top\widetilde A_C(z)\xi=c_C\langle z\rangle^{-3}\abs{P_\xi^\perp z}^2$, where $P_\xi^\perp z=z-(z\cdot\xi)\xi$.  Fix $v$ and $\xi$.  Put $R:=\max\{1,2\sqrt E\}$, so that $\int_{\{\abs w>R\}}g\le E/R^2\le\frac14$. Put $H_+:=\int g\log^+g\dd w$; since $g\log^-g\le(2/e)\sqrt g$ on $\{g\le e^{-\abs w^2}\}$ and $\log^-g\le\abs w^2$ on $\{e^{-\abs w^2}<g<1\}$, one has $H_+\le\mathsf H+E+C_0$ with an absolute constant $C_0$.  For $M:=e^{4(\mathsf H+E+C_0+1)}$, which satisfies $H_+/\log M\le\frac14$, and every measurable $S$,
		\[
		\int_Sg\dd w\le\int_{\{g>M\}}g\dd w+M\abs S
		\le\frac{H_+}{\log M}+M\abs S\le\frac14+M\abs S .
		\]
		Let $S:=\{w:\abs w\le R,\ \abs{P_\xi^\perp(v-w)}\le\rho\}$, a piece of a cylinder of radius $\rho$ and length at most $2R$, so $\abs S\le2\pi R\rho^2$; choose $\rho^2:=1/(8\pi RM)$, so that $M\abs S\le\frac14$.  Then $W:=\{\abs w\le R\}\setminus S$ carries $g$-mass at least $\frac14$, and for $w\in W$ one has $\abs{P_\xi^\perp(v-w)}\ge\rho$ and $\langle v-w\rangle\le1+\abs v+R\le\sqrt2(1+R)\langle v\rangle$.  Hence
		\begin{align*}
			\xi^\top(\widetilde A_C*g)(v)\xi
			&\ge c_C\int_W\langle v-w\rangle^{-3}\abs{P_\xi^\perp(v-w)}^2g(w)\dd w\\
			&\ge\frac{c_C\rho^2}{4\cdot2^{3/2}(1+R)^3}\langle v\rangle^{-3}
			=:c_*\langle v\rangle^{-3},
		\end{align*}
		and $c_*$ depends only on $(c_C,E,\mathsf H)$.
	\end{proof}
	
	\begin{proof}[Proof of Lemma~\ref{lem:reverse-entropy-slack}]
		Fix $t$ and suppress it.  All integrations by parts below are justified by Assumption~\ref{ass:reverse-entropy-admissibility} and Remark~\ref{rem:integrability-justification}.  We use the identities $g-f=fh$ and $g\nabla r=f\nabla h$, write $q_f=\nabla\log f$, $q_g=\nabla\log g=q_f+\nabla r$, and $\Delta_{vw}\phi:=\phi(v)-\phi(w)$.
		
		\emph{Step 1: Decomposition.}  Since $U[\cdot,\cdot]$ is linear in the score,
		\begin{align*}
			U[g,q_g](v)-U[f,q_f](v)
			&=-\int A_C(v-w)\,\Delta_{vw}\nabla r\;g(w)\dd w\\
			&\quad-\int A_C(v-w)\,\Delta_{vw}q_f\;(g-f)(w)\dd w .
		\end{align*}
		Multiply by $g(v)\nabla r(v)$ and integrate in $v$.  Because $A_C$ is even and symmetric, symmetrizing the first term in $(v,w)$ gives $-\frac12\iint(\Delta_{vw}\nabla r)^\top A_C(v-w)(\Delta_{vw}\nabla r)\,g(v)g(w) =-\mathcal G$.  In the second term insert $g(v)\nabla r(v)=f(v)\nabla h(v)$ and $g(w)-f(w)=f(w)h(w)$; this is exactly the right-hand side of \eqref{eq:relative-bad-term-once}, which proves \eqref{eq:landau-part-decomposition} with $\mathcal B$ given by \eqref{eq:relative-bad-term-once}.  To identify it with \eqref{eq:relative-bad-term}, symmetrize once more and use $f(v)f(w)\Delta_{vw}q_f=(\nabla_v-\nabla_w)(f(v)f(w))$ and $h(w)\nabla h(v)-h(v)\nabla h(w)=(\nabla_v-\nabla_w)(h(v)h(w))$:
		\[
		\mathcal B=-\frac12\iint
		\bigl[(\nabla_v-\nabla_w)(h(v)h(w))\bigr]^\top
		A_C(v-w)(\nabla_v-\nabla_w)(f(v)f(w))\dd v\dd w .
		\]
		Integrating by parts with respect to the first-order operator $\nabla_v-\nabla_w$ (an ordinary integration by parts in $v$ and one in $w$) gives \eqref{eq:relative-bad-term}.
		
		\emph{Step 2: Lower bound for the good term.}  Since $0\le\widetilde A_C\le A_C$,
		\begin{align*}
			\mathcal G\ge\widetilde{\mathcal G}
			&:=\frac12\iint\abs{\Delta_{vw}\nabla r}^2_{\widetilde A_C(v-w)}g(v)g(w)\dd v\dd w\\
			&=\underbrace{\int\nabla r^\top(\widetilde A_C*g)\nabla r\,g\dd v}_{=:\widetilde{\mathcal G}_1}
			-\underbrace{\iint\nabla r(v)^\top\widetilde A_C(v-w)\nabla r(w)\,g(v)g(w)\dd v\dd w}_{=:\widetilde{\mathcal G}_2}.
		\end{align*}
		By Lemma~\ref{lem:cutoff-coercivity}, $\widetilde{\mathcal G}_1\ge c_*\mathcal I$. For $\widetilde{\mathcal G}_2$ write $g\nabla r=f\nabla h$ and integrate by parts in $v$:
		\begin{align*}
			\widetilde{\mathcal G}_2
			&=\iint f(v)f(w)\,\partial_ih(v)\,(\widetilde A_C)_{ij}(v-w)\,\partial_jh(w)\dd v\dd w\\
			&=-\iint h(v)\Bigl[\partial_if(v)(\widetilde A_C)_{ij}(v-w)
			+f(v)\bigl(\partial_i(\widetilde A_C)_{ij}\bigr)(v-w)\Bigr]f(w)\partial_jh(w)\dd v\dd w.
		\end{align*}
		Using $\norm{\widetilde A_C}_{\rm op}\le c_C$, $\abs{\nabla\cdot\widetilde A_C}\le2c_C$, $\abs{\nabla f}=f\abs{q_f}\le K_1\langle v\rangle^\kappa f$, and $\langle v\rangle^\kappa\ge1$,
		\[
		\abs{\widetilde{\mathcal G}_2}
		\le c_C(2+K_1)\int\abs{g-f}\langle v\rangle^\kappa\dd v
		\int g\abs{\nabla r}\dd w .
		\]
		Lemma~\ref{lem:weighted-pinsker} with $\varphi=\langle v\rangle^\kappa$ and $2\kappa\le m_*$ gives $\int\abs{g-f}\langle v\rangle^\kappa\le(2\KLdiv(g\Vert f))^{1/2}M_*^{1/2}$, while Cauchy--Schwarz and $3\le m_*$ give $\int g\abs{\nabla r}\le\mathcal I^{1/2}(\int\langle v\rangle^3g)^{1/2} \le\mathcal I^{1/2}M_*^{1/2}$.  Hence $\abs{\widetilde{\mathcal G}_2} \le\sqrt2\,c_C(2+K_1)M_*\,\KLdiv(g\Vert f)^{1/2}\mathcal I^{1/2}$, and Young's inequality $xy\le\frac{c_*}2x^2+\frac1{2c_*}y^2$ yields $\abs{\widetilde{\mathcal G}_2}\le\frac{c_*}2\mathcal I+C_G\KLdiv(g\Vert f)$. Combining the two bounds proves \eqref{eq:relative-good-lower}.
		
		\emph{Step 3: Bad term.}  Use the form \eqref{eq:relative-bad-term-once}. For $v\ne w$ the projection in $A_C$ has operator norm one, so by the mean value theorem along the segment $[v,w]$, on which $\langle\cdot\rangle\le\max\{\langle v\rangle,\langle w\rangle\}$,
		\[
		\abs{A_C(v-w)\,\Delta_{vw}q_f}
		\le\frac{c_C\abs{q_f(v)-q_f(w)}}{\abs{v-w}}
		\le c_C\sup_{[v,w]}\norm{D^2\log f}
		\le c_CK_2\bigl(\langle v\rangle^\zeta+\langle w\rangle^\zeta\bigr):
		\]
		the Coulomb singularity is cancelled exactly by the score difference, and only the weighted Hessian bound $K_2$ enters.  Therefore
		\[
		\abs{\mathcal B}\le c_CK_2\Bigl[
		\int g\abs{\nabla r}\langle v\rangle^\zeta\dd v\int\abs{g-f}\dd w
		+\int g\abs{\nabla r}\dd v\int\abs{g-f}\langle w\rangle^\zeta\dd w\Bigr].
		\]
		By Cauchy--Schwarz and $3+2\zeta\le m_*$, $\int g\abs{\nabla r}\langle v\rangle^\zeta \le\mathcal I^{1/2}(\int\langle v\rangle^{3+2\zeta}g)^{1/2} \le\mathcal I^{1/2}M_*^{1/2}$, and likewise $\int g\abs{\nabla r}\le\mathcal I^{1/2}M_*^{1/2}$; by Lemma~\ref{lem:weighted-pinsker} with $\varphi\equiv1$ and $\varphi=\langle w\rangle^\zeta$ ($2\zeta\le m_*$), both $\int\abs{g-f}$ and $\int\abs{g-f}\langle w\rangle^\zeta$ are at most $(2\KLdiv(g\Vert f))^{1/2}M_*^{1/2}$.  Hence $\abs{\mathcal B}\le2\sqrt2\,c_CK_2M_*\,\KLdiv(g\Vert f)^{1/2}\mathcal I^{1/2}$, and Young's inequality $xy\le\delta x^2+y^2/(4\delta)$ gives \eqref{eq:relative-bad-young}.
		
		\emph{Step 4: Absorption.}  By \eqref{eq:relative-good-lower}, $\mathcal I\le\frac2{c_*}\bigl(\mathcal G+C_G\KLdiv(g\Vert f)\bigr)$. Inserting this into \eqref{eq:relative-bad-young} with $\delta=\theta c_*/2$ gives $\mathcal B\le\abs{\mathcal B} \le\theta\mathcal G+\bigl(\theta C_G+2C_B/(\theta c_*)\bigr)\KLdiv(g\Vert f)$, which is \eqref{eq:relative-bad-slack}.  The functions $C_G$ and $C_B$ are quadratic in $K_1$ and $K_2$ with coefficients depending only on $c_C$, $M_*$, $E$, and $\mathsf H$, so they are integrable and uniform over families with uniform bounds.
	\end{proof}

		\noindent{\bf Acknowledgments}
	Y. Kim is supported by National Research Foundation of Korea (NRF) grants funded by the Korea government (MSIT) (RS-2023-00219980 and RS-2026-25490291). D. Lee is supported by National Research Foundation of Korea (NRF) grants funded by the Korea government (MSIT) (RS-2023-00212304 and RS-2023-00219980). S.-J. Son is supported by National Research Foundation of Korea (NRF) grants funded by the Korea government (MSIT and MOE) (RS-2023-00210484 and RS-2025-25419038).
	\medskip
	
	\noindent\textbf{Declaration}
	The authors used Codex (OpenAI) and Claude (Anthropic) to assist with manuscript preparation, the review and refinement of mathematical arguments, and computational checks. The authors reviewed and revised the AI-assisted material. The authors assume responsibility for all content.
\medskip

	\noindent\textbf{Data availability}
	The source code and numerical data supporting the experiments in this work are publicly available at \url{https://github.com/sungjunson/landau-score-convergence}. All numerical data were generated by the accompanying code using the configurations described in the paper; no external datasets were used.

\end{document}